\newcommand{\Z}{{\textsf{\textup{Z}}}}
\newtheorem{thm}{Theorem}
\newtheorem{lem}[thm]{Lemma}
\newtheorem{cor}[thm]{Corollary}
\newtheorem{defi}[thm]{Definition}
\newtheorem{rem}[thm]{Remark}
\newtheorem{nota}[thm]{Notation}
\newtheorem{princ}[thm]{Principle}
\newtheorem{ack}[thm]{Acknowledgement}
\newtheorem{conj}[thm]{Conjecture}
\newtheorem*{tempo*}{Template}
\newtheorem{obs}[thm]{Observation}
\newcommand\be{\begin{equation}}
\newcommand\ee{\end{equation}} 
\def\bdefi{\begin{defi}\rm}
\def\edefi{\end{defi}}
\def\bnota{\begin{nota}\rm}
\def\enota{\end{nota}}
\def\FIVE{\Pi_{1}^{1}\text{-\textup{\textsf{CA}}}_{0}}
\def\SIX{\Pi_{2}^{1}\text{-\textsf{\textup{CA}}}_{0}}
\def\SIXK{\Pi_{k}^{1}\text{-\textsf{\textup{CA}}}_{0}^{\omega}}
\def\ATR{\textup{\textsf{ATR}}}
\def\ZFC{\textup{\textsf{ZFC}}}
\def\ZF{\textup{\textsf{ZF}}}
\def\L{\textsf{\textup{L}}}
 \def\r{\mathbb{r}}
\def\RCA{\textup{\textsf{RCA}}}
\def\({\textup{(}}
\def\){\textup{)}}
\def\RCAo{\textup{\textsf{RCA}}_{0}^{\omega}}
\def\ACAo{\textup{\textsf{ACA}}_{0}^{\omega}}
\def\ATRo{\textup{\textsf{ATR}}_{0}^{\omega}}
\def\WKL{\textup{\textsf{WKL}}}
\def\WWKL{\textup{\textsf{WWKL}}}
\def\bye{\end{document}}
\def\N{{\mathbb  N}}
\def\Q{{\mathbb  Q}}
\def\R{{\mathbb  R}}
\def\A{{\textsf{\textup{A}}}}
\def\SS{\textup{\textsf{S}}}
\def\di{\rightarrow}
\def\CBN{\textup{\textsf{CB}}\N}
\def\asa{\leftrightarrow}
\def\ACA{\textup{\textsf{ACA}}}
\def\QFAC{\textup{\textsf{QF-AC}}}
\def\AC{\textup{\textsf{AC}}}
\def\NCC{\textup{\textsf{NCC}}}
\def\SSEP{\Sigma\textup{\textsf{-SEP}}}
\def\PSEP{\Pi\textup{\textsf{-SEP}}}
\def\pwo{\textup{\textsf{pwo}}}
\def\INDX{\textup{\textsf{IND}}_{1}}
\def\INDY{\textup{\textsf{IND}}_{0}}
\def\cloq{\textup{\textsf{cloq}}}
\def\boco{\textup{\textsf{Boco}}}
\def\PHM{\textup{\textsf{Pohm}}}
\def\Borel{\textup{\textsf{Borel}}}
\def\RUC{\textup{\textsf{RUC}}}
\def\CUC{\textup{\textsf{CUC}}}
\def\BUC{\textup{\textsf{BUC}}}
\def\fun{\textup{\textsf{fun}}}
\def\DCAA{\textup{\textsf{DCA}}}
\def\WSAC{\textup{\textsf{weak-$\Sigma_{1}^{1}$-AC$_{0}$}}}
\def\SAC{\textup{\textsf{$\Sigma_{1}^{1}$-AC$_{0}$}}}
\def\OCO{\textup{\textsf{ccc}}}
\def\ccc{\textup{\textsf{ccc}}}
\def\accu{\textup{\textsf{accu}}}
\def\HAR{\textup{\textsf{Harnack}}}
\def\cocode{\textup{\textsf{cocode}}}
\def\NIN{\textup{\textsf{NIN}}}
\def\NCC{\textup{\textsf{NCC}}}
\def\NBI{\textup{\textsf{NBI}}}
\def\HBC{\textup{\textsf{HBC}}}
\def\closed{\textup{\textsf{closed}}}
\def\CBT{\textup{\textsf{CBT}}}
\def\BOOT{\textup{\textsf{BOOT}}}
\def\IND{\textup{\textsf{IND}}}
\def\NFP{\textup{\textsf{NFP}}}
\def\HBU{\textup{\textsf{HBU}}}
\def\HBT{\textup{\textsf{HBT}}}
\def\range{\textup{\textsf{range}}}
\def\net{\textup{\textsf{net}}}
\def\lex{\textup{\textsf{lex}}}
\def\BW{\textup{\textsf{BW}}}
\def\BWC{\textup{\textsf{BWC}}}
\def\BWL{\textup{\textsf{BWL}}}
\def\seq{\textup{\textsf{seq}}}
\def\fin{\textup{\textsf{fin}}}
\def\LIN{\textup{\textsf{LIN}}}
\def\RM{\textup{\textsf{RM}}}
\def\DCA{\Delta\textup{\textsf{-CA}}}
\def\CWO{\textup{\textsf{CWO}}}
\def\MCT{\textup{\textsf{MCT}}}
\def\PST{\textup{\textsf{PST}}}
\def\eps{\varepsilon}
\def\X{\textup{\textsf{X}}}
\def\ADS{\textup{\textsf{ADS}}}
\def\RT{\textup{\textsf{RT}}}
\def\ECF{\textup{\textsf{ECF}}}
\numberwithin{equation}{section}
\numberwithin{thm}{section}
\begin{document}
\title[On robust theorems due to Bolzano, Weierstrass, Jordan, Cantor]{On robust theorems due to Bolzano, Weierstrass, Jordan, and Cantor}
\author{Dag Normann}
\address{Department of Mathematics, The University of Oslo, Norway} 
\email{dnormann@math.uio.no}
\author{Sam Sanders}
\address{Institute for Philosophy II, RUB Bochum, Germany}
\email{sasander@me.com}
\keywords{Countable sets, Bolzano-Weierstrass theorem, Reverse Mathematics, higher-order computability, Kleene S1-S9, bounded variation, regulated functions}
\subjclass[2020]{03B30, 03F35, 03D55, 03D30}
\thanks{\emph{Attributions.} The direct contributions of the first author are mostly in Section \ref{frap}, esp.\ the intricate proofs. The results in Section \ref{bigger} are however partially-but-essentialy based on conceptual ideas due to the first author, the structure functional $\Omega$ pioneered in \cite{dagsamXIII} in particular.  The second author made partial contributions to both sections.}
\begin{abstract}
Reverse Mathematics (RM hereafter) is a program in the foundations of mathematics where the aim is to identify the \emph{minimal} axioms needed to prove a given theorem from ordinary, i.e.\ non-set theoretic, mathematics.
This program has unveiled surprising regularities: the minimal axioms are very often \emph{equivalent} to the theorem over the \emph{base theory}, a weak system of `computable mathematics', while most theorems 
are either provable in this base theory, or equivalent to one of only \emph{four} logical systems.  The latter plus the base theory are called the `Big Five' and the associated equivalences are \emph{robust} following Montalb\'an, i.e.\ stable under small variations of the theorems at hand.  Working in Kohlenbach's \emph{higher-order} RM, we obtain two new and long series of equivalences based on theorems due to Bolzano, Weierstrass, Jordan, and Cantor; these equivalences are extremely robust {and} have no counterpart among the Big Five systems.
Thus, higher-order RM is much richer than its second-order cousin, boasting at least two extra `Big' systems.
\end{abstract}
%

\maketitle
\thispagestyle{empty}



\section{Introduction}
\subsection{Motivation and caveat}
Like Hilbert (\cite{hilbertendlich}), we believe the infinite to be a central object of study in mathematics. 
That the infinite comes in `different sizes' is a relatively new insight, due to Cantor around 1874 (\cite{cantor1}), in the guise of the \emph{uncountability of the real numbers}, also known simply as \emph{Cantor's theorem}. 

\smallskip

With the notion `countable versus uncountable' in place, it is an empirical observation, witnessed by many textbooks, that to show that a set is countable one often 
constructs an injection (or bijection) to $\N$.  
When \emph{given} a countable set, one (additionally) assumes that this set can be \emph{enumerated}, i.e.\ represented by some sequence.  
In this light, implicit in much of mathematical practise is the following most basic principle about countable sets: 
\begin{center}
\emph{a set that can be mapped to $\N$ via an injection \(or bijection\) can be enumerated}.
\end{center}
This principle was studied in \cite{dagsamX, samcount,samNEO2} as part of the study of the uncountability of $\R$. 
In this paper, we continue the study of this principle in \emph{Reverse Mathematics} (RM hereafter) and connect it to well-known `household name' theorems due to Bolzano-Weierstrass, Cantor, Jordan, and Heine-Borel, as discussed in detail in Section~\ref{intro}.  We assume basic familiarity with RM, also sketched in Section \ref{prelim}.   
In particular, working in Kohlenbach's \emph{higher-order} RM, we obtain two new long series of extremely robust equivalences involving the aforementioned theorems.  
In this concrete way, third-order arithmetic is \emph{much} richer than its second-order cousin in that the former boasts (at least) two extra `Big' systems\footnote{A logical system is called `Big' if it boasts many equivalences involving robust principles.} compared to the latter.  

\smallskip

For all the aforementioned reasons, our results provide new answers to one of the driving questions behind RM, formulated as follows by Montalb\'an.  
\begin{quote}
The way I view it, gaining a greater understanding of [the big five] phenomenon is currently one of the driving questions behind reverse mathematics.
To study [this] phenomenon, one distinction that I think is worth making is the one between robust systems and non-robust systems. 
A system is \emph{robust} if it is equivalent to small perturbations of itself. This is not a precise notion yet, but we can still recognize some robust systems. All the big five systems are very robust. For example, most theorems about ordinals, stated in different possible ways, are all equivalent to each other and to $\ATR_{0}$. Apart from those systems, weak weak K\H{o}nig's Lemma $\WWKL_{0}$ is also robust, and we know no more than one or two other systems that may be robust. (\cite{montahue}*{p.\ 432}, emphasis in original)
\end{quote}

Finally, the uncountability of $\R$ deals with arbitrary mappings with domain $\R$ and is therefore best studied in a language that has such objects as first-class citizens. 
Obviousness, much more than beauty, is however in the eye of the beholder.  Lest we be misunderstood, we formulate a blanket caveat: all notions (computation, continuity, function, open set, et cetera) used in this paper are to be interpreted via their higher-order definitions, also listed below, \emph{unless explicitly stated otherwise}.    

\subsection{From Bolzano-Weierstrass to Heine-Borel and Jordan}\label{intro}
In this section, we provide an overview of our results;  in a nutshell, we obtain a large number of robust equivalences involving the \emph{Bolzano-Weierstrass theorem} for countable sets and many theorems concerned with countable sets and related notions.  
We also obtain equivalences for theorems that \emph{do not} involve countable sets in any obvious or direct way at all, namely the \emph{Jordan decomposition theorem} and similar results on functions of bounded variation and related notions.  

\smallskip

First of all, the \emph{Bolzano-Weierstrass theorem} comes in different formulations.  Weierstrass formulates this theorem around 1860 in \cite{weihimself}*{p.~77} as follows, while Bolzano \cite{russke}*{p.~174} states the existence of suprema rather than just limit points.  
\begin{quote}
If a function has a definite property infinitely often within a finite domain, then there is a point such that in any neighbourhood of this point there are infinitely many points with the property.
\end{quote}
We start by studying the Bolzano-Weierstrass theorem for countable sets as in Principle~\ref{bwc}.  
Precise definitions of all notions involved can be found in Section~\ref{somedefi} while motivation for our choice of definitions is provided in Section \ref{crux}. 
\begin{princ}[$\BWC$]\label{bwc}
For a countable set $A\subset 2^{\N}$, the supremum $\sup A$ exists.  
\end{princ}
\noindent
Unless explicitly stated otherwise, the supremum is taken relative to the lexicographic ordering. 
A number of variations $\BWC_{i}^{j}$ of Principle \ref{bwc} are possible, which we shall express via the indicated super- and sub-scripts as follows.
\begin{itemize} 
\item For $i=0$, \emph{countable sets} are defined via \textbf{injections} to $\N$ (Definition \ref{standard}).
\item For $i=1$, we restrict to \emph{strongly countable sets}, which are defined via \textbf{bijections} to $\N$  (Definition \ref{standard}).
\item For $j$ including $\seq$, we additionally have that a \textbf{sequence} $(f_{n})_{n\in \N}$ in $A$ is given with $\lim_{n\di\infty}f_{n}=\sup A$.
\item For $j$ including $\fun$, we additionally have that $\sup_{f\in A}F(f)$ exists for aribitrary \textbf{functionals} $F:2^{\N}\di 2^{\N}$.
\item For $j$ including $\pwo$, the supremum is relative to the pointwise\footnote{The pointwise ordering `$\leq_{1}$' is defined as $f\leq_{1}g\equiv (\forall n\in \N)(f(n)\leq_{\N}g(n))$ for any $f, g\in \N^{\N}$.  The sequence $\sup A$ is the supremum of $A\subset 2^{\N}$ for this ordering if $(\forall f\in A)(f\leq_{1}\sup A)$ and $(\forall k\in \N)((\sup A)(k)=1\di (\exists f\in A)(f(k)=1))$.} ordering.  
\end{itemize}
Since Cantor space with the lexicographic ordering and $[0,1]$ with its usual ordering are intimately connected, we take the former ordering to be fundamental.  
We have shown in \cite{dagsamX} that $\BWC_{0}^{\fun}$ is `explosive' in that it yields the much stronger $\SIX$ when combined with the Suslin functional, i.e.\ higher-order $\FIVE$. 
Previously, metrisation theorems from topology were needed to reach $\SIX$ via $\FIVE$ (\cite{mummyphd,mummy, mummymf}), while Rathjen states in \cite{rathjenICM} that $\SIX$ \emph{dwarfs} $\FIVE$ and Martin-L\"of talks of a \emph{chasm} and \emph{abyss} between these two systems in \cite{loefenlei}. 
Analogous results hold at the level of computability theory, in the sense of Kleene's S1-S9 (\cite{kleeneS1S9}), while we even obtain $\exists^{3}$, and hence full second-order arithmetic, if we assume \textsf{V=L}, by \cite{dagsamX}*{Theorem 4.6}.  Thus, the following natural questions arise.
\begin{enumerate}
\item[(Q0)] Is the `extra information' as in `$\fun$' or `$\seq$' necessary for explosions? 
\item[(Q1)] Is it possible to `split' e.g.\ $\BWC_{0}$ in `less explosive' components?  
\item[(Q2)] Since $\BWC_{0}$ is formulated using injections, is there an equivalent formulation only based on \emph{bijections}?
\item[(Q3)] Is the explosive nature of $\BWC_{0}$ caused by the use of injections or bijections?
\item[(Q4)] Are there equivalences involving $\BWC_{0}$ from ordinary mathematics, especially involving theorems not related to countability in any obvious way? 
\end{enumerate}
Secondly, to answer (Q0), we connect $\BWC_{0}$ to the other variations $\BWC_{i}^{j}$, as part of Kohlenbach's \emph{higher-order Reverse Mathematics}, briefly introduced in Section~\ref{prelim}.  We assume basic familiarity with Reverse Mathematics (RM hereafter), to which \cite{stillebron} provides an introduction.
We establish the series of equivalences in \eqref{hong} in Section \ref{WEC}, where $\IND_{i}$ are fragments of the induction axiom.  
\be\label{hong}\tag{\textsf{EQ}}
\begin{matrix}
\BWC_{0}^{\fun}\asa \BWC_{0}^{\seq}\asa \BWC_{0}^{\pwo}\asa [\BWC_{0}+\IND_{0}]\asa \BWC_{0}^{\fun, \pwo}. \\~\\
\vspace{-1mm}
\cocode_{1}\asa \DCA_{C}^{-}\asa\BW_{1}^{\seq}\asa [\BW_{1}+\INDX]\asa \BW_{1}^{\pwo}.
\end{matrix}
\ee
Here, $\DCA_{C}^{-}$ is a peculiar axiom inspired by $\Delta_{1}^{0}$-comprehension while $\cocode_{1}$ expresses that \emph{strongly countable} sets, i.e.\ boasting bijections to $\N$, can be enumerated.  
We point out that $\BWC_{0}\asa \BWC_{0}^{\seq}$ is interesting as follows:  to obtain the extra sequence in the latter, the only method\footnote{Apply countable choice to $(\forall n\in \N)(\exists f\in A)( d(f, \sup A)<\frac{1}{2^{n}})$ which holds by definition.} seems to use countable choice, while
the equivalence is provable \emph{without} the latter.  Thus, the extra sequence from $\BWC_{0}^{\seq}$, while seemingly a choice function, can be defined explicitly in terms of the other data, i.e.\ without the Axiom of Choice.  
By Remark \ref{hypersect}, the second line of \eqref{hong} is connected to \emph{hyperarithmetical analysis}. 

\smallskip

Thirdly, in answer to (Q3), the principles from \eqref{hong} are formulated using injections and bijections to $\N$, while items \eqref{ka}-\eqref{put} below are basic theorems about the real line $\R$ 
based on \emph{enumerable} sets, i.e.\ listed by (possibly infinite) sequences, which is essentially the notion of countable set used in second-order RM:
\begin{enumerate}
 \renewcommand{\theenumi}{\alph{enumi}}
\item $\accu$: a non-enumerable \textbf{closed} set in $\R$ {has} a limit point,\label{ka}
\item $\accu'$: a non-enumerable set in $\R$ {contains} a limit point,
\item $\ccc$: a collection of disjoint open intervals in $\R$ is enumerable. \label{put}
\item $\cloq$: a countable linear ordering is order-isomorphic to a subset of $\Q$. \label{vag}
\end{enumerate}
Closed sets are defined as in Definition \ref{openset}, which generalises the second-order notion (\cite{simpson2}*{II.5.6}).  
The principles $\ccc_{i}$ and $\accu_{i}$ for $i=0,1$ are defined as for $\BWC_{i}$ above.  We establish the following series of implications in Section \ref{flum}.
\be\label{hong2}\tag{\textsf{EQ2}}
\begin{matrix}
\accu\asa \accu'\asa \ccc\asa \BW_{0}\asa [\CBN+\BW_{1}]\asa [\CWO^{\omega}+\IND_{0}].  \\~\\
\vspace{-1mm}
\ccc_{1}\asa \CBN\asa \accu_{1}, \textup{ and } \cocode_{0}\asa [\cloq +\IND_{0}]\asa [\cloq'+\IND_{0}].
\end{matrix}
\ee
Here, $\CBN$ is the \emph{Cantor-Bernstein theorem} for $\N$ as in Principle~\ref{cbn}, which is \emph{independent} of $\BWC_{1}$ by Theorem~\ref{goeddachs}, thus answering (Q2).
The principle $\CWO^{\omega}$ expresses that countable well-orderings are comparable, while $\cloq'$ is Cantor's theorem characterising the order type $\eta$ of $\Q$.  
The notion of \emph{limit point} goes back to Cantor (\cite{cantor33}*{p.\ 98}) in 1872; he also proved the first instance of the \emph{countable chain condition} $\ccc$ in \cite{cantor33}*{p.\ 161} and introduced order types, including $\eta$, in \cite{cantorbook90,cantorm}.

\smallskip

Fourth, following (Q4), we also study $\BWC_{0}$ and $\BWC_{1}$ in the grand(er) scheme of things, namely how they connect to set theory and ordinary mathematics.   
In Section~\ref{heli2}, we obtain equivalences between $\BWC_{0}$ and $\BWC_{1}$, and fragments of the well-known \emph{countable union theorem} from set theory (see e.g.\ \cite{heerlijkheid}*{\S3.1}).
As to ordinary mathematics, in Section~\ref{heli}, we establish equivalences between $\BWC_{0}$ and versions of the \emph{Lindel\"of lemma} and \emph{Heine-Borel theorem} as studied in \cites{dagsamX, dagsamIII}. 
In Section \ref{BV}, we establish equivalences between $\BWC_{0}$, the \emph{Jordan decomposition theorem}, and related results from \cites{dagsamXII, dagsamXIII}.  The latter theorem and its ilk have no obvious or direct connection to countability \emph{at all}.  

\smallskip

Finally, we discuss how these results provide detailed answers to (Q0)-(Q4) in the below sections.  In light of all the aforementioned equivalences, we believe 
the following quote by Friedman to be apt:
\begin{quote}
When a theorem is proved from the right axioms, the axioms can be proved from the theorem. (\cite{fried})
\end{quote}
Next, Section \ref{krelim} details the definitions used in this paper while a neat motivation for our choice of definitions is provided in Section \ref{crux}, with the gift of hindsight. 

\subsection{Preliminaries and definitions}\label{krelim}
We briefly introduce \emph{Reverse Mathematics} and \emph{higher-order computability theory} in Section \ref{prelim}.
We introduce some essential definitions in Section \ref{somedefi}.  A full introduction may be found in e.g.\ \cite{dagsamX}*{\S2}.
In Section \ref{crux}, we motivate our choice of definitions, Definition \ref{openset} in particular.  
\subsubsection{Reverse Mathematics and higher-order computability theory}\label{prelim}
Reverse Mathematics (RM hereafter) is a program in the foundations of mathematics initiated around 1975 by Friedman (\cites{fried,fried2}) and developed extensively by Simpson (\cite{simpson2}).  
The aim of RM is to identify the minimal axioms needed to prove theorems of ordinary, i.e.\ non-set theoretical, mathematics. 

\smallskip

We refer to \cite{stillebron} for a basic introduction to RM and to \cite{simpson2, simpson1} for an overview of RM.  We expect basic familiarity with RM, in particular Kohlenbach's \emph{higher-order} RM (\cite{kohlenbach2}) essential to this paper, including the base theory $\RCAo$.   An extensive introduction can be found in e.g.\ \cites{dagsamIII, dagsamV, dagsamX}.  
We have chosen to include a brief introduction as a technical appendix, namely Section \ref{RMA}.  
All undefined notions may be found in the latter. 

\smallskip

Next, some of our main results will be proved using techniques from computability theory.
Thus, we first make our notion of `computability' precise as follows.  
\begin{enumerate}
\item[(I)] We adopt $\ZFC$, i.e.\ Zermelo-Fraenkel set theory with the Axiom of Choice, as the official metatheory for all results, unless explicitly stated otherwise.
\item[(II)] We adopt Kleene's notion of \emph{higher-order computation} as given by his nine clauses S1-S9 (see \cite{longmann}*{Ch.\ 5} or \cite{kleeneS1S9}) as our official notion of `computable'.
\end{enumerate}
We refer to \cite{longmann} for a thorough overview of higher-order computability theory.
%
%
%
%
%
%

\subsubsection{Some definitions in higher-order arithmetic}\label{somedefi}
We introduce the standard definitions for countable set and related notions. 

\smallskip

First of all, the main topic of \cite{dagsamX} is the logical and computational properties of \emph{the uncountability of $\R$}, established in 1874 by Cantor in his \emph{first} set theory paper \cite{cantor1}, in the guise of the following natural principles:
\begin{itemize}
\item $\NIN$: \emph{there is no injection from $[0,1]$ to $\N$},
\item  $\NBI$: \emph{there is no bijection from $[0,1]$ to $\N$}.
\end{itemize}
As it happens, $\NIN$ and $\NBI$ are among the weakest principles that require a lot of comprehension for a proof.  
An overview may be found in \cite{dagsamX}*{Figure 1}.

\smallskip

Secondly, we shall make use of the following notion of (open) set, which was studied in detail in \cite{dagsamVII, samcount, dagsamX}.
We motivate this choice in detail in Section \ref{crux}.  
\bdefi[Sets in $\RCAo$]\label{openset}
We let $Y: \R \di \R$ represent subsets of $\R$ as follows: we write `$x \in Y$' for `$Y(x)>_{\R}0$' and call a set $Y\subseteq \R$ Ôopen' if for every $x \in Y$, there is an open ball $B(x, r) \subset Y$ with $r^{0}>0$.  
A set $Y$ is called `closed' if the complement, denoted $Y^{c}=\{x\in \R: x\not \in Y \}$, is open. 
\edefi
Note that for open $Y$ as in the previous definition, the formula `$x\in Y$' has the same complexity (modulo higher types) as in second-order RM (see \cite{simpson2}*{II.5.6}), while given $(\exists^{2})$ from Section \ref{lll} the former becomes a `proper' characteristic function, only taking values `0' and `$1$'.  Hereafter, an `(open) set' refers to Definition~\ref{openset}, while `RM-open set' refers to the second-order definition from RM.  

\smallskip

The attentive reader has of course noted that e.g.\ the unit interval is only a set in the sense of Definition \ref{openset} in case we assume $\ACAo\equiv \RCAo+(\exists^{2})$.  For this reason, we shall adopt the latter as our base theory in our paper.  We discuss how the reader may obtain equivalences over $\RCAo$ in Remark \ref{LEM}.  

\smallskip

Thirdly, the notion of `countable set' can be formalised in various ways, namely via Definitions \ref{eni} and \ref{standard}.
\bdefi[Enumerable sets of reals]\label{eni}
A set $A\subset \R$ is \emph{enumerable} if there exists a sequence $(x_{n})_{n\in \N}$ such that $(\forall x\in \R)(x\in A\di (\exists n\in \N)(x=_{\R}x_{n}))$.  
\edefi
This definition reflects the RM-notion of `countable set' from \cite{simpson2}*{V.4.2}.  
We note that given $\mu^{2}$ from Section \ref{lll}, we may replace the final implication in Definition \ref{eni} by an equivalence. 

\smallskip

The usual definition of `countable set' is as follows in $\RCAo$. 
\bdefi[Countable subset of $\R$]\label{standard}~
A set $A\subset \R$ is \emph{countable} if there exists $Y:\R\di \N$ such that $(\forall x, y\in A)(Y(x)=_{0}Y(y)\di x=_{\R}y)$. 
If $Y:\R\di \N$ is also \emph{surjective}, i.e.\ $(\forall n\in \N)(\exists x\in A)(Y(x)=n)$, we call $A$ \emph{strongly countable}.
\edefi
The first part of Definition \ref{standard} is from Kunen's set theory textbook (\cite{kunen}*{p.~63}) and the second part is taken from Hrbacek-Jech's set theory textbook \cite{hrbacekjech} (where the term `countable' is used instead of `strongly countable').  
For the rest of this paper, `strongly countable' and `countable' shall exclusively refer to Definition \ref{standard}, \emph{except when explicitly stated otherwise}. 

\smallskip

Finally, we shall use the following definition of finite and infinite set.  
\bdefi[Finite and infinite sets]\label{plonk}
A set $A\subset \R$ is called \emph{infinite} if 
\[
(\forall n\in \N)(\exists w^{1^{*}})[ |w|\geq n \wedge  (\forall i, j<|w|)(w(i), w(j)\in A  \wedge (i\ne j \di w(i)\ne w(j)) )], 
\]
i.e.\ there are arbitrary long finite sequences of distinct elements in $A$.  A set $A\subset \R$ is \emph{finite} if it is not infinite.   
\edefi
The exact definition of (in)finite set plays a \emph{minor} role in most of this paper, but a \emph{major} role in the study of the Jordan decomposition theorem and related topics in Section \ref{BV}.
This observation is explained at length in Remark \ref{diunk}.  In a nutshell, the notion of finite set as in Definition \ref{plonk} is suitable for the RM-study of functions of bounded variation, whereas the `usual' definitions of finite set, involving injections or bijections to $\N$, are not.  
\subsubsection{Some axioms of higher-order arithmetic}\label{lll2}
We introduce a number of axioms of higher-order arithmetic, including the `higher-order counterparts' of $\WKL_{0}$ and $\ACA_{0}$.
We motivate the latter term in detail based on Remark \ref{ECF}.

\smallskip

First of all, with Definitions \ref{openset} and \ref{standard} in place, the following principle has interesting properties, as studied in \cites{samcount, dagsamX, samNEO2}.
\begin{princ}[$\cocode_{0}$]
For any non-empty countable set $A\subseteq [0,1]$, there is a sequence $(x_{n})_{n\in \N}$ in $A$ such that $(\forall x\in \R)(x\in A\asa (\exists n\in \N)(x_{n}=_{\R}x))$.
\end{princ}
Indeed, as explored in \cites{dagsamX, samcount}, we have $\cocode_{0}\asa \BWC_{0}^{\fun}$ over $\ACAo$, while another interesting equivalence is based on the `projection' axiom studied in \cite{samph}:
\be\tag{$\BOOT$}
(\forall Y^{2})(\exists X\subset \N)\big(\forall n\in \N)(n\in X\asa (\exists f \in \N^{\N})(Y(f, n)=0)\big).
\ee
We mention that $\BOOT$ is equivalent to e.g.\ the monotone convergence theorem for nets indexed by Baire space (see \cite{samph}*{\S3}), while it is essentially Feferman's \textsf{(Proj1)} from \cite{littlefef} without set parameters. 
The axiom $\BOOT^{-}$ results from restricting $\BOOT$ to functionals $Y$ with the following `at most one' condition: 
\be\label{uneek}
(\forall n\in \N)(\exists \textup{ at most one } f \in \N^{\N})(Y(f, n)=0), 
\ee
where similar constructs appear in the RM of $\ATR_{0}$ by \cite{simpson2}*{V.5.2}.
The weaker $\BOOT^{-}$ appears prominently in the RM-study of open sets given as (third-order) characteristic functions (\cite{dagsamVII}).
In turn, $\BOOT_{C}^{-}$ is $\BOOT^{-}$ with `$\N^{\N}$' replaced by `$2^{\N}$' everywhere; $\BOOT^{-}_{C}$ was introduced in \cite{dagsamX}*{\S3.1} in the study of $\BWC_{0}^{\fun}$, and we have 
$\BOOT_{C}^{-}\asa \BWC_{0}^{\fun}$ over $\RCAo$ by \cite{samcount}*{Theorem 3.12}.  
In light of \cite{simpson2}*{V.5.2}, $\ACAo+\BOOT_{C}^{-}$ proves $\ATR_{0}$. 

\smallskip

Secondly, related to $\BOOT^{-}_{C}$ and $\cocode_{0}$ is the following principle.  
\begin{princ}[$\range_{0}$] For $Y:2^{\N}\di \N$ an injection on $A\subset 2^{\N}$, we have
\[
(\exists X\subset \N)(\forall n\in \N)\big[(\exists f\in A)(Y(f)=n)\asa n\in X \big],
\]
i.e.\ the range of $Y$ restricted to $A$ exists. 
\end{princ}
\noindent
With the gift of hindsight\footnote{A countable $A\subset \R$ yields a linear order via $x\preceq y\equiv Y(x)\leq Y(y)$, where $Y$ is injective on $A$.} from \cites{samcount, samNEO2,dagsamX}, we see that $\cocode_{0}$ is equivalent to:
\be\label{locutus}
\text{\emph{a linear order $(A, \preceq_{A})$ for countable $A\subset \R$ can be enumerated.}}
\ee
In second-order RM, countable linear orders are represented by sequences (see \cite{simpson2}*{V.1.1}), i.e.\ the previous principle
seems essential if one wants to interpret theorems about countable linear orders in higher-order arithmetic or set theory. 
Another useful fragment of $\BOOT$ is $\DCA$, which is central to `lifting' second-order reversals to higher-order arithmetic (see \cite{samFLO2, samrecount}).  
\begin{princ}[$\DCA$]\label{DAAS}
For $i=0, 1$, $Y_{i}^{2}$, and $A_i(n)\equiv (\exists f \in \N^\N)(Y_{i}(f,n)=0)$\textup{:}
\[
(\forall n\in \N)(A_0(n) \asa \neg A_1(n))\di (\exists X\subset \N)(\forall n\in \N)(n\in X\asa A_{0}(n)).
\]
\end{princ}
This principle borrows its name from the fact that the $\ECF$-translation (see Remark \ref{ECF}) converts $\DCA$ into $\Delta_{1}^{0}$-comprehension.  
As will become clear below, $\DCA$ with the `at most one' condition \eqref{uneek} plays an important role in the RM of the Bolzano-Weierstrass theorem. 

\smallskip

Thirdly, the Heine-Borel theorem states the existence of a finite sub-covering for an open covering of certain spaces. 
Now, a functional $\Psi:\R\di \R^{+}$ gives rise to the \emph{canonical covering} $\cup_{x\in I} I_{x}^{\Psi}$ for $I\equiv [0,1]$, where $I_{x}^{\Psi}$ is the open interval $(x-\Psi(x), x+\Psi(x))$.  
Hence, the uncountable covering $\cup_{x\in I} I_{x}^{\Psi}$ has a finite sub-covering by the Heine-Borel theorem, which yields the following principle.
\begin{princ}[$\HBU$]
$(\forall \Psi:\R\di \R^{+})(\exists  y_{0}, \dots, y_{k}\in I){(\forall x\in I)}(\exists i\leq k)(x\in I_{y_{i}}^{\Psi}).$
\end{princ}
Note that $\HBU$ is essentially \emph{Cousin's lemma} (see \cite{cousin1}*{p.\ 22}), i.e.\ the Heine-Borel theorem for canonical coverings.  
By \cite{dagsamIII, dagsamV}, $\Z_{2}^{\Omega}$ proves $\HBU$, but $\Z_{2}^{\omega}+\QFAC^{0,1}$ cannot.
Basic properties of the \emph{gauge integral} (\cite{zwette, mullingitover}) are equivalent to $\HBU$.
By \cite{dagsamIII}*{Theorem 3.3}, $\HBU$ is equivalent to the same compactness property for $2^{\N}$.
\begin{princ}[$\HBU_{\textsf{c}}$]
$(\forall G^{2})(\exists  f_{1}, \dots, f_{k} \in 2^{\N} ){(\forall f\in 2^{\N})}(\exists i\leq k)(f\in [\overline{f_{i}}G(f_{i})]).$
\end{princ}
As studied in \cite{sahotop}*{\S3.1}, canonical coverings as in $\HBU$ are not suitable for the study of basic topological notions like paracompactness and dimension.   
This suggests the need for a more general notion of covering; the solution adopted in \cite{sahotop} it to allow $\psi:I\di \R$, i.e.\ $I_{x}^{\psi}$ is empty in case $\psi(x)\leq 0$. 
In this way, we say that `$\cup_{x\in I}I_{x}^{\psi}$ covers $[0,1]$' if $(\forall x\in [0,1])(\exists y\in [0,1])(x\in I_{y}^{\psi})$.  
Thus, we obtain the Heine-Borel theorem as in $\HBT$, going back to Lebesgue in 1898 (see \cite{lesje}*{p.\ 133}).
\begin{princ}[$\HBT$]
For $ \psi:[0,1]\di \R$, , if $\cup_{x\in I}I_{x}^{\psi}$ covers $[0,1]$, then there are  $y_{1}, \dots, y_{k} \in [0,1]$ such that $\cup_{i\leq k}I_{y_{i}}^{\psi}$ covers $[0,1]$.
\end{princ}
As shown in \cite{sahotop}*{\S3}, we have $\HBU\asa \HBT$ over various natural base theories, some of which we shall discuss and use in Section \ref{link2}.
%

\smallskip

Finally, as discussed in detail in \cite{kohlenbach2}*{\S2}, the base theories $\RCAo$ and $\RCA_{0}$ prove the same $\L_{2}$-sentences `up to language' as the latter is set-based (the $\L_{2}$-language) and the former function-based (the $\L_{\omega}$-language).   
Here, $\L_{2}$ is the language of second-order arithmetic, while $\L_{\omega}$ is the language of all finite types. 
This conservation result is obtained via the so-called $\ECF$-interpretation, discussed next. 
\begin{rem}[The $\ECF$-interpretation]\label{ECF}\rm
The (rather) technical definition of $\ECF$ may be found in \cite{troelstra1}*{p.\ 138, \S2.6}.
Intuitively, the $\ECF$-interpretation $[A]_{\ECF}$ of a formula $A\in \L_{\omega}$ is just $A$ with all variables 
of type two and higher replaced by type one variables ranging over so-called `associates' or `RM-codes' (see \cite{kohlenbach4}*{\S4}); the latter are (countable) representations of continuous functionals.  
The $\ECF$-interpretation connects $\RCAo$ and $\RCA_{0}$ (see \cite{kohlenbach2}*{Prop.\ 3.1}) in that if $\RCAo$ proves $A$, then $\RCA_{0}$ proves $[A]_{\ECF}$, again `up to language', as $\RCA_{0}$ is 
formulated using sets, and $[A]_{\ECF}$ is formulated using types, i.e.\ using type zero and one objects.  

\smallskip

In light of the widespread use of codes in RM and the common practise of identifying codes with the objects being coded, it is no exaggeration to refer to $\ECF$ as the \emph{canonical} embedding of higher-order into second-order arithmetic.  Moreover, $\RCAo+\BOOT$ is called the `higher-order counterpart' of $\ACA_{0}$ as the former is a conservative extension of the latter, and $\ECF$ maps $\BOOT$ to $\ACA_{0}$.  Similarly, $\RCAo+\HBT$ is the `higher-order counterpart' of $\WKL_{0}$. 
\end{rem}
As a neat application of the $\ECF$-interpretation, Remark \ref{blafte} establishes that the Jordan decomposition theorem (see Section \ref{deffer}) does not imply $(\exists^{2})$, although the former theorem applies to discontinuous functions.

\section{Equivalences for the Bolzano-Weierstrass theorem}\label{frap}
\subsection{Introduction}\label{frapintro}
We establish the results sketched in Section \ref{intro} and \eqref{hong}.  

\smallskip

In Section \ref{dix}, we establish the equivalence between $\cocode_{1}$ and the Bolzano-Weierstrass theorem for \emph{strongly} countable sets in Cantor space in various guises, including $\BWC_{1}$.
In Section \ref{diff}, we do the same for $\cocode_{0}$ and $\BWC_{0}$ and variations.  
In Section \ref{CBN}, we study $\CBN$, the Cantor-Berstein theorem for $\N$, and show that it is strictly weaker than $\BWC_{0}$ in that $\Z_{2}^{\omega}+\CBN$ cannot even prove $\NBI$.  
In Section \ref{flum}, we study items \eqref{ka}-\eqref{vag} from Section \ref{intro},   
which are basic theorems about limit points in $\R$ and related concepts, all going back to Cantor somehow.  We establish equivalences between versions of these items on one hand, and $\CBN$ and $\cocode_{0}$ on the other hand; unlike the latter, items \eqref{ka}-\eqref{put} do not mention `injections' or `bijections'.

\smallskip

As to technical machinery, we mention the `excluded middle trick' pioneered in \cite{dagsamV}.  
While we adopt $\ACAo$ as our base theory, the following trick can be used to replace the latter theory by $\RCAo$ \emph{if the reader so desires}.
\begin{rem}[Excluded middle trick]\label{LEM}\rm
The law of excluded middle as in $(\exists^{2})\vee \neg(\exists^{2})$ is quite useful as follows:  suppose we are proving $T\di \cocode_{0}$ over $\RCAo$.  
Now, in case $\neg(\exists^{2})$, all functions on $\R$ are continuous by \cite{kohlenbach2}*{\S3} and $\cocode_{0}$ trivially
holds.  Hence, what remains is to establish $T\di \cocode_{0}$ 
\emph{in case we have} $(\exists^{2})$.  However, the latter axiom e.g.\ implies $\ACA_{0}$ and can uniformly convert reals to their binary representations.  
In this way, finding a proof in $\RCAo+(\exists^{2})$ is `much easier' than finding a proof in $\RCAo$.
In a nutshell, we may wlog assume $(\exists^{2})$ when proving theorems that are trivial (or readily proved) when all functions (on $\R$ or $\N^{\N }$) are continuous, like $\cocode_{0}$.   
\end{rem}
We stress that the previous trick should be used sparingly: the unit interval is not a set in the sense of Definition \ref{openset} in the absence of $(\exists^{2})$.

\smallskip

In addition to the previous remark, we shall need a coding trick based on the well-known lexicographic ordering $<_{\lex}$, as described in Notation \ref{fluk}. 
For brevity, we sometimes abbreviate $\langle n\rangle *w^{0^{*}}*f^1$ as $nwf$ if all types are clear from context.
\begin{nota}[Sequences with information]\label{fluk}\rm
For a finite binary sequence $s^{0^{*}}$, define $w_{s}$ by replacing $0$ in $s$ with the word $1001$ and $1$ in $s$ with $101$. Conversely, if $w^{0^*}$ is a finite conjunction of words $1001$ and $101$, we let $s_w$ be the finite binary  sequence $s$ such that $w_{s_w} =_{0^*}w$. This coding and decoding transfers directly to infinite binary sequences and infinite conjunctions of the words $1001$ and $101$.  
A \emph{sequence with information} is any coded presentation $g = w_s0f$ of a pair $(s,f)$ where $s^{0^{*}}$ is a finite binary sequence and $f\in 2^{\N}$.
\end{nota}
This notation is convenient when trying to define the set $X$ of binary sequences $s^{0^{*}}$ such that $ (\exists f \in 2^{\N}) [Y(s,f)=0]$ for some fixed $Y^{2}$.    
Indeed, one point is that the coding as in Notation \ref{fluk} preserves the lexicographic ordering of the sequences. Another point is that if $s_1$ is a strict subsequence of $s_2$, and $w_{s_1}0f_1$ and $w_{s_2}0f_2$ are two sequences with information, then $w_{s_1}0f_1 < _{\lex}w_{s_2}0f_2$.  In this way, the above versions of the Bolzano-Weierstrass are applied to sets of sequences with information in such a way that the information parts do not show up in the supremum.

\subsection{Bolzano-Weierstrass theorem and (strongly) countable sets}\label{WEC}
In this section, we study the RM of the Bolzano-Weierstrass theorem in the guise of $\BWC_{i}^{j}$ from Section \ref{intro}.
In particular, we provide a positive answer to question (Q0) from the latter by establishing the equivalences in \eqref{hong}.

\subsubsection{Strongly countable sets}\label{dix}
We connect the Bolzano-Weierstrass theorem for strongly countable sets to $\cocode_{1}$, which is $\cocode_{0}$ restricted to strongly countable sets. 
We discuss the connection to hyperarithmetical analysis in Remark~\ref{hypersect}.

\smallskip

First of all, we need a little bit of the induction axiom, formulated as in $\INDX$ in Principle \ref{IX}.
The equivalence between induction and bounded comprehension is well-known in second-order RM (\cite{simpson2}*{X.4.4}).
\begin{princ}[$\IND_{1}$]\label{IX}
Let $Y^{2}$ satisfy $(\forall n \in \N) (\exists !f \in 2^{\N})[Y(n,f)=0]$.  Then $ (\forall n\in \N)(\exists w^{1^{*}})\big[ |w|=n\wedge  (\forall i < n)(Y(i,w(i))=0)\big]$.  
\end{princ}
Note that $\IND_{1}$ is a special case of the axiom of finite choice, and is valid in all models considered in \cites{dagsam, dagsamII, dagsamIII, dagsamV, dagsamVI, dagsamVII, dagsamIX, dagsamX}. 
Moreover, $\IND_{1}$ is trivial in case $\neg( \exists^2)$ since the condition on $Y$ is then false.  
%
\begin{lem} \label{wonk}
The system $\ACAo$ proves $\cocode_{1} \rightarrow \IND_{1}$.
\end{lem}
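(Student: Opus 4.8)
The plan is to argue via the excluded middle trick of Remark \ref{LEM}. In case $\neg(\exists^{2})$, all functionals on $2^{\N}$ are continuous by \cite{kohlenbach2}*{\S3}, so no $Y^{2}$ can satisfy the uniqueness hypothesis of $\IND_{1}$: for continuous $Y$ the set $\{f\in 2^{\N}: Y(n,f)=0\}$ is clopen, hence either empty or infinite, and never a singleton. Thus the condition on $Y$ is vacuous and $\IND_{1}$ holds trivially, exactly as noted after Principle \ref{IX}. Consequently we may assume $(\exists^{2})$. Fixing $Y^{2}$ with $(\forall n\in\N)(\exists! f\in 2^{\N})[Y(n,f)=0]$ and writing $f_{n}$ for the unique witness at $n$, the idea is to package the pairs $(n,f_{n})$ into a single \emph{strongly countable} set, apply $\cocode_{1}$ to enumerate it, and read the required finite sequences off the resulting enumeration. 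We work in $2^{\N}$, which is harmless by Remark \ref{LEM}.

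Concretely, I would use the sequences-with-information coding of Notation \ref{fluk}. Let $A\subseteq 2^{\N}$ consist of all $h$ of the form $h=w_{1^{n}}0f$ with $Y(n,f)=0$, where $1^{n}$ is the all-ones sequence of length $n$ encoding $n$ in its information part. Since deciding `$h\in A$' only requires decoding $h$ (primitive recursive) and one evaluation of $Y$, the object $A$ is a genuine set as in Definition \ref{openset}, given by a $\{0,1\}$-valued characteristic functional. Define $G:2^{\N}\di\N$ by letting $G(h)$ be the length of the information part of $h$ when $h$ is a valid code, and $0$ otherwise; this $G$ exists in $\RCAo$.

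Next I would verify that $A$ is strongly countable in the sense of Definition \ref{standard}, via $G$. Surjectivity of $G$ onto $\N$ on $A$ follows from the existence half of the hypothesis, since each $f_{n}$ yields $w_{1^{n}}0f_{n}\in A$ with $G$-value $n$; injectivity of $G$ on $A$ follows from the uniqueness half, since two elements of $A$ sharing information part $1^{n}$ have content parts both solving $Y(n,\cdot)=0$, hence both equal to $f_{n}$. As $A$ is also nonempty, $\cocode_{1}$ applies and produces a sequence $(h_{m})_{m\in\N}$ with each $h_{m}\in A$ and listing all of $A$. Given $n\in\N$, for each $i<n$ I search for the least $m$ with $G(h_{m})=i$; this terminates because $G$ is surjective on $A$ and $(h_{m})$ exhausts $A$, and the matrix is decidable, so $i\mapsto f_{i}$ (the content part of $h_{m_{i}}$) is a function available in $\RCAo$. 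Setting $w:=\langle f_{0},\dots,f_{n-1}\rangle$ gives $|w|=n$, and since each $h_{m_{i}}\in A$ forces $Y(i,f_{i})=0$, also $(\forall i<n)(Y(i,w(i))=0)$, which is precisely $\IND_{1}$.

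The main obstacle is arranging $A$ as a \emph{strongly} countable set rather than merely a countable one: the witness $G$ must be simultaneously injective and surjective on $A$, and it is exactly the $\exists!$ (rather than plain $\exists$) in the hypothesis of $\IND_{1}$ that delivers injectivity. This is why $\cocode_{1}$, the version for strongly countable sets, suffices here, and it explains why the uniqueness clause is built into Principle \ref{IX}. The remaining ingredients—that the coding of Notation \ref{fluk} cleanly separates the information and content parts, and that the final minimisation is unproblematic under the standing assumption $(\exists^{2})$—are routine.
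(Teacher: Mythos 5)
Your proposal is correct and follows essentially the same route as the paper: pair each $n$ with its unique witness $f_n$ to form a set on which the projection to $n$ is a bijection to $\N$ (existence giving surjectivity, uniqueness giving injectivity), apply $\cocode_{1}$, and read the finite sequences off the enumeration. The only differences are cosmetic — the paper codes the pairs as $\langle n\rangle * f$ with $F(g):=g(0)$ rather than via Notation \ref{fluk}, and it skips the excluded-middle preamble since the construction works directly (the paper separately notes that $\IND_{1}$ is vacuous under $\neg(\exists^{2})$).
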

\begin{proof} 
To show that $\cocode_{1} \rightarrow \IND_{1}$, assume $(\forall n \in \N) (\exists !f \in 2^\N)A_{0}(n,f)$ where $A_{0}$ is quantifier-free. Let $\langle n\rangle* f\in A$ if $A_{0}(n,f)$ and define $F(g) := g(0)$, i.e. $F(\langle n\rangle*f) = n$. Modulo coding, we may view $A$ as a subset of $2^{\N}$.
By assumption, $F$ is a bijection from $A$ to $\N$, and by $\cocode_{1}$, $A$ is enumerable as $\{g_i\}_{i \in \N}$. 
From this enumeration, we can (Turing) compute $n \mapsto f_n$ where $f_n$ is the unique $f$ with $A_{0}(n,f)$ for any $n\in \N$, and in particular an object as claimed to exist by $\IND_{1}$.\end{proof}
Secondly, the following theorem completes most of the results in \eqref{hong} for $\BWC_{1}$.  
\begin{thm}[$\ACAo$]\label{wonkja}
$[\BWC_{1}+ \INDX]\asa\BWC_{1}^{\pwo}\asa\cocode_{1}$. 
\end{thm}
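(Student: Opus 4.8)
The plan is to establish the two bi-implications $\cocode_{1}\asa \BWC_{1}^{\pwo}$ and $\cocode_{1}\asa [\BWC_{1}+\INDX]$, which together give the displayed three-way equivalence. Throughout I invoke the excluded-middle trick (Remark \ref{LEM}): all four principles are trivial when $\neg(\exists^{2})$ holds, since by Observation \ref{fruit} a strongly countable set would then have at most one element and so could not be in bijection with $\N$, making the hypotheses vacuous (and $\INDX$ trivial). Hence I may assume $(\exists^{2})$, which lets me decide membership in a given (strongly) countable $A\subseteq 2^{\N}$, evaluate any $Y^{2}$, and pass to binary representations at will.

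The forward directions from $\cocode_{1}$ are routine. Given an enumeration $(x_{n})_{n\in\N}$ of a strongly countable $A$, the lexicographic supremum is built bit-by-bit: having fixed a prefix $\sigma$, the next bit is $1$ iff $(\exists n)(x_{n}$ extends $\sigma1)$, a $\Sigma^{0}_{1}$-question answerable under $(\exists^{2})$; this yields $\cocode_{1}\di\BWC_{1}$. The pointwise supremum is even simpler, $p(k)=1$ iff $(\exists n)(x_{n}(k)=1)$, giving $\cocode_{1}\di\BWC_{1}^{\pwo}$. Together with Lemma \ref{wonk} (which gives $\cocode_{1}\di\INDX$) this establishes $\cocode_{1}\di[\BWC_{1}+\INDX]$ and $\cocode_{1}\di\BWC_{1}^{\pwo}$.

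For $\BWC_{1}^{\pwo}\di\cocode_{1}$ the idea is to pack the whole enumeration into a single pointwise supremum. Let $A\subseteq 2^{\N}$ be strongly countable via the bijection $Y$, and associate to each $f\in A$ the sequence $h_{f}\in 2^{\N}$ placing the bits of $f$ into the `column' indexed by $Y(f)$, together with a marker bit in a reserved region recording $Y(f)$; the marker keeps the $h_{f}$ pairwise distinct even when some $f$ is eventually $0$. Then $B:=\{h_{f}:f\in A\}$ has decidable membership under $(\exists^{2})$ and is strongly countable via $h_{f}\mapsto Y(f)$, so $\BWC_{1}^{\pwo}$ provides $\sup B$. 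Since only the element of $A$ with index $n$ contributes to column $n$, reading off column $n$ of $\sup B$ returns $Y^{-1}(n)$; as $Y$ is surjective these exhaust $A$, giving the desired enumeration.

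The crux is $[\BWC_{1}+\INDX]\di\cocode_{1}$, where only the lexicographic supremum is available. Given strongly countable $A$ via bijection $Y$, put $A_{0}(n,f)\equiv[f\in A\wedge Y(f)=n]$, so that $(\forall n)(\exists!f)A_{0}(n,f)$ by strong countability. For each $m$ I form the (by uniqueness, unique) length-$m$ initial segment $(f_{0},\dots,f_{m-1})$ and code it as $d_{m}\in 2^{\N}$ by interleaving $f_{0},\dots,f_{m-1}$ into their columns, zeroing columns $\geq m$, and recording the length $m$ by marker bits in a reserved low-priority region (conveniently handled via Notation \ref{fluk}), the latter keeping the $d_{m}$ distinct even when some $f_{i}$ is eventually $0$. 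Here $\INDX$ is indispensable: it is exactly what guarantees the length-$m$ tuple exists for every $m$, so that $D:=\{d_{m}:m\in\N\}$ is \emph{strongly} (not merely) countable via $d_{m}\mapsto m$, which is what licenses applying $\BWC_{1}$. By construction $(d_{m})_{m}$ is pointwise nondecreasing with pointwise limit $d_{\infty}$ coding the full enumeration; since pointwise $\leq$ implies lexicographic $\leq$, each $d_{m}\leq_{\lex}d_{\infty}$, and a short argument (for each coordinate $j$, all large $d_{m}$ agree with $d_{\infty}$ on $[0,j]$) shows no strictly smaller sequence is an upper bound, whence $\sup_{\lex}D=d_{\infty}$; decoding $d_{\infty}$ yields an enumeration of $A$. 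I expect the main obstacle to be precisely the bookkeeping that makes $D$ strongly countable (the essential role of $\INDX$) together with verifying $\sup_{\lex}D=d_{\infty}$, i.e.\ that the greedy lexicographic supremum cooperates and the marker bits do not corrupt the recovered payload.
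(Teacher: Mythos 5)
Your proposal is correct and follows essentially the same route as the paper: the excluded-middle reduction to $(\exists^{2})$, the column-plus-marker encoding of $A$ as a strongly countable set whose pointwise supremum yields the enumeration for $\BWC_{1}^{\pwo}\di\cocode_{1}$, and, for $\BWC_{1}+\INDX\di\cocode_{1}$, the set of codes of the unique length-$m$ initial segments (with $\INDX$ supplying their existence, hence strong countability) whose lexicographic supremum decodes to the enumeration. The only difference is cosmetic: you use a pointwise-monotone columnwise coding and identify the lexicographic supremum with the pointwise limit directly, whereas the paper uses the prefix-ordered coding of Notation \ref{fluk}; both verifications go through.
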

\begin{proof} We have already established that $\cocode_{1} \rightarrow \IND_{1}$ in Lemma \ref{wonk}.
Moreover, it is straightforward to prove both $\BWC_{1}^{\pwo}$ and $\BWC_{1}$ from $\cocode_{1}$.
We first prove that $\BWC_{1}^{\pwo} \rightarrow \cocode_{1}$ in $\RCAo$. To this end, let $F:2^{\N}\di \N$ be a bijection on $A \subseteq 2^\N$. Define the set $B\subset 2^{\N}$ as follows: $g \in B$ if the 
following items are satisfied: 
\begin{itemize}
\item for all $n, m, a, b\in \N$, $g(\langle n,a\rangle) = g(\langle m , b\rangle) = 1\di n = m$,
\item for a unique $n_{0}\in \N$, $g(\langle n_{0} , 0\rangle) = 1$,
\item for this $n_{0}$, the function $\lambda a.g(\langle n_{0} , a+1\rangle) $ is in $ A$ and maps to $n_{0}$ under $F$.
\end{itemize}
Clearly, $B$ is strongly countable and $\BWC_{1}^{\pwo}$ yields a pointwise least upper bound for $B$. 
This is essentially the characteristic function of the disjoint union of the sets (with characteristic functions) in $A$, and we can recover an enumeration of $A$.



\smallskip

Next, we prove that $\BWC_{1} \rightarrow \cocode_{1}$, using $\INDX$. 
Let $F$ be bijective on $A\subset 2^{\N}$. We will construct a strongly countable set $B$ such that $g(\langle i,j\rangle) = F^{-1}(i)(j)$ is coded as the lexicographic supremum of $B$. Let $w0f \in B$ if $f = g_{0}\oplus g_{1}\oplus \dots \oplus g_{k-1}$ where $k$ is the length of $s_w$, where $F(g_i) = i$ for $i < k$, and where $s_w(\langle i,j\rangle) = g_i(j)$ whenever $\langle i,j\rangle < k$. We  let $G(w0f)$ be the length of $s_w$. 
Then $G$ is a bijection on $B$. We need $\IND_{1}$ to establish the unique existence of $g_{0}\oplus g_{1}\oplus \dots \oplus g_{k-1}$ for each $k$ for this otherwise trivial fact. 
The supremum of $B$ in the lexicographic ordering now codes the enumeration of $A$ 
via the inverse of $F$ and the $0 \mapsto 1001$ and $1 \mapsto 101$ translation from Notation \ref{fluk}.
\end{proof}

\smallskip

Thirdly, by the following, $\ACAo+\BWC_{1}^{\pwo}$ and $\ACAo+\BWC_{1}+\IND_{1}$ are connected to \emph{hyperarithmetical analysis}. 
We discuss this connection in Remark \ref{hypersect}
\begin{cor}\label{hyp}
The system $\ACAo+\BWC_{1}^{\pwo}$ proves $\WSAC$; the former yields a conservative extension when added to $\SAC$.
\end{cor}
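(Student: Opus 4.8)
The plan is to reduce both claims to the enumeration principle $\cocode_{1}$. By Theorem~\ref{wonkja}, the base theory $\RCAo$ already proves $\BWC_{1}^{\pwo}\asa\cocode_{1}$, so throughout I may argue about $\ACAo+\cocode_{1}$ in place of $\ACAo+\BWC_{1}^{\pwo}$. Note that $\ACAo$ supplies $(\exists^{2})$, so arithmetical matrices become decidable and reals may be converted to binary representations at will, exactly as exploited in Remark~\ref{LEM}. For the first claim the target $\WSAC$ reads: whenever $(\forall n\in\N)(\exists! f)\,\phi(n,f)$ holds, with $f$ the unique witness and $\phi$ of the relevant complexity, the choice sequence $(f_{n})_{n\in\N}$ with $(\forall n)\,\phi(n,f_{n})$ exists.

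For this first claim I would invoke the mechanism underlying \cite{samcount}*{Theorem~3.17}: turn the $\exists!$-hypothesis into a \emph{strongly countable} witness set and let $\cocode_{1}$ enumerate it. Concretely, consider $A:=\{\langle n\rangle* f : \phi(n,f)\}\subseteq 2^{\N}$; in the presence of $(\exists^{2})$ its membership relation is decidable, so $A$ is a bona fide set in the sense of Definition~\ref{standard}, and the map $\langle n\rangle* f\mapsto n$ is a \emph{bijection} from $A$ onto $\N$ precisely because of the uniqueness clause. Hence $A$ is strongly countable, $\cocode_{1}$ provides an enumeration $(g_{i})_{i\in\N}$ of $A$, and reading off for each $n$ the unique $g_{i}$ whose first coordinate equals $n$ yields the required sequence $(f_{n})_{n\in\N}$. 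One may equivalently route this through the `at most one' version of $\DCA$: the relation $f_{n}(k)=1$ then has a $\Sigma_{1}^{1}$ definition with a unique witness together with a matching co-definition, and the strongly countable witness set is exactly what allows $\cocode_{1}$ to perform the relevant $\Delta$-comprehension.

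For the second claim I would argue by a conservativity sandwich. First, $\ACAo+\QFAC^{0,1}$ proves $\cocode_{1}$: given a strongly countable $A$ with bijection $Y\colon A\to\N$, the instance $(\forall n\in\N)(\exists f\in 2^{\N})(f\in A\wedge Y(f)=n)$ has a matrix decidable in $(\exists^{2})$, so $\QFAC^{0,1}$ returns a choice function $n\mapsto f_{n}$, which enumerates $A$ by surjectivity and injectivity of $Y$. By \cite{hunterphd}*{Cor.\ 2.7}, $\ACAo+\QFAC^{0,1}$ is a conservative extension of $\SAC$ for $\L_{2}$-sentences; in particular it proves $\SAC$ and, via Theorem~\ref{wonkja}, also $\BWC_{1}^{\pwo}$. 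Consequently $\SAC+\ACAo+\BWC_{1}^{\pwo}$ is contained in $\ACAo+\QFAC^{0,1}$ as theories, so it proves no $\L_{2}$-sentence beyond those of $\SAC$; as the reverse inclusion is trivial, this is the asserted conservative extension of $\SAC$.

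I expect the genuine content, and the main obstacle, to lie entirely in the first claim, namely in guaranteeing that the witness set $A$ is \emph{strongly} countable (a bijection, not merely an injection, to $\N$) while keeping its membership relation arithmetical; this is exactly where both the $\exists!$-hypothesis and the availability of $(\exists^{2})$ from $\ACAo$ are indispensable, and it is the part carried by \cite{samcount}*{Theorem~3.17}. By contrast, the derivation $\ACAo+\QFAC^{0,1}\vdash\cocode_{1}$ and the conservativity chain for the second claim are routine once \cite{hunterphd}*{Cor.\ 2.7} is imported.
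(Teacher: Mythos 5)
Your proposal is correct and follows essentially the same route as the paper: reduce $\BWC_{1}^{\pwo}$ to $\cocode_{1}$ via Theorem~\ref{wonkja}, derive $\WSAC$ from the unique-choice consequence of $\cocode_{1}$, and obtain conservativity over $\SAC$ by sandwiching everything inside $\ACAo+\QFAC^{0,1}$ via Hunter's result. The only difference is that you unpack inline the two implications $\QFAC^{0,1}\di\cocode_{1}\di\,!\QFAC^{0,1}$ that the paper simply cites from \cite{samcount}*{Theorem 3.17}, and your witness-set construction is exactly the one used in the paper's Lemma~\ref{wonk}.
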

\begin{proof}
By \cite{samcount}*{Theorem 3.17}, $\QFAC^{0,1}\di \cocode_{1}\di !\QFAC^{0,1}$, where the final principle is the first principle with a uniqueness condition.  
Now, $\ACAo+\QFAC^{0,1}$ is a conservative extension of $\SAC$ by \cite{hunterphd}*{Cor.\ 2.7}, while $\ACAo+!\QFAC^{0,1}$ clearly proves $\WSAC$
\end{proof}
%
We note that the monotone convergence theorem for nets with \emph{strongly countable} index set, called $\MCT_{1}^{\net}$ in \cite{samcount}, is equivalent to $\cocode_{1}$ over $\RCAo$ by \cite{samcount}*{Theorem~3.12}.
Hence, this theorem has the same status as e.g.\ $\BWC_{1}+\INDX$. 

\smallskip

Finally, the previous results suggest a connection between $\cocode_{1}$ and hyperarithmetical analysis.  
A well-known system here is $\Delta_{1}^{1}$-comprehension (see \cite{simpson2}*{Table 4, p.\ 54}) and we now connect the latter to $\cocode_{1}$.  
%
%
To this end, let $\DCA^{-}_{C}$ be $\DCA$ restricted to formulas $A_i(n)\equiv (\exists f \in 2^\N)(Y_{i}(f,n)=0)$ also satisfying $(\forall n \in \N)(\exists \textup{ at most one } f\in 2^{\N})(Y_{i}(f, n)=0)$ for $i=0,1$.
In this way, $\DCA_{C}^{-}$ is similar in role and form to $\BOOT_{C}^{-}$.  
We have the following surprising result. 
\begin{thm}\label{aars}
The system $\ACAo$ proves that the following are equivalent:
\begin{enumerate}
 \renewcommand{\theenumi}{\alph{enumi}}
\item $\cocode_{1}$: any strongly countable set can be enumerated,\label{GF0}
\item For strongly countable $A\subset [0,1]$, any subset of $A$ can be enumerated,\label{GF}
\item $\DCA_{C}^{-}$: the axiom $\DCA$ with an `at most one' condition for $2^{\N}$. \label{GF1}
\end{enumerate}
\end{thm}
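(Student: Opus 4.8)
The plan is to prove the cycle $\eqref{GF0}\di\eqref{GF}\di\eqref{GF1}\di\eqref{GF0}$, working in $\ACAo$ throughout so that $(\exists^{2})$ is at our disposal; in particular, for a set $B$ presented as in Definition~\ref{openset} the predicate `$x\in B$' is decidable, and $\Sigma_{1}^{0}$-comprehension is available to form the range of a sequence. Recall also that a strongly countable set, carrying a bijection to $\N$, is automatically infinite, so no non-emptiness worries arise for \eqref{GF0} or \eqref{GF1}. For $\eqref{GF0}\di\eqref{GF}$, let $A\subset[0,1]$ be strongly countable and $B\subseteq A$. I would apply $\cocode_{1}$ to enumerate $A$ as $(x_{n})_{n\in\N}$ and then, using $(\exists^{2})$ to decide `$x_{n}\in B$', extract the subsequence of those $x_{n}$ lying in $B$, which enumerates $B$. (Finite or empty $B$ are handled by the usual padding convention, as for $\cocode_{0}$.)

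For $\eqref{GF}\di\eqref{GF1}$, suppose $Y_{0},Y_{1}$ witness the hypothesis of $\DCA_{C}^{-}$, so that for each $n$ exactly one of $A_{0}(n),A_{1}(n)$ holds and the holding side has a unique witness $f\in 2^{\N}$. I would code all the witnesses into a single strongly countable set: let $A$ consist of all $g=\langle n,i\rangle * f$ with $Y_{i}(f,n)=0$, and let $F(g)$ be the decoded first coordinate $n$. Since each $n$ contributes exactly one such $g$, the map $F$ is a bijection of $A$ onto $\N$, membership in $A$ is decidable from $Y_{0},Y_{1}$, and $A$ is infinite. Viewing $A$ as a subset of itself, \eqref{GF} enumerates it as $(g_{m})_{m}$, whereupon $n\in X\asa(\exists m)(F(g_{m})=n\wedge \textup{tag}(g_{m})=0)$ is arithmetical in this sequence, so $\ACAo$ yields the desired $X=\{n:A_{0}(n)\}$.

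For $\eqref{GF1}\di\eqref{GF0}$, let $F$ be a bijection on the strongly countable $A\subset 2^{\N}$; I want the enumeration $n\mapsto F^{-1}(n)$, defined bitwise. Put $Y_{0}(g,\langle n,k\rangle)=0$ iff $g\in A\wedge F(g)=n\wedge g(k)=1$, and let $Y_{1}$ be the same with $g(k)=0$; these are total type-two functionals once $(\exists^{2})$ and the characteristic function of $A$ are given. Because the unique $g\in A$ with $F(g)=n$ settles the value $g(k)$, both the `at most one' clause and the complementarity $A_{0}(\langle n,k\rangle)\asa\neg A_{1}(\langle n,k\rangle)$ hold, so $\DCA_{C}^{-}$ provides the relevant $X$. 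Setting $f_{n}(k):=1$ iff $\langle n,k\rangle\in X$ recovers $F^{-1}(n)$, and $(f_{n})_{n}$ enumerates $A$ by surjectivity of $F$.

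The main obstacle I anticipate is $\eqref{GF}\di\eqref{GF1}$: one must verify that the coded witness set $A$ is genuinely a set in the sense of Definition~\ref{openset} (its characteristic function being computable from $Y_{0},Y_{1}$) and genuinely strongly countable (so that $F$ is a total functional acting bijectively on $A$), and that reading off $A_{0}$ from the enumeration via the tag keeps the definition of $X$ arithmetical. The remaining implications are comparatively routine, with $\eqref{GF1}\di\eqref{GF0}$ reducing to checking the two side conditions of $\DCA_{C}^{-}$ and $\eqref{GF0}\di\eqref{GF}$ amounting to a decidable filtering under $(\exists^{2})$.
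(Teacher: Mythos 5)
Your proof is correct and follows the same cycle $\eqref{GF0}\di\eqref{GF}\di\eqref{GF1}\di\eqref{GF0}$ as the paper, with essentially the same arguments for the first and last legs: $\eqref{GF0}\di\eqref{GF}$ is the same $\exists^{2}$-filtering of an enumeration, and your $\eqref{GF1}\di\eqref{GF0}$ uses $\DCA_{C}^{-}$ to decide, for each pair $\langle n,k\rangle$, whether the unique $g\in A$ with $F(g)=n$ satisfies $g(k)=1$ — the paper phrases this with cylinder sets $[\sigma]$ and reconstructs $f_{n}$ bit by bit from the resulting set $X_{0}$, but it is the same idea and your bitwise version is a slightly more streamlined instance of it. The one substantive difference is in $\eqref{GF}\di\eqref{GF1}$. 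The paper forms the two witness sets $A=\{f:(\exists n)(Y_{0}(f,n)=0)\}$ and $B=\{g:(\exists m)(Y_{1}(g,m)=0)\}$ and defines $V$ on $A\cup B$ by cases via $Z(f)=(\mu n)(Y_{0}(f,n)=0)$ and $W(g)=(\mu m)(Y_{1}(g,m)=0)$, asserting bijectivity of $V$ after a ``tedious-but-straightforward case distinction''; this verification is genuinely delicate, since a single witness $f$ may serve several indices $n$ while $Z(f)$ records only the least of them, so surjectivity of $V$ onto $\N$ is not at all immediate. Your tagged construction — exactly one element $\langle n,i\rangle * f$ per index $n$, with $F$ the projection onto $n$ — makes the bijection onto $\N$ trivial to check and sidesteps that issue entirely; the price is only the routine verification (which you correctly flag) that the tagged set is a genuine set with decidable membership, modulo the same coding of $\N\times\{0,1\}\times 2^{\N}$ into $2^{\N}$ that the paper already uses in Lemma \ref{wonk}. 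Note that your argument applies item \eqref{GF} only in the degenerate case $B=A$, i.e.\ it really proves $\eqref{GF0}\di\eqref{GF1}$, which is harmless since \eqref{GF} trivially implies \eqref{GF0}.
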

\begin{proof}
For the implication $\eqref{GF0}\di \eqref{GF}$, let $A\subset [0,1]$ be strongly countable and use $\cocode_{1}$ to obtain a sequence listing all elements of $A$.  
For $B\subset A$, use $\mu^{2}$ to remove all elements in $A\setminus B$ from this sequence.  
For $\eqref{GF}\di \eqref{GF1}$, fix $Y_{i}^{2}$ for $i=0,1$ as in $\DCA_{C}^{-}$ and define the following subsets of Cantor space:
\[
A:= \{f\in 2^{\N}: (\exists n\in \N)(Y_{0}(f, n)=0)  \} \textup{ and } B:= \{g\in 2^{\N}: (\exists m\in \N)(Y_{1}(g, m)=0)  \}.
\]
Define $Z, W:2^{\N}\di \N$ as $Z(f):= (\mu n)(Y_{0} (f, n)=0)$ and $W(g):= (\mu m)(Y_{1} (g, m)=0)$.
By the assumption on $Y_{0}$ (resp.\ $Y_{1}$), $Z$ (resp.\ $W$) is injective on $A$ (resp.\ $B$).
Now let $A~{\dot{\cup}}~B$ be the disjoint\footnote{The disjoint union $A~{\dot{\cup}}~B$ can be defined as $\{ (\langle n\rangle *f) \in 2^{\N} : (n=0 \wedge f\in A) \vee (n=1 \wedge f\in B)   \}$.} union of $A$ and $B$ and define the following:
\be\label{fool}
V(h):=
\begin{cases}
Z(h(1)*h(2)*\dots ) & h(0)=0 \wedge h(1)*h(2)*\dots \in A \\
W(h(1)*h(2)*\dots) &  h(0)=1 \wedge h(1)*h(2)*\dots \in B \\
0 & \textup{ otherwise}
\end{cases}.
\ee
Now, $V:2^{\N}\di \N$ defined as in \eqref{fool} is \emph{bijective} on $A~\dot{\cup}~B$, which is readily verified via a tedious-but-straightforward case distinction.  Hence, $A~\dot{\cup}~B$ is strongly countable and applying item \eqref{GF} yields 
an enumeration $(f_{n})_{n\in \N}$ of $A$.   By the definition of $A$, we have $(\exists f\in 2^{\N})(Y_{0}(f, n)=0)\asa (\exists m\in\N)(Y_{0}(f_{m}, n)=0)$, for any $n\in \N$.
Now define $X\subset \N$ as follows:  $n\in X\asa (\exists m \in \N)(Y_{0}(f_{m}, n)=0)$.  This set is exactly as needed for $\DCA_{C}^{-}$, and we are done. 

\smallskip

For the implication $\DCA_{C}^{-}\di \cocode_{1}$, let $Y:2^{\N}\di \N$ be bijective on $A\subset 2^{\N}$.
Now consider, for any $n, m\in \N$ and $i=0, 1$, the following:
\[
 (\exists g \in A)(  g(m) = i\wedge Y(g) = n)\asa (\forall f\in A)( f(m)\ne i\di Y(f)\ne n  ), 
\]
which follows by definition and satisfies the required `at most one' conditions. 
Then $\DCA_{C}^{-}$ provides $X\subset \N^{3}$ such that 
\[
(n,m,i)\in X\asa (\exists g \in A)(  g(m) = i\wedge Y(g) = n)
\]
for any $n, m\in \N$ and $i=0, 1$.  The enumeration of $A$ is given by $f_n(m) = i$ for the unique $i$ such that $(n,m,i) \in X$, and we are done. 
\end{proof}
The `at most one' conditions in $\DCA_{C}^{-}$ may seem strange, but similar constructs exist in second-order RM: as discussed in \cite{simpson2}*{p.\ 181}, a version of 
Suslin's classical result that the Borel sets are exactly the $\Delta_{1}^{1}$-sets can be proved in $\ATR_{0}$.   However, Borel sets in second-order RM are in fact given by $\Delta_{1}^{1}$-formulas 
that satisfy an `at most one' condition, in light of \cite{simpson2}*{V.3.3-4}.

\smallskip

We finish this section with a remark on hyperarithmetical analysis.
\begin{rem}\label{hypersect}\rm
The notion of \emph{hyperarithmetical set} (\cite{simpson2}*{VIII.3}) gives rise to the (second-order) definition of \emph{system/statement of hyperarithmetical analyis} (see e.g.\ \cite{monta2} for the exact definition), 
which includes systems like $\Sigma_{1}^{1}$-$\textsf{CA}_{0}$ (see \cite{simpson2}*{VII.6.1}).  Montalb\'an claims in \cite{monta2} that \textsf{INDEC}, a special case of \cite{juleke}*{IV.3.3}, is the first `mathematical' statement of hyperarithmetical analysis.  The latter theorem by Jullien can be found in \cite{aardbei}*{6.3.4.(3)} and \cite{roosje}*{Lemma 10.3}.  

\smallskip

%
The monographs \cites{roosje, aardbei, juleke} are all `rather logical' in nature and $\textsf{INDEC}$ is the \emph{restriction} of a higher-order statement to countable linear orders in the sense of RM (\cite{simpson2}*{V.1.1}), i.e.\ such orders are given by sequences.  
In our opinion, the statements $\MCT_{1}^{\net}$ and $\BWC_{1}$ introduced above are (much) more natural than \textsf{INDEC} as they are obtained from theorems of mainstream mathematics 
by a (similar to the case of $\textsf{INDEC}$) restriction, namely to strongly countable sets.  
Now consider,  $\ACAo+\X$ where $\X$ is either $\MCT_{1}^{[0,1]}$, $\cocode_{1}$, $\DCA_{C}^{-}$, or $\BWC_{1}+\INDX$.
By the above, $\ACAo+\X$ is a rather natural system \emph{in the range of hyperarithmetical analysis}, namely sitting between $\RCAo+\textsf{weak}$-$\Sigma_{1}^{1}$-$\textsf{CA}_{0}$ and $\ACAo+\QFAC^{0,1}\equiv_{\L_{2}}\Sigma_{1}^{1}$-\textsf{CA}$_{0}$. 
\end{rem}

\subsubsection{Countable sets}\label{diff}
We study the Bolzano-Weierstrass for countable sets in its various guises and connect it to $\cocode_{0}$.

\smallskip

Firstly, as perhaps expected in light of the use of $\INDX$ above, we also need a fragment of the induction axiom, as follows. 
\bdefi[$\INDY$]
Let $Y^{2}$ satisfy $(\forall n\in \N)(\exists \textup{ at most one } f\in 2^{\N})(Y(f, n)=0)$.  
For $k\in \N$, there is $w^{1^{*}}$ with $|w|=k$ such that for $m\leq k$, we have:
\[
(w(m)\in 2^{\N}\wedge Y(w(m), m)=0) \asa (\exists f\in 2^{\N})(Y(f, m)=0).
\]
\edefi
Note that $\IND_{0}\di \IND_{1}$ by definition. 
The following theorem is a first approximation of the results in \eqref{hong}.
\begin{thm}\label{reefer} The system $\ACAo$ proves $\BWC_{0}^{\pwo}\asa \cocode_{0}$
\end{thm}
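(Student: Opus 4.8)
The plan is to prove the two directions of $\BWC_{0}^{\pwo}\asa \cocode_{0}$ separately, following the template established in Theorem~\ref{wonkja} for the strongly countable case, but now adapting the construction to \emph{injections} rather than bijections. By Remark~\ref{LEM} (the excluded middle trick) I may freely assume $(\exists^{2})$ throughout, since both principles are trivial when all functions on $\R$ are continuous by Observation~\ref{fruit}; this also lets me pass between $2^{\N}$ and $[0,1]$ at will and convert reals to binary representations. The direction $\cocode_{0}\di \BWC_{0}^{\pwo}$ should be the easy half: given a countable $A\subset 2^{\N}$, apply $\cocode_{0}$ to obtain an enumeration $(x_{n})_{n\in\N}$ of $A$, and then use $\mu^{2}$ (available from $\exists^{2}$) to compute the pointwise supremum directly, setting $(\sup A)(k)=1$ iff $(\exists n)(x_{n}(k)=1)$, which is exactly the pointwise least upper bound demanded by $\BWC_{0}^{\pwo}$.

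For the harder direction $\BWC_{0}^{\pwo}\di \cocode_{0}$, I would mimic the $\BWC_{1}^{\pwo}\rightarrow\cocode_{1}$ argument from Theorem~\ref{wonkja}. Let $Y:2^{\N}\di\N$ be an injection on $A\subseteq 2^{\N}$. I would build an auxiliary set $B\subset 2^{\N}$ whose pointwise supremum encodes, in a disjoint-union fashion, enough information to recover an enumeration of $A$. Following the earlier construction, $g\in B$ should hold exactly when $g$ codes a single element $f\in A$ together with its $Y$-value $n$, placed into the $n$-th `column' $\lambda a.\,g(\langle n,a\rangle)$, with the uniqueness clauses guaranteeing that each column is either empty or carries the unique $f\in A$ with $Y(f)=n$. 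The key point is that injectivity of $Y$ ensures each index $n\in\N$ is occupied by at most one element of $A$, so $B$ is again countable via the natural map, and the pointwise supremum of $B$ assembles the characteristic functions of all elements of $A$ side by side. From this supremum one reads off, column by column using $\mu^{2}$, the desired enumeration of $A$.

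The main obstacle I anticipate is the mismatch between injection and surjection: in the strongly countable case one knows every $n\in\N$ is hit, so the columns of the supremum are all filled and the enumeration is immediate; here, with a mere injection, the range of $Y$ on $A$ need not be all of $\N$, and the pointwise supremum will have `gaps' at unused indices. I must therefore ensure the construction of $B$ is insensitive to these gaps—an empty column in $\sup B$ must be interpreted as `no element at this index' rather than spuriously contributing a point to the enumeration. This is precisely the role of the uniqueness bookkeeping (the clause $g(\langle n,a\rangle)=g(\langle m,b\rangle)=1\di n=m$ in the earlier proof): it forces each admissible $g\in B$ to mark exactly one occupied index, so that distinct columns of the supremum correspond to genuinely distinct elements of $A$, and empty columns can be safely discarded when extracting the enumeration. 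Unlike the $\BWC_{1}\di\cocode_{1}$ argument, I expect no appeal to $\INDX$ is needed here, since the pointwise (rather than lexicographic) supremum sidesteps the finite-choice step; this is consistent with the statement asserting a clean equivalence in $\RCAo$ with no induction hypothesis attached.
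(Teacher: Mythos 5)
Your proposal is correct and matches the paper's proof, which likewise assumes $(\exists^{2})$ via the excluded-middle trick, obtains the reverse direction immediately from the enumeration supplied by $\cocode_{0}$, and for the forward direction adapts the column-coding construction from Theorem \ref{wonkja} to an injection. Your observations that empty columns of the pointwise supremum simply record indices outside the range of $Y$, and that no appeal to $\IND_{1}$ is needed for the pointwise (as opposed to lexicographic) supremum, are precisely the details the paper leaves implicit in the phrase ``readily adapted.''
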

\begin{proof}
The reverse implication is immediate as $\cocode_{0}$ converts $A$ into a sequence.  
Of course, $(\exists^{2})$ implies $\ACA_{0}$ and hence the second-order Bolzano-Weierstrass theorem by \cite{simpson2}*{III.2}.
For the forward implication, the construction in the proof of Theorem~ \ref{wonkja} is readily adapted. 
\end{proof}
Secondly, what remains to establish \eqref{hong} is the following.  
\begin{thm}\label{kantnochwally}
The system $\ACAo$ proves 
\be\label{zentrum}
\cocode_{0}\asa [\BWC_{0}+\IND_{0}]\asa \range_{0}\asa \BWC_{0}^{\pwo}.
\ee
\end{thm}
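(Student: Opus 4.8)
The plan is to establish the cyclic chain of implications
\[
\cocode_{0}\di [\BWC_{0}+\IND_{0}]\di [\range_{0}+\IND_{0}]\di \BWC_{0}^{\pwo}\di \cocode_{0},
\]
where the last implication $\BWC_{0}^{\pwo}\di \cocode_{0}$ is exactly Theorem~\ref{reefer}. Throughout I invoke the excluded middle trick (Remark~\ref{LEM}): in case $\neg(\exists^{2})$ all functions on $2^{\N}$ are continuous, so every countable set has at most one element by Observation~\ref{fruit} and all four principles are trivial; hence I may assume $(\exists^{2})$, which in particular yields $\ACA_{0}$ and arithmetical comprehension relative to any given parameter. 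The conjunct $\IND_{0}$ is preserved trivially along the two forward implications that mention it.

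For $\cocode_{0}\di [\BWC_{0}+\IND_{0}]$, the part $\cocode_{0}\di \IND_{0}$ is proved just as Lemma~\ref{wonk}: given $Y^{2}$ with the `at most one' condition, collect the pairs $\langle m\rangle* f$ with $Y(f,m)=0$ into a countable set $A\subset 2^{\N}$ (the map $\langle m\rangle*f\mapsto m$ being injective on $A$), enumerate $A$ via $\cocode_{0}$, and for a given bound $k$ use $(\exists^{2})$ to locate, for each $m\le k$ admitting a witness, the corresponding entry of the enumeration; assembling these yields the finite sequence $w$ required by $\IND_{0}$. The part $\cocode_{0}\di \BWC_{0}$ is routine: an enumeration of $A$ together with $(\exists^{2})$ computes the lexicographic supremum bit-by-bit, as in the proof of Theorem~\ref{wonkja}. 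The implication $[\range_{0}+\IND_{0}]\di \BWC_{0}^{\pwo}$ is likewise short: given a countable $A\subset 2^{\N}$ with injection $Y$, consider $A':=\{\langle k\rangle * f: f\in A\wedge f(k)=1\}$ and $Y'(\langle k\rangle *f):=\langle Y(f),k\rangle$, which is readily seen to be injective on $A'$; applying $\range_{0}$ yields $R=\{\langle Y(f),k\rangle : f\in A\wedge f(k)=1\}$, and arithmetical comprehension produces $X:=\{k\in\N:(\exists n)(\langle n,k\rangle\in R)\}$. By construction $k\in X\asa (\exists f\in A)(f(k)=1)$, so the characteristic function of $X$ is exactly the pointwise supremum of $A$.

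The main obstacle is $[\BWC_{0}+\IND_{0}]\di \range_{0}$. Fix $Y^{2}$ injective on $A\subset 2^{\N}$; I must produce $\{n:(\exists f\in A)(Y(f)=n)\}$, equivalently its characteristic function $\chi\in 2^{\N}$. Using Notation~\ref{fluk}, I build the set $B$ of sequences with information $w_{s}0f$ where $s$ is a finite binary string and $f\in 2^{\N}$ codes, via $\oplus$, a tuple $\langle f_{k}: k<|s|,\ s(k)=1\rangle$ with each $f_{k}\in A$ and $Y(f_{k})=k$; thus the $1$-positions of $s$ carry genuine witnesses while $0$-positions are unconstrained. Since $Y$ is injective on $A$, each admissible $s$ determines its witnesses uniquely, so $B$ is countable (via $w_{s}0f\mapsto s$) and is a legitimate set given $(\exists^{2})$. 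The point of the coding is that $w_{s}0f<_{\lex}w_{t}0g$ whenever $s$ is a proper initial segment of $t$, and that at the first place where two strings differ the one carrying a $1$ is lexicographically larger; consequently the lexicographic supremum provided by $\BWC_{0}$ carries a $1$ at position $k$ precisely when some admissible string can, i.e.\ precisely when $k$ lies in the range of $Y$ on $A$. Decoding the word-part of $\sup B$ therefore returns $\chi$, hence the desired set. The role of $\IND_{0}$ is to guarantee that $B$ actually contains the finite approximations $w_{\overline{\chi}m}0f$: collecting, for all $k\le m$ in the range, the unique witnesses into one finite tuple is finite choice under an `at most one' hypothesis, which is exactly $\IND_{0}$ applied to $[f\in A\wedge Y(f)=k]$. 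These approximations form a lexicographically increasing chain whose supremum is $w_{\chi}0f^{*}$, confirming that $\sup B$ decodes to $\chi$.

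I expect the step $[\BWC_{0}+\IND_{0}]\di \range_{0}$ to be the delicate one, the crux being the verification that the information parts of the sequences in $B$ do not perturb the word-part of the lexicographic supremum, so that $\sup B$ genuinely decodes to $\chi$. Once this is in place, Theorem~\ref{reefer} closes the cycle via $\BWC_{0}^{\pwo}\di \cocode_{0}$, yielding all of the asserted equivalences.
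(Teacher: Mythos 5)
Your proof is correct and takes essentially the same route as the paper's: the same sequences-with-information construction (with $\IND_{0}$ supplying the finite tuples of witnesses) for $[\BWC_{0}+\IND_{0}]\di\range_{0}$, the same $\langle n\rangle*f\mapsto\langle Y(f),n\rangle$ device for $\range_{0}\di\BWC_{0}^{\pwo}$, and Theorem \ref{reefer} to close the cycle through $\cocode_{0}$. The only (welcome) difference is that you spell out $\cocode_{0}\di\IND_{0}$ explicitly by adapting Lemma \ref{wonk}, a step the paper leaves to a cross-reference.
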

\begin{proof}
The implication $\BWC_{0}^{\pwo}\di[\BWC_{0}+\IND_{0}]$ follows in the same way as for $\BWC_{1}^{\pwo}\di [\BWC_{1}+\IND_{1}]$ in the proof of Theorem \ref{wonkja}, i.e.\ via $\cocode_{0}$. 
To prove $[\BWC_{0}^{}+\IND_{0}] \rightarrow \range_{0}$, let $F:2^{\N} \rightarrow \N$ be injective on $A\subset 2^{\N}$. Define the set $B$ of sequences with information $w0g$ such that 
$w0g \in B$ if $g$ is of the form $g_0 \oplus \cdots \oplus g_{k-1}$, where $k$ is the length of $s_w$, and such that $F(g_i) = i$ whenever $s_w(i) = 1$. Then $B$ is clearly countable since $A$ is countable. Using $\IND_{0}$ we see that for each $k$ there is a $w_k$ such that $s_{w_k}$ has length $k$ and approximates the characteristic function of the range of $F$. Using $\IND_{0}$ again, there is $g = g_0 \oplus \cdots \oplus g_{k-1}$ such that $w_k0g \in B$. This object is the lexicographicly largest object $w'0g' \in B$ such that the length of $s_{w'} \leq k$. It follows that $\sup B$ will approximate a coded representation of the characteristic function of the range of $F$, and $\range_{0}$ follows. 

\smallskip

To prove $\range_{0} \rightarrow \BWC_{0}^{\pwo}$, let $F:2^{\N}\di \N$ be injective on $A \subset 2^{\N}$. Let $nf \in B$ if $f \in A$ and $f(n) = 1$ and let $G(nf) = \langle n,F(f)\rangle$. Then $G$ is injective on $B$, so let $X$ be the range of $B$ under $G$. The pointwise least upper bound $f$ of $A$ is then definable from $X$ and $\exists^2$ by $f(n) = 1 \asa (\exists k)(\langle n,k\rangle \in X)$.
\end{proof}
Thirdly, we also obtain some nice equivalences for $\IND_{0}$, which can be proved as well for $\IND_{1}$ and \emph{strongly} countable sets. 
Note that the third item uses the `set theoretic' definition of finite sets of reals, also discussed in Section \ref{crux}.  
\begin{thm}[$\ACAo+\QFAC^{0,1}$]\label{xruc}
The following are equivalent. 
\begin{itemize}
\item $\IND_{0}$.
\item A countable and finite set can be enumerated \(by a finite sequence\).
\item A set $A\subset[0,1]$ with $Y:[0,1]\di \N$ injective and bounded on $A$, can be enumerated \(by a finite sequence\).
\end{itemize}
We only need $\QFAC^{0,1}$ to obtain the second item.  
\end{thm}
\begin{proof}
The second item readily implies the third one.  We now prove the second item from the first one.  
To this end, assume $A, Y$ are as in the second item and suppose $(\forall n\in \N)(\exists x\in A)(Y(x)>n)$.  
Apply $\QFAC^{0,1}$ and let $(x_{n})_{n\in \N}$ be the resulting sequence.  Define $g:\N\di \N$ as follows:
\[
g(0):= 0 \textup{ and } g(n+1):=Y(x_{g(n)}).
\]
for which we use the primitive recursion scheme in $\RCAo$.  Now note that $(x_{g(n)})_{n\in \N}$ is a sequence of distinct reals in $A$, contradicting the assumption that it is finite (as in Definition \ref{plonk}).
The previous contradiction implies that there is $N\in \N$ such that $(\forall x\in A)(Y(x)\leq N)$.  
Since $Y$ is injective on $A$, we also have $(\forall n\leq N)(\exists \textup{ at most one } x\in A)(Y(x)=n  )$.  
Now apply $\IND_{0}$ to obtain the desired enumeration of $A$.  To prove $\IND_{0}$ from the third item, let $Y$ be as in the former and fix $k\in \N$.  
Define the set $A:=\{ f\in 2^{\N}: (\exists n\leq k)(Y(f, n)=0) \}$ and define $Z(f):= (\mu n\leq k)(Y(f, n)=0)$, if such there is, and $0$ otherwise.  
Clearly, $Z$ is injective and bounded (by $k$) on $A$.  Applying the third item, we can enumerate $A$, yielding $w^{1^{*}}$ as required by $\IND_{0}$. 
\end{proof}
The previous theorem suggests that $\IND_{0}$ (and even $\cocode_{0}$) cannot prove that a finite set is enumerable, due to the absence of
an injection.  However, finite sets (that come without any obvious injection) do occur `in the wild', namely in the study of functions of bounded variation, as discussed in detail in Section \ref{truerm}.  

\smallskip

Finally, we discuss equivalences for $\cocode_{0}$ from other parts of mathematics.  
\begin{rem}[Lifting results]\rm
Firstly, consider the following algebra statement:
\begin{center}
\emph{any countable sub-field of $\R$ is isomorphic to a sub-field of an algebraically closed countable field}.
\end{center}
The second-order version of the latter is equivalent to $\ACA_{0}$ by \cite{simpson2}*{III.3.2}.  
Now, the centred statement with `countable' removed everywhere is (equivalent to) \textsf{ALCL} from \cite{samrecount}*{\S3.6.1}; it is shown in \cite{samrecount}*{Theorem 3.31} that
\be\label{fluff}
\ACAo+ \DCA \textup{ proves } \textup{\textsf{ALCL}} \di \BOOT.
\ee
\emph{without any essential modification} to the proof of \cite{simpson2}*{III.3.2}, i.e.\ the proof of the latter is `lifted' to the proof in \eqref{fluff} by `bumping up' all the relevant types by one.    
Now let \textsf{ALCL}$_{0}$ be the above centred statement in italics with `countable' interpreted as in Definition~\ref{standard}.  One readily modifies the proof from \eqref{fluff} to yield:
\be\label{fluff2}
\ACAo+ \DCA^{-}_{C} \textup{ proves } \textup{\textsf{ALCL}}_{0} \di \BOOT^{-}_{C}, 
\ee
therewith yielding $[\textup{\textsf{ALCL}}_{0}+\cocode_{1}] \asa \cocode_{0}$ over $\RCAo$ by Theorem \ref{aars}.  One can obtain similar results for the other proofs in \cites{samrecount, samFLO2}, and most likely for any second-order reversal involving countable algebra (and beyond).  
\end{rem}
\subsection{The Cantor-Bernstein theorem}\label{CBN}
We connect $\BWC_{0}^{\pwo}$ to the \emph{Cantor-Bernstein theorem} for $\N$ as studied in \cite{samcount} and defined as in Principle \ref{cbn}. 
As it happens, this theorem is studied in second-order RM as \cite{dougremmelt}*{Problem 1} and was studied by Cantor already in 1878 in \cite{cantor2}.  Our results provide an answer to (Q1).  
\begin{princ}[$\CBN$]\label{cbn}
A countable set $A\subset \R$ is strongly countable if there exists a sequence $(x_{n})_{n\in \N}$ of pairwise distinct reals such that $(\forall n\in \N)(x_{n}\in A)$.
\end{princ} 
First of all, the equivalence $[\CBN+\cocode_{1}]\asa \cocode_{0}$ is proved\footnote{The forward implication is trivial, assuming $\exists^2$. For the reverse implication: if $A$ is countable, consider a set $B$ isomorphic to $\N \oplus A$. Apply $\CBN$ to this set to show that it is strongly countable, and then $\cocode_1$ to show that it is enumerable. Thus, $A$ is enumerable.}
 in \cite{samcount}*{Theorem~3.12}.  
We have the following corollary to Theorems \ref{wonkja} and \ref{reefer}, which provides an answer to (Q1), as $\BW_{0}^{\pwo}$ can be split further. 
\begin{cor}\label{murni}
The system $\ACAo$ proves $ \BWC_{0}^{\pwo}\asa [\CBN+\BWC_{1}^{\pwo}]$.
\end{cor}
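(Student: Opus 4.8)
The plan is to obtain the equivalence by chaining together equivalences that are already available over $\RCAo$, thereby reducing both sides to the enumeration principles $\cocode_0$ and $\cocode_1$. The three ingredients are: Theorem~\ref{reefer}, which gives $\BWC_0^{\pwo}\asa \cocode_0$; Theorem~\ref{wonkja}, which gives $\cocode_1\asa \BWC_1^{\pwo}$; and the equivalence $[\CBN+\cocode_1]\asa \cocode_0$ from \cite{samcount}*{Theorem~3.12}, recalled at the start of this section.

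Concretely, I would argue
\[
\BWC_0^{\pwo}\ \asa\ \cocode_0\ \asa\ [\CBN+\cocode_1]\ \asa\ [\CBN+\BWC_1^{\pwo}],
\]
where the first step is Theorem~\ref{reefer}, the second is \cite{samcount}*{Theorem~3.12}, and the third leaves the $\CBN$-conjunct untouched while replacing $\cocode_1$ by the equivalent $\BWC_1^{\pwo}$ via Theorem~\ref{wonkja}. Since each constituent equivalence holds over $\RCAo$, so does the composite, which is exactly the claim. If one prefers, Remark~\ref{LEM} lets us assume $(\exists^{2})$ outright, since under $\neg(\exists^{2})$ both instances of $\BWC_i^{\pwo}$ are trivial by Observation~\ref{fruit}, while $\CBN$ holds vacuously, as no countable set can then carry a sequence of pairwise distinct reals.

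Because the corollary is a formal consequence of previously established equivalences, there is no substantive obstacle here; the only point needing a moment's care is that the replacement of $\cocode_1$ by $\BWC_1^{\pwo}$ is performed \emph{inside} the conjunction with $\CBN$. This is legitimate precisely because Theorem~\ref{wonkja} proves the equivalence $\cocode_1\asa \BWC_1^{\pwo}$ over the base theory, and hence licenses substitution in any propositional context.
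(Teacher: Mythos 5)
Your proof is correct and follows essentially the same route as the paper: both reduce the claim to the chain Theorem~\ref{reefer}, the equivalence $[\CBN+\cocode_{1}]\asa\cocode_{0}$ from \cite{samcount}, and Theorem~\ref{wonkja}. The only cosmetic difference is that the paper obtains $\BWC_{0}^{\pwo}\di\BWC_{1}^{\pwo}$ directly as a special case rather than passing through $\cocode_{1}$, which changes nothing of substance.
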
 
\begin{proof} 
Immediate form $[\CBN+\cocode_{1}]\asa \cocode_{0}$ and Theorems \ref{wonkja} and \ref{reefer}.
%
%
\end{proof}
Secondly, we show that $\CBN$ does not imply $\cocode_{1}$, based on the proof of \cite{dagsamX}*{Theorem 3.26}. 
This establishes that the statements inside the same square brackets in \eqref{honkeg} are independent, even relative to $\Z_{2}^{\omega}$:
\be\label{honkeg}
\cocode_{0}\asa [\BWC_{1}^{\pwo}+\CBN]\asa [\cocode_{1}+\CBN]\asa [\DCA_{C}^{-}+\CBN].
\ee
Note that \eqref{honkeg} follows from Corollary \ref{murni}, while trivially $\cocode_{1}\di \NBI$.
\begin{thm}\label{goeddachs}
The system $\Z_{2}^{\omega}+\CBN+\IND_{0}$ cannot prove $\NBI$. 
\end{thm}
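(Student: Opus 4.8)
The plan is to argue by a model construction: using the soundness theorem, it suffices to produce a model $\mathcal{M}$ of $\Z_{2}^{\omega}+\CBN+\IND_{0}$ in which $\NBI$ fails, i.e.\ one carrying a genuine bijection $F:[0,1]\di\N$ as a third-order object. The point that makes this possible, despite $(\exists^{2})$ being available, is that such a bijection does \emph{not} by itself provide a \emph{sequence} $(y_{n})_{n\in\N}$ enumerating $[0,1]$: computing the reals $F^{-1}(n)$ uniformly in $n$ amounts to searching the space of reals, which needs $(\exists^{3})$ rather than $(\exists^{2})$. Hence the usual Cantor diagonal argument, which \emph{would} contradict the bijectivity of $F$, cannot be carried out, and $\NBI$ can consistently fail; this is precisely the mechanism behind \cite{dagsamX}*{Theorem 3.26}, whose model I would take as the backbone.

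Concretely, I would let $\mathcal{M}$ have as type-one part a countable model $\mathcal{S}$ of $\Z_{2}$ chosen absolute enough (e.g.\ a $\beta$-model) that full second-order comprehension and the Suslin functionals required for $\Z_{2}^{\omega}$ are present and correct, and take as its higher-type part the functionals that are Kleene (S1-S9) computable from $(\exists^{2})$, the Suslin functionals, a fixed bijection $F:([0,1]\cap\mathcal{S})\di\N$, and finitely many further \emph{forward} bijections described below. Since $\mathcal{S}$ is externally countable, $F$ exists and is adjoined, so that $\mathcal{M}\models\neg\NBI$; the content taken from \cite{dagsamX}*{Theorem 3.26} is that this functional cone validates $\Z_{2}^{\omega}$ yet omits $(\exists^{3})$, equivalently contains no functional enumerating $[0,1]\cap\mathcal{S}$ as a sequence.

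It then remains to validate the two extra axioms. For $\IND_{0}$, the `at most one' hypothesis makes the relevant matrix decidable via $(\exists^{2})$, so for each $k$ the required finite witnessing sequence $w$ exists and is computable from the given $Y^{2}$ and $(\exists^{2})$, hence lies in $\mathcal{M}$; recall moreover that $\IND_{0}$ is listed as valid in all models of this kind. For $\CBN$, suppose $A\subset\R$ is countable via an injection $Y:\R\di\N$ and carries a sequence $(x_{n})_{n\in\N}$ of pairwise distinct elements of $A$. Then $n\mapsto x_{n}$ injects $\N$ into $A$ while $Y$ injects $A$ into $\N$, so a Cantor--Bernstein bijection $Z_{A}:A\di\N$ exists in the metatheory; I would simply adjoin each such $Z_{A}$ (a \emph{forward} map) to the generators of $\mathcal{M}$, whence $A$ is strongly countable in $\mathcal{M}$ and $\mathcal{M}\models\CBN$.

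The main obstacle is the closure bookkeeping of the last paragraph: one must check that adjoining $F$ and the forward bijections $Z_{A}$, and then closing under S1-S9, does \emph{not} produce $(\exists^{3})$, nor the range functional $\range_{0}$, nor any enumerating sequence of a countable set, for any of these would revive Cantor's diagonal and force $\NBI$. Here one exploits the asymmetry that each $Z_{A}$ is only the \emph{forward} bijection $A\di\N$: recovering its inverse as a sequence $\N\di A$, or the range $Y[A]\subseteq\N$, still requires a search over the reals and hence $(\exists^{3})$, so the generators stay inside the same cone as in \cite{dagsamX}*{Theorem 3.26}. This is consistent with Theorem \ref{kantnochwally}, which shows $[\range_{0}+\IND_{0}]\asa\cocode_{0}$ and hence that $\range_{0}$ \emph{must} be absent in our model (as $\cocode_{0}\di\cocode_{1}\di\NBI$); verifying that the cone is genuinely closed and still omits $\range_{0}$ and $(\exists^{3})$ is where the detailed work lies.
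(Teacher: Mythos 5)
You have the right starting point (the model $\textbf{Q}^{*}$ of $\Z_{2}^{\omega}+\neg\NBI$ from \cite{dagsamX}*{Theorem 3.26}), but the way you handle $\CBN$ contains a genuine gap, and it is exactly the step that constitutes the content of the theorem. You propose to \emph{adjoin}, as new computational generators, an externally constructed Cantor--Bernstein bijection $Z_{A}:A\di\N$ for each countable $A$ with a sequence of distinct elements, and then close under S1--S9. This runs into two problems you do not resolve. First, after adjoining the $Z_{A}$ and closing, new type-two objects appear, hence new countable sets $A$ satisfying the hypothesis of $\CBN$, which need their own $Z_{A}$; the construction must therefore be iterated, and nothing guarantees the iteration stabilises without generating $\exists^{3}$, $\range_{0}$, or an enumeration of the type-one universe (any of which kills $\neg\NBI$). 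Second, even for a single adjunction you give no argument that the enlarged cone still models $\Z_{2}^{\omega}$ and still omits these functionals; you explicitly defer this as ``where the detailed work lies'', but that \emph{is} the theorem. Note also that the structure in \cite{dagsamX} is not the literal Kleene closure of $\exists^{2}$, the $\SS^{2}_{k}$ and a bijection: $\textbf{Q}[2]$ is the strictly larger class of $F:A\di\N$ whose restriction to each level $A_{k}$ of a filtration $A_{0}\subsetneq A_{1}\subsetneq\dots$ is partially computable in $\SS^{2}_{k}$ and finitely many parameters, \emph{with no uniformity}. Your ``cone'' is a different object, so the cited theorem does not directly certify it.

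The paper's proof avoids all of this by showing that the required bijections are \emph{already present} in $\textbf{Q}[2]$; nothing is adjoined. The key mechanism is finite patching along the filtration: $\textbf{Q}[2]$ contains a global bijection $\phi:\textbf{Q}[1]\di\N$ whose restriction to each $A_{k}$ is suitably partially computable, and for any infinite $B\in\textbf{Q}[2]$ one surjectivizes $\phi\restriction B$ by redirecting, at each stage $k$ with $B\cap(A_{k+1}\setminus A_{k})\neq\emptyset$, one chosen element onto the least missing value; since the resulting $\psi$ differs from $\phi\restriction B$ at only finitely many points inside each $A_{k}$, the non-uniform local-computability criterion defining $\textbf{Q}[2]$ is preserved and $\psi\in\textbf{Q}[2]$. (The easy case $B\subseteq A_{k}$ is handled separately because $A_{k+1}$ contains an enumeration of $A_{k}$.) This finite-patching argument, resting on the robustness of the membership criterion for $\textbf{Q}[2]$ under level-wise finite modification, is the idea missing from your proposal; without it, or a worked-out substitute for your closure bookkeeping, the proof is incomplete.
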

\begin{proof}
The proof of \cite{dagsamX}*{Theorem 3.28} discusses a model $\textbf{Q}^{*}$ of $\Z_{2}^{\omega}+\neg \NBI$, which implies that $\Z_{2}^{\omega}$ cannot prove $\NBI$ (or $\cocode_{1}$).  
This model is defined in \cite{dagsamX}*{Definition 2.28} and its properties are based on \cite{dagsamX}*{Lemma 2.16 and Theorem~2.17}. Here, we will explain the properties of $\textbf{Q}^{*}$ essential for the proof of our theorem, namely that this model satisfies $\CBN+\IND_{0}$.   
For the proofs of (most of) these properties, we refer to \cite{dagsamX}. 

\smallskip

First of all, the construction of the model is based on Kleene-computability relative to the functionals $\SS^2_k$, where $\SS^2_k$ is the characteristic function of some complete $\Pi^1_k$-subset of $\N^\N$. Using the L\"owenheim-Skolem theorem, we let $A \subseteq \N^\N$ be a countable set such that all $\Pi^1_k$-formulas are absolute for $A$ for all $k$. We let $(g_k)_{k \in \N}$ be an enumeration of $A$ and we let $A_k$ be the set of functions computable in $\SS^2_k$ and $\{g_0 , \ldots ,g_{k-1}\}$. The key properties are that $A_k \subsetneq A_{k+1} \subsetneq A$ and that $A_{k+1}$ contains an enumeration of $A_k$ for each $k$. 

\smallskip

Secondly, we let $\textbf{Q}[1] = A$ be the elements of the model of pure type 1. The definition of $\textbf{Q}[2]$ is as follows: If $F:A \rightarrow \N$ we let $F \in \textbf{Q}[2]$ if there is a $k_0$ such that for all $k \geq k_0$, the restriction of $F$ to $A_k$ is partially computable in $\SS^2_k$ and $\{g_0 , \ldots , g_{k-1}\}$. No uniformity is required. 
On top of this, we close $\textbf{Q}[1]$ and $\textbf{Q}[2]$ under Kleene computability hereditarily for each pure type. As proved in \cite{dagsamX}, this will not add new elements of type 1 or type 2 to the structure.   Finally, we use a  canonical extension to interpretations of all finite types. The resulting type structure, named $\textbf{Q}^*$ in \cite{dagsamX}, is a model of $\Z_2^\omega$ and satisfies our weak induction axioms $\IND_{0}$ and $\IND_{1}$.
Indeed, the models are constructed as computational closures, implying that for any sequence $ f_0 , \ldots , f_n$ of elements in the model, the coded sequence $(f_0 , \ldots , f_n)$ is also in the model, and the two induction axioms $\IND_{0}$ and $\IND_{1}$ readily follow.
 
\smallskip

Thirdly, having witnessed the construction of the model $\textbf{Q}^*$, we now show that it satisfies that all infinite subsets of $\N^\N$ are strongly countable.  In particular, we have that $\CBN$ holds in $\textbf{Q}^{*}$.
To this end, fix some arbitrary $B \in \textbf{Q}[2]$ that is (the characteristic function of) an infinite subset  of $A = \textbf{Q}[1]$.
We have established in the proof of \cite{dagsamX}*{Theorem~3.26} that $\textbf{Q}[2]$ contains a bijection $\phi:\textbf{Q}[1] \di \N$ with the extra property that $\phi_k$, the restriction of $\phi$ to $A_k$, is partially computable in $\SS^2_k$ and $g_0 , \ldots , g_{k-1}$. We do not need the explicit construction of $\phi$: it suffices to split the argument for finding a bijection from $B$ to $\N$ in two cases, as follows.
\begin{itemize}
\item If $B \subseteq A_k$ for some $k$, then $B$ is enumerable in $A_{k'}$ for some $k' > k$ (property of the model $\textbf{Q}^*$), and the inverse can be found directly.
\item In the `otherwise' case, we construct an increasing sequence of functionals $ \psi_k: (B \cap A_k)\di \N$ as being equal to the restriction of $\phi$ to $B \cap A_k$ except at finitely many points; we use the finite set of exceptions to make $\psi:=\lim_{k\di \infty}\psi_{k}$ surjective.  Now, for infinitely many $k$ we have that $B \cap (A_{k+1} \setminus A_k) \neq \emptyset$.   At each stage where this is the case, and where the range of $A_k \cap B$ under $\psi_k$ is a proper subset of the range of $A_k$ under $\phi$, we define $\psi_{k+1}$ as follows.
\begin{itemize}
\item Choose one element $f$ in $B \cap (A_{k+1} \setminus A_k)$. Let $n$ be the least element in the 
range of $A_k$ under $\phi$ that is not in the range of $B \cap A_k$ under $\psi_k$, and define $\psi_{k+1}(f) := n$.
\item We let $\psi_{k+1}$ be equal to  $\phi$ on the rest of $B \cap (A_{k+1} \setminus A_k)$, noticing that the injectivity of $\phi$ ensures that the value $n$ used above will not be in the range of $A_{k+1} \setminus A_k$ under $\phi$, so injectivity is preserved. 
\end{itemize}
Since $B$ and $\phi$ are elements in $\textbf{Q}[2]$ and $\psi$ differs from the restriction of $\phi$ to $B$ at only finitely many points in each $A_k$, it follows that $\psi \in \textbf{Q}[2]$.
\end{itemize}
The previous case distinction finishes the proof. 
\end{proof}
We now list some other interesting properties of the model $\textbf{Q}^*$ constructed above. 
If $A$ and $A_k$ are as in the construction, and $B\subset A$ is such that $B \cap A_k$ is finite for each $k$, then automatically $B \in \textbf{Q}[2]$. Since each set $A_{k+1} \setminus A_k$ is dense in $\N^\N$, this opens up numerous possibilities for counter-intuitive properties consistent with $\Z_2^\omega$. A few examples are as follows.
\begin{itemize}
\item There is a strongly countable set such that all enumerable subsets are finite.
\item There is an infinite subset of $[0,1]$ with no cluster-point.
\item There is an infinite subset of $[0,1]$ with one cluster-point $0$, but with no sequence from the set converging to $0$.
\end{itemize}
By the above, $\CBN$ is weaker than $\BWC_{0}$ and we also
conjecture that the former is `less explosive' than the latter as follows.
\begin{conj}
The system $\FIVE^{\omega}+\CBN$ cannot prove $\SIX$.
\end{conj}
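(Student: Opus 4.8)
The natural strategy is to prove this non-implication semantically, by constructing a (higher-order) model of $\FIVE^{\omega}+\CBN$ in which $\SIX$ fails, i.e.\ in which no type-two functional decides $\Pi_{2}^{1}$-truth. The obvious template is the model $\textbf{Q}^{*}$ from the proof of Theorem~\ref{goeddachs}: there one obtains $\CBN$ (in fact, that every infinite subset of $\N^{\N}$ is strongly countable) from a tower $A_{0}\subsetneq A_{1}\subsetneq \cdots$ with the crucial feature that $A_{k+1}$ contains an enumeration of $A_{k}$, which is exactly what powers the two-case construction of the surjection $\psi$. For the conjecture one must cap the computational strength of this tower at the Suslin functional $\SS^{2}=\SS^{2}_{1}$, since this functional is precisely what $\FIVE^{\omega}$ affords, while $\SIX$ corresponds to the availability of $\SS^{2}_{2}$.

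The key modification I would make is to decouple the tower from any increase in functional strength. In $\textbf{Q}^{*}$ the property `$A_{k+1}$ enumerates $A_{k}$' was bought by moving from $\SS^{2}_{k}$ to the strictly stronger $\SS^{2}_{k+1}$. Instead, I would fix the single functional $\SS^{2}_{1}$ throughout and build a tower of oracle \emph{reals} $x_{0},x_{1},\dots$, where $A_{k}$ consists of the functions Kleene-computable in $\SS^{2}_{1}$ and $x_{0},\dots,x_{k-1}$, and where $x_{k+1}$ is chosen to be a real coding a (set-theoretically available) enumeration of the countable set $A_{k}$. Then $A_{k+1}$ again contains an enumeration of $A_{k}$, the restriction of a global bijection $\phi\colon \textbf{Q}[1]\di \N$ to $A_{k}$ lands in $A_{k+1}$, and the entire $\CBN$-argument of Theorem~\ref{goeddachs} transfers essentially verbatim. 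Since every $A_{k}$ contains $\SS^{2}_{1}$, the resulting structure is a model of $\FIVE^{\omega}$; a L\"owenheim--Skolem step securing $\Pi_{1}^{1}$-absoluteness for the countable union $A=\bigcup_{k}A_{k}$ (relative to $\SS^{2}_{1}$) places $\SS^{2}_{1}\restriction A$ in the model and, as in $\textbf{Q}^{*}$, the weak induction axioms $\IND_{0}$ and $\IND_{1}$ hold.

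The hard part --- and the reason this is only a conjecture --- is verifying that $\SIX$ genuinely fails, i.e.\ that $\SS^{2}_{2}$ is not recovered in the closure of $\{\SS^{2}_{1}\}\cup\{x_{k}:k\in\N\}$ under Kleene computability. The obstacle is sharp: deciding totality of $\SS^{2}_{1}$-computations is a $\Pi_{2}^{1}$-complete problem, so enumerating $A_{k}$ \emph{by computation} would already yield $\SS^{2}_{2}$ and hence $\SIX$; the whole point of supplying the enumerations $x_{k}$ as external oracle reals, rather than computing them, is to evade this. What remains is a conservation/absoluteness analysis showing that adjoining these countably many `tame' reals to $\SS^{2}_{1}$ cannot combine to decide $\Pi_{2}^{1}$-truth --- equivalently, that the type-two part of the model is a $\beta$-model of $\FIVE^{\omega}$ that stays strictly inside the $\Pi_{1}^{1}$-comprehension world and never reaches $\Pi_{2}^{1}$-comprehension. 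Establishing this last point rigorously, presumably by tracking the analytical complexity of the $x_{k}$ and of the functionals computable from them, is where I expect the genuine difficulty to lie.
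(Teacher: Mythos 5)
You should first note that the paper does not prove this statement at all: it is stated as a \emph{conjecture}, immediately followed by the remark that proving it ``may be difficult, as Theorem \ref{nine} suggests that $\CBN$ is `very close' to $\BWC_{0}$ in explosive power.'' So there is no proof in the paper to compare yours against, and your proposal must be judged on its own. What you have written is an honest strategy sketch rather than a proof, and you say so yourself: the entire content of the non-implication --- that the resulting structure does not interpret $\SS^{2}_{2}$ --- is precisely the step you leave open. Until that step is supplied, nothing has been established.

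The gap is not a routine verification. Concretely: to place an enumeration of $A_{k}$ into $A_{k+1}$ you must adjoin a real $x_{k}$ that codes, among other things, the set of Kleene indices $e$ with $\{e\}(\SS^{2}_{1}, x_{0},\dots,x_{k-1})$ convergent, i.e.\ the complete semicomputable-in-$(\SS^{2}_{1},\vec{x})$ set. This is a jump operation relative to the Suslin functional, and you iterate it $\omega$ times; showing that the closure of $\{\SS^{2}_{1}\}\cup\{x_{k}: k\in\N\}$ under S1--S9 (together with all type-two $F$ whose restrictions to the $A_{k}$ are eventually partially computable in the corresponding oracles) never recovers $\SS^{2}_{2}$ would require a genuinely new absoluteness or forcing argument, not merely ``tracking the analytical complexity.'' Moreover, Theorem \ref{nine} is a concrete warning sign for your construction: $\FIVE^{\omega}+\CBN^{+}$ \emph{does} prove $\SIX$, so your model must verify $\CBN$ while the bijections $\psi$ you build (finite modifications of the global $\phi$) must fail to be canonical Cantor--Bernstein witnesses in the sense of $\CBN^{+}$; you do not check this, and if those $\psi$ happened to yield canonical witnesses the whole approach would collapse. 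In short: your strategy is the natural one and is plausibly what the authors themselves have in mind, but the statement remains open both in the paper and in your proposal.
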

Proving the previous conjecture may be difficult, as Theorem \ref{nine} suggests that $\CBN$ is `very close' to $\BWC_{0}$ in explosive power.

\smallskip

Now, the Cantor-Bernstein theorem is a standard exercise in axiomatic set theory (see e.g.\ \cite{hrbacekjech}*{p.\ 69}).   Experience bears out that when the students are asked to construct a bijection $H : A \di B$ from given injections $F : A\di B$ and $G : B \di A$, the successful solutions will all have the property that for each $a \in A$, either $H(a) = F(a)$ or $a = G(H(a))$. 
Let $H$ with this property be called a \emph{canonical witness} to the Cantor-Bernstein theorem.
Let $\CBN^{+}$ be $\CBN$ augmented with the existence of such a canonical witness. Such witnesses are assumed in \cite{dougremmelt}*{Problem~1} as part of the study of the Cantor-Bernstein theorem in second-order RM.   
\begin{thm}\label{nine}
The system $\FIVE^{\omega}+\CBN^{+}$ proves $\SIX$.
\end{thm}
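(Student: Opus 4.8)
The plan is to route through the principle $\BOOT_{C}^{-}$, for which the explosion is already on record: the footnote notes that $\FIVE^{\omega}+\BOOT_{C}^{-}$ proves $\SIX$, so it suffices to establish $\FIVE^{\omega}+\CBN^{+}\di \BOOT_{C}^{-}$. In fact I expect the Suslin functional to enter only in this last, cited step; the reduction $\CBN^{+}\di \BOOT_{C}^{-}$ should already go through over $\ACAo$, with $(\exists^{2})$ used merely to form the relevant characteristic functions.

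Fix $Y^{2}$ satisfying the `at most one' condition, so that for each $n$ there is at most one $f\in 2^{\N}$ with $Y(f,n)=0$; the goal is to produce the set $X=\{n\in\N:(\exists f\in 2^{\N})(Y(f,n)=0)\}$. Imitating the $\N\oplus A$ trick used for $[\CBN+\cocode_{1}]\asa\cocode_{0}$, I would build a single $B\subseteq 2^{\N}$ consisting of a \emph{tagged} copy of $\N$ together with the witnesses of $Y$: put $\langle 0\rangle * \langle k\rangle * 0^{\N}\in B$ for every $k\in\N$, and $\langle 1\rangle * \langle n\rangle * f\in B$ whenever $Y(f,n)=0$. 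Membership in $B$ is arithmetical in $Y$, so its characteristic function exists given $(\exists^{2})$. Define $F:2^{\N}\di\N$ by sending the $\N$-tagged element of index $k$ to $2k$ and the witness element $\langle 1\rangle*\langle n\rangle*f$ to $2n+1$; the `at most one' condition is exactly what makes $F$ injective on $B$, so $B$ is countable. The explicit sequence $G(k):=\langle 0\rangle*\langle k\rangle*0^{\N}$ is an injection $\N\di B$ listing infinitely many distinct elements of $B$, so $\CBN$ applies and $\CBN^{+}$ supplies a bijection $H:B\di\N$ with the canonical property $(\forall b\in B)\big(H(b)=F(b)\vee b=G(H(b))\big)$.

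The key observation is that $G$ maps into the $\N$-tagged part only, which is disjoint from the witness part by the leading tag; hence no witness element can satisfy $b=G(H(b))$, and the canonical property forces every $b=\langle 1\rangle*\langle n\rangle*f\in B$ to be of the first kind, i.e.\ $H(b)=F(b)=2n+1$. Thus the $H$-image of the witness part is exactly $\{2n+1:n\in X\}$. Since $H$ is onto $\N$, an odd number $2n+1$ is attained on the $\N$-tagged part precisely when it is \emph{not} attained on the witness part, i.e.\ precisely when $n\notin X$. Therefore $n\in X$ iff $2n+1\notin\{H(G(k)):k\in\N\}$. The map $k\mapsto H(G(k))$ is a genuine $\N\di\N$ function definable from $H$ and the explicit $G$, so its range exists by $\ACA_{0}$ and $X$ is arithmetical in that range. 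This yields $\BOOT_{C}^{-}$, and invoking the cited fact that $\FIVE^{\omega}+\BOOT_{C}^{-}$ proves $\SIX$ completes the argument.

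The main obstacle is not any deep computation but the careful bookkeeping ensuring that (i) $F$ is genuinely injective on $B$, which is exactly where the `at most one' hypothesis is consumed; (ii) the $\N$-tagged and witness parts are provably disjoint, so that the canonical property really forces all witness elements to be of $F$-type; and (iii) every object used, namely $B$, $F$, $G$, and finally $X$, is definable at the right complexity so that the reduction stays inside $\ACAo$ up to the single appeal to the $\BOOT_{C}^{-}$ explosion. A secondary point to verify is the degenerate case where $X$ is finite: here $B$ is still infinite because of its built-in copy of $\N$, so $\CBN$ applies unproblematically and the extraction of $X$ is unaffected.
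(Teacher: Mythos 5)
Your argument is correct and is essentially the paper's own proof: the same reduction to $\BOOT_{C}^{-}$ via the footnoted explosion, the same set built as a disjoint union of the $Y$-witnesses and a tagged copy of $\N$, the same injections $F$ (odd/even index) and $G$ (into the $\N$-copy), and the same use of the canonical witness to force every witness element onto the $F$-branch. The only (immaterial) difference is the final extraction of $X$: the paper tests pointwise whether $H(G(2n+1))=2(2n+1)$, whereas you form the range of $k\mapsto H(G(k))$ and test $2n+1\notin\mathrm{range}$ -- both are available in $\ACA_{0}$.
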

\begin{proof}
We prove $\CBN^{+}\di\BOOT_{C}^{-}$ and note that \cite{dagsamX}*{Theorem 3.23} yield $\SIX$ via $\FIVE^{\omega}$.
Let $Y^{2}$ be such that $(\forall n\in \N)(\exists \textup{ at most one }f\in 2^{\N})(Y(f, n)=0)$.
Let $f_{0}$ be the constant zero function and define $A\subset \N\times 2^{\N}$ as follows:  
\[
(m, f)\in A\asa (\exists n\in \N)[ (m=2n+1\wedge Y(n, f)=0) \vee (m=2n\wedge f=f_{0})   ].
\]
Modulo coding, we can view $A$ as a subset of $2^{\N}$.
Define $F: A\di \N$ and $G:\N\di A$ as follows: $F\big((k,f)\big) := k$ and let $G(n): = (2n,f_{0})$. Both functions are injective, so let $H : A \di \N$ be a canonical witness as in $\CBN^{+}$. 
Now consider the following: 
\be\label{moii}
(\exists f\in 2^{\N})(Y(f, n)=0)\asa [H\big(2(2n+1),f_{0}\big) =2( 2n + 1)].
\ee
To prove \eqref{moii}, assume the left-hand side of \eqref{moii} for fixed $n\in \N$.  
Then there is $f\in 2^{\N}$ such that $(2n+1,f) \in A$. Since this $(2n+1,f)$ is not in the range of $G$, we must have that $H(2n+1,f) = F(2n+ 1,f) = 2n + 1$ and $G(2n + 1) = (2(2n + 1),f_{0})$.  
Since the case that $H(2(2n+1),f_{0}) = 2n+1$ (the inverse of $G$) violates that $H$ is injective, we must have that $H(2(2n + 1), f_{0}) = 2(2n + 1)$.
Now assume the left-hand side of \eqref{moii} is false for fixed $n\in \N$. Then there is no $f\in 2^{\N}$ such that $F(f)=2n+1$. Since there is an $m\in \N$ and a $g\in 2^{\N}$ such that $(m,g) \in  A$ and $H(m,g) = 2n+1$, we must have used the $G^{-1}$-part of $H$ and have that $m = 2(2n+1)$ and $g = f_0$. This contradicts that $H (2(2n + 1), f_0 ) = 2(2n + 1)$.
\end{proof}
\begin{cor}
The system $\ACAo$ proves $\BOOT^{-}_{C}\asa \CBN^{+}$.
\end{cor}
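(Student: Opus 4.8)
The plan is to read off the forward implication from work already done and to supply the reverse implication by a classical construction made effective. For $\CBN^{+}\di\BOOT_{C}^{-}$ nothing new is needed: the proof of Theorem~\ref{nine} establishes exactly this implication inside $\RCAo$, the detour through $\FIVE^{\omega}$ there being required only to pass afterwards from $\BOOT_{C}^{-}$ to $\SIX$. Hence it remains to prove $\BOOT_{C}^{-}\di\CBN^{+}$ over $\RCAo$.

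For the reverse implication I would first apply the excluded-middle trick of Remark~\ref{LEM}. In case $\neg(\exists^{2})$, any $Y:\R\di\N$ that is injective on a set $A$ forces $A$ to have at most one element by Observation~\ref{fruit}; since the hypothesis of $\CBN$ supplies a sequence of pairwise distinct reals lying in $A$, and hence an infinite $A$, that hypothesis is never met and $\CBN^{+}$ holds vacuously. We may therefore assume $(\exists^{2})$, so that $\ACA_{0}$ and arithmetical comprehension are at our disposal.

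Next I would invoke the equivalence $\BOOT_{C}^{-}\asa\cocode_{0}$, obtained by chaining $\BOOT_{C}^{-}\asa\BWC_{0}^{\fun}$ from \cite{samcount}*{Theorem 3.12} with $\cocode_{0}\asa\BWC_{0}^{\fun}$. Given the data of $\CBN$, namely a countable $A\subset\R$ with injection $F$ and an injection $G:\N\di A$ witnessing that $A$ is infinite, apply $\cocode_{0}$ to enumerate $A$ as a sequence $(a_{m})_{m\in\N}$. With this enumeration in hand and $(\exists^{2})$ available, both partial inverses $G^{-1}$ and $(F\restriction A)^{-1}$ become computable by search, so I can carry out the classical Schr\"oder--Bernstein chain analysis explicitly. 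For $a\in A$ one traces the backward chain $a,\,G^{-1}(a),\,(F\restriction A)^{-1}(G^{-1}(a)),\dots$; whether this chain terminates in $A$ (at an element outside $\range(G)$), terminates in $\N$ (at an element outside $\range(F\restriction A)$), is cyclic, or is two-sided infinite is an arithmetical property of $(a_{m})$, $F$ and $G$, and is therefore decided by $(\exists^{2})$. One then sets $H(a):=F(a)$ on the chains that terminate in $A$, cycle, or are two-sided infinite, and $H(a):=G^{-1}(a)$ on the chains that terminate in $\N$, where $a\in\range(G)$ so that $G^{-1}(a)$ is indeed defined.

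Finally I would verify that $H$ is a bijection from $A$ onto $\N$ by the usual bookkeeping, and observe that it is a \emph{canonical} witness for free: by the very case split, every $a\in A$ satisfies either $H(a)=F(a)$ or $H(a)=G^{-1}(a)$, i.e.\ $a=G(H(a))$, which is precisely the defining condition. This yields $\CBN^{+}$, and together with the forward implication the stated equivalence. The only delicate point, and the main obstacle, is confirming that the four-way chain classification is genuinely arithmetical in the enumeration and in $F,G$, so that $H$ is a bona fide object of $\RCAo+(\exists^{2})$; once that is secured, bijectivity is the routine Schr\"oder--Bernstein argument and the canonical clause is immediate.
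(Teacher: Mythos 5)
Your proposal is correct and follows the same route the paper intends: its own proof of this corollary is a one-liner, reading the forward implication $\CBN^{+}\di\BOOT_{C}^{-}$ off the proof of Theorem \ref{nine} (where $\FIVE^{\omega}$ is only used afterwards to pass to $\SIX$) and obtaining the reverse implication from $\BOOT_{C}^{-}\asa\cocode_{0}$ together with the construction of a canonical witness from an enumeration. Your explicit Schr\"oder--Bernstein chain analysis, with the case split decided by $\exists^{2}$ relative to the enumeration, is precisely the detail the paper leaves implicit under ``the above results'', and your verification that it is arithmetical (hence available in $\RCAo+(\exists^{2})$, the other case being vacuous) is sound.
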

\begin{proof}
Immediate from the proof of the theorem, the above results, and the fact that $\BOOT_{C}^{-}\asa \BWC_{0}^{\fun}$ over $\RCAo$ by \cite{samcount}*{Theorem 3.12}. 
\end{proof}

\subsection{Theorems going back to Cantor}\label{flum}
In this section, we establish \eqref{hong2} from Section~\ref{intro}.  In particular, we extend Theorem~\ref{kantnochwally} via a number of equivalences involving basic theorems about the real line or limit points, all going back to Cantor one way or the other.  While interesting in their own right, our results also provide (positive) answers to questions (Q2)-(Q3) from Section \ref{intro}.
On a conceptual note, the order type $\eta$ of $\Q$ appears throughout the second-order RM, but Cantor's characterisation of $\eta$ as in $\cloq'$ below is \emph{quite} explosive by Corollary \ref{fexpo}.
%

\smallskip

First of all, the \emph{perfect set theorem} or the \emph{Cantor-Bendixson theorem} (see \cite{simpson2}*{V and VI} for the RM-study) imply that a nonempty \emph{uncountable} and \emph{closed} set has a perfect subset, and therefore \emph{the original set has at least one limit point}.  
We shall study the latter for {closed sets} as in Definition \ref{openset}. 
We note that the modern notion of limit/accumulation point was first articulated by Cantor in \cite{cantor33}*{p.\ 98}.
\begin{princ}\label{akku}
A non-enumerable and closed set in $\R$ has a limit point.  
\end{princ}
Theorem \ref{fungi} shows that $\BW_{0}^{\fun}$ is equivalent to a version of Principle \ref{akku}, which is interesting as the latter does not mention bijections or injections.  
In particular, Principle \ref{akku} is a sentence of second-order arithmetic\footnote{Let $\accu_{\RM}$ be Principle \ref{akku} formulated with RM-closed sets.  
Since $\ATR_{0}$ implies the perfect set theorem (\cite{simpson2}*{I.11.5}), we have the first implication in $\ATR_{0}\di \accu_{\RM}\di \ACA_{0}$, while the second one follows 
via the proof of \cite{simpson2}*{III.2.2}.   
We believe that the final implication reverses.} \emph{with one single modification}, namely the use of Definition \ref{openset} rather than RM-closed sets. 

\smallskip

Secondly, Cantor shows in \cite{cantor33}*{p.\ 161, Hilfsatz II} that a collection of disjoint open intervals in $\R$ is countable; this is the first instance of the well-known \emph{countable chain condition}.   
The following principle $\OCO$ expresses the former property \emph{without} mentioning the words `injection' or `bijection'.
\begin{princ}[$\OCO$]
Let $A\subset \R^{2}$ be such that for any non-identical intervals $(a, b)$ and $(c, d)$ in $A$, the intersection is empty.  
Then $A$ can be enumerated.  
\end{princ}
Let $\OCO_{0}$ be $\OCO$ with the conclusion `$A$ is countable'.  
As will become clear in the proof of Theorem \ref{fungi}, $\OCO_{0}$ is provable in $\RCAo$, akin to how Cantor's theorem is provable in $\RCA_{0}$ by \cite{simpson2}*{II.4.7}.  

\smallskip

Thirdly, the countable chain condition is found in the original version of \emph{Suslin's hypothesis}, first formulated by Suslin in \cite{sousvide}.  
In this context, Cantor contributed the following theorem (for any countable set), as discussed in \cite{riot}*{p.\ 122-123}. 
\begin{princ}[$\cloq$]
A countable linear ordering $(X, \preceq_{X})$ for $X\subset \R$ is order-isomorphic to a subset of $\Q$. 
\end{princ}
Moreover, Cantor introduces the notion of \emph{order type} in \cite{cantorbook90} and characterises the order type $\eta$ of $\Q$ in \cite{cantorm} based on the following (for any countable set).
\begin{princ}[$\cloq'$]
A countable and dense linear ordering without endpoints $(X, \preceq_{X})$ for $X\subset \R$ is order-isomorphic to $\Q$. 
\end{princ}
We use the usual\footnote{Namely that the relation $\preceq_{X}$ is transitive, anti-symmetric, and connex, just like in \cite{simpson2}*{V.1.1}.} definition of \emph{linear ordering} where `$\preceq_{X}$' is given by a characteristic function $F_{X}:\R^{2}\di \N$, i.e. $x\preceq_{X} y\equiv F_{X}(x, y)=_{0}1$, while $(X, \preceq_{X})$ is called \emph{countable} if $X\subset \R$ is.  
Similarly, an order-isomorphism from $(X, \preceq_{X})$ to $(Y, \preceq_{Y})$ is a surjective\footnote{Note that \eqref{crange} implies that $(\forall x,x' \in X)( Y(x)=_{Y}Y(x') \di x=_{X}x' )$, i.e.\ $Y$ is injective relative to the equalities `$=_{X}$' and `$=_{Y}$', i.e.\ `surjective' may be replaced by `bijective'.} $Y: X\di Y$ that respects the order relation (see \cite{simpson2}*{Def.\ V.2.7}), i.e.\
\be\label{crange}
(\forall x,x' \in X)(x\preceq_{X} x' \asa Y(x)\preceq_{Y}Y(x')),
\ee
while a \emph{well-founded linear order} (well-order) has no strictly descending sequences.  The reader should verify that using a stronger definition of order-isomorphism does not change the below equivalences. 

\smallskip

As to well-orders, Simpson calls the comparability of countable well-orders `indispensable' for a decent theory of ordinals, pioneered by Cantor in \cite{cantor1883}.  
We agree that it would be very indecent indeed to have incomparable countable well-orders, suggesting the following principle, which is just the second-order $\CWO$ from \cite{simpson2}*{V.6} formulated for linear orders over $\R$ that are countable.  
\begin{princ}[$\CWO^{\omega}$]
For countable well-orders {$(X, \preceq_{X}) $ and $ (Y, \preceq_{Y})$} where $X, Y\subset \R$, the former order is order-isomorphic to the latter order or an initial segment of the latter order, or vice versa.
\end{princ}
%
Thirdly, we present a preliminary result that got everything started.
\begin{thm}[$\ACAo$]\label{klankk}
Principle \ref{akku} implies the uncountability of $\R$ as in $ \NIN$.
\end{thm}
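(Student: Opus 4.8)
The plan is to establish the contrapositive flavour of the statement: assuming $\accu$ (Principle \ref{akku}) together with the negation of $\NIN$, i.e.\ a genuine injection $Y:[0,1]\di \N$, I derive a contradiction. First I would dispose of the case $\neg(\exists^{2})$ using the excluded middle trick of Remark \ref{LEM}: if all functions on $\R$ are continuous, then by Observation \ref{fruit} any $Y$ injective on $[0,1]$ forces $[0,1]$ to have at most one element, which is absurd. Hence $\NIN$ holds outright in this case, and we may assume $(\exists^{2})$ throughout the remainder of the argument.

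The core idea is to use the injection $Y$ to \emph{spread out} the points of $[0,1]$ into a closed, uniformly discrete subset of $\R$ that nonetheless remains non-enumerable. Concretely, I would set $\phi(x):=Y(x)+\tfrac{x}{2}$ and let $C:=\{\phi(x):x\in[0,1]\}$. Since $Y$ is injective and $\tfrac{x}{2}\in[0,\tfrac{1}{2}]$, the map $\phi$ is injective and order-separating, so any two distinct points of $C$ lie at distance at least $\tfrac{1}{2}$. Consequently $C$ has \emph{no} limit point and its complement is open. Using $(\exists^{2})$, I can present $C$ as a set in the sense of Definition \ref{openset}, since membership $z\in C$ is decided by computing $\lfloor z\rfloor$ and the fractional part $\{z\}$, checking $\{z\}\leq \tfrac{1}{2}$, and testing $Y(2\{z\})=\lfloor z\rfloor$; all of these operations are available given $(\exists^{2})$.

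It then remains to show that $C$ is non-enumerable. Suppose instead that $C$ were listed by a sequence $(y_{k})_{k\in\N}$. Using $(\exists^{2})$ I would define $x_{k}:=2\{y_{k}\}$ and verify that $(x_{k})_{k\in\N}$ enumerates all of $[0,1]$: for each $x\in[0,1]$ we have $\phi(x)\in C$, hence $\phi(x)=y_{k}$ for some $k$, and since $\{\phi(x)\}=\tfrac{x}{2}$ this yields $x_{k}=x$. This contradicts the non-enumerability of $[0,1]$, i.e.\ Cantor's theorem, which is already provable in $\RCAo$ (cf.\ \cite{simpson2}*{II.4.7}). Thus $C$ is a non-enumerable closed set, and $\accu$ furnishes a limit point of $C$, contradicting the uniform discreteness established above. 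This contradiction proves $\NIN$.

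The main obstacle I anticipate is not the combinatorics of the construction but the bookkeeping needed to make $C$ a bona fide closed set under Definition \ref{openset} and to carry out the real-number arithmetic: taking integer and fractional parts, deciding real equalities, and evaluating $Y$ at the specific reals $2\{y_{k}\}$, all of which are exactly the operations that fail without $(\exists^{2})$. This is precisely why reducing to the case $(\exists^{2})$ at the outset is essential, and it is the step that must be handled with some care, rather than the spreading idea itself.
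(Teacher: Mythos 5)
Your proposal is correct and follows essentially the same route as the paper: reduce to the case $(\exists^{2})$ via Remark \ref{LEM}, use the putative injection $Y$ to spread $[0,1]$ into a closed set with no limit point (the paper uses $z\mapsto z+Y(z)$ where you use $x\mapsto Y(x)+x/2$, an inessential variation), and then derive a contradiction between Principle \ref{akku} and the non-enumerability of $[0,1]$ as given by \cite{simpson2}*{II.4.7}. The bookkeeping you flag (floor, fractional part, deciding membership) is handled in the paper exactly as you anticipate, using $\exists^{2}$.
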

\begin{proof}
Let $Y:[0,1]\di \N$ be an injection and  
use $\exists^{2}$ to define $A\subset \R$ as follows:
\be\label{fli}
x\in A \asa (\exists n\in \N)(n\leq x< n+1\wedge Y(x-n)=n).
\ee
Intuitively, $A$ is the set $\{z+Y(z): z\in [0,1]\}$, although the latter need not exist (as a set) in $\ACAo$. 
Each $[m, m+1)\cap A$ has at most one element as $x, y \in \big( [m, m+1)\cap A\big)$ implies $Y(x-m)=Y(y-m)$ by \eqref{fli} and hence $x=_{\R}y$ by the injectivity of $Y$.  
In this light, $A$ does not have a limit point, while this set is trivially closed.  

\smallskip

Towards a contradiction, we now show that $A$ is non-enumerable.  Suppose $(x_{n})_{n\in \N}$ lists all elements of $A$, i.e.\ $(\forall x\in A)(\exists n\in \N)(x=_{\R}x_{n})$. 
Since we have $x\in A\asa  (Y(x- \lfloor x\rfloor)=\lfloor x\rfloor )$ for non-negative $x\in \R$, the sequence $(x_{n}-\lfloor x_{n}\rfloor)_{n\in \N}$ lists all elements of $[0,1]$.
Indeed, for $y_{0}\in [0,1]$, $(y_{0}+Y(y_{0}))\in A$ by definition and suppose $x_{n_{0}}=_{\R}y_{0}+Y(y_{0})$.  Hence, $\lfloor x_{n_{0}}\rfloor=Y(y_{0})$, and hence $y_{0}=x_{n_{0}}-\lfloor x_{n_{0}}\rfloor$.  
A sequence listing the reals in $[0,1]$ yields a contradiction by \cite{simpson2}*{II.4.7}.
\end{proof}
As is often the case (see e.g.\ \cite{dagsamX, samrecount, samNEO2}), the previous proof can be generalised to yield $\cocode_{0}$.  
As noted above, there is however a fundamental difference between $\NIN$ and $\cocode_{0}$: the latter combined with $\FIVE^{\omega}$ proves $\SIX$, while the former does not (seem to go beyond $\FIVE$).  
\begin{cor}[$\ACAo$]\label{fara}
Principle \ref{akku} implies $\cocode_{0}$.
\end{cor}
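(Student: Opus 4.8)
The plan is to generalise the proof of Theorem \ref{klankk} from an injection defined on all of $[0,1]$ to an injection merely defined on a countable set, and then to invoke Principle \ref{akku} in contrapositive form. By the excluded middle trick of Remark \ref{LEM} we may assume $(\exists^{2})$, since $\cocode_{0}$ is trivial when all functions on $\R$ are continuous; in particular, membership in any set given by Definition \ref{openset} becomes decidable and we may freely use $\lfloor\cdot\rfloor$, real comparison, and the $\mu$-operator. Fix a non-empty countable $A\subseteq [0,1]$ with $Y:\R\di\N$ injective on $A$ as in Definition \ref{standard}. The idea is to spread $A$ out along the real line using $Y$, exactly as in Theorem \ref{klankk}, so as to destroy all limit points, and then to extract an enumeration.

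Concretely, I would define $B\subseteq \R$ by
\[
z\in B \asa \big(z-\lfloor z\rfloor\in A \wedge Y(z-\lfloor z\rfloor)=\lfloor z\rfloor\big),
\]
so that, morally, $B=\{x+Y(x):x\in A\}$, the boundary point $x=1$ being handled by the same device as in Theorem \ref{klankk}. Using $(\exists^{2})$ together with the membership function of $A$, the right-hand side is decidable, so $B$ has a characteristic function and is a bona fide set in the sense of Definition \ref{openset}. The key structural step is that each unit interval $[m,m+1]$ contains at most one element of $B$: if $z_{1},z_{2}\in B\cap[m,m+1]$ then $\lfloor z_{1}\rfloor=\lfloor z_{2}\rfloor=m$, whence $Y(z_{1}-m)=Y(z_{2}-m)=m$, and the injectivity of $Y$ on $A$ forces $z_{1}=z_{2}$. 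Consequently $B$ has no limit point, since any ball of radius below $\tfrac{1}{2}$ meets at most two unit intervals and hence contains only boundedly many points of $B$; therefore $B$ is closed, as the absence of limit points yields, for each $z\notin B$, a ball witnessing that $B^{c}$ is open.

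The final steps are then short. I would apply Principle \ref{akku} contrapositively: a closed set with no limit point cannot be non-enumerable, so $B$ is enumerable, say by $(z_{n})_{n\in\N}$. Taking fractional parts recovers $A$, since the sequence $(z_{n}-\lfloor z_{n}\rfloor)_{n\in\N}$ lists exactly the elements of $A$: every $x\in A$ satisfies $x+Y(x)\in B$ and so equals some $z_{n}-\lfloor z_{n}\rfloor$, while conversely each $z_{n}\in B$ yields $z_{n}-\lfloor z_{n}\rfloor\in A$. This is precisely the enumeration required by $\cocode_{0}$.

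I expect the main obstacle to be the verification that $B$ is closed in the sense of Definition \ref{openset}. The subtlety is that we cannot effectively locate the elements of $B$, as doing so would amount to inverting $Y$; hence the openness of $B^{c}$ must be argued non-constructively from the \emph{at most one per unit interval} property (which gives the absence of limit points) rather than by computing distances to the nearest point of $B$. Since Definition \ref{openset} only demands the pointwise existence of a witnessing ball, this suffices, but it is the conceptual crux. A secondary, routine nuisance is the integer-boundary behaviour of $\lfloor\cdot\rfloor$ at $x=1$, which is dispatched exactly as in the proof of Theorem \ref{klankk}.
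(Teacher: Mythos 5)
Your proposal is correct and follows essentially the same route as the paper: both define the ``spread-out'' set $\{x+Y(x):x\in A\}$ via a decidable membership condition using $(\exists^{2})$, observe that each unit interval contains at most one element so the set is closed with no limit point, apply Principle \ref{akku} in contrapositive form to obtain an enumeration, and recover $A$ by taking fractional parts. The paper's own proof is a terser version of exactly this argument, with the same construction as its equation \eqref{fli2}.
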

\begin{proof}
Note that $\cocode_{0}$ is trivial in case $\neg(\exists^{2})$, as all functions on $\R$ are then continuous by \cite{kohlenbach2}*{\S3}.
Hence, for the rest of the proof, we may assume $(\exists^{2})$.  

\smallskip

Let $B\subset [0,1]$ be a countable set, i.e.\ there exists $Y:[0,1]\di \N$ such that $Y$ is injective on $B$.  
Similar to \eqref{fli}, we define the following set using $\exists^{2}$:
\be\label{fli2}
x\in A \asa (\exists n\in \N)(n\leq x< n+1\wedge Y(x-n)=n\wedge (x-n) \in B).
\ee
Since $Y$ is an injection on $B$, $(m, m+1]\cap A$ has at most one element. 
Thus, $A$ is a closed subset with no limit point.  By the contraposition of Principle \ref{akku}, there is a sequence $(x_{n})_{n\in \N}$ such that $x\in A\asa (\exists n\in \N)(x=x_{n})$.  
Clearly, $(x_{n}-\lfloor x_{n}\rfloor))_{n\in \N}$ similarly enumerates $B$, and we are done.
\end{proof}
The previous corollary is interesting as follows: let $\PST$ and $\CBT$ be the \emph{perfect set theorem} and the \emph{Cantor-Bendixson theorem} formulated as in \cite{dagsamVII}, i.e.\ for closed sets as in Definition \ref{openset} that are not enumerable. 
Note that $\FIVE$ proves these theorems formulated for RM-closed sets (and in $\L_{2}$) by \cite{simpson2}*{V and VI}.
\begin{cor}
The system $\FIVE^{\omega}$ cannot prove $\PST$ or $\CBT$.
\end{cor}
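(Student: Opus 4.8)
The plan is to prove the contrapositive-style statement that $\FIVE^{\omega}$ cannot prove $\PST$ (and similarly $\CBT$) by combining the preceding corollaries with the explosive behaviour of $\cocode_{0}$ established in the earlier sections. The key observation is that both $\PST$ and $\CBT$, as formulated for the higher-order closed sets of Definition \ref{openset}, immediately imply Principle \ref{akku}: a non-enumerable closed set has a perfect (respectively, a non-empty perfect kernel or derived-set) subset, and any non-empty perfect set has a limit point. Thus $\PST\di\accu$ and $\CBT\di\accu$ over $\RCAo$, where $\accu$ abbreviates Principle \ref{akku}.

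First I would record this chain of implications: over $\RCAo$ we have $\PST\di \accu$ and $\CBT\di \accu$, and by Corollary \ref{fara} we have $\accu\di\cocode_{0}$. Hence $\FIVE^{\omega}+\PST$ (or $\FIVE^{\omega}+\CBT$) proves $\FIVE^{\omega}+\cocode_{0}$. Next I would invoke the explosiveness result cited in the introduction, namely that $\FIVE^{\omega}+\cocode_{0}$ proves $\SIX$; this follows since $\cocode_{0}\asa\BWC_{0}^{\fun}$ and $\FIVE^{\omega}+\BWC_{0}^{\fun}$ proves $\SIX$ by \cite{dagsamX}. Therefore, if $\FIVE^{\omega}$ alone could prove $\PST$ (or $\CBT$), then $\FIVE^{\omega}$ would prove $\SIX$.

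The contradiction is then immediate from the well-known fact, emphasised throughout the introduction (Rathjen, Martin-L\"of), that $\SIX$ is strictly stronger than $\FIVE$: indeed $\FIVE^{\omega}$ is conservative over $\FIVE$ for the relevant class of sentences and cannot prove $\SIX$, since $\SIX$ has strictly greater proof-theoretic strength. Hence $\FIVE^{\omega}$ cannot prove $\PST$, and the same argument applies verbatim to $\CBT$.

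The main obstacle I anticipate is ensuring that the higher-order formulations of $\PST$ and $\CBT$ (for closed sets as in Definition \ref{openset}) genuinely deliver a limit point in the precise sense required by Principle \ref{akku}, rather than merely a perfect \emph{kernel} that might be empty; one must check that non-enumerability guarantees the perfect part is non-empty and that a non-empty perfect set in $\R$ provably has a limit point in $\RCAo$. Once that routine verification is in place, the proof is simply the composition $\PST\di\accu\di\cocode_{0}$ followed by the explosion $\FIVE^{\omega}+\cocode_{0}\di\SIX$ and the separation of $\SIX$ from $\FIVE^{\omega}$.
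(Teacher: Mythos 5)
Your proposal is correct and follows essentially the same route as the paper: $\PST,\CBT\di$ Principle \ref{akku} $\di\cocode_{0}$ via Corollary \ref{fara}, then the explosion $\FIVE^{\omega}+\cocode_{0}\vdash\SIX$ from \cite{dagsamX}, contradicting the $\Pi_{3}^{1}$-conservativity of $\FIVE^{\omega}$ over $\FIVE$ from \cite{yamayamaharehare}. The only difference is cosmetic: the paper pins down the conservativity class ($\Pi_{3}^{1}$) explicitly where you appeal to it more loosely, but the argument is the same.
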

\begin{proof}
By Theorem \ref{fara}, both $\PST$ and $\CBT$ imply Principle \ref{akku}.  If $\FIVE^{\omega}$ could prove e.g.\ $\PST$, we would obtain $\SIX$ by \cite{dagsamX}*{Theorem 4.22}.
However, $\FIVE^{\omega}$ is $\Pi_{3}^{1}$-conservative over $\FIVE$ by \cite{yamayamaharehare}*{Theorem 2.2}.
\end{proof}
By the previous proof, $\FIVE^{\omega}+\PST$ proves $\SIX$ (and the same for $\CBT$), i.e.\ Definition \ref{openset} makes these theorems quite explosive.   

\smallskip

Unfortunately, we could not find a way to obtain the reversal of Corollary~\ref{fara}. 
On the other hand, \emph{assuming} Principle \ref{akku}, in case $A\subset \R$ is closed and has no limit points, one readily defines $G:\R\di \N$ (using $\exists^{2}$)
such that 
\be\label{dist2}\textstyle
(\forall x\in A)(\exists n\leq G(x))\big(B(x, \frac{1}{2^{n}})\cap A=\{x\} \big), 
\ee
where \eqref{dist2} expresses that $G$ is a witnessing functional for `$A$ has no limit points'.  In other words, Principle \ref{akku} `enriches itself' with 
a witnessing functional $G$, while the set $A$ from \eqref{fli2} has an almost trivial such witnessing functional (again using $\exists^{2}$).  
All this suggests the latter witnessing construct merits further study.

\smallskip

Fourth, to obtain the equivalences in Theorem \ref{fungi}, we seem to need the following slight constructive enrichment of Principle \ref{akku}, as provided by \eqref{dist2}.  
\begin{princ}[$\accu$]
For any closed $A\subseteq \R$ and $G:\R\di \N$ such that \eqref{dist2}, there is $(x_{n})_{n\in \N}$ in $A$ with $(\forall x\in \R)\big(x\in A\asa (\exists n\in \N)(x=x_{n})\big)$. 
\end{princ}
%

We also study the following, apparently stronger, variation in Theorem \ref{fungi}.
\begin{princ}[$\accu'$]
For any $A\subseteq \R$ and $G:\R\di \N$ such that \eqref{dist2}, there is $(x_{n})_{n\in \N}$ in $A$ with $(\forall x\in \R)\big(x\in A\asa (\exists n\in \N)(x=x_{n})\big)$. 
\end{princ}
We note that Theorem \ref{fungi} provides a positive answer to (Q3) from Section~\ref{intro} as $\accu$ and $\ccc$ do not 
involve the notions `injection' or `bijection'.  Moreover, $\accu'\asa \accu$ is a nice robustness result, showing that quantifying over all sub-sets of $\R$ (rather than just the closed ones) need not be problematic.  
%
%
%
%
%
\begin{thm}\label{fungi}
The following are equivalent over $\ACAo$:
\begin{multicols}{2}
\begin{enumerate}
 \renewcommand{\theenumi}{\alph{enumi}}
\item $\cocode_{0}$,\label{won}
\item $\BWC_{0}^{\pwo}$ \(Bolzano-Weierstrass\),\label{toe}
\item $\accu'$,
\item $\accu$,
\item $\OCO$.\label{fina}
\end{enumerate}
\end{multicols}
\end{thm}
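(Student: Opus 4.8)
The plan is to invoke Remark \ref{LEM} and assume $(\exists^{2})$ throughout, since each of \eqref{won}--\eqref{fina} is readily seen to hold when all functions on $\R$ are continuous (for $\cocode_{0}$ and $\BWC_{0}^{\pwo}$ this is as in the proof of Theorem \ref{reefer}; for $\accu'$ and $\accu$ a continuous witness $G$ is locally constant, whence the points of $A$ are uniformly separated on bounded intervals and $A$ is enumerable; $\OCO$ is then immediate). As Theorem \ref{reefer} already gives $\eqref{won}\asa \eqref{toe}$, it suffices to prove the cycle
\[
\cocode_{0}\di \OCO\di \accu'\di \accu\di \cocode_{0}.
\]
Here $\accu'\di \accu$ is immediate, as $\accu$ is simply $\accu'$ with $A$ restricted to closed sets.

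For $\accu\di \cocode_{0}$ I would reprise the proof of Corollary \ref{fara}: given countable $B\subseteq [0,1]$ witnessed by an injection $Y$, the set $A$ from \eqref{fli2} is closed and meets each $(m,m+1]$ in at most one point, hence has no limit point. It remains to supply a witness as in \eqref{dist2}, and with $(\exists^{2})$ one may take $G(x):=(\mu n)\big[B(x,\tfrac{1}{2^{n}})\cap A=\{x\}\big]$, the bracketed condition being decidable by $\exists^{2}$ (it has the form $\neg(\exists y\in \R)(\cdots)$ with arithmetical matrix) and $G$ being total on $A$ precisely because $A$ has no limit point; then $\accu$ enumerates $A$ and $(x_{n}-\lfloor x_{n}\rfloor)_{n}$ enumerates $B$ exactly as in Corollary \ref{fara}. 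For $\cocode_{0}\di \OCO$, given a family $A\subseteq \R^{2}$ of pairwise disjoint open intervals, the assignment of $(a,b)\in A$ to the least rational in $(a,b)$ is injective on $A$ by disjointness; coding $\R^{2}$ into $\R$ then makes $A$ a countable subset of $\R$, and decoding the sequence produced by $\cocode_{0}$ enumerates $A$ (using that $\cocode_{0}$ for $[0,1]$ yields $\cocode_{0}$ for $\R$).

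The crux is $\OCO\di \accu'$. Given $A\subseteq \R$ and $G$ as in \eqref{dist2}, note first that if some $n\leq G(x)$ isolates $x$, then so does $n=G(x)$ (the smaller ball), so $B(x,\tfrac{1}{2^{G(x)}})\cap A=\{x\}$ for all $x\in A$. To each $x\in A$ I assign the interval $I_{x}:=(x-\tfrac{1}{2^{G(x)+1}},x+\tfrac{1}{2^{G(x)+1}})$ and let $A'\subseteq \R^{2}$ be the collection of these; with $(\exists^{2})$, $A'$ is a set in the sense of Definition \ref{openset}, as $(a,b)\in A'$ iff its midpoint $x$ satisfies $x\in A$ and $b-a=\tfrac{1}{2^{G(x)}}$. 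The family $A'$ is pairwise disjoint: if $I_{x}\cap I_{y}\neq \emptyset$ with $x\neq y$ and, say, $G(x)\leq G(y)$, then $|x-y|<\tfrac{1}{2^{G(x)+1}}+\tfrac{1}{2^{G(y)+1}}\leq \tfrac{1}{2^{G(x)}}$, so $y\neq x$ lies in $B(x,\tfrac{1}{2^{G(x)}})\cap A$, a contradiction. Applying $\OCO$ to $A'$ yields an enumeration $((a_{k},b_{k}))_{k\in \N}$, and since $x\mapsto I_{x}$ is a bijection from $A$ onto $A'$ (distinct points give distinct midpoints), the midpoints $\big(\tfrac{a_{k}+b_{k}}{2}\big)_{k\in \N}$ enumerate $A$, which is $\accu'$.

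I expect the main obstacle to be the bookkeeping in $\OCO\di \accu'$: one must check that $A'$ really is a (third-order) set, i.e.\ that its characteristic behaviour is uniformly definable from $G$ and $\exists^{2}$, and that the disjointness estimate indeed holds with the single radius $\tfrac{1}{2^{G(x)+1}}$ rather than an a priori isolating radius. A secondary, purely technical point is the faithful coding of $\R^{2}$ into $\R$ in $\cocode_{0}\di \OCO$, together with the routine passage from $\cocode_{0}$ on $[0,1]$ to $\cocode_{0}$ on $\R$; neither should cause real difficulty once $(\exists^{2})$ is available.
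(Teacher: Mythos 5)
Your overall architecture is sound and in one respect tidier than the paper's: you prove $\OCO\di\accu'$ directly, correctly observing that closedness of $A$ plays no role once the disjoint intervals $I_{x}$ are in hand, whereas the paper proves $\OCO\di\accu$ and then recovers $\accu\di\accu'$ by running back around the circle through $\cocode_{0}$ and $\OCO$. Your disjointness estimate for the $I_{x}$, the definition of the auxiliary set of intervals, and the implication $\cocode_{0}\di\OCO$ via least rationals all match the paper's proof.

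There is, however, a genuine gap in your treatment of $\accu\di\cocode_{0}$, namely the definition of the witness $G$. You set $G(x):=(\mu n)\big[B(x,\tfrac{1}{2^{n}})\cap A=\{x\}\big]$ and claim the bracketed condition is decidable by $\exists^{2}$ because it has the form $\neg(\exists y\in \R)(\cdots)$ with arithmetical matrix. But a formula with an existential quantifier over $\R$ is $\Sigma_{1}^{1}$, not arithmetical; $\exists^{2}$ only decides number quantifiers, and deciding real quantifiers is precisely what the Suslin functional $\SS^{2}$ is for --- the gap between $\exists^{2}$ and $\SS^{2}$ being a running theme of this very paper, the step cannot be waved through. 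The repair is to exploit the concrete shape of the set $A$ from \eqref{fli2} rather than searching for an isolation radius: since $A$ meets each $[m,m+1]$ in at most one point, the only candidates for elements of $A$ within distance $1$ of $x\in A\cap[m,m+1]$ lie in the two adjacent unit intervals, and one checks directly that $B(x,\tfrac{1}{2^{n}})\cap A=\{x\}$ holds as soon as $\tfrac{1}{2^{n}}<\min(z,1-z)$ where $z=x-\lfloor x\rfloor$. Hence $G$ is definable outright from $x$ and $\exists^{2}$, after first shrinking $B$ into, say, $[\tfrac14,\tfrac34]$ by an affine map so that $z$ stays bounded away from $0$ and $1$. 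This explicit definition is what the paper means by the witness being ``readily defined''; with that substitution your proof goes through.
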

\begin{proof}
The equivalence $\eqref{won}\asa \eqref{toe}$ can be found in Theorem \ref{kantnochwally}.
We note that $\frac{1}{1+e^{x}}$ defines an injection from $\R$ to $(0,1)$.  
Hence, using $\exists^{2}$, one readily extends $\cocode_{0}$ to subsets $A\subset \R$.  

\smallskip

The implication $\accu\di \cocode_{0}$ is (essentially) proved in Corollary \ref{fara}, as the functional $G$ as in \eqref{dist2} is readily defined in this case.  
%
%
To prove $\OCO\di \accu$, let $A, G$ be as in the latter, i.e.\ satisfying \eqref{dist2}.  
By the latter, we have that for $x, y\in A$, the intersection $B(x, \frac{1}{2^{G(x)+1}})\cap B(y, \frac{1}{2^{G(y)+1}})$ is empty in case $x\ne y$.  
We shall now define the set consisting of $ B(x, \frac{1}{2^{G(x)+1}})$ for $x\in A$.  
To this end, define $B\subset \R^{2}$ as:
\be\label{kieper}\textstyle
(a, b)\in B\asa \big[\frac{a+b}{2}\in A\wedge a=_{\R}\frac{a+b}{2}- \frac{1}{2^{G(\frac{a+b}{2})+1}} \wedge b=_{\R}\frac{a+b}{2}+ \frac{1}{2^{G(\frac{a+b}{2})+1}} \big].
\ee
Applying $\OCO$ to the set $B$ to yield sequences $(a_{n})_{n\in \N}$ and $(b_{n})_{n\in \N}$ with
\[
(\forall a, b\in \R)\big( (a, b)\in B\asa (\exists n\in \N)(a_{n}=a\wedge b_{n}=b)    )
\]
Then the sequence $(\frac{a_{n}+b_{n}}{2})_{n\in \N}$ enumerates $A$, and this implication is done.

\smallskip

For $\cocode_{0}\di \OCO$, we first prove $\OCO_{0}$ in $\ACAo$.  To the latter end, let $A\subset \R^{2}$ be as in $\OCO_{0}$ and fix some enumeration $(q_{n})_{n\in \N}$ of $\Q$.
Define $Y((a, b))$ as the least $n\in \N$ such that $q_{n}\in (a, b)$ if such there is, and $0$ otherwise.  Clearly, $Y$ is injective on $A$ and the latter is countable, i.e.\ $\OCO_{0}$ follows inside $\ACAo$.  
Clearly, the combination $\cocode_{0}+\OCO_{0}$ implies $\OCO$.  Thus, $\cocode_{0}\di \OCO$ over $\ACAo$ follows. 

\smallskip

Finally, the reverse implication in $\accu\asa \accu'$ is trivial.  
Now fix $A\subset \R$ and $G:\R\di \N$ satisfying \eqref{dist2}.  Then \eqref{kieper} yields a collection of 
open disjoint intervals in $\R$.  
Since $\accu\di \OCO$, this collection can be enumerated, yielding $\accu'$.  
\end{proof}
We shall obtain an equivalence between Principle \ref{akku} and $\cocode_{0}$ over an elegant base theory in Section \ref{truerm}

\smallskip

Fifth, the theorem has some interesting corollaries as follows.  
Let $\accu_{0}$ be $\accu$ with the consequent weakened to stating that $A$ is countable.  
In contrast to $\BWC_{0}^{j}$, the former principles for countable sets are weak, as follows. 
\begin{cor}\label{labbe}
The system $\ACAo$ proves $\accu_{0}$ and $\ccc_{0}$.
\end{cor}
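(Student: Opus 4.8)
The plan is to prove both statements in $\RCAo$ by the excluded-middle trick of Remark \ref{LEM}, splitting on $(\exists^2)\vee\neg(\exists^2)$. The key conceptual point is that $\ccc_0$ and $\accu_0$ only demand an \emph{injection} of $A$ into $\N$, not a full enumeration; given $(\exists^2)$ this is cheap, which is exactly why they fall far below the enumeration versions $\ccc$ and $\accu$ (equivalent to $\cocode_0$ by Theorem \ref{fungi}). So I would place all the real work in the case $(\exists^2)$, where $\mu^2$ lets me decide whether a rational lies in an interval and round a real to a rational.

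For $\ccc_0$ under $(\exists^2)$ I would reuse the map from the proof of Theorem \ref{fungi}: fixing an enumeration $(q_n)_{n\in\N}$ of $\Q$, send each interval $(a,b)\in A$ to the least $n$ with $q_n\in(a,b)$. This is a total $Y:\R^2\to\N$ by $\exists^2$, and since disjoint intervals share no rational, $Y$ is injective on $A$; hence $A$ is countable. Under $\neg(\exists^2)$ every function on $\R$ is continuous, so the function representing $A$ is continuous and $A$ is open in $\R^2$; as two distinct nearby pairs of endpoints give overlapping (nonempty) intervals, disjointness forces $A=\emptyset$, and $\ccc_0$ holds trivially.

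For $\accu_0$ under $(\exists^2)$ I would use the witness $G$ directly. By \eqref{dist2} the ball $B(x,2^{-G(x)})$ meets $A$ only in $x$, for each $x\in A$. I would set $Y(x):=\langle q,k\rangle$ for a canonically chosen rational ball with $x\in B(q,2^{-k})\subseteq B(x,2^{-G(x)})$ — e.g.\ $k:=G(x)+2$ and $q$ the least-indexed rational within $2^{-k}$ of $x$, both obtained from $\exists^2$. If $x,x'\in A$ had the same code, then $x'\in B(q,2^{-k})\subseteq B(x,2^{-G(x)})$, so $x'=x$ by the isolation property; thus $Y$ is injective on $A$ and $A$ is countable.

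The hard part is the case $\neg(\exists^2)$ for $\accu_0$. Unlike $\ccc_0$, whose hypothesis collapses $A$ to $\emptyset$ once all functions are continuous, the hypothesis of $\accu_0$ is satisfied by uniformly discrete closed sets, so one must argue carefully that the intended trivialisation (in the spirit of Observation \ref{fruit}) genuinely applies to the data $(A,G)$ under $\neg(\exists^2)$. Pinning down this step — together with checking that the rounding map $x\mapsto q$ in the $(\exists^2)$ case yields a totally defined injection — is where the care lies; the two $(\exists^2)$ constructions are otherwise routine.
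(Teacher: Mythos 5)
Your two constructions under $(\exists^{2})$ are exactly the paper's: for $\ccc_{0}$ the paper maps each interval to the least index of a rational it contains (this is the proof of $\OCO_{0}$ inside Theorem \ref{fungi}), and for $\accu_{0}$ it applies the same device to the pairwise disjoint balls $B(x,\frac{1}{2^{G(x)+1}})$ obtained by halving the radii from \eqref{dist2} (the paper's ``using $G(x)/2$''); your code $\langle q,k\rangle$ with $k=G(x)+2$ is the same idea with a slightly different bookkeeping, and your injectivity argument via the isolation property is sound. The worry about totality of the rounding map is harmless: with $\mu^{2}$ the $\Sigma^{0}_{1}$ search for the least-indexed rational within $2^{-k}$ of $x$ terminates and is decidable, so $Y$ is total.

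The one step you leave open --- $\accu_{0}$ under $\neg(\exists^{2})$ --- is not actually delicate, and it closes by precisely the continuity argument you already used for $\ccc_{0}$. Under $\neg(\exists^{2})$ every $\R\di\R$ function is continuous (\cite{kohlenbach2}*{\S3}), so the characteristic function representing $A$ (Definition \ref{openset}) is continuous, which makes $A$ \emph{open}: if $x\in A$ then some ball $B(x,r)\subseteq A$. But \eqref{dist2} says every point of $A$ is isolated in $A$, and a nonempty open set has no isolated points; hence $A=\emptyset$, which is trivially countable. Your concern that ``uniformly discrete closed sets'' satisfy the hypothesis is the classical intuition, but such a set cannot be represented by a continuous characteristic function unless it is empty --- this is exactly the content of Observation \ref{fruit} in this setting. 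With that one line added, your argument matches the paper's (terse) proof in full.
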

\begin{proof}
Note that $\ccc_{0}$ was proved in $\ACAo$ in the proof of the theorem. 
To prove $\accu_{0}$, note that \eqref{dist2} yields an injection to $\Q$.
\end{proof}
Let $\accu_{1}$ be the restriction of $\accu$ to infinite sets $A\subset \R$ and with conclusion weakened to: there is a bijection from $A$ to $\N$.
Similarly, let $\OCO_{1}$ be $\OCO$ with the weaker conclusion `$A$ is strongly countable' for infinite $A\subset \R$.  
\begin{cor}\label{labbedva}
The system $\ACAo$ proves $\cocode_{0}\asa [  \accu_{1}+ \cocode_{1}]$ and $\ccc_{1}\asa \CBN\asa \accu_{1}$.
\end{cor}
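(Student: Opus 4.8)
The plan is to work throughout in $\RCAo+(\exists^{2})$, which is harmless by the excluded middle trick of Remark \ref{LEM}: in case $\neg(\exists^{2})$ all functions on $\R$ are continuous and every principle in sight is trivial, while under $(\exists^{2})$ we may decide membership in all sets below, convert reals to binary, and embed $\R$ into $(0,1)$ by $x\mapsto \frac{1}{1+e^{x}}$. For the first equivalence $\cocode_{0}\asa[\accu_{1}+\cocode_{1}]$, the forward direction is immediate: $\cocode_{1}$ is a restriction of $\cocode_{0}$, while $\cocode_{0}\di\accu$ by Theorem \ref{fungi} and $\accu\di\accu_{1}$ by definition. For the reverse direction I would fix a countable $A\subseteq[0,1]$ and split on whether $A$ is strongly countable. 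If it is, $\cocode_{1}$ enumerates $A$; if it is finite, Definition \ref{standard} already supplies a finite listing. In the remaining case $A$ is infinite and not strongly countable, and I would lift it exactly as in Corollary \ref{fara} to the closed set $A^{*}=\{z+Y(z):z\in A\}$ with its evident witness $G$ for \eqref{dist2}; the $\exists^{2}$-computable bijection $z\mapsto z+Y(z)$ (inverse $w\mapsto w-\lfloor w\rfloor$) transports ``infinite'' and ``not strongly countable'' from $A$ to $A^{*}$, so $\accu_{1}$ yields an enumeration of $A^{*}$, and stripping integer parts enumerates $A$.

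The heart of the second equivalence is an ``adjoin a copy of $\N$'' observation, which I would use to extract strong countability from $\CBN$. Given any infinite set $A$ with an injection $Y:A\di\N$, form $C=\tilde A\cup\{2,3,4,\dots\}\subseteq\R$, where $\tilde A\subseteq(0,1)$ is the image of $A$ under the embedding above; then $C$ is countable and the adjoined integers $z_{m}=m+2$ are exactly the \emph{sequence of pairwise distinct reals} that $\CBN$ demands. Hence $\CBN$ furnishes a bijection $H:\R\di\N$ that is injective on $C$ and surjective onto $\N$ along $C$. The decisive point is that $H[\{z_{m}:m\in\N\}]$ is the range of the $\exists^{2}$-computable sequence $m\mapsto H(z_{m})$, hence a genuine subset of $\N$; since $H$ is bijective on $C=\tilde A\sqcup\{z_{m}\}$ and the two parts are disjoint, its complement equals $H[\tilde A]$, which is therefore also a set. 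As $A$ is infinite, $H[\tilde A]$ is an infinite subset of $\N$, and reindexing it by its increasing enumeration turns $H$ into a bijection of $A$ onto $\N$. Thus $A$ is strongly countable, with the witness $\exists^{2}$-computable from $Y$ and $H$.

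With this observation the remaining implications fall out. For $\CBN\di\ccc_{1}$, an infinite family of disjoint open intervals is countable (send each interval to the least rational it contains, as in the proof of Theorem \ref{fungi}), so it becomes strongly countable. For $\CBN\di\accu_{1}$, a closed set with a witness $G$ for \eqref{dist2} is countable (the disjoint balls $B(x,\tfrac{1}{2^{G(x)+1}})$ let one injectively select rationals), so every infinite such set is strongly countable; consequently the hypothesis of $\accu_{1}$ (infinite, closed, \emph{not} strongly countable, with witness $G$) is never met and $\accu_{1}$ holds vacuously. Conversely $\accu_{1}\di\CBN$ proceeds as in the first paragraph: lift a countable-with-distinct-sequence $A$ to $A^{*}$, and either $A$ is already strongly countable or $\accu_{1}$ enumerates $A^{*}$, whence $A$ is enumerable and, being infinite, strongly countable. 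For $\ccc_{1}\di\CBN$ I would instead send $x\in A$ to the interval $I_{x}=(Y(x)+\tfrac13,Y(x)+\tfrac23)$; distinct values of $Y$ place these in distinct integer slots, so the family is disjoint and infinite, $\ccc_{1}$ makes it strongly countable via some $K$, and $x\mapsto K(I_{x})$ is then an $\exists^{2}$-computable bijection of $A$ onto $\N$.

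The main obstacle is precisely the direction out of $\CBN$, namely $\CBN\di\accu_{1}$ and $\CBN\di\ccc_{1}$. The difficulty is that $\CBN$ can only upgrade a set that is \emph{already} equipped with a sequence of distinct elements (an injection $\N\di A$), whereas the inputs to $\accu_{1}$ and $\ccc_{1}$ supply only an injection $A\di\N$ together with the purely ``negative'' definitional infinitude; manufacturing an actual distinct sequence would require \emph{finding} elements of a set of reals, which $(\exists^{2})$ alone cannot do, as witnessed by the model $\textbf{Q}^{*}$ and its pathological infinite-but-sequence-free subsets of $[0,1]$. The adjunction observation circumvents exactly this: the extra copy of $\N$ provides the required distinct sequence gratis, and the one genuinely new ingredient is the recovery of the range $H[\tilde A]$ as the \emph{complement of the computable set} $H[\{z_{m}\}]$. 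This is the step that converts a mere injection into $\N$ into a bijection onto $\N$, and it is where the whole equivalence turns.
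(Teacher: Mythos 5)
Your overall architecture for the second equivalence is genuinely different from the paper's: the paper runs the single cycle $\CBN\di\ccc_{1}\di\accu_{1}\di\CBN$ (the last leg by adapting Corollary \ref{fara}, the middle leg by re-running the proof of $\ccc\di\accu$ from Theorem \ref{fungi}), whereas you route everything through the lemma ``under $\CBN+(\exists^{2})$ every infinite countable set is strongly countable''. Your adjunction construction is the same trick the paper uses in the footnote proving $[\CBN+\cocode_{1}]\asa\cocode_{0}$, but there one only concludes that the set isomorphic to $\N\oplus A$ is strongly countable and then invokes $\cocode_{1}$; your additional step of recovering $H[\tilde A]$ as the complement of the arithmetically definable set $H[Z]$ and re-indexing is a real strengthening not present in the paper. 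Your treatment of the first part and of $\accu_{1}\di\CBN$ matches the paper's intended arguments.

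There are, however, two concrete gaps. First, in the adjunction lemma you must show that $W=H[\tilde A]$ is an \emph{infinite} subset of $\N$ before re-indexing it by its increasing enumeration. This amounts to showing that a set that is infinite in the paper's sense (``every finite sequence of its elements omits some element'') cannot inject into a bounded initial segment of $\N$. To refute $W\subseteq\{0,\dots,N\}$ one must produce, uniformly in $N$, a finite sequence of $N+2$ pairwise distinct elements of $A$ (or locate the finitely many $H$-preimages of the points of $W$), and that is induction/finite choice for formulas with existential quantifiers over reals --- precisely the content of $\IND_{0}$/$\IND_{1}$, which are not part of $\RCAo+(\exists^{2})$. (A cousin of this issue lurks in the paper's own one-line proof of $\CBN\di\ccc_{1}$, which needs a sequence of pairwise distinct intervals to apply $\CBN$ at all; your lemma makes the dependence explicit but does not discharge it.) Second, in $\ccc_{1}\di\CBN$ the family $\{(Y(x)+\frac13,Y(x)+\frac23):x\in A\}$ depends only on the value $Y(x)$, so deciding whether a given interval belongs to it is exactly deciding membership in the range of $Y$ restricted to $A$, i.e.\ the content of $\range_{0}$; the family is therefore not available as a set in the sense of Definition \ref{openset}. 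The repair is to let each interval encode the point $x$ itself, as in \eqref{kieper} or the set $A^{*}$ of Corollary \ref{fara}, so that membership is decidable from the characteristic function of $A$ and from $Y$ via $\exists^{2}$ --- or simply to drop this leg and obtain $\ccc_{1}\di\CBN$ as the paper does, by composing $\ccc_{1}\di\accu_{1}\di\CBN$.
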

\begin{proof}
For the second part, $\RCAo$ proves $\ccc_{0}$ by the proof of the theorem.  Applying $\CBN$ to the conclusion of the former for infinite sets, one obtains $\ccc_{1}$.   The proof of $\ccc_{1}\di \accu_{1}$ follows from the proof of $\ccc\di \accu$.  Finally, the proof of Corollary~\ref{fara} is readily adapted to $\accu_{1}\di \CBN$.
For the first part, we note that $\accu_{1}$ only deals with infinite sets.  To obtain the same results for finite sets $A\subset \R$, consider the infinite set $B:=A\cup \Q$ and note that $\mu^{2}$ can enumerate all elements in $A\cap \Q$.
Given an enumeration of $B$, one similarly obtains an enumeration of $A$.
\end{proof}
The first equivalence is interesting as the left-hand side (only) deals with injections, while the right-hand side (only) deals with bijections. 
Similarly, we have $\BWC_{0}\asa [\ccc_{1}+\BW_{1} ]$ by Corollary \ref{murni}.   
Thus, we have provided an answer to question (Q2) from Section \ref{intro}.  Next, we consider $\CWO^{\omega}$ as follows. 
\begin{thm}
The system $\ACAo$ proves $\cocode_{0}\asa [\CWO^{\omega}+\IND_{0}]$. 
\end{thm}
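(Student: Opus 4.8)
The plan is to split the statement into the three parts $\cocode_{0}\di\cloq$, $\cloq\di\CBN$, and the equivalence $\cocode_{0}\asa[\CWO^{\omega}+\IND_{0}]$, and to prove each separately. In every case I would first invoke the excluded-middle trick of Remark~\ref{LEM}: all the principles involved are trivial when every function on $\R$ is continuous, so we may assume $(\exists^{2})$ throughout. This makes set-membership and the order relations $F_{X}$ decidable (proper characteristic functions) and puts $\ACA_{0}$ at our disposal. For $\cocode_{0}\di\cloq$, given a countable linear order $(X,\preceq_{X})$ with $X\subset\R$, I would first apply $\cocode_{0}$ to enumerate $X$ as a sequence $(x_{n})_{n\in\N}$; relative to this enumeration $(X,\preceq_{X})$ becomes a countable linear order in the second-order sense of \cite{simpson2}*{V.1.1}, and Cantor's classical theorem that every countable linear order embeds order-preservingly into $\Q$ is provable already in $\RCA_{0}$. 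Using $(\exists^{2})$ and the enumeration I would lift the resulting index-level embedding to the order-isomorphism $h\colon X\to\Q$ required by $\cloq$ (on input $x\in X$, search for an index $n$ with $x=_{\R}x_{n}$ and output the rational assigned to $x_{n}$).

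For the forward direction $\cocode_{0}\di[\CWO^{\omega}+\IND_{0}]$, the conjunct $\IND_{0}$ is already available from Theorem~\ref{kantnochwally} (which gives $\cocode_{0}\asa[\BWC_{0}+\IND_{0}]$), so only $\cocode_{0}\di\CWO^{\omega}$ remains. Here I would enumerate the two given well-orders via $\cocode_{0}$, and then use that $\cocode_{0}\asa\BWC_{0}^{\fun}\asa\BOOT_{C}^{-}$ (the latter by \cite{samcount}*{Theorem 3.12}) together with the fact that $\ACAo+\BOOT_{C}^{-}$ proves $\ATR_{0}$ (via \cite{simpson2}*{V.5.2}). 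Comparability of countable well-orders is a theorem of $\ATR_{0}$, namely the second-order $\CWO$ of \cite{simpson2}*{V.6}, so applying it to the enumerated orders and transporting the resulting isomorphism back to a function between the real well-orders (using the enumerations and $(\exists^{2})$) yields $\CWO^{\omega}$.

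For the reverse direction $[\CWO^{\omega}+\IND_{0}]\di\cocode_{0}$, given a countable $A\subset\R$ with injection $Y$, I would form the well-order $W_{2}:=(A,\preceq_{A})$ where $x\preceq_{A}y\equiv Y(x)\leq Y(y)$. This is genuinely well-founded, since a $\preceq_{A}$-descending sequence would induce a strictly descending sequence of naturals, and it is a legitimate input to $\CWO^{\omega}$; take $W_{1}:=(\N,\leq)$ as the comparison order. Comparing $W_{1}$ and $W_{2}$ via $\CWO^{\omega}$ returns an order-isomorphism between one and an initial segment of the other, and when $A$ is infinite both orders have type $\omega$, so the isomorphism $\phi\colon\N\to A$ \emph{is} a sequence enumerating $A$, i.e.\ $\cocode_{0}$. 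The point is that $\CWO^{\omega}$ itself supplies $\phi$, so no range-of-$Y$ computation is needed on our part. The role of $\IND_{0}$ here is to discharge the finite case and to manage the finite initial segments so as to decide which side of the comparison actually occurs.

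The main obstacle is $\cloq\di\CBN$. One is handed a countable $A\subset\R$ (via $Y$) together with a sequence $(x_{n})$ of pairwise distinct elements of $A$ witnessing infinitude, and must produce a \emph{bijection} $A\to\N$, i.e.\ strong countability. The natural approach is to apply $\cloq$ to a suitable linear order on $A$ built from $A$ and the sequence, obtaining an order-embedding into $\Q$ whose image is of type $\omega$ and in which the cofinal data $\{h(x_{n})\}$ is controllable. The genuinely delicate point is that an injection of $A$ into $\Q$ (or the induced injection $A\to\N$) does \emph{not} by itself yield the needed \emph{surjectivity} onto $\N$: the range of $Y$ is only $\Sigma^{1}_{1}$-definable and cannot be decided from $(\exists^{2})$ alone, so the rank function cannot simply be computed pointwise. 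I would attack this by running a back-and-forth between the cofinal image of the given sequence and a fixed enumeration of $\Q$, using decidability of set-membership and of the relation ``$x\in\{x_{n}\}$'' under $(\exists^{2})$; as a cross-check, one may instead route through the already established equivalence $\CBN\asa\accu_{1}$ of Corollary~\ref{labbedva}. I expect the extraction of the surjective witness to be where the real work lies, the remaining bookkeeping being routine with the sequences-with-information coding of Notation~\ref{fluk}.
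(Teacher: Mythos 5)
Your treatment of $\cocode_{0}\di\cloq$ and of $\cocode_{0}\di\CWO^{\omega}$ matches the paper's (back-and-forth on the enumeration; respectively routing through $\BOOT_{C}^{-}$, $\ATR_{0}$ and \cite{simpson2}*{V.6.8}, with $\IND_{0}$ coming from the earlier equivalences). The two remaining steps, however, contain genuine gaps.

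First, $\cloq\di\CBN$ is left unproved, and the difficulty you flag is not where the work lies. The paper's argument is short: given $Y$ injective on $A$, apply $\cloq$ to the order $x\preceq_{A}y\equiv Y(x)\leq Y(y)$; the conclusion of $\cloq$ is an order-\emph{isomorphism}, i.e.\ a bijection from $A$ \emph{onto} a subset $Q\subset\Q$. Since $Q$ is a set in the sense of Definition \ref{openset} sitting inside the enumerated set $\Q$, one can (given $(\exists^{2})$) run through a fixed enumeration of $\Q$ and count the members of $Q$; as $Q$ is infinite (by the sequence of pairwise distinct elements in the hypothesis of $\CBN$), this yields a bijection $Q\di\N$, and composing gives the required bijection $A\di\N$. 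No back-and-forth against the cofinal sequence and no decision of the range of $Y$ is needed: the surjectivity onto $\N$ comes from enumerating $Q$ without repetition, not from inverting $Y$. Your proposed detour via $\CBN\asa\accu_{1}$ is circular for this purpose, since one would still have to derive one of those principles from $\cloq$.

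Second, your proof of $[\CWO^{\omega}+\IND_{0}]\di\cocode_{0}$ fails at the decisive point. You compare $(\N,\leq)$ with $(A,\preceq_{A})$ where $x\preceq_{A}y\equiv Y(x)\leq Y(y)$, observe that both have type $\omega$, and conclude that ``the isomorphism $\phi\colon\N\di A$ is an enumeration''. But $\CWO^{\omega}$ only guarantees an isomorphism in \emph{one} of the two directions, and nothing rules out that it hands you $W\colon A\di\N$. Such a $W$ is merely a bijection from $A$ to $\N$, i.e.\ it witnesses that $A$ is strongly countable; converting that into an enumeration is exactly $\cocode_{1}$, which is not available here (by Theorem \ref{goeddachs} it is not even provable in $\Z_{2}^{\omega}+\IND_{0}$). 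One cannot invert a third-order bijection $A\di\N$ by search, and $\IND_{0}$ only yields finite initial segments of the would-be enumeration with no uniform way to glue them. This is precisely why the paper rigs $(A,\preceq_{A})$ to have order type $\omega+1$, placing the element with least $Y$-value on top: the extra top element makes any order-isomorphism from $A$ into (an initial segment of) $\N$ impossible, forcing the comparison map to go from $\N$ into $A$, which is then a genuine enumeration. Without this, or an equivalent device for controlling the direction of the isomorphism, your argument does not go through.
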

\begin{proof}
%
%
For $\cocode_{0}\di \CWO^{\omega}$, use the proof that $\ATR_{0}\di \CWO$ over $\RCA_{0}$ from \cite{simpson2}*{V.6.8}.  
Note that $\ACAo+\BOOT_{C}^{-}$ proves $\ATR_{0}$ by \cite{simpson2}*{V.5.2}.  
Recall that $\cocode_{0}\di \IND_{0}$ is proved in Theorem~\ref{aars}.

\smallskip

For $[\CWO^{\omega}+\IND_{0}]\di \cocode_{0}$, let $Y:\R\di \N$ be injective on $A\subset[0,1]$.  In case $(\exists m\in\N)(\forall x\in A)(Y(x)\leq m)$, $\IND_{0}$ provides an enumeration of $A$ as we have 
\[
(\forall n\in \N)(\exists \textup{ at most one } x\in [0,1])(x\in A\wedge Y(x)=n).
\]    
Hence, we may assume $(\forall m\in \N)(\exists x\in A)(Y(x)\geq m)$.  
Now define the linear order $(A, \preceq_{A})$ via the following formula:
\[
x\preceq_{A} y \equiv \big[Y(y)=n_{0} \vee [  Y(x)\ne n_{0}\wedge Y(x)\leq_{\N}Y(y)]  \big],
\]
where $n_{0}\in \N$ is the least $n\in \N$ such that $(\exists x\in A)(Y(x)=n)$; this number is readily defined using $\IND_{0}$. 
Let $y_{0}\in A$ be such that $Y(y_{0})=n_{0}$.
Intuitively, $(A,\preceq_{A})$ has order type $\omega+1$, i.e.\ the order of $\N$ followed by one element.  
Hence, of the four different possibilities provided by the consequent of $\CWO^{\omega}$, three lead to contradiction.  Indeed, a finite initial segment of either $(\N, \leq_{\N})$ or $(A, \preceq_{A})$ has only got finitely many elements (since $Y$ is an injection), while $\N$ is infinite and $A$ satisfies $(\forall m\in \N)(\exists x\in A)(Y(x)\geq m)$.  Similarly, an order-isomorphism $W:A\di \N$ leads to contradiction as follows: since there is $y_{0}\in A$ such that $Y(y_{0})=n_{0}$, there cannot be a injection from $A\setminus \{y_{0}\}$ to $\{0, 1,\dots, W(y_{0})\}$, as the latter set is finite, while the former is not. 
Similarly, an order-isomorphism $Z:\N\di A$ yields a contradiction as any $n\geq n_{0}$ is mapped below $Z(n_{0})\in A$ (relative to $\preceq_{A}$), which is not possible as $Y$ is an injection. 
The only remaining possibility is that $\CWO^{\omega}$ provides an order-isomorphism $Z: \N\di A\setminus \{y_{0}\}$, where $A\setminus \{y_{0}\}=\{ y\in A: y \prec y_{0} \}$ is an initial segment of $A$.    
The morphism $Z$ is then a sequence satisfying $(\forall x\in A\setminus\{y_{0}\})(\exists n\in \N)(Z(n)=_{\R}x)$, i.e.\ we obtain an enumeration of $A$. 
\end{proof}
\begin{thm}
The system $\ACAo$ proves $\cocode_{0}\asa [\cloq+\IND_{0}]$. 
\end{thm}
\begin{proof}
To prove $\cocode_{0}\di \cloq$, use the well-known `back-and-forth' proof based on the enumeration of $A$ (see \cite{riot}*{p.\ 123}). 
By Theorem \ref{kantnochwally}, we only need to prove $\cloq\di \range_{0}$ in $\ACAo$.  To this end, fix $A\subset [0,1]$ and let $Y:[0,1]\di \N$ be countable in $A$.  
Wlog we may assume that $0, 1\not \in A$.  
Now define the set $R\subset \R$ as follows:  $y\in R$ if and only we have either $(\exists n\in \N)(y=_{\R}n)$, or the following holds
\[
(\exists q\in \Q)(|y-q|\in A ) \wedge  (\forall m\in \N)(m<|y|<m+1\di Y(|y-q|)=m ).
\]
Clearly, the set $R$ is countable and $(R, \leq_{\R})$ is a linear order.  Apply $\cloq$ to obtain $Q\subset \Q$ and $Z:R\di \Q$ such that $Z$ is an order-isomorphism from $(R, \leq_{\R})$ to $(Q, \leq_{\Q})$.  
Now consider the following formula where $n\in \N$:
\begin{align}
(\exists x\in A)(Y(x)=n)
&\asa (\exists y\in  (n, n+1))(y\in R)\notag\\
&\asa (\exists q\in Q)(Z(n)<_{\Q}q <_{\Q}Z(n+1)).\label{perfide}
\end{align}
The first equivalence holds by the definition of $R$, while the second equivalence follows from the fact that $Z$ is an order-isomorphism. 
Since \eqref{perfide} is decidable given $(\exists^{2})$, $\range_{0}$ is now immediate. 
\end{proof}
Inspired by the previous proof, a version of Hausdorf's decomposition theorem for countable linear orders (see \cite{clotekussen}*{Theorem 12} for the second-order RM version) should imply $\cocode_{0}$. 
In turn, the previous proof inspires the following corollary.  
\begin{cor}\label{fexpo}
The system $\ACAo$ proves $\cocode_{0}\asa [\cloq'+\IND_{0}]$. 
\end{cor}
\begin{proof}
To prove $\cocode_{0}\di \cloq'$, use the well-known `back-and-forth' proof based on the enumeration of $A$ (see \cite{riot}*{p.\ 123}). 
To prove $\cloq'\di \range_{0}$, fix $A\subset [0,1]$ and let $Y:[0,1]\di \N$ be countable in $A$.  
Wlog we may assume that $ A\cap \Q=\emptyset$ as Feferman's $\mu^{2}$ allows us to list the rationals in $A$.  
Now define the set $R'\subset \R$ as follows:  $y\in R'$ if and only we have either $(\exists q\in \Q)(y=_{\R}q)$, or the following holds
\[
(\exists q\in \Q)(|y-q|\in A ) \wedge  (\forall m\in \N)(m<_{\R}|y|<_{\R}m+1\di Y(|y-q|)=m ).
\]
Clearly, the set $R'$ is countable and $(R', \leq_{\R})$ is a dense linear order without end points.  Apply $\cloq'$ to obtain an order-isomorphism $Z$ from $(R', \leq_{\R})$ to $(\Q, \leq_{\Q})$.  
Now consider the following formula where $n\in \N$:
\begin{align}
(\exists x\in A)(Y(x)=n)
&\asa (\exists y\in  (n, n+1))(y\in R' \wedge \textup{$y$ is irrational})\label{co1}\\
&\asa (\exists q\in \Q\cap (Z(n), Z(n+1))  )(\forall r\in \Q\cap (n, n+1))(Z(r)\ne_{\Q} q ).\notag
\end{align}
The first equivalence holds by definition while the second equivalence follows from the fact that $Z$ is an order-isomorphism. 
As for the theorem, $\range_{0}$ follows. 
\end{proof}
Restricting $\cloq'$ to strongly countable sets, one readily obtains an equivalence to $\cocode_{1}+\IND_{1}$ by introducing an extra condition `$x>p$' in \eqref{co1} with $p\in\Q$.  

\smallskip

Finally, as to related research, Mal'tsev's theorem on countable ordered groups (\cite{mallebabbe}) is studied in second-order RM (\cite{solvent}), and seems to imply $\cocode_{0}$.  
%
%
%

\section{The bigger picture}\label{bigger}
Section \ref{frap} yields many (robust) equivalences for the Bolzano-Weierstrass theorem as in $\BW_{0}$ and $\BWC_{1}$.  
With these in place, it is time to connect the latter to the bigger picture, namely ordinary mathematics and set theory, as follows.
\begin{itemize}
\item In Section \ref{heli}, we connect the Bolzano-Weierstrass theorem as in $\BWC_{0}$ to the Heine-Borel theorem and the Lindel\"of lemma as studied in \cites{dagsamIII, dagsamVI}. 
\item We connect $\BWC_{0}$ to the \emph{countable union theorem} from set theory (Section~\ref{heli2}); a natural restriction of the latter is equivalent to the former. 
\item In Section \ref{BV}, we show that $\BWC_{0}$ is equivalent to the \emph{Jordan decomposition theorem} and similar results on functions of \emph{bounded variation}.  We also consider theorems on \emph{regulated} functions.  
\item In Section \ref{unorder}, we show that $\BWC_{0}$ is equivalent to basic properties of \emph{unordered sums}, which are a device for bestowing meaning upon sums over uncountable index sets. 
\end{itemize}
Regarding the final item, the Jordan decomposition theorem and its ilk have no obvious or direct connection to countability at all, and have been studied in second-order RM (\cites{kreupel, nieyo}).  

\subsection{Heine-Borel and Lindel\"of}\label{heli}
\subsubsection{Introduction}\label{kintro}
In this section, we connect the Bolzano-Weierstrass theorem as in $\BWC_{0}$ to the Heine-Borel theorem and the Lindel\"of lemma. 
An overview of our results is as follows.

\smallskip

In Section~\ref{frik}, we identify weak/countable versions of the Heine-Borel theorem and Lindel\"of lemma that are equivalent to $\BW_{0}$.
In Section \ref{link}, we show that $\LIN$, a most general version of the Lindel\"of lemma for $\N^{\N}$, we have $\BOOT+\QFAC^{0,1}\di\LIN\di \BWC_{0}$, working over $\ACAo$. 
In Section \ref{link2}, assuming a fragment of the induction axiom, we similarly establish:
\be\label{darfl}
\BOOT\di \HBT\di \BWC_{0}\di \BWC_{1}.
\ee
Recall that $\BOOT$ and $\HBT$ are the higher-order counterparts of $\ACA_{0}$ and $\WKL_{0}$ (see Remark \ref{ECF}).  
 In this light, higher-order RM yields a much richer picture than its second-order counterpart, in that there are at least two extra `Big' systems.  

\smallskip

Next, the following series of implications is also established in Section \ref{link2}, \emph{without} the use of extra induction:
\be\label{darflK}
\BOOT\di \SSEP\di \BWC_{0}\di \BWC_{1},
\ee
where $\SSEP$ is the higher-order counterpart of $\Sigma_{1}^{0}$-separation.  The latter is equivalent to $\WKL_{0}$ by \cite{simpson2}*{IV.4.4} and $\ECF$ maps $\SSEP$ to $\Sigma_{1}^{0}$-separation; we believe that $\HBU$ `speaks more to the imagination' than $\SSEP$.  
Moreover, $\HBU\asa \HBT\asa \SSEP$ is established in Section \ref{link2}, assuming extra axioms discussed next.  

\smallskip

Finally, we should say a few words on the \emph{neighbourhood function principle} $\NFP$ from \cite{troeleke1}*{p.\ 215}.
Restricted to the $\L_{2}$-langauge, $\NFP$ is equivalent to the usual comprehension principle of $\Z_{2}$.  
Now, the higher-order generalisation of comprehension, in the form of the functionals $\SS_{k}^{2}$, does not provide a satisfactory classification of e.g.\ $\HBU$. 
Indeed, we know that $\Z_{2}^{\Omega}$ proves $\BOOT, \HBU, \BWC_{0}, \BWC_{1}$ and $\Z_{2}^{\omega}$ does not, while of course $\Z_{2}\equiv_{\L_{2}}\Z_{2}^{\omega}\equiv_{\L_{2}}\Z_{2}^{\Omega}$. 
As explored in \cites{dagsamX, samph, sahotop}, the higher-order generalisation of $\NFP$ provides a more satisfactory classification of these principles: there are natural fragments of $\NFP$ equivalent to $\BOOT$, $\HBU$, and the Lindel\"of lemma, \emph{assuming} a fragment of $\NFP$ called $\A_{0}$, discussed in Section \ref{link2}.  
By Theorem \ref{timtam} and Corollary \ref{drapa}, we have $\HBU\asa \SSEP\asa \HBT$ working over $\ACAo+\A_{0}$.  

\smallskip

We should not have to point out that second-order RM assumes/needs $\Delta_{1}^{0}$-comprehension in the base theory.  Thus, it stands to reason that the development 
of RM based on $\NFP$ requires a fragment of the latter, like the $\A_{0}$ axiom, in the base theory.  This argument is explored at length in \cites{sahotop, samph}. 
Moreover, by Corollary~\ref{dikker}, there is even a fragment of $\NFP$, similar to $\A_{0}$, that is equivalent to $\BWC_{0}$.

\subsubsection{Countable coverings}\label{frik}
We connect $\BWC_{0}$ to versions of the Heine-Borel theorem and Lindel\"of lemma for coverings that are countable as in Definition~\ref{standard}.

\smallskip

First of all, the following version of the `countable' Heine-Borel theorem implies $\NIN$ by \cite{dagsamX}*{Cor.\ 3.20}, but no reversal is known. 
\begin{princ}[$\HBC_{0}$]
For countable $A\subset \R^{2}$ with $(\forall x\in [0,1])(\exists (a, b)\in A)(x\in (a, b))$, there are $(a_{0}, b_{0}), \dots (a_{k}, b_{k})\in A$ with $(\forall x\in [0,1])(\exists i\leq k)(x\in (a_{i},b_{i} ))$.
\end{princ}
The Heine-Borel theorem for different representations of open coverings is studied in RM (\cite{paultoo}), i.e.\ the motivation for $\HBC_{0}$ is already present in second-order RM.
Moreover, Borel in \cite{opborrelen2}*{p.\ 42} uses `countable infinity of intervals' and \textbf{not} `sequence of intervals' in his formulation\footnote{In fact, Borel's explicitly mentions a version of $\cocode_{1}$ in \cite{opborrelen2}*{p.\ 6} while the proof of the Heine-Borel theorem in \cite{opborrelen2}*{p.\ 42} starts with an application of $\cocode_{1}$ and then proceeds with the usual `interval-halving' proof, similar to Cousin's proof in \cite{cousin1}.} of the Heine-Borel theorem.  He also mentions in \cite{opborrelen2}*{p.\ 42, Footnote (1)} a `theoretical method' for `effectively determining' the finite sub-covering at hand. 
In this light, we may assume that the finite sub-covering in $\HBC_{0}$ is given by a finite sequence of reals without fear of adding `extra data'. 

\smallskip

We shall study a `sequential' version of $\HBC_{0}$ involving sequences of (sub-)coverings.  Such sequential theorems are well-studied in RM, starting with \cite{simpson2}*{IV.2.12}, and also in \cites{fuji1,fuji2,hirstseq,dork2,dork3, yokoyamaphd, polahirst}.    
\begin{princ}[$\HBC_{0}^{\seq}$]
Let $(A_{n})_{n\in \N}$ be a sequence of sets in $\R^{2}$ with countable union.  Then there is $(b_{n})_{n\in \N}$ such that for $n\in \N$, $b_{n}$ is a finite sequence of elements of $A_{n}$ and if 
the intervals in $A_{n}$ cover $[0,1]$, then so do the intervals in $b_{n}$.
\end{princ}
On a related note, let $\LIN_{0}$ be the Lindel\"of lemma for countable sets $A\subset \R$, i.e.\ for $\Psi:\R\di \R^{+}$, there is $(x_{n})_{n\in \N}$ in $A$ such that  $\cup_{n\in \N}B(x_{n}, \Psi(x_{n}))$ covers $A$. 
We have the following theorem connecting the aforementioned principles.
\begin{thm}\label{jeffreys}
The system $\ACAo$ proves $\LIN_{0}\asa \cocode_{0}\asa  \HBC_{0}^{\seq} $.
\end{thm}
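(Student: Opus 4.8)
The plan is to prove the two stated equivalences by establishing the four implications $\cocode_{0}\di\LIN_{0}$, $\LIN_{0}\di\cocode_{0}$, $\cocode_{0}\di\HBC_{0}^{\seq}$, and $\HBC_{0}^{\seq}\di\cocode_{0}$. Throughout, as in Remark~\ref{LEM}, I may assume $(\exists^{2})$: in case $\neg(\exists^{2})$ all functions on $\R$ are continuous by \cite{kohlenbach2}*{\S3}, so any $Y:\R\di\N$ is constant on a countable set and all four principles collapse to triviality. With $(\exists^{2})$ in hand I freely use $\ACA_{0}$, real comparisons, floor functions and binary representations. The two implications \emph{out of} $\cocode_{0}$ are routine. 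For $\cocode_{0}\di\LIN_{0}$, enumerate the countable $A$ as $(x_{n})_{n\in\N}$ via $\cocode_{0}$; since $x_{n}\in B(x_{n},\Psi(x_{n}))$ and every element of $A$ occurs among the $x_{n}$, these balls cover $A$. For $\cocode_{0}\di\HBC_{0}^{\seq}$, let $(A_{n})$ have countable union, apply $\cocode_{0}$ (extended to $\R^{2}$ using $\exists^{2}$) to enumerate $\bigcup_{n}A_{n}$ as $(c_{k})_{k\in\N}$, and for each $n$ read off the subsequence of those $c_{k}$ lying in $A_{n}$ (membership being testable since $(A_{n})$ is given). This presents each $A_{n}$ as an ordinary sequence of intervals, whence the second-order Heine--Borel theorem for sequential coverings, provable in $\WKL_{0}$ and hence available from $\exists^{2}$ (\cite{simpson2}*{IV.1}), yields uniformly in $n$, via the least covering initial segment, a finite $b_{n}\subseteq A_{n}$ covering $[0,1]$ whenever $A_{n}$ does; set $b_{n}$ empty otherwise.

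The implication $\LIN_{0}\di\cocode_{0}$ uses the spreading trick of Corollary~\ref{fara}. Given $Y:[0,1]\di\N$ injective on $A$, I place the unique $z\in A$ with $Y(z)=n$ at the point $n+z/2\in[n,n+\tfrac12]$; formally $x\in A'$ iff, writing $n=\lfloor x\rfloor$, one has $x-n\leq\tfrac12$, $2(x-n)\in A$ and $Y(2(x-n))=n$, which is testable from $\exists^{2}$. Then $A'$ is countable via $x\mapsto\lfloor x\rfloor$, and any two distinct points of $A'$ lie at distance $\geq\tfrac12$. Applying $\LIN_{0}$ to $A'$ with the constant gauge $\Psi\equiv\tfrac14$ produces $(y_{m})$ in $A'$ whose radius-$\tfrac14$ balls cover $A'$; as each such ball meets $A'$ only in its own centre, $(y_{m})$ must list \emph{every} point of $A'$, and $z_{m}:=2(y_{m}-\lfloor y_{m}\rfloor)$ then enumerates $A$.

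The crux is $\HBC_{0}^{\seq}\di\cocode_{0}$. The guiding observation is that for a \emph{finite} family $b$ of real intervals the predicate ``$b$ covers $[0,1]$'' is decidable from $\exists^{2}$, and since $b_{n}\subseteq A_{n}$, the principle yields ``$A_{n}$ covers $[0,1]$'' $\asa$ ``$b_{n}$ covers $[0,1]$''; thus $\HBC_{0}^{\seq}$ \emph{decides the covering status of each member of a sequence of countable families of real intervals} — a genuinely non-arithmetical ability, since for arbitrary real intervals this status is $\Sigma_{1}^{1}$ and lies beyond $\ACA_{0}$. I would exploit this as follows. First spread $A$ into an isolated set $A''\subset[0,1]$ as above, with the index-$n$ point confined to a known dyadic slab $S_{n}$ and with membership in $A''$ testable. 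For each dyadic interval $D$, at every scale $2^{-m}$, I build a family $\mathcal{G}_{D}$ of \emph{testable} membership consisting of fixed rational intervals covering $[0,1]\setminus D$ that poke only negligibly into $D$, together with the real intervals $B(c,|D|)$ for $c\in A''$. Then $\mathcal{G}_{D}$ covers $[0,1]$ exactly when some point of $A''$ lies in a slight enlargement of $D$, its covering status being supplied to us by $\HBC_{0}^{\seq}$ through the finite subcover. Running this over all $D$ simultaneously — the overall union is countable since the $c$-centred intervals are indexed through $Y$ and the rest are rational — decides, for every dyadic $D$, whether $D$ nearly contains a point of $A''$. Because $A''$ is isolated, the nested dyadic intervals flagged as containing a point converge to the individual points of $A''$; recovering each such point as a real, testing it against $A''$, and mapping back through the spreading yields the desired enumeration of $A$.

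The main obstacle is precisely the construction in the previous paragraph: one must manufacture covering families whose membership is pointwise testable, so that they are legitimate sets in $\RCAo+(\exists^{2})$, yet whose \emph{global} covering status encodes the non-arithmetical information ``$A$ contains a point here''. The naive attempt of taking a background that avoids the points of $A$ fails, since ``$(a,b)\cap A=\emptyset$'' is co-$\Sigma_{1}^{1}$ and defines no set; the resolution is to make only the $A''$-centred intervals responsible for covering the interior of $D$, so that a testable family has exactly the required covering behaviour, much as membership in the set \eqref{kieper} is arranged to be decidable. The two remaining points needing care are that the total union is countable in the sense of Definition~\ref{standard}, and that the dyadic recovery pins down points \emph{exactly} rather than merely approximately — which is where the isolation obtained from the spreading of Corollary~\ref{fara} is essential, as a dense set such as $\Q$ would leave nested intervals non-convergent.
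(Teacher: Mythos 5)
Your treatment of $\cocode_{0}\di\LIN_{0}$, $\cocode_{0}\di\HBC_{0}^{\seq}$, and $\LIN_{0}\di\cocode_{0}$ matches the paper's proof in substance: the first two are routine given $(\exists^{2})$ and the sequential Heine--Borel theorem available in $\WKL_{0}$, and for the third the paper likewise applies $\LIN_{0}$ to an isolated, spread-out copy of $A$ with a gauge small enough that each ball meets the set only in its own centre (via $\accu$ and the witnessing functional of \eqref{dist2}), so that the returned centres enumerate the set.

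The implication $\HBC_{0}^{\seq}\di\cocode_{0}$ is where you both diverge from the paper and have a genuine gap. Your family $\mathcal{G}_{D}$ does \emph{not} cover $[0,1]$ ``exactly when some point of $A''$ lies in a slight enlargement of $D$'': two points $c_{1},c_{2}\in A''$ flanking $D$ from either side, each outside any fixed enlargement of $D$, can still have $B(c_{1},|D|)\cup B(c_{2},|D|)$ cover the middle third of $D$ that your rational background leaves exposed (take $c_{1}$ just to the left of $D$ and $c_{2}$ just to the right), so the covering bit you extract does not characterise the presence of a point in $D$. Repairing this forces exactly the slab-and-scale bookkeeping you defer, and even then the dyadic search delivers each point of $A''$ only as a limit of flagged intervals, whose exact identification as a member of $A''$ still requires an argument. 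All of this is unnecessary: the paper's construction assigns to each $n$ a family $A_{n}$ containing \emph{at most one} interval, namely the one with midpoint $x$ and total length $4\max(|1-x|,x)\geq 2$ for the unique $x\in A$ with $Y(x)=n$ (if any). Such an interval covers $[0,1]$ by itself, so $A_{n}$ covers $[0,1]$ iff $n$ is in the range of $Y$ on $A$, and the finite sequence $b_{n}$ returned by $\HBC_{0}^{\seq}$ then literally hands you that interval, whose midpoint is the desired element of $A$. The principle is thereby used not merely as a one-bit covering oracle but as a supplier of witnesses, and no dyadic recovery is needed.
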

\begin{proof}
The implication $\LIN_{0}\leftarrow \cocode_{0}$ is trivial, while the reversal follows 
from $\LIN_{0}\di \accu$ which in turn follows from applying $\LIN_{0}$ to the covering provided by $\frac{1}{2^{G(x)+1}}$ as in \eqref{dist2}. 
The implication $ \cocode_{0}\di  \HBC_{0}^{\seq}$ is also straightforward as $\cocode_{0}$ allows us to convert
countable sets into sequences.  The usual second-order proof from \cite{simpson2}*{IV.1} in $\WKL$ now yields $\HBC_{0}$, while \cite{yokoyamaphd}*{Theorem 2.7} yields the sequential version, also working in $\WKL_{0}$.  

\smallskip

Finally, to obtain $\HBC_{0}^{\seq}\di \cocode_{0}$, fix a set $A\subset [0,1]$ with $Y:[0,1]\di \N$ injective on $A$.  Now define the sequence $(A_{n})_{n\in \N}$ in $\R^{2}$ as follows:
\[\textstyle
(a, b)\in A_{n}\asa \big[ \frac{a+b}{2}\in A\wedge Y(\frac{a+b}{2})=n \wedge b-a= 4 \max\big( |1-\frac{a+b}{2}|, \frac{a+b}{2}  \big) \big].
\]
By definition, each $A_{n}$ has at most one element and the union is countable as $\cup_{n\in \N}A_{n}$ is a variation of $A$. 
Let $(b_{n})_{n\in \N}$ be as provided by $\HBC_{0}^{\seq}$ and note that 
\[\textstyle
(\exists x\in A)(Y(x)=n)\asa (\exists (a, b)\in b_{n})(Y(\frac{a+b}{2})=n), 
\]
which immediately yields $\cocode_{0}$, and we are done. 
\end{proof}
Note that the formulation of $\HBC_{0}^{\seq}$ avoids the countable union theorem, which happens
to be the topic of Section~\ref{heli2}.  Theorem \ref{jeffreys} also has a certain robustness: the second equivalence still goes through if we let $(b_{n})_{n\in \N}$ be a sequence of non-empty finite sets, while assuming $\cocode_{1}$.  Moreover, we believe that many sequential versions of theorems are equivalent to $\cocode_{0}$, like e.g.\ $\ADS$ and $\RT_{2}^{2}$ from the RM zoo (see \cite{dsliceke}).
An exception is $\cloq'$, as shown in Section \ref{heli2}. 

\smallskip

Finally, by Theorem \ref{jeffreys}, the general Lindel\"of lemma for \emph{any} set $A\subset \R$ is quite explosive, yielding $\SIX$ when combined with $\FIVE^{\omega}$. 
Nonetheless, we show in the next section that this general version is still provable from $\BOOT+\QFAC^{0,1}$. 
%
%
%
%

\subsubsection{A general Lindel\"of lemma}\label{link}
We show that a most general formulation of the Lindel\"of lemma still follows from $\BOOT$.  
We have established a similar result for the Heine-Borel theorem for uncountable coverings of closed sets in \cite{dagsamVII}*{Theorem~4.5}.
We note that Lindel\"of proves his eponymous lemma for \emph{any} set in $\R^{n}$ in \cite{blindeloef}.  
\begin{princ}[$\LIN$]\label{LIN}
For any $G^{2}$ and $D\subseteq \N^{\N}$, there is $(f_{n})_{n\in \N}$ in $D$ such that $\cup_{n\in \N} [\overline{f_{n}}G(f_{n})] $ covers $D$. 
\end{princ}
The following theorem is the main result of this section. 
\begin{thm}
The system $\ACAo+\QFAC^{0,1}+\BOOT$ proves $\LIN$.
\end{thm}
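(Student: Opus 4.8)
The plan is to use $\BOOT$ to collect the finite ``stems'' $\overline{f}G(f)$ that actually occur inside $D$, then use $\QFAC^{0,1}$ to choose a genuine witness in $D$ for each such stem; the resulting countable family of basic neighbourhoods covers $D$ because every $g\in D$ lands in the neighbourhood determined by its own stem. First I would fix $G^{2}$ and a set $D\subseteq\N^{\N}$, the latter presented (as usual) by a characteristic function, and recall that over $\RCAo$ the axiom $\BOOT$ proves $(\exists^{2})$, so all the codings and case distinctions below are legitimate. Coding finite sequences of naturals by naturals in the standard way and writing $\overline{f}G(f)$ for the initial segment of $f$ of length $G(f)$, define a functional $Y^{2}$ by setting $Y(f,\sigma)=0$ exactly when $f\in D$ and $\overline{f}G(f)$ equals the sequence coded by $\sigma$, and $Y(f,\sigma)=1$ otherwise. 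This $Y$ is definable from $G$ and the characteristic function of $D$, so applying $\BOOT$ to it yields a set $X\subseteq\N$ with
\[
\sigma\in X \leftrightarrow (\exists f\in D)(\overline{f}G(f)=\sigma)\quad\text{for all }\sigma.
\]
Thus $X$ is precisely the set of codes of stems realised inside $D$.

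Next I would invoke choice. The formula $\sigma\notin X \vee (f\in D \wedge \overline{f}G(f)=\sigma)$ is quantifier-free in the parameters $X$, $G$ and the characteristic function of $D$, and $(\forall\sigma)(\exists f\in\N^{\N})$ of it holds: for $\sigma\in X$ the second disjunct is witnessed by the very $f$ guaranteed by $\BOOT$, while for $\sigma\notin X$ the first disjunct holds outright (with $f$ arbitrary). Hence $\QFAC^{0,1}$ supplies a functional $\Phi:\N\di\N^{\N}$ such that $\Phi(\sigma)\in D$ and $\overline{\Phi(\sigma)}G(\Phi(\sigma))=\sigma$ whenever $\sigma\in X$. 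Assuming, as we may, that $D$ is inhabited by some fixed $f^{*}$, I would define $f_{n}:=\Phi(n)$ if $n\in X$ and $f_{n}:=f^{*}$ otherwise; using $(\exists^{2})$ to decide $n\in X$, this is a genuine sequence lying in $D$.

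It then remains to verify that $\cup_{n\in\N}[\overline{f_{n}}G(f_{n})]$ covers $D$. Given $g\in D$, put $\sigma_{g}:=\overline{g}G(g)$; then $\sigma_{g}\in X$, witnessed by $g$ itself, so $f_{\sigma_{g}}=\Phi(\sigma_{g})$ satisfies $\overline{f_{\sigma_{g}}}G(f_{\sigma_{g}})=\sigma_{g}=\overline{g}G(g)$. In particular $G(f_{\sigma_{g}})=G(g)$ and $g$ agrees with $f_{\sigma_{g}}$ on the first $G(f_{\sigma_{g}})$ values, i.e.\ $g\in[\overline{f_{\sigma_{g}}}G(f_{\sigma_{g}})]$. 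Since $\sigma_{g}\in X$ guarantees $f_{\sigma_{g}}$ occurs in the sequence, $g$ is covered, and $(f_{n})_{n\in\N}$ is as required by $\LIN$.

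The one delicate point, and the step I expect to be the main obstacle, is ensuring that the matrix to which $\QFAC^{0,1}$ is applied is genuinely quantifier-free; this is exactly why the intermediate projection must first be produced as an honest type-one set $X$ via $\BOOT$, rather than being left as an unbounded existential over $\N^{\N}$. Once $X$, $G$, and the characteristic function of $D$ are available as parameters, the membership and stem-equality tests become decidable and the choice step is routine. A minor secondary point is the harmless assumption that $D$ is inhabited (otherwise there is no sequence ``in $D$'' and the statement is read vacuously), which the default value $f^{*}$ handles.
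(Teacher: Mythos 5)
Your argument follows essentially the same route as the paper's own proof: apply $\BOOT$ to form the set $X$ of initial segments $\overline{f}G(f)$ realised by some $f\in D$, then apply $\QFAC^{0,1}$ to a quantifier-free matrix to extract a witness in $D$ for each such segment, and the covering verification is exactly as in the paper. The one incorrect assertion is that $\BOOT$ proves $(\exists^{2})$ over $\RCAo$: it does not, since the $\ECF$-model over a model of $\ACA_{0}$ satisfies $\BOOT$ (recall that $\ECF$ maps $\BOOT$ to $\ACA_{0}$) while $(\exists^{2})$ fails there, all functionals being continuous. Fortunately this claim is inessential to your proof: the definition of $Y$, the matrix fed to $\QFAC^{0,1}$, and the case split defining $f_{n}$ from $\Phi$ and $X$ all involve only quantifier-free conditions (membership in the type-one set $X$ is decidable outright), so $(\exists^{2})$ is never actually used. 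The paper instead handles $\neg(\exists^{2})$ as a separate case via continuity of all functionals; your version indicates that this case split is dispensable for this particular implication, but you should delete the false claim rather than lean on it.
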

\begin{proof}

\smallskip

Fix a non-empty set $D\subset \N^{\N}$ and $G^{2}$ and let $(\sigma_{n})_{n\in \N}$ be a list of all finite sequences.  
Use $\BOOT$ to define $X\subset\N$ such that
\be\label{fril}
n \in X\asa (\exists f\in D )\big( f \in [\sigma_{n}] \wedge \sigma_{n} =_{0^{*}}\overline{f}G(f)\big).
\ee
Define $\tau_{0}$ as $\sigma_{n_{0}}$ where $n_{0}:=(\mu n)(n \in X)$, and define $\tau_{n+1}$ as $\sigma_{n+1}$ if $n+1\in X$, and $\tau_{n}$ otherwise.  
Then $\cup_{n\in \N}[\tau_{n}]$ also covers $D$, but we still need to `identify' the associated $f\in D$ from \eqref{fril}. To this end, apply $\QFAC^{0,1}$ to 
\[
(\forall n\in X)(\exists f\in D)( f \in [\sigma_{n}] \wedge \sigma_{n} =_{0^{*}} \overline{f}G(f)).
\]
The resulting sequence provides the countable sub-covering as required by the conclusion of Principle \ref{LIN}.
\end{proof}
As shown in \cites{dagsamIII, dagsamV}, the Lindel\"of lemma for the full Baire space yields $\FIVE$ when combined with $(\exists^{2})$. 
Moreover, by Theorem \ref{jeffreys}, $\LIN\di \cocode_{0}$ is immediate, implying that that $\FIVE^{\omega}+\LIN$ proves $\SIX$. We also have
\[
[\BOOT+\QFAC^{0,1}]\di \LIN\di \BWC_{0}\di \BWC_{1}.
\]
Finally, since Baire space is not $\sigma$-compact, we believe the use of countable choice in the previous proof to be essential. 

\subsubsection{Uncountable coverings}\label{link2}
In this section, we connect $\HBT$ and related principles to $\BWC_{0}$ as sketched in Section \ref{kintro}

\smallskip

First of all, in more detail, our main result is $\HBU\di \BWC_{0}$ assuming an extra axiom $\A_{0}$ introduced in \cite{samph, sahotop} and discussed below. 
This implication is established using the intermediate principle $\SSEP$ as in Principle \ref{septisch}.  The latter is the third-order counterpart of the $\Sigma_{1}^{0}$-separation principle, which is equivalent to $\WKL_{0}$ by \cite{simpson2}*{IV.4.4}.   
Since $\HBU$ is the higher-order counterpart of $\WKL_{0}$, one expects $\HBU\asa \SSEP$, which is indeed proved in Theorem \ref{timtam}, also assuming $\A_{0}$.  
Regarding \eqref{darfl}, weakening $\A_{0}$ is possible as in \eqref{kru}.  
We note that $\ECF$ maps both $\HBU$ and $\SSEP$ to $\WKL_{0}$, while $\A_{0}, \BWC_{0},\BWC_{1}$ are trivial under $\ECF$.
Moreover, a version of $\A_{0}$ turns out to be equivalent to $\cocode_{0}$ by Corollary \ref{dikker}.

\smallskip

Secondly, we have previously considered a separation principle in connection to $\HBU$ in \cite{samph}, namely as follows. 
\begin{princ}[$\SSEP$]\label{septisch} For $i=0,1$, $Y_{i}^{2}$, and $\varphi_{i}(n)\equiv (\exists f_{i}\in \N^{\N})(Y_{i}(f_{i}, n)=0)$, 
\[
(\forall n\in \N)(\neg\varphi_{0}(n)\vee \neg \varphi_{1}(n))\di (\exists Z\subset \N)(\forall n\in \N)\big[\varphi_{0}(n)\di n\in Z \wedge \varphi_{1}(n)\di n\not\in Z  \big].
\]
\end{princ}
The following theorem implies that $\FIVE^{\omega}+\SSEP$ proves $\SIX$, which also follows immediately from \cite{simpson2}*{VII.6.14}.
\begin{thm}\label{brilli}
The system $\ACAo$ proves $\SSEP\di \cocode_{0}$.
\end{thm}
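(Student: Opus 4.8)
The plan is to invoke the \emph{excluded middle trick} of Remark \ref{LEM}: in case $\neg(\exists^{2})$ all functions on $\R$ are continuous and $\cocode_{0}$ holds trivially by Observation \ref{fruit}, so we may assume $(\exists^{2})$ throughout. Granting $(\exists^{2})$ we may, as noted in Remark \ref{LEM}, replace $[0,1]$ by $2^{\N}$ and pass to binary representations; thus a nonempty countable $A\subseteq 2^{\N}$ is given together with $Y:2^{\N}\di\N$ injective on $A$, and membership `$f\in A$' is decidable from $\exists^{2}$ by Definition \ref{openset}. The guiding idea is that $\SSEP$, although only a \emph{separation} principle, suffices to read off the unique witness to `$n$ lies in the range of $Y$ on $A$', and that this candidate witness can then be \emph{verified} using $\exists^{2}$.

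Concretely, for $n,k\in\N$ I would consider the two conditions
\[\varphi_{0}(\langle n,k\rangle)\equiv(\exists f\in 2^{\N})\big(f\in A\wedge Y(f)=n\wedge f(k)=1\big),\]
\[\varphi_{1}(\langle n,k\rangle)\equiv(\exists f\in 2^{\N})\big(f\in A\wedge Y(f)=n\wedge f(k)=0\big),\]
whose matrices are decidable given $\exists^{2}$ and which are of the form required by $\SSEP$ (absorbing `$f\in 2^{\N}$' and `$f\in A$' into the functionals $Y_{i}$ via $\exists^{2}$). They are disjoint: were both to hold, injectivity of $Y$ on $A$ would force a single $f\in A$ with $f(k)=1$ and $f(k)=0$. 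Applying $\SSEP$ yields $Z\subset\N$ with $\varphi_{0}(\langle n,k\rangle)\di\langle n,k\rangle\in Z$ and $\varphi_{1}(\langle n,k\rangle)\di\langle n,k\rangle\notin Z$, and for each $n$ I define a candidate $f_{n}\in 2^{\N}$ by $f_{n}(k)=1\asa\langle n,k\rangle\in Z$.

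The crux is the equivalence
\[(\exists f\in 2^{\N})(f\in A\wedge Y(f)=n)\ \asa\ \big(f_{n}\in A\wedge Y(f_{n})=n\big),\]
whose right-hand side is \emph{decidable} from $Z$ and $\exists^{2}$. For the nontrivial implication, suppose the left-hand side holds and let $f^{*}$ be the unique such element (unique by injectivity). For each $k$: if $f^{*}(k)=1$ then $\varphi_{0}(\langle n,k\rangle)$ holds, so $\langle n,k\rangle\in Z$ and $f_{n}(k)=1$; if $f^{*}(k)=0$ then $\varphi_{1}(\langle n,k\rangle)$ holds, so $\langle n,k\rangle\notin Z$ and $f_{n}(k)=0$. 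Hence $f_{n}=f^{*}$, and the right-hand side follows. Consequently the set $X:=\{n:f_{n}\in A\wedge Y(f_{n})=n\}$ exists (it is decidable from $Z$ and $\exists^{2}$), which already gives $\range_{0}$; moreover, for $f\in A$ and $n=Y(f)$ the equivalence forces $f_{n}=f$, so the family $(f_{n})_{n\in X}$ lists exactly the elements of $A$. Using $\mu^{2}$ to pick the least element of the nonempty $X$ and padding with its witness on $\N\setminus X$, I obtain an enumeration of $A$, which is $\cocode_{0}$.

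The main obstacle to flag is precisely the gap between separation and comprehension: $\SSEP$ delivers only a set \emph{separating} $\varphi_{0}$ from $\varphi_{1}$, never genuine comprehension for the $\Sigma_{1}^{1}$ range, and for indices $n$ outside the range the set $Z$ gives no direct information. Injectivity is what rescues the argument, since it both makes $\varphi_{0},\varphi_{1}$ disjoint and forces the bits extracted from $Z$ to reassemble into \emph{the} witness whenever one exists; this is exactly what turns the otherwise $\Sigma_{1}^{1}$ membership `$n\in X$' into a single $\exists^{2}$-verification. I expect the only bookkeeping to require care is checking that the absorbed predicates genuinely yield total type-two functionals $Y_{i}$ under $(\exists^{2})$ and that the coding $\langle n,k\rangle$ reduces the two-parameter separation to the single-parameter form of $\SSEP$.
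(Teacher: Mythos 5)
Your proposal is correct and follows essentially the same route as the paper's proof: the excluded-middle trick to secure $(\exists^{2})$, a single application of $\SSEP$ to a pair of mutually exclusive existential conditions made disjoint by injectivity, bit-by-bit reconstruction of the unique witness from the separating set, and a final $\mu^{2}$-thinning to an enumeration. The only difference is cosmetic: you extract binary digits of elements of $2^{\N}$ where the paper performs the analogous binary search on $[0,1]$ via rational cuts $x>_{\R}q$ versus $x\leq_{\R}q$.
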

\begin{proof}
Let $Y:\R\di \N$ be injective on the non-empty set $A\subset [0,1]$.  
Define the formula $\varphi_{i}(n, q)$ as follows where $n\in \N$ and $q\in \Q\cap (0,1)$:
\be\label{sd}
\varphi_{0}(n, q)\equiv (\exists x\in A)( Y(x)=n  \wedge x>_{\R}q)
\ee
\be\label{card}
\varphi_{1}(n, q)\equiv (\exists x\in A)( Y(x)=n  \wedge x\leq_{\R} q).
\ee
Since $Y$ is injective on $A$, we have $(\forall n\in \N, q\in \Q\cap (0,1))(\neg\varphi_{0}(n, q)\vee \neg \varphi_{1}(n, q))$.
Let $Z\subset \N\times \Q$ be as in $\SSEP$ and note that for $n\in \N, q\in \Q\cap (0,1)$, we have
\be\label{lap}
(n, q)\in Z\di (\forall x\in A)( Y(x)=n  \di x>_{\R} q),
\ee
 \be\label{land}
 (n, q)\not\in Z\di (\forall x\in A)( Y(x)=n  \di x\leq_{\R} q).
\ee
Based on \eqref{lap} and \eqref{land}, define a sequence $(x_{n})_{n\in \N}$ of reals in $[0,1]$ as follows:  $[x_{n}](0)$ is $\frac{1}{2}$ if $(n, \frac{1}{2})\in Z$, and $0$ otherwise; $[x_{n}](k+1)$ is $[x_{n}](k)+\frac{1}{2^{k+1}}$ if $(n, [x_{n}](k)+\frac{1}{2^{k+1}})\in Z$, and $[x_{n}](k)$ otherwise.   Using Feferman's $\mu^{2}$, define $(y_{n})_{n\in \N}$ as a sub-sequence (possibly with repetitions) of $(x_{n})_{n\in \N}$ such that $(\forall n\in \N)(y_{n}\in A)$.
Then $(y_{n})_{n\in \N}$ is an enumeration of $A$ such that for all $k\in \N$:
\be\label{xh}
(\exists x\in A)(Y(x)=k)\asa (\exists m\in\N)(Y(y_{m})=k).  
\ee
Indeed, the reverse implication in \eqref{xh} is immediate by the definition of $(y_{n})_{n\in \N}$.  
For the forward implication if $(\exists x\in A)(Y(x)=k) $ for fixed $k\in \N$, then $Y(x_{k})=k$ and $x_{k}\in A$, by the definition of $(x_{n})_{n\in \N}$.  
Hence, the right-hand side of \eqref{xh} follows, and we observe that $(y_{n})_{n\in \N}$ enumerates $A$.
\end{proof}
\noindent
We can obtain an equivalence via the following `at most one' condition:
\be\label{pirf}
(\forall i\in \{0, 1\})(\forall n\in \N)(\exists\textup{ at most one } f\in 2^{\N})(Y_{i}(f, n)=0). 
\ee
Let $\SSEP^{-}_{C}$ be $\SSEP$ with all type $1$ quantifiers restricted to $2^{\N}$ and \eqref{pirf}.
\begin{cor}\label{somecor}
The system $\ACAo$ proves $\SSEP^{-}_{C}\asa \cocode_{0}$.
\end{cor}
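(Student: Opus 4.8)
The plan is to prove both implications, reducing at the outset to the case $(\exists^{2})$ by the excluded-middle trick of Remark \ref{LEM}. For the forward direction $\SSEP^{-}_{C}\di\cocode_{0}$ I would observe that the instance of $\SSEP$ already used in the proof of Theorem \ref{brilli} is in fact an instance of the restricted principle $\SSEP^{-}_{C}$, so nothing new is required there. For the reverse direction $\cocode_{0}\di\SSEP^{-}_{C}$ I would use $\cocode_{0}$ to enumerate the (countable) set of potential witnesses and then form the separating set using $\exists^{2}$.

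First I would dispose of the $\neg(\exists^{2})$ case for both principles. Under $\neg(\exists^{2})$ every type-two functional is continuous, so for any $Y_{i}^{2}$ satisfying the at-most-one condition and any fixed $n$, if some $f$ had $Y_{i}(f,n)=0$ then local constancy would give a whole basic neighbourhood $[\overline{f}m]$ on which $Y_{i}(\cdot,n)=0$; as $[\overline{f}m]$ is infinite, this contradicts the at-most-one condition. Hence $\varphi_{i}(n)$ fails for every $n$, and $\SSEP^{-}_{C}$ holds trivially (take $Z=\emptyset$). Since $\cocode_{0}$ is likewise trivial under $\neg(\exists^{2})$ by Observation \ref{fruit}, we may assume $(\exists^{2})$ throughout.

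For $\SSEP^{-}_{C}\di\cocode_{0}$, recall that the proof of Theorem \ref{brilli}, given $Y:[0,1]\di\N$ injective on $A$, applies $\SSEP$ to the formulas $\varphi_{0}(n,q)$ and $\varphi_{1}(n,q)$ of \eqref{sd} and \eqref{card}, indexed by pairs $(n,q)$ with $q\in\Q\cap(0,1)$. I would check this is an instance of $\SSEP^{-}_{C}$ on two counts. The type-one witnesses $x\in A\subseteq[0,1]$ may be taken in $2^{\N}$ via their binary representations, which is legitimate under $(\exists^{2})$ by Remark \ref{LEM}; and, crucially, the at-most-one condition is satisfied, since any witness $x$ of $\varphi_{i}(n,q)$ must satisfy $Y(x)=n$, and there is at most one such $x\in A$ by injectivity of $Y$. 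Thus the weaker $\SSEP^{-}_{C}$ already drives that proof and yields $\cocode_{0}$.

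For the reverse implication, let $Y_{0},Y_{1}$ be as in $\SSEP^{-}_{C}$ with $\varphi_{i}(n)\equiv(\exists f\in 2^{\N})(Y_{i}(f,n)=0)$ disjoint. I would set $A_{0}:=\{f\in 2^{\N}:(\exists n\in\N)(Y_{0}(f,n)=0)\}$ and define $Z_{0}(f):=(\mu n)(Y_{0}(f,n)=0)$ using $\exists^{2}$; the at-most-one condition on $Y_{0}$ makes $Z_{0}$ injective on $A_{0}$ (exactly as in the proof of Theorem \ref{aars}), so $A_{0}$ is countable in the sense of Definition \ref{standard}. Applying $\cocode_{0}$ (available for subsets of $2^{\N}$ under $(\exists^{2})$ by Remark \ref{LEM}) gives an enumeration $(f_{k})_{k\in\N}$ of $A_{0}$, whereupon $\exists^{2}$ forms the set $Z:=\{n\in\N:(\exists k\in\N)(Y_{0}(f_{k},n)=0)\}$, which equals $\{n:\varphi_{0}(n)\}$. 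This $Z$ separates: $\varphi_{0}(n)\di n\in Z$ holds by construction, while $\varphi_{1}(n)\di\neg\varphi_{0}(n)\di n\notin Z$ by the disjointness hypothesis. I expect the only real obstacle to be the bookkeeping in the forward direction, namely verifying that the at-most-one condition and the restriction of type-one quantifiers to $2^{\N}$ are genuinely met by the Theorem \ref{brilli} instance, rather than any substantial new argument; the reverse direction is a direct enumerate-then-comprehend construction once $(\exists^{2})$ is in hand.
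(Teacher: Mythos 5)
Your proof is correct and follows essentially the same route as the paper's: the forward direction by observing that the instance used in Theorem \ref{brilli} already satisfies the `at most one' conditions of $\SSEP^{-}_{C}$, and the reverse direction by enumerating $A_{0}$ via $\cocode_{0}$ (with $Z_{0}(f):=(\mu n)(Y_{0}(f,n)=0)$ as the injection) and then forming the separating set with $\exists^{2}$. Your additional care with the $\neg(\exists^{2})$ case is a harmless refinement of what the paper leaves implicit.
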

\begin{proof}
The forward implication is immediate from the proof of the theorem as \eqref{sd} and \eqref{card} satisfy the required `at most one' conditions. 
For the reverse implication, let $Y_{i}^{2}$ be as in $\SSEP^{-}_{C}$ and define $A_{i}:=\{ f\in 2^{\N}: (\exists n\in \N)(Y_{i}(f,n )=0)  \}$.  Clearly, 
this set is countable as $Z_{i}(f):=(\mu n)(Y_{i}(f,n )=0) $ yields an injection on $A_{i}$.  Hence, $\cocode_{0}$ provides an enumeration $(f_{m})_{m\in \N}$ of $A_{0}$, implying
\[
\varphi_{0}(n)\asa (\exists f\in 2^{\N})(Y_{0}(f, n)=0)\asa (\exists m\in \N)(Y(f_{m}, n)=0), 
\]
i.e.\ $\varphi_{0}(n)$ is decidable modulo $\exists^{2}$.  The same holds for $\varphi_{1}(n)$ and we are done.
\end{proof}
Next, as shown in \cite{samph}*{\S5} and \cite{sahotop}, $\HBU$, $\BOOT$, and the Lindel\"of lemma are equivalent to elegant fragments of the \emph{neighbourhood function principle} $\NFP$ from \cite{troeleke1}.  
In the same way as $\Delta_{1}^{0}$-comprehension is included in $\RCA_{0}$, the RM of $\NFP$ warrants a base theory that includes the following fragment of $\NFP$, as discussed \emph{at length and in minute detail} in \cite{samph}*{\S5} and \cite{sahotop}*{\S3.5}.
\bdefi[$\textsf{A}_{0}$]\label{froem}
For $Y^{2}$ and $A(\sigma^{0^{*}})\equiv (\exists f\in 2^{\N})(Y(f, \sigma)=0)$, we have
\[
(\forall f\in \N^{\N})(\exists n\in \N)A(\overline{f}n)\di (\exists \Phi^{2})(\forall f\in \N^{\N})A(\overline{f}\Phi(f)).
\]
\edefi
Recall the equivalence from \cite{simpson2}*{X.4.4} between $\Sigma_{1}^{0}$-induction and bounded $\Sigma_{1}^{0}$-comprehension.  
As noted above, $\IND_{0}$ occupies the same category as the latter axiom, while an equivalence between $\HBU$ and $\SSEP$ needs \emph{bounded separation}, as follows.  
The axiom `\textsf{bounded}-$\SSEP$' is $\SSEP$ weakened such that for any $k\in \N$:
\[
(\forall n\leq k)(\neg\varphi_{0}(n)\vee \neg \varphi_{1}(n))\di (\exists Z\subset \N)(\forall n\leq k)\big[\varphi_{0}(n)\di n\in Z \wedge \varphi_{1}(n)\di n\not\in Z  \big].
\]
Clearly, \textsf{bounded}-$\SSEP$ only provides a finite/bounded fragment of the separating set from $\SSEP$, and the former follows from the induction axiom.  
We now have the following theorem which establishes \eqref{darfl}.   
\begin{thm}\label{timtam}
The system $\ACAo+\A_{0}$ proves $[\HBU+\textup{ \textsf{bounded}-$\SSEP$}]\asa \SSEP $; the reverse implication holds over $\ACAo$.
\end{thm}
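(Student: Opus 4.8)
The plan is to read Theorem~\ref{timtam} as the higher-order lift of the classical equivalence between $\WKL_{0}$ and $\Sigma^{0}_{1}$-separation (\cite{simpson2}*{IV.4.4}), with $\A_{0}$ playing the role that $\Delta_{1}^{0}$-comprehension plays in the second-order base theory and \textsf{bounded}-$\SSEP$ the role of bounded $\Sigma_{1}^{0}$-comprehension. Throughout I would work with the Cantor-space reformulation $\HBU_{\textsf{c}}$, available by \cite{dagsamIII}*{Theorem~3.3}, and split on $(\exists^{2})\vee\neg(\exists^{2})$ as in Remark~\ref{LEM}. In the case $\neg(\exists^{2})$ all functionals are continuous, so each $\varphi_{i}$ is genuinely $\Sigma^{0}_{1}$ (only a finite part of a witness is consulted) and both implications collapse to the classical equivalence: for the forward direction $\HBU$ proves $\WKL$ and $\WKL\di\Sigma^{0}_{1}$-separation yields $\SSEP$, while for the reverse direction $\SSEP$ is exactly $\Sigma^{0}_{1}$-separation, which delivers $\WKL$-style compactness of $2^{\N}$ for continuous covers, i.e.\ $\HBU_{\textsf{c}}$. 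Hence the real content lies in the case $(\exists^{2})$.

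For the reverse implication $\SSEP\di[\HBU+\textsf{bounded}\textup{-}\SSEP]$ over $\RCAo$, the fragment \textsf{bounded}-$\SSEP$ is immediate: given a bounded instance with disjointness only for $n\leq k$, I would replace $Y_{i}$ by $\tilde{Y}_{i}$ with $\tilde{Y}_{i}(f,n):=Y_{i}(f,n)$ for $n\leq k$ and $\tilde{Y}_{i}(f,n):=1$ otherwise, so that global disjointness holds trivially and $\SSEP$ returns a set separating up to $k$. For $\SSEP\di\HBU_{\textsf{c}}$ I would lift the proof of $\Sigma^{0}_{1}$-separation $\di\WKL$: assuming the canonical cover of $2^{\N}$ induced by $G^{2}$ admits no finite sub-cover, I would use $\SSEP$ to make, at each node $\sigma$ along a branch, the decision of which child $\sigma i$ still fails to admit a finite sub-cover, thereby building a point $h\in 2^{\N}$ lying in no basic set $[\overline{g}G(g)]$, contradicting that every $g$ meets its own stem $\overline{g}G(g)$. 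The competing $\Sigma$-predicates must be phrased so that global disjointness holds, which is arranged by only ever querying $\sigma$ on the branch already known to be uncovered.

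The forward implication is the crux and is where $\A_{0}$ is indispensable. First I would reduce, using $\exists^{2}$, to witnesses ranging over $2^{\N}$: a surjection $2^{\N}\di\N^{\N}$ definable from $\exists^{2}$ (read off maximal blocks of $1$'s, with a default where there are finitely many $0$'s) lets me replace $\varphi_{i}(n)\equiv(\exists f\in\N^{\N})(Y_{i}(f,n)=0)$ by an equivalent predicate whose inner quantifier ranges over $2^{\N}$, which is the format required by both $\A_{0}$ and $\HBU_{\textsf{c}}$. The classical proof of $\WKL\di\Sigma^{0}_{1}$-separation builds the \emph{decidable} tree $T$ of binary $\sigma$ with $(\exists k<|\sigma|)\theta_{0}(n,k)\di\sigma(n)=1$ (and symmetrically for $\theta_{1}$) and reads a separating set off an infinite path; the obstruction in our setting is that the node conditions now mention the $\Sigma^{1}_{1}$-shaped predicates $\varphi_{i}$, which $\exists^{2}$ cannot decide. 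The plan is to use $\A_{0}$ to supply a modulus functional $\Phi$ detecting witnesses from finite initial segments, and then $\HBU_{\textsf{c}}$ to turn this pointwise modulus into a \emph{finite} bound on the relevant search, so that along each coordinate the existence of a witness becomes a bounded, $\exists^{2}$-decidable question; \textsf{bounded}-$\SSEP$ then provides consistent separating choices on each finite range of coordinates. Assembling these yields the separating set $Z$, and disjointness $(\forall n)(\neg\varphi_{0}(n)\vee\neg\varphi_{1}(n))$ guarantees $\varphi_{0}(n)\di n\in Z$ and $\varphi_{1}(n)\di n\notin Z$, since a witness for one side certifies the absence of a witness for the other.

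The main obstacle I anticipate is exactly this conversion of the non-effective search $(\exists f\in 2^{\N})(Y_{i}(f,n)=0)$ into a bounded one: the modulus produced by $\A_{0}$ is only pointwise over the non-compact space of witness-candidates, and it is the compactness supplied by $\HBU_{\textsf{c}}$ that must be invoked to extract a uniform, coordinate-wise bound, while coordinates admitting \emph{no} witness—where the bar hypothesis of $\A_{0}$ fails—must be routed around by exploiting disjointness rather than totality. Getting the bookkeeping of this interplay correct, so that the resulting approximation to $T$ is genuinely a set and genuinely infinite, is the delicate step; it is precisely why neither $\HBU$ nor $\A_{0}$ alone suffices and why \textsf{bounded}-$\SSEP$ is needed to glue the finite stages.
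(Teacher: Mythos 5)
Your handling of the $\neg(\exists^{2})$ case and your reverse direction are essentially the paper's (the paper runs $\SSEP\di\HBU$ on $[0,1]$ by interval halving, indexing the $\SSEP$-instance by rationals $q$ with $\varphi_{0}(q)\equiv$ ``$[0,q]$ is finitely coverable'' and $\varphi_{1}(q)\equiv$ ``$[q,1]$ is finitely coverable'', which is precisely the device needed to make the disjointness hypothesis hold \emph{globally}; your plan of ``only ever querying $\sigma$ on the branch already known to be uncovered'' does not literally work, since $\SSEP$ must be applied to a single disjoint family fixed in advance, but the left/right indexing repairs this). The genuine gap is in the forward direction. You propose to apply $\A_{0}$ to the \emph{witness} space so that ``$(\exists f\in 2^{\N})(Y_{i}(f,n)=0)$ becomes a bounded, $\exists^{2}$-decidable question''. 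This cannot work: $Y_{i}(f,n)=0$ is a single application of a possibly discontinuous type-two functional, so no finite initial segment of $f$ certifies or refutes it, and the bar hypothesis of $\A_{0}$ fails on the witness space (as you yourself note without resolving it). Worse, if this step succeeded it would decide $\varphi_{i}(n)$ uniformly in $n$, i.e.\ it would derive a Cantor-space version of $\BOOT$ from $\HBU+\A_{0}+\textsf{bounded}$-$\SSEP$, which overshoots: $\SSEP$ asks only for a separating set, never for deciding either side, and under $\ECF$ your claim would turn $\WKL_{0}$ into $\ACA_{0}$.

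The paper applies $\A_{0}$ to a different space, where the bar structure genuinely exists. Argue by contradiction: assume $\neg\SSEP$ and let $A(\overline{Z}n)$ be the separation condition $(\varphi_{0}(n)\di n\in Z)\wedge(\varphi_{1}(n)\di n\notin Z)$, which depends on $Z$ only through the single bit ``$n\in Z$'', hence only on a finite initial segment of the characteristic function of $Z$. Then $\neg\SSEP$ states $(\forall Z\subset\N)(\exists n\in\N)\neg A(\overline{Z}n)$, and $\neg A$ is a $\Sigma$-formula, so $\A_{0}$ yields $G:2^{\N}\di\N$ with $(\forall Z)\neg A(\overline{Z}G(Z))$. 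Applying $\HBU$ to the canonical covering induced by $G$ gives a finite subcover and hence a uniform bound $n_{0}$ with $(\forall Z)(\exists n\leq n_{0})\neg A(\overline{Z}n)$, while $\textsf{bounded}$-$\SSEP$ produces $Z_{0}$ with $A(\overline{Z_{0}}m)$ for all $m\leq n_{0}+1$ --- a contradiction. So the compactness machinery is aimed at the space of \emph{candidate separating sets}, not at the witnesses, and the predicates $\varphi_{i}$ are never decided; this is the structural idea missing from your sketch.
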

\begin{proof}
%
%
Assume $\HBU$ and suppose $\neg\SSEP$.  Fix $Y_{0}, Y_{1}$ as in the latter and let $A(\overline{Z}n)$ be the following, i.e.\ the formula in square brackets in $\SSEP$:
\be\label{ds}
(\varphi_{0}(n)\di n\in Z) \wedge (\varphi_{1}(n)\di n\not\in Z)  ,
\ee
where the notation `$\overline{Z}n$' in $A(\overline{Z}n)$ is justified by noting that the set $Z$ is only invoked in \eqref{ds} in the form `$n\in Z$'.
By assumption, we have $(\forall Z\subset \N)(\exists n\in \N)\neg A(\overline{Z}n)$, which has the right form to apply $\A_{0}$.  
Hence, there is $G:2^{\N}\di \N$ such that $(\forall Z\subset \N)\neg A(\overline{Z}G(Z))$.   Apply $\HBU$ to obtain $f_{1}, \dots, f_{k}\in 2^{\N}$, a finite sub-covering of the canonical covering $\cup_{f\in 2^{\N}}[\overline{f}G(f)]$.
Define $n_{0}:=\max_{i\leq k}G(f_{i})$ and note that $(\forall Z\subset \N)(\exists n\leq n_{0})\neg A(\overline{Z}n)$.
However, \textsf{bounded}-$\SSEP$ provides a set $Z_{0}\subset \N$ such that for $m\leq n_{0}+1$, we have $A(\overline{Z_{0}}m)$, a contradiction, and we are done.   
%

\smallskip

For the reverse implication, $\SSEP$ implies \textsf{bounded}-$\SSEP$.  Now assume $\SSEP$ and suppose $\HBU$ fails for $\Psi_{0}:[0,1]\di \R^{+}$.  
Consider the following for $q\in \Q\cap (0,1)$:
\[
\varphi_{0}(q)\equiv (\exists w^{1^{*}})\big[ (\forall i<|w|)(w(i)\in [0,1])\wedge [0, q] \subset \cup_{i<|w|}I_{w(i)}^{\Psi_{0}} \big],
\]
\[
\varphi_{1}(q)\equiv (\exists v^{1^{*}})\big[ (\forall j<|v|)(v(j)\in [0,1])\wedge [q,1] \subset \cup_{j<|v|}I_{v(j)}^{\Psi_{0}} \big],
\]
where $(\forall q\in \Q\cap (0,1))(\neg\varphi_{0}(q)\vee \neg\varphi_{1}(q))$ by assumption.  Let $Z_{0}\subset \N$ be as provided by $\SSEP$ and define a real $x_{0}\in [0,1]$ as follows.  
Define $[x_{0}](0)$ as $\frac{1}{2}$ if $\frac{1}{2}\in Z_{0}$, and $0$ otherwise; define $[x_{0}](k+1)$ as $[x_{0}](k)+\frac{1}{2^{k+1}}$ if $[x_{0}](k)+\frac{1}{2^{k+1}}\in Z$, and $[x_{0}](k)$ otherwise. 
By definition, the real $x_{0}$ satisfies the following:
\be\label{wrong}\textstyle
(\forall w^{1^{*}})\big[(\forall i<|w|)(w(i)\in [0,1])\di \big([x_{0}](k), [x_{0}](k)+\frac{1}{2^{k+1}}\big)\not\subset \cup_{i<|w|}I_{w(i)}^{\Psi_{0}}  \big], 
\ee
which immediately yields a contradiction as $\big([x_{0}](k), [x_{0}](k)+\frac{1}{2^{k+1}}\big)\subset I_{x_{0}}^{\Psi_{0}}$ for $k$ large enough, and we are done.
\end{proof}
\begin{cor}\label{drapa}
The system $\ACAo$ proves $[\HBT+\textup{\textsf{bounded}-$\SSEP$}]\asa \SSEP$.
\end{cor}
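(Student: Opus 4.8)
The plan is to prove both directions, with essentially all the work in the forward implication $[\HBT+\textsf{bounded-}\SSEP]\di \SSEP$. For the reverse implication I would simply observe that $\SSEP$ trivially entails its bounded variant, while Theorem~\ref{timtam} already gives $\SSEP\di \HBU$ over $\RCAo$; combining this with $\HBU\asa\HBT$ (as in \cite{sahotop}, available in the present base theory) yields $\SSEP\di[\HBT+\textsf{bounded-}\SSEP]$ over $\RCAo$.

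For the forward direction I would first apply the excluded middle trick of Remark~\ref{LEM}. In case $\neg(\exists^{2})$ all functions are continuous, so $\HBT$ collapses to the Heine--Borel theorem for countable coverings and hence to $\WKL_{0}$, while $\SSEP$ collapses to $\Sigma_{1}^{0}$-separation, equivalent to $\WKL_{0}$ by \cite{simpson2}*{IV.4.4}; thus $\HBT\di\SSEP$ in this case. So assume $(\exists^{2})$, fix $Y_{0},Y_{1}$ as in $\SSEP$ with $(\forall n)(\neg\varphi_{0}(n)\vee\neg\varphi_{1}(n))$, and suppose toward a contradiction that no separating set exists. Classically this means every $Z\subseteq\N$ \emph{fails} at some $n$, where ``$Z$ fails at $n$'' abbreviates $[\varphi_{0}(n)\wedge n\notin Z]\vee[\varphi_{1}(n)\wedge n\in Z]$.

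The crux --- and the reason the axiom $\A_{0}$ needed in Theorem~\ref{timtam} can be dropped --- is to build a Cantor-space covering whose radius function is definable from $\exists^{2}$ alone, achieved by forcing each center to carry its own witness in its tail. Working with the Cantor-space form of the Heine--Borel theorem (justified by $\HBU\asa\HBU_{\textsf{c}}$ from \cite{dagsamIII}*{Theorem 3.3} together with $\HBU\asa\HBT$, and by the freedom to swap $2^{\N}$ for $[0,1]$ noted in Remark~\ref{LEM}), I would define $G:2^{\N}\di\N\cup\{-1\}$ by letting $G(g)$ be the least $n+1$ such that the tail $\lambda k.\, g(n+1+k)$ witnesses that $\overline{g}(n+1)$ fails at $n$, i.e. $[g(n)=0\wedge Y_{0}(\lambda k.g(n{+}1{+}k),n)=0]\vee[g(n)=1\wedge Y_{1}(\lambda k.g(n{+}1{+}k),n)=0]$, and $G(g)=-1$ otherwise. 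Since the inner condition is decidable in $\exists^{2}$, the search is arithmetical and $G$ exists over $\RCAo+(\exists^{2})$. I would then check that $\cup_{G(g)\geq 0}[\overline{g}G(g)]$ covers $2^{\N}$: given $Z$, it fails at some $n$ by assumption, so $\varphi_{0}(n)$ or $\varphi_{1}(n)$ supplies a witness $f$, and the center $g:=\overline{Z}(n+1)*f$ produces an active cylinder containing $Z$ (possibly of shorter length, which still contains $Z$).

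Finally, apply $\HBT$ to obtain a finite subcovering $g_{1},\dots,g_{k}$, set $n_{0}:=\max_{j\le k}(G(g_{j})-1)$, and note that each $Z$ lies in some $[\overline{g_{j}}G(g_{j})]$ and hence fails at $G(g_{j})-1\leq n_{0}$, because failure at a fixed position depends only on the value $Z$ takes there (fixed by the cylinder) together with $\varphi_{0}/\varphi_{1}$, which is witnessed by the tail of $g_{j}$. Thus $(\forall Z)(\exists n\leq n_{0})[Z\text{ fails at }n]$. Now \textsf{bounded-}\SSEP{} applied with $k=n_{0}$ yields a set $Z_{0}$ separating $\varphi_{0},\varphi_{1}$ on $\{0,\dots,n_{0}\}$, i.e. one failing at no $n\leq n_{0}$, contradicting the previous sentence; hence a separating set exists and $\SSEP$ follows. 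The main obstacle is the coverage verification: one must keep $G$ definable from $\exists^{2}$ (which forces the witnesses into the \emph{tails} of the centers rather than into a uniform functional) while still guaranteeing that every point of $2^{\N}$ is caught, and it is exactly the general --- as opposed to canonical --- form of covering permitted by $\HBT$, with most points receiving the empty cylinder, that makes this possible without $\A_{0}$.
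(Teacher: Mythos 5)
Your forward implication is essentially the paper's own argument: the paper likewise drops $\A_{0}$ by pushing the existential witness for ``$Z$ fails at $n$'' into the tail of the binary expansion of the covering point, so that the radius function ($2^{-k}$ for the shortest failing initial segment, $0$ otherwise) is definable from $\exists^{2}$ alone, and then applies $\HBT$ to the resulting general (non-canonical) covering and finishes with \textsf{bounded}-$\SSEP$ exactly as you do. Two repairs are needed in your write-up, both at the level of justification rather than substance. First, the transfer to Cantor space should not be routed through $\HBU\asa\HBU_{\textsf{c}}$ or $\HBU\asa\HBT$: the former concerns \emph{canonical} coverings given by a total $G^{2}$, whereas your $G$ takes the value $-1$ on most points, and the latter equivalence is not available over $\RCAo$ (the paper only has it modulo a fragment of $\A_{0}$). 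The correct move, which the paper makes, is to define $\psi:[0,1]\di\R$ directly from the binary expansion using $\exists^{2}$ and apply $\HBT$ as stated. Second, and for the same reason, the reverse implication should not invoke $\HBU\asa\HBT$ over $\RCAo$; instead one re-runs the second half of the proof of Theorem~\ref{timtam} with $\psi:[0,1]\di\R$ in place of $\Psi_{0}:[0,1]\di\R^{+}$, which goes through verbatim since the final contradiction only needs that $x_{0}$ lies in some non-empty $I_{y}^{\psi}$. With these adjustments your proof coincides with the paper's.
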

\begin{proof}
The reverse implication readily follows from the second part of the proof of the theorem. 
For the forward implication, consider $(\forall Z\subset \N)(\exists n\in \N)\neg A(\overline{Z}n)$ as in the proof of the theorem.
As noted above, we may use $\exists^{2}$ to code $\N\di \N$ sequences as binary sequences.  
Let $Y$ be the characteristic function of the formula obtained by omitting the leading existential quantifiers (over $2^{\N}$) of $\neg A(\sigma)$.  
Define the function $\psi:[0,1]\di \R$ as follows: $\psi(x):=0$ if there is no initial segment $\sigma^{0^{*}}$ of the binary expansion $\sigma*f$ of $x$ such that $Y(f, \sigma)=0$; otherwise $\psi(x):=\frac{1}{2^{k}}$
where $k$ is the length of the shortest such initial segment.  
Then $\psi$ yields a covering of $[0,1]$ to which $\HBT$ applies.   In the same was as in the proof of the theorem, one obtains a contradiction using \textup{\textsf{bounded}-$\SSEP$}.
\end{proof}
It is straightforward to show that $\HBT$ implies the fragment of $\A_{0}$ needed to prove $\HBU\di \HBT$.  
Another interesting exercise is to consider $\A_{0}^{-}$ which is $\A_{0}$ with the extra condition $(\forall \sigma^{0^{*}}\leq_{0^{*}}1)(\exists \textup{ at most one } f\in 2^{\N})(Y(f, \sigma)=0)$.  
Using the above results, one readily shows that over $\ACAo$:
\be\label{kru}
\BOOT\di [\HBU+\A_{0}^{-}]\di \cocode_{0}\di \A_{0}^{-},
\ee
\be\label{sutha}
[\HBU+\textup{ \textsf{bounded}-$\SSEP$}+\A_{0}^{-}]\asa \SSEP.
\ee
What is more important is the following corollary to Theorem \ref{timtam} related to $\A_{0}^{-}$.   Let $\Sigma$-$\NFP_{C}^{-}$ be $\A_{0}^{-}$ with the conclusion strengthened as in $\NFP$, i.e.\ $(\exists \gamma\in K_{0})(\forall f\in 2^{\N})A(\overline{f}\gamma({f}))$. 
Note that `$\gamma\in K_{0}$' is the notation used in $\NFP$ from \cite{troeleke1} for $\gamma^{1}$ being a total RM-code/associate.  
Let $\textup{ \textsf{bounded}-$\SSEP_{C}^{-}$}$ be $\textup{ \textsf{bounded}-$\SSEP$}$ with the same restrictions as $\SSEP^{-}_{C}$. 
\begin{cor}\label{dikker}
The system $\ACAo$ proves $\cocode_{0}\asa [\Sigma\textup{-}\NFP_{C}^{-}+ \textup{ \textsf{bounded}-$\SSEP^{-}_{C}$}]$.
\end{cor}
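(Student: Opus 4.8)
The plan is to prove the two implications separately, reusing the equivalence $\SSEP_{C}^{-}\asa\cocode_{0}$ from Corollary~\ref{somecor} and mimicking the compactness argument in the proof of Theorem~\ref{timtam}. Throughout, Remark~\ref{LEM} lets us assume $(\exists^{2})$: in case $\neg(\exists^{2})$ all functionals on $2^{\N}$ are continuous, and both $\Sigma\textup{-}\NFP_{C}^{-}$ and $\textup{\textsf{bounded}-}\SSEP_{C}^{-}$ reduce to their (provable) second-order analogues exactly as the corresponding principles do in the proofs of Theorem~\ref{timtam} and Corollary~\ref{somecor}. Under $(\exists^{2})$ we may, as in Theorem~\ref{timtam}, replace quantifiers over $\N^{\N}$ by quantifiers over $2^{\N}$. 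The crucial observation is that $(\exists^{2})$ yields $\ACA_{0}$ and hence $\WKL_{0}$, so the fan theorem is available; this is the device that will replace the appeal to $\HBU$ made in Theorem~\ref{timtam}.

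For the forward implication, $\cocode_{0}\di\SSEP_{C}^{-}$ by Corollary~\ref{somecor}, and $\SSEP_{C}^{-}$ trivially implies its bounded version. It remains to derive $\Sigma\textup{-}\NFP_{C}^{-}$. Fix $Y^{2}$ with the `at most one' condition and assume $(\forall f\in 2^{\N})(\exists n)A(\overline{f}n)$ for $A(\sigma)\equiv(\exists g\in 2^{\N})(Y(g,\sigma)=0)$. Form the set $B:=\{\langle\sigma\rangle* g:\sigma\textup{ binary}\wedge Y(g,\sigma)=0\}$; the `at most one' condition makes the map $\langle\sigma\rangle* g\mapsto\sigma$ injective on $B$, so $B$ is countable and $\cocode_{0}$ lists it as a sequence. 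Using $\exists^{2}$ (i.e.\ $\ACA_{0}$) we extract from this enumeration the set $X:=\{\sigma:A(\sigma)\}$ of binary sequences. We then define the associate $\gamma\in K_{0}$ by setting $\gamma(\tau):=|\tau|+1$ when $\tau\in X$ and no proper initial segment of $\tau$ lies in $X$, and $\gamma(\tau):=0$ otherwise; the premise guarantees that for each $f\in 2^{\N}$ some initial segment lies in $X$, so $\gamma$ is a total neighbourhood function witnessing $(\forall f\in 2^{\N})A(\overline{f}\gamma(f))$, as required.

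For the reverse implication it suffices, by Corollary~\ref{somecor}, to derive $\SSEP_{C}^{-}$ from $\Sigma\textup{-}\NFP_{C}^{-}+\textup{\textsf{bounded}-}\SSEP_{C}^{-}$. Fix $Y_{0},Y_{1}$ as in $\SSEP_{C}^{-}$, with $\varphi_{i}(n)\equiv(\exists f\in 2^{\N})(Y_{i}(f,n)=0)$ and $(\forall n)(\neg\varphi_{0}(n)\vee\neg\varphi_{1}(n))$, and suppose towards a contradiction that no separating set exists. Writing $A_{\textup{sep}}(\overline{Z}n)$ for the clause $(\varphi_{0}(n)\di n\in Z)\wedge(\varphi_{1}(n)\di n\notin Z)$, this failure reads $(\forall Z\in 2^{\N})(\exists n)\neg A_{\textup{sep}}(\overline{Z}n)$, exactly as in Theorem~\ref{timtam}. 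The key point is that $\neg A_{\textup{sep}}(\overline{Z}(n+1))$ equals $[n\notin Z\wedge\varphi_{0}(n)]\vee[n\in Z\wedge\varphi_{1}(n)]$, which is of the form $(\exists g\in 2^{\N})(\tilde{Y}(g,\sigma)=0)$ for a functional $\tilde{Y}$ inheriting the `at most one' condition: the string $\sigma$ fixes both $n$ and the bit $\sigma(n)$ deciding $n\in Z$, and each of $\varphi_{0},\varphi_{1}$ has at most one witness. Thus $\Sigma\textup{-}\NFP_{C}^{-}$ applies and yields $\gamma\in K_{0}$ with $(\forall Z\in 2^{\N})\neg A_{\textup{sep}}(\overline{Z}\gamma(Z))$. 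Since $\gamma$ is a total neighbourhood function on the compact space $2^{\N}$ and the fan theorem is available, $\gamma$ is bounded by some $n_{0}$, whence $(\forall Z\in 2^{\N})(\exists n\leq n_{0})\neg A_{\textup{sep}}(\overline{Z}n)$. But $\textup{\textsf{bounded}-}\SSEP_{C}^{-}$ applied with bound $n_{0}$ produces $Z_{0}$ with $A_{\textup{sep}}(\overline{Z_{0}}n)$ for all $n\leq n_{0}$, contradicting the previous line for $Z_{0}$. This contradiction establishes $\SSEP_{C}^{-}$, and $\cocode_{0}$ follows.

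The main obstacle is the reverse implication, specifically extracting a uniform bound $n_{0}$ on the associate $\gamma$: this is precisely the compactness ($\HBU$-type) ingredient that Theorem~\ref{timtam} obtained from $\HBU$, and here it must instead be recovered from the fact that a total neighbourhood function on $2^{\N}$ is bounded, which needs the fan theorem and hence the presence of $(\exists^{2})$. A secondary point requiring care is packaging the disjunctive formula $\neg A_{\textup{sep}}$ as a single existential satisfying the `at most one' condition, so that $\Sigma\textup{-}\NFP_{C}^{-}$ is genuinely applicable; the case split on the bit $\sigma(n)$ makes this routine, but it is exactly where the restriction to the `at most one' fragment is essential.
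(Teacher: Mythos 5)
Your proof is correct and follows essentially the same route as the paper's: the forward direction uses the equivalence of $\cocode_{0}$ with $\BOOT_{C}^{-}$-style decidability of $A(\sigma)$ to build the associate $\gamma\in K_{0}$, and the reverse direction applies $\Sigma$-$\NFP_{C}^{-}$ to $(\forall Z)(\exists n)\neg A(\overline{Z}n)$, bounds the resulting $\gamma$ via $\WKL$/fan-theorem compactness, and closes with \textsf{bounded}-$\SSEP_{C}^{-}$ exactly as in Theorem \ref{timtam}, finishing via Corollary \ref{somecor}. Your explicit verification that the disjunctive formula $\neg A_{\textup{sep}}$ can be packaged as a single existential satisfying the `at most one' condition is a detail the paper leaves implicit, and is a welcome addition.
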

\begin{proof}
The forward implication is straightforward: $\BOOT_{C}^{-}$ makes $A(\sigma)$ from $\Sigma$-$\NFP_{C}^{-}$ decidable, i.e.\ there is $X$, up to coding a subset of $\N$,  such that
\[
(\forall \sigma^{0^{*}}\leq 1)\big[ \sigma \in X\asa (\exists f\in 2^{\N})(Y(f, \sigma)=0) \big].  
\]
Using $\QFAC^{1,0}$ (and induction), we obtain $G^{2}$ such that $(\forall f\in 2^{\N})A(\overline{f}G(f))$, where $G(f)$ is the least such number.  
Clearly, $G^{2}$ has an RM-code, and $\NFP_{C}^{-}$ follows.  

\smallskip

For the reverse implication, we prove $[\Sigma$-$\NFP_{C}^{-}+  \textup{ \textsf{bounded}-$\SSEP^{-}_{C}$}]\di \SSEP_{C}^{-}$ and Corollary \ref{somecor} finishes the proof. 
To obtain $\SSEP_{C}^{-}$, consider $A(\sigma)$ as in \eqref{ds}.   Note that $(\forall Z\subset \N)(\exists n\in \N)\neg A(\overline{Z}n)$ has the right form to apply $\Sigma$-$\NFP_{C}^{-}$.  
The resulting function $\gamma\in K_{0}$ has an upper bound given $\WKL$ by \cite{simpson2}*{IV.2.2}.  Now use $\textup{ \textsf{bounded}-$\SSEP_{C}^{-}$}$ to obtain a contradiction in the same way as in the proof of Theorem \ref{timtam}.
Note that Corollary \ref{somecor} yields $\cocode_{0}$.
\end{proof}
Finally, $\A_{1}$ is $\A_{0}$ but for formulas $A(\sigma^{0^{*}})\equiv (\forall f\in 2^{\N})(Y(f, \sigma)=0)$ and
proves the equivalence between $\accu$ and Principle \ref{akku}.   
The axiom $\A_{1}$ implies that any continuous function on $\N^{\N}$ has an associate/RM-code, as explored in \cite{samph}*{\S5}.

%

\subsubsection{More on separation}
In this section, we show that $\PSEP$, a separation principle much weaker than $\SSEP$, implies $\cocode_{1}$. 
We also obtain an equivalence based on a weakening of $\PSEP$. 

\smallskip

First of all, note that the following principle is readily proved by applying $\QFAC^{0,1}$ to the antecedent (see also \cite{simpson2}*{V.5.7}). 
Theorem \ref{waho} is reminiscent of  the fact that $\Pi_{1}^{1}$-separation implies $\Delta_{1}^{1}$-comprehension. 

\begin{princ}[$\PSEP$] For $i=0,1$, $Y_{i}^{2}$, and $\varphi_{i}(n)\equiv (\forall f_{i}\in \N^{\N})(Y_{i}(f_{i}, n)=0)$, 
\[
(\forall n\in \N)(\neg\varphi_{0}(n)\vee \neg \varphi_{1}(n))\di (\exists Z\subset \N)(\forall n\in \N)\big[\varphi_{0}(n)\di n\in Z \wedge \varphi_{1}(n)\di n\not\in Z  \big].
\]
\end{princ}
\begin{thm}\label{waho}
The system $\ACAo$ proves $\PSEP\di \cocode_{1}$.
\end{thm}
\begin{proof}
Let $Y:\R\di \N$ be bijective on the non-empty set $A\subset [0,1]$.  
Define the formula $\varphi_{i}(n, q)$ as follows where $n\in \N$ and $q\in \Q\cap (0,1)$:
\be\label{sd2}
\varphi_{0}(n, q)\equiv (\forall x\in A)( Y(x)=n  \di x>_{\R}q)
\ee
\be\label{card2}
\varphi_{1}(n, q)\equiv (\forall x\in A)( Y(x)=n  \di x\leq_{\R} q).
\ee
Since $Y$ is bijective on $A$, we have $(\forall n\in \N, q\in \Q\cap (0,1))(\neg\varphi_{0}(n, q)\vee \neg \varphi_{1}(n, q))$.
Let $Z\subset \N\times \Q$ be as in $\PSEP$ and note that for $n\in \N, q\in \Q\cap (0,1)$, we have
\be\label{lap2}
(n, q)\in Z\di (\exists x\in A)( Y(x)=n  \wedge x>_{\R} q),
\ee
 \be\label{land2}
 (n, q)\not\in Z\di (\exists x\in A)( Y(x)=n  \wedge x\leq_{\R} q).
\ee
Now proceed as in the proof of Theorem \ref{brilli} to define an enumeration of $A$. 
\end{proof}
\noindent
Finally, let $\PSEP!$ be $\PSEP$ restricted to $Y_{i}^{2}$ such that 
\be\label{ukeen}
(\forall n\in \N)(\exists! f\in 2^{\N})\big[ Y_{0}(f, n)\ne 0 \vee Y_{1}(f, n )\ne 0\big], 
\ee
and all type $1$ quantifiers restricted to $2^{\N}$.
We have the following corollary.
\begin{cor}
The system $\ACAo$ proves ${\PSEP!}\asa \cocode_{1}$.
\end{cor}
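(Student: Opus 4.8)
The plan is to mirror the structure of Corollary~\ref{somecor}: the forward implication should come essentially for free from the preceding theorem, while the reverse implication should be a short application of $\cocode_{1}$ together with $\exists^{2}$. As always, I would first dispose of the case $\neg(\exists^{2})$. Here $\cocode_{1}$ holds trivially by Observation~\ref{fruit}, while \emph{no} pair $Y_{0}, Y_{1}$ can satisfy the uniqueness precondition \eqref{ukeen}: if all functionals on $2^{\N}$ are continuous, then for fixed $n$ the set $\{f\in 2^{\N}: Y_{0}(f,n)\ne 0\vee Y_{1}(f,n)\ne 0\}$ is clopen, and a clopen singleton is impossible in the perfect space $2^{\N}$. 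Hence \eqref{ukeen} fails for every such $Y_{i}$ and $\PSEP!$ is vacuously true, so the equivalence holds. We may therefore assume $(\exists^{2})$ for the remainder.

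For the forward implication $\PSEP!\di\cocode_{1}$, I would re-run the proof of the preceding theorem, the only new obligation being to check that the instance built there meets the uniqueness restriction of $\PSEP!$. Writing $x_{f}$ for the real coded by $f\in 2^{\N}$ and reading \eqref{sd2}--\eqref{card2} in the form $(\forall f\in 2^{\N})(Y_{i}(f,\langle n,q\rangle)=0)$, one computes that $Y_{0}(f,\langle n,q\rangle)\ne 0\vee Y_{1}(f,\langle n,q\rangle)\ne 0$ holds exactly when $x_{f}\in A$ and $Y(x_{f})=n$. Since $Y$ is a \emph{bijection} on $A$, there is a unique such real, and, using that $(\exists^{2})$ lets us fix a canonical binary code for each real (Remark~\ref{LEM}), a unique such $f$. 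Thus \eqref{ukeen} is satisfied, $\PSEP!$ applies, and the separating set $Z$ it produces is turned into an enumeration of $A$ exactly as in the theorem's proof.

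For the reverse implication $\cocode_{1}\di\PSEP!$, let $Y_{0},Y_{1}$ be as in $\PSEP!$ and, for each $n$, let $f_{n}$ denote the unique witness supplied by \eqref{ukeen}. I would form $A:=\{\langle n\rangle * f: Y_{0}(f,n)\ne 0\vee Y_{1}(f,n)\ne 0\}\subseteq 2^{\N}$ (modulo coding) together with $F(\langle n\rangle * f):=n$; by \eqref{ukeen} the map $F$ is a bijection from $A$ onto $\N$, so $A$ is strongly countable. Applying $\cocode_{1}$ enumerates $A$, and from this enumeration the map $n\mapsto f_{n}$ is computable, just as in Lemma~\ref{wonk}. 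The key observation is that for every $f\ne f_{n}$ one has $Y_{0}(f,n)=Y_{1}(f,n)=0$, so $\varphi_{0}(n)\asa Y_{0}(f_{n},n)=0$ and $\varphi_{1}(n)\asa Y_{1}(f_{n},n)=0$, both decidable via $\exists^{2}$. I would then set $Z:=\{n\in\N: Y_{0}(f_{n},n)=0\}$. The separation property is immediate: $\varphi_{0}(n)\di n\in Z$ by definition, while if $\varphi_{1}(n)$ holds the antecedent of $\PSEP!$ forces $\neg\varphi_{0}(n)$, whence $Y_{0}(f_{n},n)\ne 0$ and $n\notin Z$.

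The only genuine obstacle is the uniqueness bookkeeping in the forward direction: making \eqref{ukeen} hold \emph{on the nose} requires passing from the unique real $x\in A$ with $Y(x)=n$ to a unique $f\in 2^{\N}$, which means removing the ambiguity of binary expansions at dyadic rationals by fixing canonical codes (decidable once $(\exists^{2})$ is assumed). Everything else is a direct transcription of the proofs of the preceding theorem and of Lemma~\ref{wonk}.
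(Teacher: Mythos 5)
Your proposal is correct and follows essentially the same route as the paper: the forward direction reuses the instance from the preceding theorem after checking that \eqref{sd2}--\eqref{card2} satisfy the uniqueness condition \eqref{ukeen}, and the reverse direction observes that the tagged witness set is strongly countable, applies $\cocode_{1}$, and reads off the separating set $Z$ from the enumeration. The paper states both steps in two sentences; your additional care about canonical binary codes for reals (to get uniqueness of $f$ rather than just of $x$) and the vacuity of $\PSEP!$ under $\neg(\exists^{2})$ are correct details that the paper leaves implicit.
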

\begin{proof}
The forward direction is immediate from the proof of the theorem as \eqref{ukeen} is satisfied by the formulas \eqref{sd2} and \eqref{card2}. 
For the reverse implication, the set $\{f\in 2^{\N}: Y_{0}(f, n)\ne 0 \vee Y_{1}(f, n )\ne 0\}$ is strongly countable.  The enumeration provided by $\cocode_{1}$ readily provides the set $Z$ from $\PSEP!$ and we are done.
\end{proof}
%
%
%

\subsection{Countable unions and the Axiom of Choice}\label{heli2}
In this section, we study the connection between the Bolzano-Weierstrass theorem, the \emph{countable union theorem} for $\R$, and the existence of sets not in the class ${\bf F}_\sigma$. 
By Corollary \ref{PRM}, there are natural versions of the countable union theorem equivalent to $\BWC_{i}$ for $i=0,1$.  

\smallskip

First of all, the Axiom of Choice ($\AC$ for short) is perhaps the most (in)famous axiom of the usual foundations of mathematics, i.e.\ $\ZFC$ set theory.  
It is known that very weak fragments of $\AC$ are independent of $\ZF$, like the \emph{countable union theorem} which expresses that a countable union of countable (or even $2$-element) sets is again countable.  We refer to \cite{heerlijkheid} for an overview of this kind of results on $\AC$, while we note that Cantor already considered the countable union theorem in 1878, namely in \cite{cantor2}*{p.\ 243}.  The countable union theorem involving enumerations and (codes of) analytic sets may be found in second-order RM as \cite{simpson2}*{V.4.10}, i.e.\ the following principle is a quite natural object of study in higher-order RM. 
We discuss the naturalness and generality of $\CUC$ in Remark \ref{flaw}.
%
\begin{princ}[$\CUC$] 
Let $(A_{n})_{n\in \N}$ be a sequence of sets in $\R$ such that for all $n\in \N$, there is an enumeration of $A_{n}$.  
Then there is an enumeration of $\cup_{n\in \N}A_{n}$.  
\end{princ}
Note that we need $(\exists^{2})$ to guarantee that the union in $\CUC$ exists.
As noted above, the countable union theorem \emph{for $2$-element sets} is still unprovable in $\ZF$.  In this light, define $\CUC(2)$ as $\CUC$ where each $A_{n}$ has exactly two elements, i.e.\ 
\be\label{fomp}
(\forall x, y, z\in A_{n})(x=_{\R}y\vee x=_{\R}z)\wedge (\exists w, v\in A_{n})(w\ne_{\R} v).
\ee
The following principle is (possibly) weaker than the countable union theorem according to \cite{heerlijkheid}*{Diagram 3.4, p.\ 23}: 
\emph{$\R$ is not a countable union of countable sets.}
We distill the following principle from the latter.
\begin{princ}[$\RUC$] 
Let $(A_{n})_{n\in \N}$ be a sequence of sets in $\R$ such that for all $n\in \N$, there exists an enumeration of $A_{n}$.  
Then there is $y\in \R$ not in $\cup_{n\in \N}A_{n}$.  
\end{princ}
Note that $\RUC$ fails in the model $\textbf{Q}^{*}$ constructed in the proof of Theorem~\ref{goeddachs}, i.e.\ $\neg \RUC$ is consistent with $\Z_2^\omega$.
By \cite{simpson2}*{II.4.7}, Cantor's theorem (that the reals cannot be enumerated) is provable in $\RCA_{0}$, and hence $\CUC\di \RUC$ over $\ACAo$.
The connection between $\RUC$ and the following principle is however more interesting.  
\begin{princ}[${\bf NF_\sigma}$]
There exists a subset of $\R$ that is not ${\bf F}_\sigma$.
\end{princ}
To be precise, we let ${\bf F}_\sigma$ be the class of sets obtained by closing the class of closed sets under unions of countable subclasses, always assuming that the unions exist.
The following theorem connects $\CUC$ and $\RUC$ to $\BWC_{0} $ and $ \BWC_{1}$.  
\begin{thm}\label{immea}
The system $\ACAo$ proves $\CUC\di \cocode_{0}\di \CUC(2)\di \cocode_{1}$ and ${\bf NF}_\sigma \di \RUC\di \NIN$.  
\end{thm}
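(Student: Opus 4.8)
The plan is to establish the five implications separately, freely assuming $(\exists^{2})$ throughout: each principle is trivial (or has vacuous hypotheses) when all functions on $\R$ are continuous, so Remark \ref{LEM} lets me work in $\ACAo$ with $\exists^{2}$, converting reals to binary expansions at will. Two of the implications are variants of a single "junk point" decomposition. For $\CUC\di\cocode_{0}$, I would start from a countable $A\subseteq[0,1]$ with $Y:\R\di\N$ injective on $A$ and set $A_{n}:=\{x\in A: Y(x)=n\}\cup\{n+2\}$. Each $A_{n}$ is non-empty and has at most two elements (the junk point $n+2\notin[0,1]$ together with at most one point of $A$, since $Y$ is injective), hence is enumerable, and $(A_{n})_{n}$ is a legitimate sequence of sets via $\exists^{2}$. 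Applying $\CUC$ enumerates $\cup_{n}A_{n}=A\cup\{n+2:n\in\N\}$, and filtering this sequence to its members of $[0,1]$ (decidable from $\exists^{2}$) yields an enumeration of $A$. The step $\CUC(2)\di\cocode_{1}$ is the same trick made sharper: given $Y:\R\di\N$ \emph{bijective} on $A$, surjectivity makes each $\{x\in A:Y(x)=n\}$ a genuine singleton, so $A_{n}:=\{x\in A:Y(x)=n\}\cup\{n+2\}$ has \emph{exactly} two elements, $\CUC(2)$ applies directly, and filtering recovers the enumeration of $A$.

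The hard implication is $\cocode_{0}\di\CUC(2)$. Here each $A_{n}$ has exactly two elements but is \emph{opaque}: with only $\exists^{2}$ one can test membership $x\in A_{n}$ yet cannot extract either element, nor define any injection on a single $A_{n}$, nor distinguish its two members (all of which amount to a third-order existential $\exists f\in A_{n}(\dots)$). This opacity is the main obstacle, and I expect it must be overcome by routing all element-extraction through the comprehension strength of $\cocode_{0}$. Concretely, I would use the equivalences $\cocode_{0}\asa\BWC_{0}^{\pwo}\asa[\range_{0}+\IND_{0}]$ (Theorem \ref{kantnochwally}) and $\cocode_{0}\asa\BOOT_{C}^{-}$ to manufacture the \emph{prefix-tree data} $X_{0}:=\{(n,\sigma): A_{n}\cap[\sigma]\neq\emptyset\}$, building the relevant auxiliary set from sequences-with-information as in the proof of Theorem \ref{kantnochwally} so that its supremum codes $X_{0}$. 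Since $|A_{n}|=2$, the tree $\{\sigma:(n,\sigma)\in X_{0}\}$ is a single stem with exactly one branching node followed by two thin paths; reading $X_{0}$ off bit-by-bit (an $\exists^{2}$ computation once $X_{0}$ exists) reconstructs both elements of $A_{n}$ explicitly and uniformly in $n$, so the resulting double sequence enumerates $\cup_{n}A_{n}$. The delicate point is the multiplicity two: a common prefix $\sigma$ lies below \emph{both} elements, so the naive projection violates the `at most one' hypothesis of $\BOOT_{C}^{-}$, and I would isolate the single branching node per level so that the auxiliary sets stay countable.

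For the chain ${\bf NF}_{\sigma}\di\RUC\di\NIN$ I argue by contraposition. For $\RUC\di\NIN$, suppose $\NIN$ fails, fix an injection $[0,1]\di\N$, and compose it with the bijection $\frac{1}{1+e^{x}}:\R\di(0,1)$ to obtain $H:\R\di\N$ injective on all of $\R$. Setting $A_{n}:=\{x\in\R: H(x)=n\}\cup\{n+2\}$ gives a sequence of non-empty sets of size at most two, each enumerable, with $\cup_{n}A_{n}=\R$ (every real lies in $A_{H(x)}$); this directly contradicts $\RUC$, so $\NIN$ holds. Crucially this does \emph{not} enumerate $\R$, so there is no clash with Cantor's theorem \cite{simpson2}*{II.4.7}: it only exhibits $\R$ as a countable union of enumerable sets.

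For ${\bf NF}_{\sigma}\di\RUC$, assume $\neg\RUC$, so $\R=\cup_{n}A_{n}$ with each $A_{n}$ enumerable, and let $X\subseteq\R$ be arbitrary. Then $X=\cup_{n}(X\cap A_{n})$, and each $X\cap A_{n}$ is a subset of an enumerable set, hence enumerable (filter its enumeration by $x\in X$, decidable from $\exists^{2}$), hence a countable union of closed singletons, i.e.\ ${\bf F}_{\sigma}$; by closure of the class ${\bf F}_{\sigma}$ under countable unions, $X$ itself is ${\bf F}_{\sigma}$. Thus every subset of $\R$ is ${\bf F}_{\sigma}$, i.e.\ $\neg{\bf NF}_{\sigma}$. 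The subtle point I would stress is that this stays within $\ACAo$ precisely because ${\bf F}_{\sigma}$-membership is closed under countable unions \emph{by definition of the class}: no uniform choice of the per-$n$ representations is invoked, which matters because such a uniform choice applied to $X=\R$ would illicitly enumerate $\R$ and contradict Cantor. This non-uniformity is exactly the feature that makes $\RUC$ non-trivial, and keeping it in view is what guarantees the argument is correct rather than circular.
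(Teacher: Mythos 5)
Four of your five implications are essentially the paper's own argument with cosmetic changes: for $\CUC\di\cocode_{0}$ and $\CUC(2)\di\cocode_{1}$ the paper uses a fixed junk point $x_{0}\in A$ (resp.\ the fibres $\{x\in A:Y(x)=n\vee Y(x)=n+1\}$) where you use $n+2$ plus filtering, and your proofs of ${\bf NF}_{\sigma}\di\RUC$ and $\RUC\di\NIN$ are the paper's proofs verbatim in content. These parts are fine.

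The gap is in $\cocode_{0}\di\CUC(2)$, which you correctly identify as the hard step but do not actually prove. Your plan is to manufacture the prefix-tree set $X_{0}=\{(n,\sigma):A_{n}\cap[\sigma]\neq\emptyset\}$ via $\BOOT_{C}^{-}$ or $\range_{0}$, and you yourself observe that this collides with the `at most one witness' hypothesis whenever $[\sigma]$ contains both elements of $A_{n}$. Your proposed repair --- ``isolate the single branching node per level'' --- does not resolve this as stated: locating the branching node of $A_{n}$ is itself a statement with two witnesses below it, and a predicate such as ``$f$ is the $\leq_{\lex}$-least element of $A_{n}\cap[\sigma]$'' involves a universal quantifier over $2^{\N}$ and so is not of the form $(\exists f\in 2^{\N})(Y(f,\langle n,\sigma\rangle)=0)$ required by $\BOOT_{C}^{-}$. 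The obstruction is repairable --- e.g.\ take as the unique witness the coded ordered pair $f=g\oplus h$ with $g,h\in A_{n}$ and $g<_{\lex}h$, which restores the `at most one' condition and from which both elements are $\exists^{2}$-computable --- but you have not done this, and it is precisely the point where the multiplicity two bites. The paper avoids the issue entirely by a more direct route: since $|A_{n}|=2$ exactly, one has $(\forall n)(\exists!x)(\exists!y)(x,y\in A_{n}\wedge x<_{\R}y)$; by $\cocode_{1}\asa\,!\QFAC^{0,1}$ (and $\cocode_{0}\di\cocode_{1}$) unique choice yields sequences $(x_{n})_{n\in\N}$, $(y_{n})_{n\in\N}$ listing the two elements of each $A_{n}$, after which one defines an explicit injection $Y$ on $\cup_{n}A_{n}$ (sending $x_{n}\mapsto 2P(x)$ and $y_{n}\mapsto 2P(x)+1$ with $P(x)=(\mu n)(x\in A_{n})$) and applies $\cocode_{0}$ once more to enumerate the union. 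You should either carry out the pair-coding repair in full or switch to the unique-choice argument.
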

\begin{proof}
For the first part, fix non-empty $A\subseteq[0,1]$ and $Y:[0,1]\di \N$ such that the latter is injective on the former. 
Let $x_{0}\in A$ be some element in $A$ and define the sequence of sets $(A_{n})_{n\in \N}$ as follows: 
\be\label{moar}
x\in A_{n}\equiv\big[ [x\in A\wedge Y(x)=n]\vee x=_{\R}x_{0}\big].
\ee
Clearly, for each $n\in\N$, there \emph{exists} an enumeration of $A_{n}$, namely either the sequence $x_{0}, x_{0}, \dots$ or the sequence $x_{0}, y, x_{0}, y, \dots$ where $y\in [0,1]$ satisfies $Y(y)=n$, if such there is.   By $\CUC$, there is an enumeration of $A=\cup_{n\in \N}A_{n}$, yielding $\cocode_{0}$.  Now assume the latter and fix a sequence $(A_{n})_{n\in \N}$ satisfying \eqref{fomp}.  
By the latter, we have  the following 
\be\label{heencent}
(\forall n\in \N)( \exists! x \in [0,1])(\exists! y\in [0,1]) (x, y\in A_{n} \wedge x<_{\R}y ),
\ee
as $A_{n}$ has exactly two elements.  
Recall that $\cocode_{1}\asa !\QFAC^{0,1}$ by \cite{samcount}*{Theorem~3.17}.
Modulo some coding $!\QFAC^{0,1}$ applies to \eqref{heencent}, and let $(x_{n})_{n\in \N}$ and $(y_{n})_{n\in \N}$ be the resulting sequences.  
Use $\exists^{2}$ to remove any reals from $(y_{n})_{n\in \N}$ already in $(x_{n})_{n\in \N}$.
Then $Y:\R\di \N$ is injective on $\cup_{n\in \N}A_{n}$:
\be\label{polp}
Y(x):=
\begin{cases}
0 & x\not \in \cup_{n\in \N}A_{n}\\
2 P(x) & (\exists n\in \N)(x=_{\R}x_{n})\\
2P(x)+1 & (\exists n\in \N)(x=_{\R}y_{n})\\
\end{cases}, 
\ee
where $P(x):=(\mu n)(x\in A_{n})$.  Then $\cocode_{0}$ yields $\CUC(2)$, as required. 
For the implication $\CUC(2)\di \cocode_{1}$, fix $A\subset [0,1]$ such that $Y:[0,1]\di \N$ is bijective on $A$.
Define the set $A_{n}:=\{x\in A: Y(x)=n \vee Y(x)=n+1   \}$ and note that \eqref{fomp} is satisfied.  Applying $\CUC(2)$ yields an enumeration of $A=\cup_{n\in \N}A_{n}$, as required.  

\smallskip

For the second part, suppose $\R=\cup_{n}A_{n}$, where for each $n\in \N$ there \emph{exists} an enumeration of $A_{n}$.  Then all subsets of $\R$ are ${\bf F}_\sigma$ as follows:
for $E\subset \R$, one defines an enumeration of $E\cap A_{n}$ by checking each element in the enumeration of $A_{n}$ for elementhood in $E$.  
Hence, $E=\cup_{n\in \N} [A_{n}\cap E] $ is a countable union of enumerable sets, and therefore ${\bf F}_\sigma$. 
For $\RUC\di \NIN$, suppose $Y:\R\di \N$ is an injection.  
Define a sequence $(A_{n})_{n\in \N}$ as follows $x\in A_{n}\equiv\big[ Y(x)=_{0}n\vee x=_{\R}0\big]$.
Clearly, for each $n\in\N$, there \emph{exists} an enumeration of $A_{n}$.   By $\RUC$, there is $y\in \R$ not in $\cup_{n\in \N}A_{n}$.  However, $\R=\cup_{n\in \N}A_{n}$ by definition, yielding $\RUC\di \NIN$.  
\end{proof}
Assuming $\ACAo + \neg \RUC$, the previous proof implies that all subsets of $\R$ are \textbf{F}$_\sigma$, and considering complements implies that all subsets are also $\textbf{G}_\delta$. 
In stronger systems, the class $\textbf{F}_\sigma \cap \textbf{G}_\delta$ corresponds to $\Delta^0_2$-formulas with function parameters. 

\smallskip

Let $\CUC_{0}(2)$ be $\CUC(2)$ without the second conjunct of \eqref{fomp} and let $\CUC_{1}(2)$ be $\CUC(2)$ where we additionally assume the sets  $A_{n}$ to be pairwise disjoint.
\begin{cor}[$\ACAo$]\label{PRM}
We have $\cocode_{0}\asa \CUC_{0}(2)$ and $\cocode_{1}\asa \CUC_{1}(2)$.
\end{cor}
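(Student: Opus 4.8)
The plan is to establish the four implications making up the two equivalences, reusing the constructions from the proof of Theorem \ref{immea} together with the facts $\cocode_{0}\asa \BOOT_{C}^{-}$ (via $\BWC_{0}^{\fun}$) and $\cocode_{1}\asa !\QFAC^{0,1}$, both recalled from \cite{samcount}. Throughout we have $\exists^{2}$ (hence $\mu^{2}$) at our disposal, since we work over $\ACAo$; in particular membership `$x\in A_{n}$' and the real comparison `$x<_{\R}y$' are decidable, and reals may be coded as elements of $2^{\N}$.

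For the two reverse implications I would simply observe that the sets built in the proof of Theorem \ref{immea} already have the required shape. For $\CUC_{0}(2)\di \cocode_{0}$: given a nonempty countable $A$ with $Y$ injective on it and some $x_{0}\in A$, the sets $A_{n}:=\{x_{0}\}\cup\{x\in A: Y(x)=n\}$ appearing in the proof of $\CUC\di\cocode_{0}$ have at most two elements by injectivity of $Y$, and for each $n$ an enumeration exists, so $\CUC_{0}(2)$ applies verbatim. For $\CUC_{1}(2)\di \cocode_{1}$: given a strongly countable $A$ with $Y$ a bijection of $A$ onto $\N$, set $A_{n}:=\{x\in A: Y(x)=2n \vee Y(x)=2n+1\}$; these are pairwise disjoint, have exactly two elements (by surjectivity and injectivity of $Y$), and $\cup_{n}A_{n}=A$, so $\CUC_{1}(2)$ yields an enumeration of $A$. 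In both cases the per-$n$ existence of an enumeration is provable by the usual classical case split, exactly as in Theorem \ref{immea}.

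For the forward implications the point is to extract the finitely many elements of each $A_{n}$ as a uniform sequence. For $\cocode_{1}\di\CUC_{1}(2)$ this is direct: since each $A_{n}$ has exactly two elements, \eqref{heencent} holds, and applying $!\QFAC^{0,1}$ (available from $\cocode_{1}$) to it produces sequences $(x_{n})_{n},(y_{n})_{n}$ with $\{x_{n},y_{n}\}=A_{n}$; the interleaving $x_{0},y_{0},x_{1},y_{1},\dots$ then enumerates $\cup_{n}A_{n}$. For $\cocode_{0}\di\CUC_{0}(2)$, where each $A_{n}$ has only \emph{at most} two elements, I would first use $\BOOT_{C}^{-}$ (available from $\cocode_{0}$) to form $N_{2}:=\{n: |A_{n}|=2\}$, via the `at most one' predicate `$f$ codes the unique $<_{\lex}$-increasing pair of elements of $A_{n}$', and then $N_{1}:=\{n\notin N_{2}: A_{n}\neq\emptyset\}$, whose defining predicate is genuinely at-most-one because $|A_{n}|\leq 1$ for $n\notin N_{2}$. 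On $N_{2}$ the increasing pair is unique and on $N_{1}$ the single element is unique, so two applications of $!\QFAC^{0,1}$ produce the elements of every $A_{n}$ uniformly, whose interleaved listing enumerates $\cup_{n}A_{n}$.

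The main obstacle is precisely this last implication. The hypothesis of $\CUC_{0}(2)$ permits $A_{n}$ to have zero, one, or two elements, and $\exists^{2}$ alone cannot decide which case obtains (this is a genuine $\Sigma^{1}_{1}$ question), so one cannot invoke unique choice uniformly as in the exactly-two case. The role of $\BOOT_{C}^{-}$ is exactly to repair this by first deciding the cardinalities, after which the uniqueness required by $!\QFAC^{0,1}$ is recovered. I expect the only delicate bookkeeping to be the coding of (pairs of) reals as elements of $2^{\N}$ so that the relevant predicates meet the `at most one' format of $\BOOT_{C}^{-}$, which is routine given $\exists^{2}$.
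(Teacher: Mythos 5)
Your proposal is correct and follows essentially the same route as the paper: the reverse implications reuse the constructions from Theorem \ref{immea}, and the forward implication $\cocode_{0}\di\CUC_{0}(2)$ is handled exactly as in the paper by first using $\BOOT_{C}^{-}$ to decide the cardinality of each $A_{n}$ and then extracting the elements via unique choice $!\QFAC^{0,1}$. You are in fact slightly more careful in two spots: your partition $A_{n}=\{x\in A: Y(x)=2n\vee Y(x)=2n+1\}$ is genuinely pairwise disjoint (the paper's $n,n+1$ version is not), and you spell out the separate treatment of the singleton sets via $N_{1}$, which the paper compresses into ``one now readily modifies \eqref{polp}''.
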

\begin{proof}
The proof of $\CUC(2)\di \cocode_{1}$ from the theorem yields $\CUC_{1}(2)\di\cocode_{1}$ as $A_{n}:=\{x\in A: Y(x)=2n \vee Y(x)=2n+1   \}$ are indeed pairwise disjoint.  
The proof of $\cocode_{0}\di \CUC(2)$ yields $\cocode_{1}\di \CUC_{1}(2)$ 
as the extra `pairwise disjoint' condition in $\CUC_{1}(2)$ guarantees that $Y$ defined in \eqref{polp} is bijective on $\cup_{n\in \N}A_{n}$.
The proof of $\CUC\di \cocode_{0}$ from the theorem yields a proof of $\CUC_{0}(2)\di\cocode_{0}$ as the sets from \eqref{moar} have at most two elements. 
The proof of $\cocode_{0}\di \CUC(2)$ from the theorem can be adapted as follows: consider the following formula, where the boldface text is different from \eqref{heencent}:
\be\label{heencent4}
(\forall n\in \N)(\exists \textup{ \textbf{at most one} } (x, y)\in \R^{2})(x, y\in A_{n} \wedge x<_{\R} y),
\ee
to which $\BOOT_{C}^{-}$ applies modulo coding.   For the resulting set $X\subset \N$ we have 
\[
(\forall n\in X)( \exists! x \in [0,1])(\exists! y\in [0,1]) (x, y\in A_{n} \wedge x<_{\R}y ).
\]
One now readily modifies \eqref{polp} to the case at hand, which yields an enumeration of all $A_{n}$ that have exactly two elements. 
To enumerate the $A_{n}$ that are singletons, consider the following:
\be\label{heencent9}
(\forall n\in \N\setminus X)(\exists \textup{ at most one } x\in \R)(x\in A_{n}),
\ee
to which $\BOOT_{C}^{-}$ applies modulo coding.   For the resulting set $Z\subset \N$ we have 
\[
(\forall n\in Z)( \exists! x \in [0,1]) (x\in A_{n}  ),
\]
which readily yields the required enumeration. 
\end{proof}
By the previous, one can view $\CUC$ as the sequential version of $\cocode_{0}$.  However, the sequential version of e.g.\ $\BWC_{0}$ is readily proved in $\Z_{2}^{\Omega}$ (and hence $\ZF$).   
By contrast, the sequential version of $\cloq'$ is equivalent to $\CUC$ by Corollary \ref{clong},
\begin{princ}[$\cloq'_{\seq}$]
Let $(X_{n}, \preceq_{n})_{n\in \N}$ be a sequence of dense linear orderings without endpoints, with each $X_{n}\subset \R$ countable.  
Then there is a sequence $(Z_{n})_{n\in \N}$ with $Z_{n}:\R\di \Q$ an order-isomorphism from $(X_{n}, \preceq_{n})$ to $\Q$ for each $n\in \N$. 
\end{princ}
\begin{cor}\label{clong}
The system $\ACAo$ proves $[\cloq'_{\seq}+\IND_{0}]\asa \CUC$. 
\end{cor}
\begin{proof}
For the reverse implication, $\CUC$ yields $\cocode_{0}$ by Theorem \ref{immea}.  
Hence, if $(X_{n}, \preceq_{n})_{n\in \N}$ is as in the antecedent of $\cloq'_{\seq}$, $\cocode_{0}$ implies that for each $X_{n}$, there is an enumeration.  
By $\CUC$, there is a `master' enumeration of $\cup_{n\in \N}X_{n}$.  
Use the well-known `back-and-forth' proof (see \cite{riot}*{p.\ 123}) for each $(X_{n}, \preceq_{n})$, uniformly in $\N$ and based on the master enumeration, to yield a sequence as in the consequence of $\cloq'_{\seq}$. 

\smallskip

For the forward implication, we have access to $\cocode_{0}$ by Corollary \ref{fexpo}.  
Let $(A_{n})_{n\in \N}$ be a sequence as in $\CUC$ and define $A:=\cup_{n\in \N}A_{n}$.  Note that $(\exists^{2})$ shows that each $A_{n}$ is countable via an obvious injection.  
Without loss of generality, we may assume that $\Q\cap A$ is $\emptyset$, since Feferman's $\mu^{2}$ can list all the rationals in a given set of reals.   Now define $X_{n}:= \Q\cup A_{n}$ and $\preceq_{{n}}$ the usual ordering of the reals.  
Let $(Z_{n})_{n\in \N}$ be as provided by $\cloq'_{\seq}$, let $(p_{n})_{n\in \N}$ be the usual list of primes, and let $G:\Q\di (\N\setminus \{0\})$ be an injection.  Define $H(x)$ as $(\mu n)(x\in A_{n})$ and define $Y:\R\di \N$ as $Y(x):=p_{H(x)}^{G(Z_{H(x)}(x))}$.  By definition, $Y$ is an injection on $A$; the latter is therefore countable, and enumerable by Corollary \ref{fexpo}.
\end{proof}
We note in passing that the weak choice principle \textsf{WCC} from \cite{kreep} is intermediate between $\cocode_{0}$ and $\cocode_{1}$ by the previous.  
We also have the following corollary. 
\begin{cor}\label{frsnl}
$\Z_{2}^{\Omega}+\QFAC^{0,1}$ proves $\CUC$; $\Z_{2}^{\omega}+\QFAC^{0,1}$ cannot prove $\RUC$.
\end{cor}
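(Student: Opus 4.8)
The plan is to prove the two halves separately, using $\exists^{3}$ for the positive part and the implication $\RUC\di\NIN$ for the negative part.

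For $\Z_{2}^{\Omega}+\QFAC^{0,1}\vdash\CUC$, recall that $\Z_{2}^{\Omega}$ is $\RCAo$ together with Kleene's $\exists^{3}$, and in particular provides $\exists^{2}$. First I would note that, given the sequence $(A_{n})_{n\in\N}$ as a single third-order object, the predicate ``$h$ enumerates $A_{n}$'', namely $(\forall x\in\R)\big(x\in A_{n}\asa(\exists m\in\N)(x=_{\R}h(m))\big)$, is $\Pi^{1}_{1}$ \emph{relative to} $(A_{n})_{n\in\N}$ and is therefore decided uniformly in $n$ and $h$ by $\exists^{3}$. This is exactly the point where $\Z_{2}^{\Omega}$ is needed rather than $\Z_{2}^{\omega}$: the functionals $\SS^{2}_{k}$ only decide $\Pi^{1}_{1}$ (and higher) predicates of type-one objects, and cannot absorb the type-two parameter $(A_{n})_{n\in\N}$. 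The hypothesis of $\CUC$ is precisely $(\forall n\in\N)(\exists h\in\N^{\N})[\,h\text{ enumerates }A_{n}\,]$, whose matrix is now decidable, so $\QFAC^{0,1}$ yields a single sequence $(h_{n})_{n\in\N}$ with $h_{n}$ enumerating $A_{n}$ for every $n$. The pairing $\langle n,m\rangle\mapsto h_{n}(m)$ then enumerates $\cup_{n\in\N}A_{n}$, whose existence as a set is guaranteed by $\exists^{2}$, and this is $\CUC$.

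For the negative half, the plan is to reduce to the uncountability of $\R$. By the second part of Theorem \ref{immea}, $\ACAo$ proves $\RUC\di\NIN$, so any extension of $\ACAo$ proving $\RUC$ also proves $\NIN$; it therefore suffices to invoke the fact, established in \cite{dagsamX}, that $\Z_{2}^{\omega}+\QFAC^{0,1}$ does not prove $\NIN$. I would stress that the model $\textbf{Q}^{*}$ from the proof of Theorem \ref{goeddachs} is \emph{not} available here: it carries a bijection $\phi$ of its reals onto $\N$, and applying $\QFAC^{0,1}$ to the surjectivity clause $(\forall n)(\exists f)(\phi(f)=n)$ ---whose matrix is genuinely quantifier-free--- would produce an enumeration of all reals, contradicting Cantor's theorem \cite{simpson2}*{II.4.7}. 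Thus $\textbf{Q}^{*}\not\models\QFAC^{0,1}$, and indeed $\Z_{2}^{\omega}+\QFAC^{0,1}$ already proves $\NBI$; what one needs is instead a model of $\Z_{2}^{\omega}+\QFAC^{0,1}$ carrying an injection but no bijection of its reals into $\N$, witnessing $\neg\NIN$.

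The main obstacle is therefore entirely in the negative half, namely obtaining (or citing) such a $\QFAC^{0,1}$-model, which must realise precisely the injection-versus-bijection gap emphasised throughout the paper: an injection of $[0,1]$ into $\N$ is present, so $\NIN$ fails, yet no reversible such map exists, so $\QFAC^{0,1}$ together with Cantor's theorem is not violated. Granting the non-provability of $\NIN$ from \cite{dagsamX}, the implication $\RUC\di\NIN$ immediately gives that $\Z_{2}^{\omega}+\QFAC^{0,1}$ cannot prove $\RUC$.
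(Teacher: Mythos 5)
Your proof is correct and follows essentially the same route as the paper: the positive half uses $\exists^{3}$ to render the matrix of the antecedent of $\CUC$ quantifier-free so that $\QFAC^{0,1}$ produces a master sequence of enumerations, and the negative half reduces via $\RUC\di\NIN$ (Theorem \ref{immea}) to the unprovability of $\NIN$ in $\Z_{2}^{\omega}+\QFAC^{0,1}$ cited from \cite{dagsamX}. Your version merely spells out more explicitly why $\exists^{3}$ (rather than the $\SS^{2}_{k}$) is needed and why the model $\textbf{Q}^{*}$ cannot serve here, both of which are accurate but inessential additions.
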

\begin{proof}
For the negative result, $\NIN$ is not provable in $\Z_{2}^{\omega}+\QFAC^{0,1}$ by \cite{dagsamX}*{Theorem~3.2}, while $\RUC\di \NIN$ over $\ACAo$ by Theorem \ref{immea}.
For the positive result, the antecedent of $\CUC$ expresses the following:
\[
(\forall n\in \N)(\exists (x_{m})_{m\in \N})(\forall y\in \R)\big[y\in A_{n}\asa (\exists k\in \N)(x_{k}=_{\R}y)    \big].
\]
Using $\exists^{3}$ and $\QFAC^{0,1}$, there is a `master' sequence, yielding $\CUC$.
\end{proof}
We finish this section with a remark on the naturalness and generality of $\CUC$.
\begin{rem}[$\CUC$, old and new]\label{flaw}\rm
First of all, an $\L_{2}$-version of $\CUC$ for sets represented by analytic codes is proved in \cite{simpson2}*{V.4.10}, inside $\ATR_{0}$.  
Note that enumerable sets are automatically Borel, and therefore analytic.  
Similarly, (codes for) Borel sets are closed under countable unions in second-order RM by \cite{simpson2}*{V.3.3}, also working in $\ATR_{0}$.
Modulo coding, there is thus antecedent for the study of $\CUC$ in second-order RM. 

\smallskip

Secondly, in contrast to the second-order principles from the previous paragraph, $\CUC$ does (seem to) quantify over all enumerable subsets of $\R$.  
This apparent generality of $\CUC$ should not be overstated: an enumerated set is of course measurable (provably having measure zero in $\ACAo$), and the class of (codes for) measurable sets is closed under countable unions in second-order RM, as mentioned in \cite{simpson2}*{X.1.17}.   Similarly, enumerated sets are clearly Borel sets (of low level) in $\ACAo$.   
Hence, $\CUC$ is of a level of generality comparable to what one studies in RM, but formulated with third-order characteristic functions rather than second-order codes.    

\smallskip

Thirdly, in Section \ref{BV}, we connect $\cocode_{0}$ to theorems pertaining to bounded variation (and related concepts), like the \emph{Jordan decomposition theorem} as in Theorem \ref{drd}.
On one hand, this theorem readily implies $\cocode_{0}$, while the reversal \emph{should} go through, seeing as though functions of bounded variation only have countably many points of discontinuity.  
Indeed, an enumeration of the latter set even guarantees that Jordan's original proof (\cite{jordel}) of the Jordan decomposition theorem goes through.  Try as we might, the aforementioned reversal only goes through assuming the following (seemingly trivial) fragment of the countable union theorem, which however does not\footnote{Note that $\neg \NIN$ implies $\CUC_{\fin}$, while $\Z_{2}^{\omega}+\QFAC^{0,1}$ does not prove $\NIN$ by \cite{dagsamX}*{\S3}.} even imply $\NIN$ over $\Z_{2}^{\omega}+\QFAC^{0,1}$.
\begin{princ}[$\CUC_{\fin}$]
Let $(X_{n})_{n\in \N}$ be subsets of $\R$ such that $\cup_{n\in \N}X_{n}$ is not countable.  Then $X_{m}$ is not finite for some $m\in \N$.
\end{princ}
Recall our notion of `finite set' from Definition \ref{plonk}, to be discussed in detail in Section \ref{truerm}.
In the below, we even  obtain equivalences involving $\CUC_{\fin}$, i.e.\ the countable union theorem is a natural/useful object of study in this context.   
\end{rem}

\subsection{Bounded variation and related concepts}\label{BV}
In this section, we establish an equivalence between $\BWC_{0}$ and the well-known \emph{Jordan decomposition theorem} as in Theorem \ref{drd}.
We also obtain other equivalences involving theorems about \emph{bounded variation} and \emph{regulated} functions.  
We introduce definitions for the previous italicised notions in Section~\ref{deffer}, while our main results are in Section~\ref{truerm}.
The latter results provide some non-trivial motivation for our choice of definition of closed and finite set, as discussed in Section \ref{crux}.

\subsubsection{Definitions: bounded variation and related notions}\label{deffer}
We formulate the definitions of bounded variation and regulated functions, as well as some background. 

\smallskip

Firstly, the notion of \emph{bounded variation} (often abbreviated $BV$ below) was first explicitly\footnote{Lakatos in \cite{laktose}*{p.\ 148} claims that Jordan did not invent or introduce the notion of bounded variation in \cite{jordel}, but rather discovered it in Dirichlet's 1829 paper \cite{didi3}.} introduced by Jordan around 1881 (\cite{jordel}) yielding a generalisation of Dirichlet's convergence theorems for Fourier series.  
Indeed, Dirichlet's convergence results are restricted to functions that are continuous except at a finite number of points, while $BV$-functions can have infinitely many points of discontinuity, as already studied by Jordan, namely in \cite{jordel}*{p.\ 230}.
Nowadays, the \emph{total variation} of a function $f:[a, b]\di \R$ is defined as follows:
\be\label{tomb}\textstyle
V_{a}^{b}(f):=\sup_{a\leq x_{0}< \dots< x_{n}\leq b}\sum_{i=0}^{n} |f(x_{i})-f(x_{i+1})|.
\ee
If this quantity exists and is finite, one says that $f$ has bounded variation on $[a,b]$.
Now, the notion of bounded variation is defined in \cite{nieyo} \emph{without} mentioning the supremum in \eqref{tomb}; this approach can also be found in \cites{kreupel, briva, brima}.  
Hence, we shall distinguish between the two notions in Definition \ref{varvar}.  As it happens, Jordan seems to use item \eqref{donp} of Definition \ref{varvar} in \cite{jordel}*{p.\ 228-229}.
This definition suggests a two-fold variation for any result on functions of bounded variation, namely depending on whether the supremum \eqref{tomb} is given, or only an upper bound on the latter.  
\bdefi[Variations on variation]\label{varvar}
\begin{enumerate}  
\renewcommand{\theenumi}{\alph{enumi}}
\item The function $f:[a,b]\di \R$ \emph{has bounded variation} on $[a,b]$ if there is $k_{0}\in \N$ such that $k_{0}\geq \sum_{i=0}^{n} |f(x_{i})-f(x_{i+1})|$ 
for any partition $x_{0}=a <x_{1}< \dots< x_{n-1}<x_{n}=b  $.\label{donp}
\item The function $f:[a,b]\di \R$ \emph{has {a} variation} on $[a,b]$ if the supremum in \eqref{tomb} exists and is finite.\label{donp2}
\end{enumerate}
\edefi
Secondly, the fundamental theorem about $BV$-functions is formulated as follows.
\begin{thm}[Jordan decomposition theorem, \cite{jordel}*{p.\ 229}]\label{drd}
A $BV$-function $f : [0, 1] \di \R$ is the difference of  two non-decreasing functions $g, h:[0,1]\di \R$.
\end{thm}
Theorem \ref{drd} has been studied via second-order representations in \cites{groeneberg, kreupel, nieyo, verzengend}.
The same holds for constructive analysis by \cites{briva, varijo,brima, baathetniet}, involving different (but related) constructive enrichments.  
Now, $\ACA_{0}$ suffices to derive Theorem \ref{drd} for various kinds of second-order \emph{representations} of $BV$-functions in \cite{kreupel, nieyo}.  
By contrast, our results imply that $\Z_{2}^{\omega}+\QFAC^{0,1}$ cannot prove the third-order version of Theorem \ref{drd}, as the latter is equivalent to $\BWC_{0}$ over a suitable base theory (see Theorem \ref{bruhathm}). 
Nonetheless, the third-order Jordan decomposition theorem does not imply much comprehension, by the following remark.  
\begin{rem}[Comprehension and Jordan decompositions]\label{blafte}\rm
The third-order version of the Jordan decomposition theorem (Theorem \ref{drd}) implies neither $(\exists^{2})$ nor any theorem of $\Z_{2}$ not provable in $\ACA_{0}$, working over $\RCAo$.  
Indeed, the $\ECF$-translation (Remark~\ref{ECF}) of the former is implied by $\textsf{Jordan}_{\textsf{cont}}$, the second-order version of Theorem \ref{drd} from \cite{nieyo} and provable in $\ACA_{0}$. 
By contrast, $\ECF$ translates $(\exists^{2})$ to `$0=1$' while second-order sentences are translated to themselves.  
\end{rem}
Thirdly, Jordan proves in \cite{jordel3}*{\S105} that $BV$-functions are exactly those for which the notion of `length of the graph of the function' makes sense.  In particular, $f\in BV$ if and only if the `length of the graph of $f$', defined as follows:
\be\label{puhe}\textstyle
L(f, [0,1]):=\sup_{0=t_{0}<t_{1}<\dots <t_{m}=1} \sum_{i=0}^{m-1} \sqrt{(t_{i}-t_{i+1})^{2}+(f(t_{i})-f(t_{i+1}))^{2}  }
\ee
exists and is finite by \cite{voordedorst}*{Thm.\ 3.28.(c)}.  In case the supremum in \eqref{puhe} exists (and is finite), $f$ is also called \emph{rectifiable}.  
Rectifiable curves predate $BV$-functions: in \cite{scheeffer}*{\S1-2}, it is claimed that \eqref{puhe} is essentially equivalent to Duhamel's 1866 approach from \cite{duhamel}*{Ch.\ VI}.  Around 1833, Dirksen, the PhD supervisor of Jacobi and Heine, already provides a definition of arc length that is (very) similar to \eqref{puhe} (see \cite{dirksen}*{\S2, p.\ 128}), but with some conceptual problems as discussed in \cite{coolitman}*{\S3}.

\smallskip

Fourth, a function is \emph{regulated} (called `regular' in \cite{voordedorst}) if for every $x_{0}$ in the domain, the `left' and `right' limit $f(x_{0}-)=\lim_{x\di x_{0}-}f(x)$ and $f(x_{0}+)=\lim_{x\di x_{0}+}f(x)$ exist.  
Scheeffer studies discontinuous regulated functions in \cite{scheeffer} (without using the term `regulated'), while Bourbaki develops Riemann integration based on regulated functions in \cite{boerbakies}.  
Now, $BV$-functions are regulated (see Theorem \ref{flima}), while Weierstrass' `monster' function is a natural example of a regulated function not in $BV$.  
An interesting observation about regular functions and continuity is as follows.
\begin{rem}[Continuity and the Axiom of Choice]\label{atleast}\rm
As discussed in \cite{kohlenbach2}*{\S3}, the \emph{local} equivalence for functions on Baire space between sequential and `epsilon-delta' continuity can be proved in $\RCAo+\QFAC^{0,1}$, but not in $\ZF$.  
By the final item in Theorem \ref{flima}, this equivalence for \emph{regulated} functions is provable in $\ACAo$.
\end{rem}
%
Finally, the {Jordan decomposition theorem} as in Theorem \ref{drd} shows that a $BV$-function can be `decomposed' as the difference of monotone functions. 
This is however not the only result of its kind: Sierpi\'{n}ski e.g.\ establishes in \cite{voordesier} that for regulated $f:[0,1]\di \R$, there are $g, h$ such that $f=g\circ h$ with $g$ continuous and $h$ strictly increasing on their respective domains. 
 
 \subsubsection{Bounded variation and Reverse Mathematics}\label{truerm}
In this section, we develop the RM of the Jordan decomposition theorem and related results on bounded variation and regulated functions.  
As will become clear, the principle $\CUC_{\fin}$ from Remark~\ref{flaw} is central to this enterprise.  

\smallskip

First of all, we recall our particular notion of `finite set' to be used in $\CUC_{\fin}$ and provide some motivation in Remark \ref{diunk} right below.   On a historical note, the study of various definitions of finite set (in set theory) was the topic of Mostowski's dissertation, as suggested by Tarski (\cite{moserover}*{p.\ 18-19}). 
\begin{defi}[Finite]\label{deadd}\rm
Any $X\subset \R$ is \emph{finite} if there is $N\in \N$ such that for any finite sequence $(x_{0}, \dots, x_{N})$ of distinct reals, there is $i\leq N$ such that $x_{i}\not \in X$.
\edefi
The number $N\in \N$ from the previous definition is called an \emph{upper bound} on the size of the finite set $X\subset \R$, and we use `$|X|\leq N$' as purely symbolic notation for this.
Note that Definition \ref{deadd} is not circular as `finite sequences of reals' are just objects of type $1$, modulo coding using $\exists^{2}$.
We now motivate Definition \ref{deadd}.
\begin{rem}[Finite sets by any other name]\label{diunk}\rm
First of all, working in set theory, the various definitions\footnote{In $\ZF$, a set $A$ is `finite' if there is a bijection to $\{0, 1, \dots, n\}$ for some $n\in \N$; a set $A$ is `Dedekind finite' if any injective mapping from $A$ to $ A$ is also surjective.\label{krukk}} of `finite set' are not equivalent over $\ZF$, while countable choice suffices to establish the equivalence (\cite{jechp}).  Hence, it should not be a surprise that studying finite sets in weak systems requires one to choose a specific definition.  


\smallskip

Secondly, consider the following set where $f$ is a function of bounded variation:
\be\label{lagel2}\textstyle
A_{n}:=\big\{x\in (0,1): |f(x+)- f(x)|>\frac1{2^{n}} \vee |f(x-)- f(x)|>\frac1{2^{n}}\big\} 
\ee
This set is finite as each element of $A_{n}$ contributes at least $\frac{1}{2^{n}}$ to the total variation.  
Finite as $A_{n}$ may be, we are unable to exhibit an injection from $A_{n}$ to $\{0,1, \dots, k\}$ for some $k\in \N$, say computable in some $\SS_{m}^{k}$ (see Remark \ref{dichtbij} for details).
By contrast, $A_{n}$ is trivially finite in the sense of Definition \ref{deadd} in $\ACAo$.  

\smallskip

%
In conclusion, \emph{if} one wants to work in a weak logical system, \emph{then} (certain) finite sets that `appear in the wild' are best studied via Definition \ref{deadd}, and not the definition from Footnote~\ref{krukk} involving bijections or injections.  Moreover, Theorem~\ref{xruc} suggests that $\IND_{0}$ (and $\cocode_{0}$) does not suffice to study finite sets as in Definition \ref{deadd}; as noted in Remark \ref{flaw}, we indeed seem to need $\CUC_{\fin}$.
\end{rem}
Secondly, we need Theorem \ref{flima} to establish basic properties of $BV$ and regulated functions.
We shall make (seemingly essential) use of the following fragment of the induction axiom, which also follows from $\QFAC^{0,1}$.
\bdefi[$\IND_{2}$]
Let $Y^{2}, k^{0}$ satisfy $(\forall n\leq k)(\exists f\in 2^{\N})(Y(f, n)=0)$.  
There is $w^{1^{*}}$ such that $(\forall n\leq k)(\exists i<|w|)(Y(w(i), n)=0)$.
\edefi
Note that we use the `standard' definition of left and right limits, i.e.\ as in \eqref{lopoo}. 
\begin{thm}[$\ACAo$]\label{flima}~
\begin{itemize}
\item Assuming $\IND_{2}$, any $BV$-function $f:[0,1]\di \R$ is regulated.
\item Any monotone function $f:[0,1]\di \R$ has bounded variation.   
\item For any monotone function $f:\R\di \R$, there is a sequence $(x_{n})_{n\in \N}$ that enumerates all $x\in [0,1]$ such that $f$ is discontinuous at $x$.  
\item For regulated $f:[0,1]\di \R$ and $x\in [0,1]$, $f$ is sequentially continuous at $x$ if and only if $f$ is epsilon-delta continuous at $x$.
\item For finite $X\subset [0,1]$, the function $\mathbb{1}_{X}$ has bounded variation. 
\end{itemize}
\end{thm}
\begin{proof}
For the first item, assume $f(c-)$ does not exist for $c\in (0,1]$.  
We obtain a contradiction using $\QFAC^{0,1}$ and then using $\IND_{2}$.  
Hence, there is $\eps>0$ with 
\be\label{tokkie}\textstyle
(\forall k \in \N)(\exists x, y\in (c-\frac{1}{2^{k}}, c)  )(x<y \wedge |f(x)-f(y)|>\eps  ).  
\ee
Apply $\QFAC^{0,1}$ to \eqref{tokkie}; modify the resulting sequence $(x_{n}, y_{n})_{n\in \N}$ to guarantee 
\[\textstyle
x_{m}<y_{m}<c-\frac{1}{2^{m+1}}<x_{m+1}<y_{m+1}<c-\frac{1}{2^{m+1}}
\]
for large enough $m\in \N$.  By definition, $|f(x_{k})-f(y_{k})|>\eps$ for large enough $k\in \N$, i.e.\ collecting enough such points in a partition, the associated variation is arbitrary large.  
We now observe how the previous proof is readily modified: apply $\IND_{2}$ to \eqref{tokkie} after choosing large enough (relative to the variation of $f$) upper bound on $k$ in \eqref{tokkie}.
Hence, $f(c-)$ must exist and the other cases follow in the same way.

\smallskip

For the second part, assume $f:[0,1]\di \R$ is monotone.  Then the usual telescoping sum trick implies that the total variation of $f$ as in \eqref{tomb} exists and equals $|f(0)-f(1)|$.
The third part is follows from \cite{dagsamXII}*{Lemma 7}, which applies to $[0,1]$ but trivially generalises to $\R$.  

\smallskip

For the fourth item, let $f:[0,1]\di \R$ be regulated and fix $x_{0}\in [0,1]$.  We only need to prove the forward implication, i.e.\ assume $f$ is sequentially continuous at $x_{0}$.  
To show that $f(x_{0}-)=f(x_{0})$, consider $y_{n}:= x_{0}-\frac{1}{2^{n+1}}$ and note that $(y_{n})_{n\in \N}$ converges to $x_{0}$, implying that $(f(y_{n}))_{n\in \N}$ converges to $f(x_{0})$.
Now consider the definition of `the left limit $f(x_{0}-)$ exists' as follows:
\be\label{lopoo}\textstyle
(\exists y\in \R)(\forall k\in \N)(\exists N\in \N)( \forall z \in ( x_{0}-\frac{1}{2^{N}}, x_{0}))(|f(z)-y|<\frac{1}{2^{k}})
\ee
Since $(y_{n})_{n\in \N}$ converges to $x_{0}$ and $(f(y_{n}))_{n\in \N}$ converges to $f(x_{0})$, we have $y=f(x_{0})$ in \eqref{lopoo}.
In the same way, one shows that $f(x_{0}+)=f(x_{0})$.  Then \eqref{lopoo} and the associated `right limit' version imply that $f$ is epsilon-delta continuous at $x_{0}$.

\smallskip

For the fifth item, fix finite $X\subset [0,1]$ with $N\in \N$ as in Definition \ref{deadd}.  Now suppose $f(x):=\mathbb{1}_{X}(x)$ does not have bounded variation, i.e.\ for any $n\in \N$, there is a partition $ x_{0}=0,x_{1}, \dots,x_{k}, x_{k+1}=1$ of $[0,1]$ such that $n+5\leq \sum_{i=0}^{k}|f(x_{i+1})-f(x_{i})|$.
By the definition of $f$, the latter inequality implies that there are $i_{0}, \dots, i_{n}\leq k$ such that $x_{i_{j}}\in X$ for $j\leq n$.
Taking $n=N+1$, we obtain a contradiction.
\end{proof}
Thirdly, we can now connect the Jordan decomposition theorem and $\cocode_{0}$.
Note that `bounded variation' refers to item \eqref{donp} in Definition \ref{varvar}.
\begin{thm}[$\ACAo+\IND_{2}+\CUC_{\fin}$]\label{bruhathm} 
The following are equivalent.
\begin{enumerate}
\renewcommand{\theenumi}{\roman{enumi}}
\item The principle $\cocode_{0}$.\label{easy1}
\item The Jordan decomposition theorem \(Theorem \ref{drd}\). \label{easy2}
\item  For a $BV$-function $f:[0,1]\di \R$, there is a sequence enumerating all points where $f$ is discontinuous.\label{easy3}
\end{enumerate}
The previous upward implications are provable over $\ACAo$.
Assuming $\QFAC^{0,1}$, the above are equivalent to the following. 
\begin{enumerate}
\renewcommand{\theenumi}{\roman{enumi}}
\setcounter{enumi}{3}
\item For regulated $f:[0,1]\di \R$, there is a sequence enumerating all points where $f$ is discontinuous.\label{easy4}
\item \(Sierpi\'{n}ski\) For regulated $f:[0,1]\di \R$, there are $g, h$ such that $f=g\circ h$ with $g$ continuous and $h$ strictly increasing on their interval domains. \label{easy5}
\end{enumerate}
The previous upward implications are provable over $\ACAo+\IND_{2}$.
\end{thm}
\begin{proof}
The equivalence $\eqref{easy2}\asa \eqref{easy3}$ follows from Theorem \ref{flima} and the usual proof of the Jordan decomposition theorem.
Indeed, we can `imitate' the supremum in \eqref{tomb} as follows: use $\mu^{2}$ to define, for any $x\in [0,1]$, the following:
\be\label{VX}\textstyle
V(x):= \sup_{0\leq y_{0}< \dots< y_{n}\leq x}\sum_{i=0}^{n} |f(y_{i})-f(y_{i+1})|,
\ee
where $(y_{i})_{i\in \N}$ is the sequence consisting of $\Q\cap [0,1]$ together with the sequence provided by item \eqref{easy3}.
Trivially, $g(x):=\lambda x.V(x)$ is increasing on $[0,1]$ and the same holds for $h(x):= V(x)-f(x)$.  Indeed, for $0\leq y< z\leq 1$, we have
\[
h(z)-h(y)=V(z)-f(z)-V(y) + f(y)= (V(z)-V(y))-(f(z)-f(y))\geq 0,
\]
where the final inequality follows from the definition of $V$.  
We now have $f(x)-g(x)=h(x)$ for all $x\in [0,1]$, yielding the Jordan decomposition theorem.  

\smallskip

For the implication $\eqref{easy3}\di \eqref{easy1}$, fix $A\subset [0,1]$ and $Y:[0,1]\di \N$ injective on $A$.  
Define $f(x)$ as $\frac{1}{2^{Y(x)+1}}$ in case $x\in A$, and $0$ otherwise. 
Clearly, $f\in BV$ as any sum $\sum_{i=0}^{n} |f(x_{i})-f(x_{i+1})|$ is at most $\sum_{i=0}^{n+5}\frac{1}{2^{i+1}}$, which is bounded by $1$ for any $n\in \N$.
The points of discontinuity for $f$ are exactly the points of $A$, and $\cocode_{0}$ follows. 

\smallskip

For the implication $\eqref{easy1}\di \eqref{easy3}$, fix a $BV$-function $f:[0,1]\di \R$ and $n\in \N$.  
We may assume that the upper bound as in item \eqref{donp} in Def.\ \ref{varvar} is $1$.  
The first item of Theorem \ref{flima} guarantees that $f$ is regular.  Now define the following set
\be\label{lagel}\textstyle
A_{n}:=\big\{x\in (0,1): |f(x+)- f(x)|>\frac1{2^{n}} \vee |f(x-)- f(x)|>\frac1{2^{n}}\big\} 
\ee
which is finite (in the sense of Definition \ref{deadd}).  Indeed, assuming $A_{n}$ were not finite, there are arbitrary long finite sequences of elements of $A_{n}$.
However, each element of $A_{n}$ contributes at least $\frac{1}{2^{n}}$ to the variation of $f$, a contradiction.
Hence, $A_{n}$ is finite (and has at most $2^{n}$ elements).  
Using the contraposition of $\CUC_{\fin}$, the union $A:=\cup_{n\in \N}A_{n}$ is countable. 
This union can now be enumerated thanks to $\cocode_{0}$, yielding a sequence listing all points of discontinuity of $f$.  

\smallskip

The implications \eqref{easy5}$\di$\eqref{easy4}$\di$\eqref{easy3} are immediate by Theorem \ref{flima}.
For $\eqref{easy4}\di \eqref{easy5}$, fix regulated $f:[0,1]\di \R$ and consider the proof of \cite{voordedorst}*{Theorem 0.36, p.\ 28}, going back to \cite{voordesier}. 
This proof establishes the existence of $g, h$ such that $f=g\circ h$ with $g$ continuous and $h$ strictly increasing. 
Moreover, one finds an \emph{explicit construction} (modulo $\exists^{2}$) of the function $h$ required, \emph{assuming} a sequence listing all points of discontinuity of $f$ on $[0,1]$.  
The function $g$ is then defined as $\lambda y.f(h^{-1}(y))$ where $h^{-1}$ is the inverse of $h$, definable using $\exists^{2}$. 

\smallskip

Finally, we shall make use of $\QFAC^{0,1}$ to prove \eqref{easy1}$\di$\eqref{easy4}; fix regulated $f:[0,1]\di \R$ and $n\in \N$ and note that $A_{n}$ as in \eqref{lagel} is again finite.  
Indeed, assuming $A_{n}$ were not finite, $\QFAC^{0,1}$ provides a sequence $(x_{j})_{j\in \N}$ of elements of $A_{n}$.
By the Bolzano-Weierstrass theorem, this sequence has a convergent sub-sequence, say with limit $c\in [0,1]$.  However, $f(c+)$ and $f(c-)$ do not exist by the definition of $A_{n}$ (via the usual epsilon-delta argument), a contradiction. 
In conclusion, the union $A:=\cup_{n\in \N}A_{n}$ can now be enumerated, thanks to item \eqref{easy1} and $\CUC_{\fin}$.
\end{proof}
The use of $\QFAC^{0,1}$ in the theorem can be avoided in various ways, one of which is the principle $\NCC$ from \cite{dagsamX}.  We will explore this in a follow-up paper. 

\smallskip

Fourth, we establish a (more) elegant result as in Theorem \ref{bruha}.  In the latter, the \emph{uniform finite union theorem} expresses the existence of $h:\N\di \N$ such that $|X_{n}|\leq h(n)$ for a sequence of finite sets $(X_{n})_{n\in \N}$ in $[0,1]$.  
The \emph{finite union theorem} expresses (only) that for such a sequence, each $\cup_{n\leq k}X_{n}$ is finite for $k\in \N$.  
Regarding item \eqref{sucker}, Principle \ref{akku} was studied in Corollary~\ref{fara} and we can now obtain an equivalence involving the former and $\cocode_{0}$. 
\begin{thm}[$\ACAo+\QFAC^{0,1}$]\label{bruha}
The following are equivalent.
\begin{enumerate}
\renewcommand{\theenumi}{\alph{enumi}}
\item The combination $\CUC_{\fin}+\cocode_{0}$.\label{ha}
\item For regulated $f:\R\di \R$, there is a sequence enumerating the points of discontinuity.  \label{konk3}
\item For regulated $f:[0,1]\di \R$, there is a sequence enumerating the points of discontinuity.  \label{konk}
\item The \emph{uniform finite union theorem} plus the Jordan decomposition theorem. \label{konk4}
\item The \emph{uniform finite union theorem} plus: for $f:[0,1]\di \R$ in $BV$, there is a sequence enumerating the points of discontinuity.  \label{konk2}
\item The \emph{finite union theorem} plus the Jordan decomposition theorem on the half-line: for $f:\R\di \R$ with bounded variation on $[0,y]$ for any $y\in \R^{+}$, there are monotone $g, h$ such that $f(x)=g(x)-h(x)$ for any $x\geq 0$.  \label{konk6}
\item The \emph{finite union theorem} plus: for $f:\R\di \R$ with bounded variation on $[0,y]$ for any $y\in \R^{+}$, there is a sequence enumerating the points of discontinuity of $f$ on $[0, +\infty)$.  \label{konk7}
\item A non-enumerable and closed set in $\R$ has a limit point \(Principle \ref{akku}\).  \label{sucker}
\end{enumerate}
\end{thm}
\begin{proof}
First of all, we derive the following basic properties concerning finite sets, working in our base theory $\ACAo+\QFAC^{0,1}$.  
\begin{enumerate}
\item[(x1)] Any item \eqref{ha}-\eqref{sucker} implies that \emph{a finite set of reals can be enumerated}.  
\item[(x2)] Item \eqref{konk} implies the \emph{finite union theorem}.  
\item[(x3)] Item \eqref{konk3} implies the \emph{uniform finite union theorem} and $\CUC_{\fin}$.  
\end{enumerate}
For item (x1), a finite set has characteristic function that is in $BV$ and regulated by Theorem \ref{flima}, assuming $\IND_{2}$ which follows from $\QFAC^{0,1}$.    
Hence, over our base theory, items \eqref{ha}-\eqref{konk7} imply that a finite set can be enumerated (as a finite sequence, using $\mu^{2}$), where we note the third item of Theorem \ref{flima}. 
Since finite sets do not have limit points, item (x1) also holds for item \eqref{sucker}.  

\smallskip

For item (x2), let $(X_{n})_{n\in \N}$ be a sequence of finite sets.
We may assume $0,1\not \in \cup_{n\in \N}X_{n}$.  
Now consider $f_{k}:[0,1]\di \N$ for $k\geq 2$ defined as follows: 
define $Y_{i}:= \{ y\in (\frac{i}{k}, \frac{i+1}{k}):  k(y-\frac{i}{k})\in X_{i} \}$ and $f_{k}(x):=\sum_{i=0}^{k}\mathbb{1}_{Y_{i}}(x)$.
By definition, $Y_{i}$ is the set $X_{i}$ for $i\leq k$, but shrunk by a factor $\frac{1}{k}$ and moved to $(\frac{i}{k}, \frac{i+1}{k})$.  
Hence, $Y_{i}$ is finite for $i\leq k$ and since $f_{k}(x)$ equals $\mathbb{1}_{Y_{i}}(x)$ for $x\in [\frac{i}{k}, \frac{i+1}{k}]$, the function $f_{k}$ is regulated by Theorem \ref{flima}.  
Thus, item~\eqref{konk} implies that the points of discontinuity of $f_{k}$ can be enumerated, which means $\cup_{i\leq k}Y_{i}$ can be enumerated. 
Using $\mu^{2}$ and the latter enumeration, one finds an upper bound $N_{i}\in \N$ for each $Y_{i}$.  Taking the sum, $\cup_{i\leq k}Y_{i}$ (and hence $\cup_{i\leq k}X_{i}$) is finite.
One obtains item (x3) in the same way: let $Z_{i}$ be the set $X_{i}$ moved to $(i+1, i+2)$ without shrinking for $i\in \N$.  Then the function $\mathbb{1}_{\cup_{n\in \N}Z_{n}}$ is regular on $\R$ and item \eqref{konk3} provides an enumeration of $\cup_{n\in \N}Z_{n}$, which readily yields $\CUC_{\fin}$.  Using this enumeration and $\mu^{2}$, one obtains the function $h:\N\di \N$ as in the \emph{uniform} finite union theorem. 

\smallskip

Secondly, we establish $\eqref{ha} \di \eqref{konk3} \di \eqref{konk}\di \eqref{ha}$.
Now, \eqref{ha} $\di$ \eqref{konk3} follows from the proof of  \eqref{easy1}$\di$\eqref{easy4} in Theorem \ref{bruhathm} by replacing $[0,1]$ by $\R$. 
In turn, \eqref{konk3} $\di$ \eqref{konk} is trivial while \eqref{konk} $\di$ \eqref{ha} is proved as follows: let $(X_{n})_{n\in \N}$ be a sequence of finite sets in $[0,1]$ and define the following function:
\be\label{varivari}
g(x):=
\begin{cases}
\frac{1}{2^{n}} & x \in X_{n} \textup{ and $n$ is the least such number}\\
0 & \textup{otherwise}
\end{cases}.
\ee
To show that $g:[0,1]\di \R$ is regulated, fix $x\in [0,1]$ and $k\in \N$.  Then $\cup_{i\leq k}X_{i}$ is finite by the finite union theorem, which is available due to item (x2) from the first paragraph of this proof.  
Then $(\exists m^{0})(\forall y\in B(x, \frac{1}{2^{m}})\setminus \{x\})(y\not \in \cup_{i\leq k}X_{i})$ readily\footnote{Suppose $(\forall m^{0})(\exists  y\in B(x, \frac{1}{2^{m}})\setminus \{x\})(y\in \cup_{i\leq k}X_{i})$ and apply $\QFAC^{0,1}$ to obtain a sequence in $\cup_{i\leq k}X_{i}$ converging to $x$.  Using $\mu^{2}$, one modifies this sequence to guarantee it consists of pairwise disjoint reals.  This however contradicts the finiteness of $\cup_{i\leq k}X_{i}$.} follows by contradiction.   By definition, $g(x)\leq \frac{1}{2^{k+1}}$ on this punctured disc, i.e.\ $g$ becomes arbitrarily small near $x$, implying $g(x+)=0=g(x-)$.  
Item \eqref{konk} now provides a list $(x_{n})_{n\in \N}$ with all points where $g$ is discontinuous; this sequence also enumerates $\cup_{n\in \N}X_{n}$. 
Indeed, $g(x_{m}-)=0=g(x_{m}+)$ implies that $g(x_{m})>0$ as $g$ must be discontinuous at $x_{m}$; by \eqref{varivari}, $x_{m}$ is in $\cup_{n\in \N}X_{n}$. 
Similarly, if $y$ is in the latter union, we have $g(y)>0$ by \eqref{varivari}; hence $g$ is discontinuous at $y$, implying there is $m\in \N$ with $y=x_{m}$.
Hence, $\cup_{n\in \N}X_{n}$ can be enumerated, which immediately implies $\cocode_{0}$ and $\CUC_{\fin}$.  
We have established \eqref{ha} $\asa$ \eqref{konk3} $\asa$ \eqref{konk}. 

\smallskip

Thirdly, we show that $\eqref{konk3}\di \eqref{konk4}\di \eqref{konk2}\di \eqref{ha}$.    
The implication $\eqref{konk3}\di \eqref{konk4}$ follows from item (x3) and Theorem \ref{bruhathm}.  
The implication $\eqref{konk4}\di \eqref{konk2}$ follows by the third item of Theorem \ref{flima}. 
For the implication \eqref{konk2}$\di$\eqref{ha}, modify \eqref{varivari} as follows:
\be\label{varivari2}
g(x):=
\begin{cases}
\frac{1}{2^{n}}\frac{1}{h(n)+1} & x \in X_{n} \textup{ and $n$ is the least such number}\\
0 & \textup{otherwise}
\end{cases},
\ee
where $h$ is as provided by the uniform finite union theorem.  Since $|X_{n}|\leq h(n)$ for all $n\in \N$, $g$ as in \eqref{varivari2} is in $BV$, with variation bounded by $1$.  
Applying item~\eqref{konk2}, one obtains an enumeration of $\cup_{n\in \N}X_{n}$, as required for item \eqref{ha}.
By the previous paragraph, we obtain $\eqref{ha}\asa \eqref{konk3}\asa \eqref{konk} \asa \eqref{konk4}\asa \eqref{konk2}$.    

\smallskip

Fourth, we show that $\eqref{konk3}\di \eqref{konk6}\di \eqref{konk7}\di \eqref{ha}$.    
The implication $\eqref{konk3}\di \eqref{konk6}$ follows from item (x3) and the generalisation of \eqref{VX} to arbitrary intervals $[0, y]$ for $y>0$; 
the second part is essentially the same as the proof of $\eqref{easy3}\di \eqref{easy2}$ in Theorem~\ref{bruhathm}. 
The implication $\eqref{konk6}\di \eqref{konk7}$ follows by the third item of Theorem~\ref{flima}. 
To prove that item \eqref{konk7} implies item \eqref{ha}, let $(X_{n})_{n\in \N}$ be a sequence of finite sets in $(0,1)$.  
Let $Z_{i}$ be the set $X_{i}$ moved to $(i+1, i+2)$ without shrinking for $i \in\N$. As above, the function $\mathbb{1}_{\cup_{n\in \N}Z_{n}}$ satisfies the conditions of item \eqref{konk7}.  
Indeed, on the interval $[0, y]$ with $0<y\leq m\in \N$, the function $\mathbb{1}_{\cup_{n\in \N}Z_{n}}$ reduces to $\mathbb{1}_{\cup_{n\leq m}Z_{n}}$, and the latter has bounded variation 
by the finite union theorem and the final item in Theorem~\ref{flima}.  An enumeration of the points of discontinuity of $\mathbb{1}_{\cup_{n\in \N}Z_{n}}$ readily yields an enumeration of $\cup_{n\in \N}X_{n}$, as required for item \eqref{ha}.
By the previous paragraph, we obtain $\eqref{ha}\asa \eqref{konk3}\asa \dots \asa \eqref{konk7}$, i.e.\ all that remains is item \eqref{sucker},

\smallskip

Finally, we prove $\eqref{ha}\di \eqref{sucker}\di \eqref{konk2}$, finishing the theorem.
Hence, assume \eqref{sucker} and fix $f:[0,1]\di \R$ in $BV$ and consider $A_{n}$ a in \eqref{lagel}, which is well-defined thanks to Theorem \ref{flima}. 
The set $A_{n}$ is also finite as in the proof of Theorem \ref{bruhathm} and we may assume $0,1 \not \in A_{n}$ for $n\in \N$.  Now let $B_{n}$ be a copy of $A_{n}$ translated from $[0,1]$ to $[n+1, n+2]$ for $n\in \N$.  
Then $B:=\cup_{n\in \N}B_{n}$ has no limit points, which one proves (by contradiction) using $\QFAC^{0,1}$ and the Bolzano-Weierstrass theorem.  
Hence, item~\eqref{sucker} yields an enumeration of $B$ and hence a sequence listing all points where $f$ is discontinuous.  Similarly, item \eqref{sucker} implies the uniform finite union theorem.  
For the implication \eqref{ha}$\di$\eqref{sucker}, fix closed $A\subset\R$ with no limit points.  
Then $A_{n}:=A\cap [-n, n ]$ is finite for any $n\in \N$, which one proves (by contradiction) using $\QFAC^{0,1}$ and the Bolzano-Weierstrass theorem.  
By $\CUC_{\fin}$, $\cup_{n\in \N}A_{n}$ is countable, and can be enumerated using $\cocode_{0}$, and we are done. 
\end{proof}
By the proof of Theorem \ref{bruha}, item \eqref{konk} implies the finite union theorem, while the same does not seem to hold for the Jordan decomposition theorem.  
We believe this is due to fact that `regulated' is a local property while `bounded variation' is a global property (of the domain).  Moreover, there are \emph{many and very different} intermediate spaces (see \cite{voordedorst} or \cite{dagsamXII}*{Remark 4.13}) between the space of regulated and of $BV$-functions; each of these intermediate spaces yields an equivalent generalisation of e.g.\ item \eqref{konk2} in Theorem \ref{bruha}, also showcasing a certain robustness.

\smallskip

Next, by the following theorem, we may replace the finite union theorem in Theorem \ref{bruha} by `more mathematical' principles.  
\begin{thm}[$\ACAo+\QFAC^{0,1}$]\label{frlo}
The higher items imply the lower items. 
\begin{itemize}
 \item The combination $\CUC_{\fin}+\cocode_{0}$.\label{hak}
\item For $f_{1}, \dots, f_{k}:[0,1]\di \R$ in $BV$, the sum $\sum_{i=1}^{k}f_{i}$ is in $BV$.
\item The finite union theorem. 
\end{itemize}
\end{thm}
\begin{proof}
For the first downward implication, the following set 
\[\textstyle
A_{n, i}:=\{x\in [0,1]:  |f_{i}(x+)-f_{i}(x)|>\frac{1}{2^{n}}\vee  |f_{i}(x-)-f_{i}(x)|>\frac{1}{2^{n}}    \} 
\]
is finite for all $n\in \N$ and $i\leq k$ if $f_{1}, \dots, f_{k}\in BV$.  
Clearly, the set $B_{n}:=\cup_{i\leq k}A_{n, i}$ is finite.  
Hence, there is an enumeration of $\cup_{n\in \N} B_{n}$, yielding a sequence $(x_{n})_{n\in \N}$ that lists all points of discontinuity of the functions $f_{i}$ for $i\leq k$.  
Using $(\mu^{2})$,  we can compute $V_{0}^{1}(f_{i})$ for $i\leq q$ as we can replace the usual supremum by one over $\N$ (and $\Q$).  The proof that $V_{0}^{1}(f+g)\leq V_{0}^{1}(f)+V_{0}^{1}(g)$ in \cite{voordedorst}*{p.\ 57} essentially amounts to the triangle inequality over $\R$, i.e.\ that $\sum_{i=1}^{k}f_{i}$ is in $BV$ now follows.  

\smallskip

The second downward implication is straightforward as a characteristic function $\mathbb{1}_{X}$ is in $BV$ if $X\subset [0,1]$ is finite by Theorem \ref{flima}.
\end{proof}
\noindent
We could replace the second item in Theorem \ref{frlo} by the following statement:
\begin{center}
\emph{for $f$ in $BV$ and $0=x_{0}<x_{1}<\dots<x_{k}<x_{k+1}=1$, $V_{0}^{1}(f)=\sum_{i=0}^{k}V_{x_{i}}^{x_{i+1}}(f)$}, 
\end{center}
but this would entail a number of technical details.  The same division property for the arc length of rectifiable functions would of course be rather natural.

\smallskip

Next, Jordan's original motivation for introducing $BV$-functions in \cite{jordel} was the convergence of Fourier series. Now, the latter always converges to $\frac{f(x+)+f(x-)}{2}$ for $f\in BV$.  
In this light, item \eqref{konk2} from Theorem is equivalent to the following.
\begin{center} 
The \emph{uniform finite union theorem} plus: for $f\in BV$, there is a sequence enumerating all points where the Fourier series does \emph{not} equal the function value.  
\end{center}
To derive the centred statement, it a somewhat tedious verification that $\ACAo+\QFAC^{0,1}$ can formalise the proof 
that the Fourier series of $f\in BV$ always converges to $\frac{f(x+)+f(x-)}{2}$.  
We refer to \cite{easypeasyfourieranalysie} for an elementary proof of this convergence result.
A more detailed discussion is in \cite{samRMR}*{\S3}, including various textbook proofs. 

\smallskip

Finally, we have used $\ACAo$ (plus extensions) as our base theory in the above; the following theorem implies that $(\exists^{2})$ can be expressed in terms 
of basic properties of regulated functions as well.  We use `usco' to abbreviate `upper semi-continuous'.
\begin{thm}[$\RCAo+\WKL$]
The following are equivalent to $(\exists^{2})$.
\begin{enumerate}
\renewcommand{\theenumi}{\roman{enumi}}
\item There exists Riemann integrable $f:[0,1]\di [0,1], g:[0,1]\di \R$ such that $g\circ f$ is not Riemann integrable.\label{F1}
\item  There exists a function that is not Riemann integrable. \label{F12}
\item There exists regulated $f:[0,1]\di [0,1], g:[0,1]\di \R$ such that $g\circ f$ is not regulated.\label{F2}
\item  There exists a function that is not regulated. \label{F3}
\item There exists $f:[0,1]\di [0,1], g:[0,1]\di \R$ in Baire 1 such that $g\circ f$ is not in Baire 1.\label{F4}
\item There exists a function $f:[0,1]\di \R$ that is not Baire 1.\label{F5}
\item There exists usco $f:[0,1]\di [0,1], g:[0,1]\di \R$ such that $g\circ f$ is not usco.\label{F6}
\item  There exists a function that is not everywhere usco. \label{F7}
\item There exists a function that is not everywhere quasi-continuous. \label{F10}
\item There exists a function that is not everywhere cliquish. \label{F11}
\end{enumerate}
We only need $\WKL$ for the first and second items.  
\end{thm}
\begin{proof}
First of all, assume $(\exists^{2})$ and define $f:[0,1]\di [0,1]$ as follows:
\be\label{thomae}
f(x):=
\begin{cases} 
0 & \textup{if } x\in \R\setminus\Q\\
\frac{1}{q} & \textup{if $x=\frac{p}{q}$ and $p, q$ are co-prime} 
\end{cases}.
\ee
Thomae introduces this function around 1875 in \cite{thomeke}*{p.\ 14, \S20}); one readily verifies that Thomae's function is Riemann integrable (with integral equal to zero) and regulated (with zero as left and right limits) on any interval.
Now define $g:[0,1]\di \R$ as $0$ in case $x=0$, and $1$ otherwise; this function is trivially Riemann integrable and regulated.  
However, $g\circ f$ is Dirichlet's function $\mathbb{1_{Q}}$, i.e.\ the characteristic function of the rationals, which is trivially shown to be \emph{not} Riemann integrable and \emph{not} regulated.
Thus, $(\exists^{2})$ implies items \eqref{F1}-\eqref{F3}. 

\smallskip

Secondly, assume item \eqref{F2} (similar for item \eqref{F3}) and note that $g\circ f$ must be discontinuous, as continuous functions are trivially regulated. 
However, the existence of a discontinuous function on $\R$ yields $(\exists^{2})$ by \cite{kohlenbach2}*{\S3}.
Similarly, for items \eqref{F1} and \eqref{F12}, $\WKL$ suffices to obtain an RM-code for a continuous functions on Cantor space (see \cite{kohlenbach4}*{\S4}); the same goes through \emph{mutatis mutandis} for functions on $[0,1]$.  
Hence, $\WKL$ suffices to show that a continuous function on $[0,1]$ is Riemann integrable by \cite{simpson2}*{IV.2.6}.
Thus, $g\circ f$ must be discontinuous, which yields $(\exists^{2})$ by \cite{kohlenbach2}*{\S3}.  Similarly, for items \eqref{F3} and \eqref{F4}, a function not in Baire 1 must be discontinuous, as continuous functions are trivially Baire 1; in this way, we obtain a discontinuous function and hence $(\exists^{2})$ by \cite{kohlenbach2}*{\S3}. The first five items now each imply $(\exists^{2})$, the first one using $\WKL$ as noted above.

\smallskip

Thirdly, assume $(\exists^{2})$ and note that Thomae's function is Baire 1.  In particular, finding a sequence of continuous function converging to $f$ as in \eqref{thomae} is straightforward (using $\exists^{2}$).
The same holds for define $g:[0,1]\di \R$ defined as $0$ in case $x=0$, and $1$ otherwise.  We now show that  $\mathbb{1_{Q}}=g\circ f$ is not Baire 1, establishing items \eqref{F4} and \eqref{F5}. 
To this end, suppose $(f_{n})_{n\in \N}$ is a sequence of continuous functions with pointwise limit $\mathbb{1_{Q}}$.  We first prove the following:
\begin{center}
For any non-empty $[a,b]\subset [0,1]$, there is an arbitrarily large $N$ and a non-empty $[c,d]\subset [a,b]$ such that  $f_{N}([c, d])=[\frac{1}{4}, \frac{3}{4}]$.
\end{center}
To establish this result, fix a non-empty interval $[a,b]\subset [0,1]$ and fix $x<y$ such that $x\in \Q\cap [a,b]$ and $y\in [a,b]\setminus \Q$.
Since $(f_{n})_{n\in \N}$ converges pointwise to $\mathbb{1_{Q}}$, there exists arbitrarily large $N$ such that $f_{N}(x)\geq \frac{3}{4}$ and $f_{N}(y)\leq\frac14$. 
By the intermediate value theorem (provable in $\RCA_{0}$ for RM-codes, and hence in $\ACAo$ for continuous functions), there exists an interval  $[c,d]\subseteq [x,y]\subset [a,b]$ such that $f_{N}([c,d])=[\frac14,\frac34]$.

\smallskip

By \cite{kohlenbach3}*{\S3}, $(\exists^{2})$ is equivalent to the existence of a functional witnessing the intermediate value theorem.  Hence, following the previous paragraph, $\exists^{2}$
readily yields a functional that returns the numbers $N\in \N$ and $c, d\in [0,1]$ as in the centred statement on input $[a,b]$ and $m\in \N$, where $N\geq m$.  
Using the latter functional, one readily obtains sequences $(c_{n})_{n\in \N}$, $(d_{n})_{n\in \N}$, and $g\in \N^{\N}$ such that $g(n)\geq n$, $f_{g(n)}([c_{n}, d_{n}])=[\frac14, \frac34]$, and $|c_{n}-d_{n}|<\frac{1}{2^{n}}$ for all $n\in \N$.
However, if $c=\lim_{n\di \infty }c_{n}$, then $\mathbb{1_{Q}}(c)=\lim_{n\di \infty }f_{g(n)}(c)\in [\frac14, \frac34]$, a contradiction.  Hence, we have proved item \eqref{F4} and \eqref{F5}.  
Since $\mathbb{1_{Q}}$ is not usco (or quasic-continuous or cliquish) by definition, the equivalence between $(\exists^{2})$ and items \eqref{F6}-\eqref{F11} follows in the same way. 
\end{proof}
The previous theorem yields the following strange result by contraposition: if all functions on $\R$ are Baire 1, then all functions on $\R$ are continuous.
In this light, Brouwer's theorem is not an isolated event, but rather the limit of a certain process.   One cannot push the previous equivalences much beyond Baire 1, as follows. 
\begin{thm}[$\ACAo+\IND_{0}$]
The principle $\NIN$ follows from the statement: \emph{there is a $[0,1]\di \R$ function that is not Baire 2}. 
\end{thm}
\begin{proof}
Fix $f:[0,1]\di \R$ and let $Y:[0,1]\di \N$ be injective.  Now define $f_{n}(x)$ as $f(x)$ in case $Y(x)\leq n$, and $0$ otherwise. 
Clearly, $f$ is the pointwise limit of the sequence $(f_{n})_{n\in \N}$.  Now fix some $n_{0}\in \N$ use $\IND_{0}$ to enumerate all $x\in [0,1]$ such that $Y(x)\leq n_{0}$.  
With this finite sequence, one readily defines a sequence of continuous functions converging to $f_{n_{0}}$, which shows that the latter is Baire 1.
\end{proof}
In conclusion, we note that the insights in this section (esp.\ regarding Definition~\ref{deadd}) came about after a recent FOM-discussion initiated by Friedman (\cite{fomo}).  

\subsubsection{On the choice of definitions}\label{crux}
In this section, we discuss our choice of definitions and provide some motivation.  

\smallskip

First of all, the following remark provides some motivation for the use of our definitions of finite and closed set as in Definitions \ref{openset} and \ref{deadd}. 
\begin{rem}\label{dichtbij}\rm
As discussed above, the sets $A_{n}$ from \eqref{lagel2} are finite and hence closed.  
In particular, working in $\ZF$ (or even $\Z_{2}^{\Omega}$ from Section \ref{lll}), the following objects can be constructed:
\begin{itemize}
\item for $n\in \N$, an injection $Y_{n}$ from $A_{n}$ to some $\{0, 1, \dots, k\}$ with $k\in \N$,
\item for $m\in \N$, an RM-code $C_{m}$ (see \cite{simpson2}*{II.5.6}) for the closed sets $A_{m}$.
\end{itemize}
However, it is shown in \cite{samwollic22post, samcsl23} that neither $Y_{n}$ nor $C_{n}$ are computable (in the sense of Kleene S1-S9) in terms of any $\SS_{m}^{2}$ and the other data.
Hence, it seems $\Z_{2}^{\omega}$ cannot prove the general existence of $Y_{n}$ and $C_{n}$ as in the previous items.
By contrast, the system $\ACAo$ (and even fragments) suffice to show that $A_{n}$ from \eqref{lagel2} is finite in the sense of Definition \ref{deadd}, and closed in the sense of Definition \ref{openset}.

\smallskip

In conclusion, the study of $BV$-functions readily yields finite (resp.\ closed) sets for which there is no reasonable injection to some fragment of $\N$ (resp.\ RM-code).  
This observation justifies our choice of definitions of closed and finite set as in Definitions \ref{openset} and \ref{deadd}
\end{rem}
Secondly, Remark \ref{dichtbij} has some ramifications for our choice of the definition of `countable set', as follows. 
Indeed, one could reformulate $\CUC_{\fin}+\cocode_{0}$ as:
\begin{center}
\emph{a \textbf{height countable} set in the unit interval can be enumerated}, 
\end{center}
where the boldface notion is defined as follows. 
\bdefi[Height countable]\label{ked}
A set $A\subset \R$ is \emph{height countable} if there is a \emph{height} $H:\R\di \N$ for $A$, i.e.\ for all $n\in \N$, $A_{n}:= \{ x\in A: H(x)<n\}$ is finite. 
\edefi
The notion of `height' is mentioned in e.g.\ \cite{vadsiger, royco, demol,komig,hux} in connection to countability.  
Now, as to the naturalness of Definition \ref{ked}, consider the set of discontinuities of a function $f\in BV$ (or even regular), definable in $\ACAo$:
\be\label{dink}
A:=\{x\in [0,1]:f(x+)\ne f(x-)\}.
\ee
The set $A$ is trivially height countable and central to many proofs in \cite{voordedorst}.  As discussed in \cite{samwollic22}, no $\SS_{m}^{2}$ suffices to compute an injection from $A$ to $\N$ in general.    

\smallskip

In conclusion, the textbook study of $BV$-functions yields height countable sets occuring `in the wild' but with no `reasonable' injection (or bijection) to $\N$.  
Hence, it seems we have a choice between using $\CUC_{\fin}$ or adopting Definition \ref{ked} as our definition of countable set.  
We choose the former option as e.g.\ Theorem \ref{bruha} is still quite elegant. 
By contrast, Definition \ref{ked} is used in \cites{samwollic22, samRMR}, as this seems to be the \textbf{only} way of obtaining elegant equivalences for the uncountability of $\R$.
To be absolutely clear, as documented in \cites{samwollic22, samRMR}, the statement \emph{the unit interval is not height countable} readily gives rise to many interesting equivalences while $\NIN$ does not (seem to), say working over $\ACAo+\QFAC^{0,1}$ or fragments.  

\smallskip

Thirdly, our notion of `finite set' as in Definition \ref{deadd} is different from the mainstream set theory definition (see Footnote~\ref{krukk}), for reasons discussed in Remark~\ref{diunk}.  
Nonetheless, the reader may desire an equivalence in Theorem \ref{bruha} involving a (more) mainstream definition of finite set.   To this end, let $\CUC_{\fin}^{'}$ be $\CUC_{\fin}$ formulated with the following finiteness notion.
\bdefi[Set theory finite]\label{stf}
A set $X\subset \R$ is \emph{set theory finite} if there are $k\in \N$ and $Y:[0,1]\di \N$ such that on $X$, $Y$ is bounded by $k$ and injective.
\edefi
\noindent
One readily shows that the following are equivalent, say over $\ACAo+\QFAC^{0,1}$.
\begin{itemize}
\item (Bolzano-Weierstrass) For $X\subset [0,1]$ which is not set theory finite, there is a limit point $y\in [0,1]$, i.e.\ $(\forall k\in \N)(\exists x\in X)(|x-y|<\frac{1}{2^{k}})$.
\item A finite set (in the sense of Definition \ref{deadd}) is set theory finite. 
\end{itemize}
Letting $\BW$ be the first item, we note that item \eqref{ha} from Theorem \ref{bruha} is equivalent to $\BW+\CUC_{\fin}'+\cocode_{0}$, and where the latter uses Definition \ref{stf} exclusively. 
Jordan mentions $\BW$ in e.g.\ \cite{jordel3}*{p.\ 23, \S27}.
We intend to explore the content of the previous remark in a future paper. 

\smallskip

Fourth, a regulated function has \emph{bounded Waterman variation} (\cite{voordedorst}*{Prop.\ 2.24}).  The latter notion amounts to replacing $|f(x_{i+1})-f(x_{i})|$ by $\lambda_{i}|f(x_{i+1})-f(x_{i})|$ in \eqref{tomb}, for a \emph{Waterman sequence}  $(\lambda_{k})_{k\in \N}$ as in \cite{voordedorst}*{Def.\ 2.15}.  Now, for $BV$-functions with variation bounded by $1$, \eqref{lagel} can have at most $2^{n}$ elements. Functions of bounded Waterman variation similarly have explicit upper bounds -defined in terms of $(\lambda_{k})_{k\in \N}$ and $\exists^{2}$- on the set \eqref{lagel}.  
In this way, the regulated function $g$ from \eqref{varivari} has bounded Waterman variation and this readily yields an upper bound function for $(X_{n})_{n\in \N}$ as in the uniform finite union theorem.  
Hence, we can avoid the use of the latter (and perhaps even $\QFAC^{0,1}$) if we have access to the information provided by the bounded Waterman variation of a regulated function. 


\subsection{Unordered sums}\label{unorder}
We develop the RM-study of \emph{unordered sums}, which are a device for bestowing meaning upon sums involving uncountable index sets.  
We first introduce the relevant definitions and prove the equivalence between $\cocode_{0}$ and basic properties of unordered sums in Theorem \ref{flunk2}.

\smallskip

%
First of all, \emph{unordered sums} are essentially `uncountable sums' $\sum_{x\in I}f(x)$ for \emph{any} index set $I$ and $f:I\di \R$.  
A central result is that if $\sum_{x\in I}f(x)$ somehow exists, it must be a `normal' series of the form $\sum_{i\in \N}f(y_{i})$, i.e.\ $f(x)=0$ for all but countably many $x\in [0,1]$; Tao mentions this theorem in \cite{taomes}*{p.~xii}. 

\smallskip

By way of motivation, there is considerable historical and conceptual interest in this topic: Kelley notes in \cite{ooskelly}*{p.\ 64} that E.H.\ Moore's study of unordered sums in \cite{moorelimit2} led to the concept of \emph{net} with his student H.L.\ Smith (\cite{moorsmidje}).
Unordered sums can be found in (self-proclaimed) basic or applied textbooks (\cites{hunterapp,sohrab}) and can be used to develop measure theory (\cite{ooskelly}*{p.\ 79}).  
Moreover, Tukey shows in \cite{tukey1} that topology can be developed using \emph{phalanxes}, which are nets with the same index sets as unordered sums.  

\smallskip

Now, unordered sums are just a special kind of \emph{net} and $a:[0,1]\di \R$ is therefore written $(a_{x})_{x\in [0,1]} $ in this context to suggest the connection to nets.  
The associated notation $\sum_{x\in [0,1]}a_{x}$ is purely symbolic.   
We only need the following notions in the below. 
Let $\fin(\R)$ be the set of all finite sequences of reals without repetitions.  
\bdefi\label{kaukie} Let $a:[0,1]\di \R$ be any mapping, also denoted $(a_{x})_{x\in [0,1]}$.
\begin{itemize}
\item We say that $\sum_{x\in [0,1]}a_{x} $ is \emph{bounded} if there is $N_{0}\in \N$ such that for any $J\in \fin(\R)$, $N_{0}>|\sum_{x\in J}a_{x}|$. 
\item We say that $(a_{x})_{x\in [0,1]} $ is \emph{convergent to $a\in \R$} if for all $k\in \N$, there is $I\in \fin({\R})$ such that for $J \in \fin({\R})$ with $I\subseteq J$, we have $|a-\sum_{x\in J}a_{x}|<\frac{1}{2^{k}}$.
\end{itemize}
\edefi
Note that in the first item, $\Phi$ is called a \emph{Cauchy modulus}.  
For simplicity, we focus on \emph{positive unordered sums}, i.e.\ $(a_{x})_{x\in [0,1]}$ such that $a_{x}\geq 0$ for $x\in [0,1]$.

\smallskip

Secondly, we establish equivalences basic properties of unordered sums. 
We note that $\QFAC^{0,1}$ is no longer needed in the base theory, while $\cocode_{0}$ is equivalent to the first item in Theorem \ref{flunk2} given $\CUC_{\fin}$.  
\begin{thm}[$\ACAo$]\label{flunk2}
The following are equivalent.
\begin{enumerate}
\renewcommand{\theenumi}{\roman{enumi}}
\item Let $(X_{n})_{n\in \N}$ and $g\in \N^{\N}$ be such that $g(n)$ is an upper bound on the size of $X_{n}$, for all $n\in \N$.  Then $\cup_{n\in \N}X_{n}$ can be enumerated. \label{bb3}
\item For a positive and bounded unordered sum $\sum_{x\in [0,1]}a_{x}$, there is a sequence $(y_{n})_{n\in \N}$ of reals such that $a_{y}=0$ for all $y$ not in this sequence.\label{bb7}
\end{enumerate}
Assuming $\QFAC^{0,1}$, the above are equivalent to:
\begin{enumerate}
\setcounter{enumi}{+2}
\renewcommand{\theenumi}{\roman{enumi}} 
\item A positive bounded unordered sum $\sum_{x\in [0,1]}a_{x}$ is convergent to some $a\in \R$.\label{bb8}
\end{enumerate}
\end{thm}
\begin{proof}
%
The equivalence between items \eqref{bb3} and \eqref{bb7} is as follows: assume the latter and let $(X_{n})_{n\in \N}$ and $g:\N\di \N$ be as in item \eqref{bb3}. 
Define $(a_{x})_{x\in [0,1]}$ as follows:
\[
a_{x}:=  
\begin{cases}
0  & x\not \in \cup_{n\in \N} X_{n} \\
\frac{1}{2^{n}}\frac{1}{g(n)+1} & x\in X_{n} \textup{ and $n$ is the least such natural} 
\end{cases}.
\]
Clearly, this unordered sum has upper bound $1$.  If $(y_{n})_{n\in \N}$ is as in item \eqref{bb7}, we obtain an enumeration of $ \cup_{n\in\N}X_{n}$.  
Now assume item \eqref{bb3} and let $(a_{x})_{x\in [0,1]}$ be an unordered sum that is Cauchy, and consider the following set:
\be\label{lort}
X_{n}:=\{x\in [0,1]: a_{x}>1/2^{n}\}.
\ee
Let $N_{0}\in \N$ be an upper bound for $\sum_{x\in K}a_{x}$ for any $K\in \fin{(\R)}$.
Hence, the finite set $X_{n}$ in \eqref{lort} has size at most $2^{n}N_{0}$.  
An enumeration of $\cup_{n\in \N}X_{n}$ immediately yields the sequence as in item~\eqref{bb7}. 

\smallskip

The implication \eqref{bb7} $\di $ \eqref{bb8} is straightforward: the former guarantees that an unordered sum is a `normal' series, which must 
converge by the monotone convergence theorem (provable in $\ACA_{0}$ by \cite{simpson2}*{III.2}).  Now assume item \eqref{bb8} and note
that convergence of an unordered sum to some $a\in \R$ implies
\be\textstyle\label{furlo}
(\forall k\in \N\)(\exists I\in \fin(\R)) \big(|a-\sum_{x\in I}a_{x}|<\frac{1}{2^{k}}\big).
\ee
Apply $\QFAC^{0,1}$ to \eqref{furlo} to obtain a sequence $(I_{n})_{n\in \N}$ of finite sequences of reals.
This sequence must contain all $y\in \R$ such that $a_{y}\ne0$, otherwise \eqref{furlo} would be false.  
Use Feferman's $\mu^{2}$ to remove all other reals, and we are done.   
\end{proof}
The following result is perhaps more surprising.  Note that the second item also follows from item \eqref{bb3} in Theorem \ref{flunk2}.
\begin{cor}[$\ACAo+\IND_{1}$]
The higher items imply the lower ones.
\begin{itemize}
\item $\QFAC^{0,1}$.
\item For a positive and \textbf{convergent} unordered sum $\sum_{x\in [0,1]}a_{x}$, there is a sequence $(y_{n})_{n\in \N}$ of reals such that $a_{y}=0$ for all $y$ not in this sequence.\label{bv7}
\item $\cocode_{1}$.
\end{itemize}
\end{cor}
\begin{proof}
The first downward implication is proved as in the proof of the theorem.  
For the second downward application, let $A\subset [0,1]$ and $Y:[0,1]\di \R$ be such that the latter is bijective on the former. 
Define $a_{x}:=\frac{1}{2^{Y(x)+1}}$ if $x\in A$, and $0$ otherwise.  One readily proves that $\sum_{x\in [0,1]}a_{x}$ is convergent to $1$, for which $\IND_{1}$ is needed.
The sequence from the second item now yields the enumeration of the set $A$ required by $\cocode_{1}$. 
\end{proof}
One can derive a version of $\QFAC^{0,1}$ involving an `at most finitely many' condition on the existential quantifier in the antecedent.  

\begin{ack}\rm
We thank Anil Nerode for his valuable advice.
We also thank the anonymous referee for the many detailed and helpful suggestions.    
Our research was supported by the \emph{Deutsche Forschungsgemeinschaft} via the DFG grant SA3418/1-1.
Initial results were obtained during the stimulating MFO workshop (ID 2046) on proof theory and constructive mathematics in Oberwolfach in early Nov.\ 2020.  
We express our gratitude towards the aforementioned institutions.    
\end{ack}

\appendix

\section{Reverse Mathematics: introduction and definitions}\label{RMA}
\subsection{Reverse Mathematics}\label{prelim1}
We discuss Reverse Mathematics (Section \ref{introrm}) and introduce -in full detail- Kohlenbach's base theory of \emph{higher-order} Reverse Mathematics (Section \ref{rmbt}).
Some essential axioms, functionals, and notations may be found in Sections \ref{kkk} and \ref{lll}.
\subsubsection{Introduction}\label{introrm}
Reverse Mathematics (RM hereafter) is a program in the foundations of mathematics initiated around 1975 by Friedman (\cites{fried,fried2}) and developed extensively by Simpson (\cite{simpson2}).  
The aim of RM is to identify the minimal axioms needed to prove theorems of ordinary, i.e.\ non-set theoretical, mathematics. 

\smallskip

We refer to \cite{stillebron} for a basic introduction to RM and to \cite{simpson2, simpson1} for an overview of RM.  We expect basic familiarity with RM, but do sketch some aspects of Kohlenbach's \emph{higher-order} RM (\cite{kohlenbach2}) essential to this paper, including the base theory $\RCAo$ (Definition \ref{kase}).  

\smallskip

First of all, in contrast to `classical' RM based on \emph{second-order arithmetic} $\Z_{2}$, higher-order RM uses $\L_{\omega}$, the richer language of \emph{higher-order arithmetic}.  
Indeed, while the former is restricted to natural numbers and sets of natural numbers, higher-order arithmetic can accommodate sets of sets of natural numbers, sets of sets of sets of natural numbers, et cetera.  
To formalise this idea, we introduce the collection of \emph{all finite types} $\mathbf{T}$, defined by the two clauses:
\begin{center}
(i) $0\in \mathbf{T}$   and   (ii)  If $\sigma, \tau\in \mathbf{T}$ then $( \sigma \di \tau) \in \mathbf{T}$,
\end{center}
where $0$ is the type of natural numbers, and $\sigma\di \tau$ is the type of mappings from objects of type $\sigma$ to objects of type $\tau$.
In this way, $1\equiv 0\di 0$ is the type of functions from numbers to numbers, and  $n+1\equiv n\di 0$.  Viewing sets as given by characteristic functions, we note that $\Z_{2}$ only includes objects of type $0$ and $1$.    

\smallskip

Secondly, the language $\L_{\omega}$ includes variables $x^{\rho}, y^{\rho}, z^{\rho},\dots$ of any finite type $\rho\in \mathbf{T}$.  Types may be omitted when they can be inferred from context.  
The constants of $\L_{\omega}$ include the type $0$ objects $0, 1$ and $ <_{0}, +_{0}, \times_{0},=_{0}$  which are intended to have their usual meaning as operations on $\N$.
Equality at higher types is defined in terms of `$=_{0}$' as follows: for any objects $x^{\tau}, y^{\tau}$, we have
\be\label{aparth}
[x=_{\tau}y] \equiv (\forall z_{1}^{\tau_{1}}\dots z_{k}^{\tau_{k}})[xz_{1}\dots z_{k}=_{0}yz_{1}\dots z_{k}],
\ee
if the type $\tau$ is composed as $\tau\equiv(\tau_{1}\di \dots\di \tau_{k}\di 0)$.  
Furthermore, $\L_{\omega}$ also includes the \emph{recursor constant} $\mathbf{R}_{\sigma}$ for any $\sigma\in \mathbf{T}$, which allows for iteration on type $\sigma$-objects as in the special case \eqref{special}.  Formulas and terms are defined as usual.  
One obtains the sub-language $\L_{n+2}$ by restricting the above type formation rule to produce only type $n+1$ objects (and related types of similar complexity).        

\subsubsection{The base theory of higher-order Reverse Mathematics}\label{rmbt}
We introduce Kohlenbach's base theory $\RCAo$, first introduced in \cite{kohlenbach2}*{\S2}.
\bdefi\label{kase} 
The base theory $\RCAo$ consists of the following axioms.
\begin{enumerate}
 \renewcommand{\theenumi}{\alph{enumi}}
\item  Basic axioms expressing that $0, 1, <_{0}, +_{0}, \times_{0}$ form an ordered semi-ring with equality $=_{0}$.
\item Basic axioms defining the well-known $\Pi$ and $\Sigma$ combinators (aka $K$ and $S$ in \cite{avi2}), which allow for the definition of \emph{$\lambda$-abstraction}. 
\item The defining axiom of the recursor constant $\mathbf{R}_{0}$: for $m^{0}$ and $f^{1}$: 
\be\label{special}
\mathbf{R}_{0}(f, m, 0):= m \textup{ and } \mathbf{R}_{0}(f, m, n+1):= f(n, \mathbf{R}_{0}(f, m, n)).
\ee
\item The \emph{axiom of extensionality}: for all $\rho, \tau\in \mathbf{T}$, we have:
\be\label{EXT}\tag{$\textsf{\textup{E}}_{\rho, \tau}$}  
(\forall  x^{\rho},y^{\rho}, \varphi^{\rho\di \tau}) \big[x=_{\rho} y \di \varphi(x)=_{\tau}\varphi(y)   \big].
\ee 
\item The induction axiom for quantifier-free formulas of $\L_{\omega}$.
\item $\QFAC^{1,0}$: the quantifier-free Axiom of Choice as in Definition \ref{QFAC}.
\end{enumerate}
\edefi
\noindent
Note that variables (of any finite type) are allowed in quantifier-free formulas of the language $\L_{\omega}$: only quantifiers are banned.
Recursion as in \eqref{special} is called \emph{primitive recursion}; the class of functionals obtained from $\mathbf{R}_{\rho}$ for all $\rho \in \mathbf{T}$ is called \emph{G\"odel's system $T$} of all (higher-order) primitive recursive functionals. 
\bdefi\label{QFAC} The axiom $\QFAC$ consists of the following for all $\sigma, \tau \in \textbf{T}$:
\be\tag{$\QFAC^{\sigma,\tau}$}
(\forall x^{\sigma})(\exists y^{\tau})A(x, y)\di (\exists Y^{\sigma\di \tau})(\forall x^{\sigma})A(x, Y(x)),
\ee
for any quantifier-free formula $A$ in the language of $\L_{\omega}$.
\edefi
As discussed in \cite{kohlenbach2}*{\S2}, $\RCAo$ and $\RCA_{0}$ prove the same sentences `up to language' as the latter is set-based and the former function-based.   
This conservation result is obtained via the so-called $\ECF$-interpretation discussed in Remark \ref{ECF}. 

%

\subsubsection{Notations and the like}\label{kkk}
We introduce the usual notations for common mathematical notions, like real numbers, as also introduced in \cite{kohlenbach2}.  
\begin{defi}[Real numbers and related notions in $\RCAo$]\label{keepintireal}\rm~
\begin{enumerate}
 \renewcommand{\theenumi}{\alph{enumi}}
\item Natural numbers correspond to type zero objects, and we use `$n^{0}$' and `$n\in \N$' interchangeably.  Rational numbers are defined as signed quotients of natural numbers, and `$q\in \Q$' and `$<_{\Q}$' have their usual meaning.    
\item Real numbers are coded by fast-converging Cauchy sequences $q_{(\cdot)}:\N\di \Q$, i.e.\  such that $(\forall n^{0}, i^{0})(|q_{n}-q_{n+i}|<_{\Q} \frac{1}{2^{n}})$.  
We use Kohlenbach's `hat function' from \cite{kohlenbach2}*{p.\ 289} to guarantee that every $q^{1}$ defines a real number.  
\item We write `$x\in \R$' to express that $x^{1}:=(q^{1}_{(\cdot)})$ represents a real as in the previous item and write $[x](k):=q_{k}$ for the $k$-th approximation of $x$.    
\item Two reals $x, y$ represented by $q_{(\cdot)}$ and $r_{(\cdot)}$ are \emph{equal}, denoted $x=_{\R}y$, if $(\forall n^{0})(|q_{n}-r_{n}|\leq {2^{-n+1}})$. Inequality `$<_{\R}$' is defined similarly.  
We sometimes omit the subscript `$\R$' if it is clear from context.           
\item Functions $F:\R\di \R$ are represented by $\Phi^{1\di 1}$ mapping equal reals to equal reals, i.e.\ extensionality as in $(\forall x , y\in \R)(x=_{\R}y\di \Phi(x)=_{\R}\Phi(y))$.\label{EXTEN}
\item The relation `$x\leq_{\tau}y$' is defined as in \eqref{aparth} but with `$\leq_{0}$' instead of `$=_{0}$'.  Binary sequences are denoted `$f^{1}, g^{1}\leq_{1}1$', but also `$f,g\in C$' or `$f, g\in 2^{\N}$'.  Elements of Baire space are given by $f^{1}, g^{1}$, but also denoted `$f, g\in \N^{\N}$'.
\item For a binary sequence $f^{1}$, the associated real in $[0,1]$ is $\r(f):=\sum_{n=0}^{\infty}\frac{f(n)}{2^{n+1}}$.\label{detrippe}
\item Sets of type $\rho$ objects $X^{\rho\di 0}, Y^{\rho\di 0}, \dots$ are given by their characteristic functions $F^{\rho\di 0}_{X}\leq_{\rho\di 0}1$, i.e.\ we write `$x\in X$' for $ F_{X}(x)=_{0}1$. \label{koer} 
\end{enumerate}
\end{defi}
For completeness, we list the following notational convention for finite sequences.  
\begin{nota}[Finite sequences]\label{skim}\rm
The type for `finite sequences of objects of type $\rho$' is denoted $\rho^{*}$, which we shall only use for $\rho=0,1$.  
Since the usual coding of pairs of numbers goes through in $\RCAo$, we shall not always distinguish between $0$ and $0^{*}$. 
Similarly, we assume a fixed coding for finite sequences of type $1$ and shall make use of the type `$1^{*}$'.  
In general, we do not always distinguish between `$s^{\rho}$' and `$\langle s^{\rho}\rangle$', where the former is `the object $s$ of type $\rho$', and the latter is `the sequence of type $\rho^{*}$ with only element $s^{\rho}$'.  The empty sequence for the type $\rho^{*}$ is denoted by `$\langle \rangle_{\rho}$', usually with the typing omitted.  

\smallskip

Furthermore, we denote by `$|s|=n$' the length of the finite sequence $s^{\rho^{*}}=\langle s_{0}^{\rho},s_{1}^{\rho},\dots,s_{n-1}^{\rho}\rangle$, where $|\langle\rangle|=0$, i.e.\ the empty sequence has length zero.
For sequences $s^{\rho^{*}}, t^{\rho^{*}}$, we denote by `$s*t$' the concatenation of $s$ and $t$, i.e.\ $(s*t)(i)=s(i)$ for $i<|s|$ and $(s*t)(j)=t(|s|-j)$ for $|s|\leq j< |s|+|t|$. For a sequence $s^{\rho^{*}}$, we define $\overline{s}N:=\langle s(0), s(1), \dots,  s(N-1)\rangle $ for $N^{0}<|s|$.  
For a sequence $\alpha^{0\di \rho}$, we also write $\overline{\alpha}N=\langle \alpha(0), \alpha(1),\dots, \alpha(N-1)\rangle$ for \emph{any} $N^{0}$.  By way of shorthand, 
$(\forall q^{\rho}\in Q^{\rho^{*}})A(q)$ abbreviates $(\forall i^{0}<|Q|)A(Q(i))$, which is (equivalent to) quantifier-free if $A$ is.   
\end{nota}
\subsubsection{Some comprehension functionals}\label{lll}
As noted in Section \ref{intro}, the logical hardness of a theorem is measured via what fragment of the comprehension axiom is needed for a proof.  
For this reason, we introduce some axioms and functionals related to \emph{higher-order comprehension} in this section.
We are mostly dealing with \emph{conventional} comprehension here, i.e.\ only parameters over $\N$ and $\N^{\N}$ are allowed in formula classes like $\Pi_{k}^{1}$ and $\Sigma_{k}^{1}$.

\smallskip

First of all, the following functional is clearly discontinuous at $f=11\dots$; in fact, $(\exists^{2})$ is equivalent to the existence of $F:\R\di\R$ such that $F(x)=1$ if $x>_{\R}0$, and $0$ otherwise (\cite{kohlenbach2}*{\S3}).  This fact shall be repeated often.  
\be\label{muk}\tag{$\exists^{2}$}
(\exists \varphi^{2}\leq_{2}1)(\forall f^{1})\big[(\exists n)(f(n)=0) \asa \varphi(f)=0    \big]. 
\ee
Related to $(\exists^{2})$, the functional $\mu^{2}$ in $(\mu^{2})$ is also called \emph{Feferman's $\mu$} (\cite{avi2}).
\begin{align}\label{mu}\tag{$\mu^{2}$}
(\exists \mu^{2})(\forall f^{1})\big[ (\exists n)(f(n)=0) \di [f(\mu(f))=0&\wedge (\forall i<\mu(f))(f(i)\ne 0) ]\\
& \wedge [ (\forall n)(f(n)\ne0)\di   \mu(f)=0]    \big], \notag
\end{align}
We have $(\exists^{2})\asa (\mu^{2})$ over $\RCAo$ and $\ACAo\equiv\RCAo+(\exists^{2})$ proves the same sentences as $\ACA_{0}$ by \cite{hunterphd}*{Theorem~2.5}. 

\smallskip

Secondly, the functional $\SS^{2}$ in $(\SS^{2})$ is called \emph{the Suslin functional} (\cite{kohlenbach2}).
\be\tag{$\SS^{2}$}
(\exists\SS^{2}\leq_{2}1)(\forall f^{1})\big[  (\exists g^{1})(\forall n^{0})(f(\overline{g}n)=0)\asa \SS(f)=0  \big], 
\ee
The system $\FIVE^{\omega}\equiv \RCAo+(\SS^{2})$ proves the same $\Pi_{3}^{1}$-sentences as $\FIVE$ by \cite{yamayamaharehare}*{Theorem 2.2}.   
By definition, the Suslin functional $\SS^{2}$ can decide whether a $\Sigma_{1}^{1}$-formula as in the left-hand side of $(\SS^{2})$ is true or false.   We similarly define the functional $\SS_{k}^{2}$ which decides the truth or falsity of $\Sigma_{k}^{1}$-formulas from $\L_{2}$; we also define 
the system $\SIXK$ as $\RCAo+(\SS_{k}^{2})$, where  $(\SS_{k}^{2})$ expresses that $\SS_{k}^{2}$ exists.  
We note that the operators $\nu_{n}$ from \cite{boekskeopendoen}*{p.\ 129} are essentially $\SS_{n}^{2}$ strengthened to return a witness (if existant) to the $\Sigma_{n}^{1}$-formula at hand.  

\smallskip

\noindent
Thirdly, full second-order arithmetic $\Z_{2}$ is readily derived from $\cup_{k}\SIXK$, or from:
\be\tag{$\exists^{3}$}
(\exists E^{3}\leq_{3}1)(\forall Y^{2})\big[  (\exists f^{1})(Y(f)=0)\asa E(Y)=0  \big], 
\ee
and we therefore define $\Z_{2}^{\Omega}\equiv \RCAo+(\exists^{3})$ and $\Z_{2}^\omega\equiv \cup_{k}\SIXK$, which are conservative over $\Z_{2}$ by \cite{hunterphd}*{Cor.\ 2.6}. 
Despite this close connection, $\Z_{2}^{\omega}$ and $\Z_{2}^{\Omega}$ can behave quite differently, as discussed in e.g.\ \cite{dagsamIII}*{\S2.2}.   
The functional from $(\exists^{3})$ is also called `$\exists^{3}$', and we use the same convention for other functionals.

\begin{bibdiv}
\begin{biblist}
\bibselect{allkeida}
\end{biblist}
\end{bibdiv}
\bye
\appendix

\section{Background}\label{prelimA}
We introduce \emph{Reverse Mathematics} in Section \ref{prelim1}, as well as Kohlenbach's generalisation to \emph{higher-order arithmetic}, and the associated base theory $\RCAo$.  
We introduce higher-order \emph{computability theory}, following Kleene's computation schemes S1-S9, in Section \ref{HCT}. 

\subsection{Reverse Mathematics}\label{prelim1}
We provide an introduction to Reverse Mathematics (Section \ref{introrm}) and introduce Kohlenbach's base theory of \emph{higher-order} Reverse Mathematics (Section \ref{rmbt}).
Some essential axioms, functionals, and notations may be found in Sections \ref{kkk} and \ref{lll}.
\subsubsection{Introduction}\label{introrm}
Reverse Mathematics (RM hereafter) is a program in the foundations of mathematics initiated around 1975 by Friedman (\cites{fried,fried2}) and developed extensively by Simpson (\cite{simpson2}).  
The aim of RM is to identify the minimal axioms needed to prove theorems of ordinary, i.e.\ non-set theoretical, mathematics. 

\smallskip

We refer to \cite{stillebron} for a basic introduction to RM and to \cite{simpson2, simpson1} for an overview of RM.  We expect basic familiarity with RM, but do sketch some aspects of Kohlenbach's \emph{higher-order} RM (\cite{kohlenbach2}) essential to this paper, including the base theory $\RCAo$ (Definition \ref{kase}).  

\smallskip

First of all, in contrast to `classical' RM based on \emph{second-order arithmetic} $\Z_{2}$, higher-order RM uses $\L_{\omega}$, the richer language of \emph{higher-order arithmetic}.  
Indeed, while the former is restricted to natural numbers and sets of natural numbers, higher-order arithmetic can accommodate sets of sets of natural numbers, sets of sets of sets of natural numbers, et cetera.  
To formalise this idea, we introduce the collection of \emph{all finite types} $\mathbf{T}$, defined by the two clauses:
\begin{center}
(i) $0\in \mathbf{T}$   and   (ii)  If $\sigma, \tau\in \mathbf{T}$ then $( \sigma \di \tau) \in \mathbf{T}$,
\end{center}
where $0$ is the type of natural numbers, and $\sigma\di \tau$ is the type of mappings from objects of type $\sigma$ to objects of type $\tau$.
In this way, $1\equiv 0\di 0$ is the type of functions from numbers to numbers, and  $n+1\equiv n\di 0$.  Viewing sets as given by characteristic functions, we note that $\Z_{2}$ only includes objects of type $0$ and $1$.    

\smallskip

Secondly, the language $\L_{\omega}$ includes variables $x^{\rho}, y^{\rho}, z^{\rho},\dots$ of any finite type $\rho\in \mathbf{T}$.  Types may be omitted when they can be inferred from context.  
The constants of $\L_{\omega}$ include the type $0$ objects $0, 1$ and $ <_{0}, +_{0}, \times_{0},=_{0}$  which are intended to have their usual meaning as operations on $\N$.
Equality at higher types is defined in terms of `$=_{0}$' as follows: for any objects $x^{\tau}, y^{\tau}$, we have
\be\label{aparth}
[x=_{\tau}y] \equiv (\forall z_{1}^{\tau_{1}}\dots z_{k}^{\tau_{k}})[xz_{1}\dots z_{k}=_{0}yz_{1}\dots z_{k}],
\ee
if the type $\tau$ is composed as $\tau\equiv(\tau_{1}\di \dots\di \tau_{k}\di 0)$.  
Furthermore, $\L_{\omega}$ also includes the \emph{recursor constant} $\mathbf{R}_{\sigma}$ for any $\sigma\in \mathbf{T}$, which allows for iteration on type $\sigma$-objects as in the special case \eqref{special}.  Formulas and terms are defined as usual.  
One obtains the sub-language $\L_{n+2}$ by restricting the above type formation rule to produce only type $n+1$ objects (and related types of similar complexity).        

\subsubsection{The base theory of higher-order Reverse Mathematics}\label{rmbt}
We introduce Kohlenbach's base theory $\RCAo$, first introduced in \cite{kohlenbach2}*{\S2}.
\bdefi\label{kase} 
The base theory $\RCAo$ consists of the following axioms.
\begin{enumerate}
 \renewcommand{\theenumi}{\alph{enumi}}
\item  Basic axioms expressing that $0, 1, <_{0}, +_{0}, \times_{0}$ form an ordered semi-ring with equality $=_{0}$.
\item Basic axioms defining the well-known $\Pi$ and $\Sigma$ combinators (aka $K$ and $S$ in \cite{avi2}), which allow for the definition of \emph{$\lambda$-abstraction}. 
\item The defining axiom of the recursor constant $\mathbf{R}_{0}$: for $m^{0}$ and $f^{1}$: 
\be\label{special}
\mathbf{R}_{0}(f, m, 0):= m \textup{ and } \mathbf{R}_{0}(f, m, n+1):= f(n, \mathbf{R}_{0}(f, m, n)).
\ee
\item The \emph{axiom of extensionality}: for all $\rho, \tau\in \mathbf{T}$, we have:
\be\label{EXT}\tag{$\textsf{\textup{E}}_{\rho, \tau}$}  
(\forall  x^{\rho},y^{\rho}, \varphi^{\rho\di \tau}) \big[x=_{\rho} y \di \varphi(x)=_{\tau}\varphi(y)   \big].
\ee 
\item The induction axiom for quantifier-free formulas of $\L_{\omega}$.
\item $\QFAC^{1,0}$: the quantifier-free Axiom of Choice as in Definition \ref{QFAC}.
\end{enumerate}
\edefi
\noindent
Note that variables (of any finite type) are allowed in quantifier-free formulas of the language $\L_{\omega}$: only quantifiers are banned.
Recursion as in \eqref{special} is called \emph{primitive recursion}; the class of functionals obtained from $\mathbf{R}_{\rho}$ for all $\rho \in \mathbf{T}$ is called \emph{G\"odel's system $T$} of all (higher-order) primitive recursive functionals. 
\bdefi\label{QFAC} The axiom $\QFAC$ consists of the following for all $\sigma, \tau \in \textbf{T}$:
\be\tag{$\QFAC^{\sigma,\tau}$}
(\forall x^{\sigma})(\exists y^{\tau})A(x, y)\di (\exists Y^{\sigma\di \tau})(\forall x^{\sigma})A(x, Y(x)),
\ee
for any quantifier-free formula $A$ in the language of $\L_{\omega}$.
\edefi
As discussed in \cite{kohlenbach2}*{\S2}, $\RCAo$ and $\RCA_{0}$ prove the same sentences `up to language' as the latter is set-based and the former function-based.   
This conservation results is obtained via the so-called $\ECF$-interpretation, which we now discuss. 
\begin{rem}[The $\ECF$-interpretation]\label{ECF}\rm
The (rather) technical definition of $\ECF$ may be found in \cite{troelstra1}*{p.\ 138, \S2.6}.
Intuitively, the $\ECF$-interpretation $[A]_{\ECF}$ of a formula $A\in \L_{\omega}$ is just $A$ with all variables 
of type two and higher replaced by type one variables ranging over so-called `associates' or `RM-codes' (see \cite{kohlenbach4}*{\S4}); the latter are (countable) representations of continuous functionals.  
The $\ECF$-interpretation connects $\RCAo$ and $\RCA_{0}$ (see \cite{kohlenbach2}*{Prop.\ 3.1}) in that if $\RCAo$ proves $A$, then $\RCA_{0}$ proves $[A]_{\ECF}$, again `up to language', as $\RCA_{0}$ is 
formulated using sets, and $[A]_{\ECF}$ is formulated using types, i.e.\ using type zero and one objects.  
\end{rem}
In light of the widespread use of codes in RM and the common practise of identifying codes with the objects being coded, it is no exaggeration to refer to $\ECF$ as the \emph{canonical} embedding of higher-order into second-order arithmetic. 

%

\subsubsection{Notations and the like}\label{kkk}
We introduce the usual notations for common mathematical notions, like real numbers, as also introduced in \cite{kohlenbach2}.  
\begin{defi}[Real numbers and related notions in $\RCAo$]\label{keepintireal}\rm~
\begin{enumerate}
 \renewcommand{\theenumi}{\alph{enumi}}
\item Natural numbers correspond to type zero objects, and we use `$n^{0}$' and `$n\in \N$' interchangeably.  Rational numbers are defined as signed quotients of natural numbers, and `$q\in \Q$' and `$<_{\Q}$' have their usual meaning.    
\item Real numbers are coded by fast-converging Cauchy sequences $q_{(\cdot)}:\N\di \Q$, i.e.\  such that $(\forall n^{0}, i^{0})(|q_{n}-q_{n+i}|<_{\Q} \frac{1}{2^{n}})$.  
We use Kohlenbach's `hat function' from \cite{kohlenbach2}*{p.\ 289} to guarantee that every $q^{1}$ defines a real number.  
\item We write `$x\in \R$' to express that $x^{1}:=(q^{1}_{(\cdot)})$ represents a real as in the previous item and write $[x](k):=q_{k}$ for the $k$-th approximation of $x$.    
\item Two reals $x, y$ represented by $q_{(\cdot)}$ and $r_{(\cdot)}$ are \emph{equal}, denoted $x=_{\R}y$, if $(\forall n^{0})(|q_{n}-r_{n}|\leq {2^{-n+1}})$. Inequality `$<_{\R}$' is defined similarly.  
We sometimes omit the subscript `$\R$' if it is clear from context.           
\item Functions $F:\R\di \R$ are represented by $\Phi^{1\di 1}$ mapping equal reals to equal reals, i.e.\ extensionality as in $(\forall x , y\in \R)(x=_{\R}y\di \Phi(x)=_{\R}\Phi(y))$.\label{EXTEN}
\item The relation `$x\leq_{\tau}y$' is defined as in \eqref{aparth} but with `$\leq_{0}$' instead of `$=_{0}$'.  Binary sequences are denoted `$f^{1}, g^{1}\leq_{1}1$', but also `$f,g\in C$' or `$f, g\in 2^{\N}$'.  Elements of Baire space are given by $f^{1}, g^{1}$, but also denoted `$f, g\in \N^{\N}$'.
\item For a binary sequence $f^{1}$, the associated real in $[0,1]$ is $\r(f):=\sum_{n=0}^{\infty}\frac{f(n)}{2^{n+1}}$.\label{detrippe}
\item Sets of type $\rho$ objects $X^{\rho\di 0}, Y^{\rho\di 0}, \dots$ are given by their characteristic functions $F^{\rho\di 0}_{X}\leq_{\rho\di 0}1$, i.e.\ we write `$x\in X$' for $ F_{X}(x)=_{0}1$. \label{koer} 
\end{enumerate}
\end{defi}
For completeness, we list the following notational convention for finite sequences.  
\begin{nota}[Finite sequences]\label{skim}\rm
The type for `finite sequences of objects of type $\rho$' is denoted $\rho^{*}$, which we shall only use for $\rho=0,1$.  
Since the usual coding of pairs of numbers goes through in $\RCAo$, we shall not always distinguish between $0$ and $0^{*}$. 
Similarly, we assume a fixed coding for finite sequences of type $1$ and shall make use of the type `$1^{*}$'.  
In general, we do not always distinguish between `$s^{\rho}$' and `$\langle s^{\rho}\rangle$', where the former is `the object $s$ of type $\rho$', and the latter is `the sequence of type $\rho^{*}$ with only element $s^{\rho}$'.  The empty sequence for the type $\rho^{*}$ is denoted by `$\langle \rangle_{\rho}$', usually with the typing omitted.  

\smallskip

Furthermore, we denote by `$|s|=n$' the length of the finite sequence $s^{\rho^{*}}=\langle s_{0}^{\rho},s_{1}^{\rho},\dots,s_{n-1}^{\rho}\rangle$, where $|\langle\rangle|=0$, i.e.\ the empty sequence has length zero.
For sequences $s^{\rho^{*}}, t^{\rho^{*}}$, we denote by `$s*t$' the concatenation of $s$ and $t$, i.e.\ $(s*t)(i)=s(i)$ for $i<|s|$ and $(s*t)(j)=t(|s|-j)$ for $|s|\leq j< |s|+|t|$. For a sequence $s^{\rho^{*}}$, we define $\overline{s}N:=\langle s(0), s(1), \dots,  s(N-1)\rangle $ for $N^{0}<|s|$.  
For a sequence $\alpha^{0\di \rho}$, we also write $\overline{\alpha}N=\langle \alpha(0), \alpha(1),\dots, \alpha(N-1)\rangle$ for \emph{any} $N^{0}$.  By way of shorthand, 
$(\forall q^{\rho}\in Q^{\rho^{*}})A(q)$ abbreviates $(\forall i^{0}<|Q|)A(Q(i))$, which is (equivalent to) quantifier-free if $A$ is.   
\end{nota}
\subsubsection{Some axioms and functionals}\label{lll}
As noted in Section \ref{intro}, the logical hardness of a theorem is measured via what fragment of the comprehension axiom is needed for a proof.  
For this reason, we introduce some axioms and functionals related to \emph{higher-order comprehension} in this section.

\smallskip

First of all, the following functional is clearly discontinuous at $f=11\dots$; in fact, $(\exists^{2})$ is equivalent to the existence of $F:\R\di\R$ such that $F(x)=1$ if $x>_{\R}0$, and $0$ otherwise (\cite{kohlenbach2}*{\S3}).  This fact shall be repeated often.  
\be\label{muk}\tag{$\exists^{2}$}
(\exists \varphi^{2}\leq_{2}1)(\forall f^{1})\big[(\exists n)(f(n)=0) \asa \varphi(f)=0    \big]. 
\ee
Related to $(\exists^{2})$, the functional $\mu^{2}$ in $(\mu^{2})$ is also called \emph{Feferman's $\mu$} (\cite{avi2}).
\begin{align}\label{mu}\tag{$\mu^{2}$}
(\exists \mu^{2})(\forall f^{1})\big[ (\exists n)(f(n)=0) \di [f(\mu(f))=0&\wedge (\forall i<\mu(f))(f(i)\ne 0) ]\\
& \wedge [ (\forall n)(f(n)\ne0)\di   \mu(f)=0]    \big], \notag
\end{align}
We have $(\exists^{2})\asa (\mu^{2})$ over $\RCAo$ and $\ACAo\equiv\RCAo+(\exists^{2})$ proves the same sentences as $\ACA_{0}$ by \cite{hunterphd}*{Theorem~2.5}. 

\smallskip

Secondly, the functional $\SS^{2}$ in $(\SS^{2})$ is called \emph{the Suslin functional} (\cite{kohlenbach2}).
\be\tag{$\SS^{2}$}
(\exists\SS^{2}\leq_{2}1)(\forall f^{1})\big[  (\exists g^{1})(\forall n^{0})(f(\overline{g}n)=0)\asa \SS(f)=0  \big], 
\ee
The system $\FIVE^{\omega}\equiv \RCAo+(\SS^{2})$ proves the same $\Pi_{3}^{1}$-sentences as $\FIVE$ by \cite{yamayamaharehare}*{Theorem 2.2}.   
By definition, the Suslin functional $\SS^{2}$ can decide whether a $\Sigma_{1}^{1}$-formula as in the left-hand side of $(\SS^{2})$ is true or false.   We similarly define the functional $\SS_{k}^{2}$ which decides the truth or falsity of $\Sigma_{k}^{1}$-formulas from $\L_{2}$; we also define 
the system $\SIXK$ as $\RCAo+(\SS_{k}^{2})$, where  $(\SS_{k}^{2})$ expresses that $\SS_{k}^{2}$ exists.  
We note that the operators $\nu_{n}$ from \cite{boekskeopendoen}*{p.\ 129} are essentially $\SS_{n}^{2}$ strengthened to return a witness (if existant) to the $\Sigma_{n}^{1}$-formula at hand.  

\smallskip

\noindent
Thirdly, full second-order arithmetic $\Z_{2}$ is readily derived from $\cup_{k}\SIXK$, or from:
\be\tag{$\exists^{3}$}
(\exists E^{3}\leq_{3}1)(\forall Y^{2})\big[  (\exists f^{1})(Y(f)=0)\asa E(Y)=0  \big], 
\ee
and we therefore define $\Z_{2}^{\Omega}\equiv \RCAo+(\exists^{3})$ and $\Z_{2}^\omega\equiv \cup_{k}\SIXK$, which are conservative over $\Z_{2}$ by \cite{hunterphd}*{Cor.\ 2.6}. 
Despite this close connection, $\Z_{2}^{\omega}$ and $\Z_{2}^{\Omega}$ can behave quite differently, as discussed in e.g.\ \cite{dagsamIII}*{\S2.2}.   
The functional from $(\exists^{3})$ is also called `$\exists^{3}$', and we use the same convention for other functionals.  

\subsection{Higher-order computability theory}\label{HCT}

\begin{bibdiv}
\begin{biblist}
\bibselect{allkeida}
\end{biblist}
\end{bibdiv}

\appendix

\subsection{Reverse Mathematics}\label{prelim1}
We discuss Reverse Mathematics (Section \ref{introrm}) and introduce -in full detail- Kohlenbach's base theory of \emph{higher-order} Reverse Mathematics (Section \ref{rmbt}).
Some essential axioms, functionals, and notations may be found in Sections \ref{kkk} and \ref{lll}.
\subsubsection{Introduction}\label{introrm}
Reverse Mathematics (RM hereafter) is a program in the foundations of mathematics initiated around 1975 by Friedman (\cites{fried,fried2}) and developed extensively by Simpson (\cite{simpson2}).  
The aim of RM is to identify the minimal axioms needed to prove theorems of ordinary, i.e.\ non-set theoretical, mathematics. 

\smallskip

We refer to \cite{stillebron} for a basic introduction to RM and to \cite{simpson2, simpson1} for an overview of RM.  We expect basic familiarity with RM, but do sketch some aspects of Kohlenbach's \emph{higher-order} RM (\cite{kohlenbach2}) essential to this paper, including the base theory $\RCAo$ (Definition \ref{kase}).  

\smallskip

First of all, in contrast to `classical' RM based on \emph{second-order arithmetic} $\Z_{2}$, higher-order RM uses $\L_{\omega}$, the richer language of \emph{higher-order arithmetic}.  
Indeed, while the former is restricted to natural numbers and sets of natural numbers, higher-order arithmetic can accommodate sets of sets of natural numbers, sets of sets of sets of natural numbers, et cetera.  
To formalise this idea, we introduce the collection of \emph{all finite types} $\mathbf{T}$, defined by the two clauses:
\begin{center}
(i) $0\in \mathbf{T}$   and   (ii)  If $\sigma, \tau\in \mathbf{T}$ then $( \sigma \di \tau) \in \mathbf{T}$,
\end{center}
where $0$ is the type of natural numbers, and $\sigma\di \tau$ is the type of mappings from objects of type $\sigma$ to objects of type $\tau$.
In this way, $1\equiv 0\di 0$ is the type of functions from numbers to numbers, and  $n+1\equiv n\di 0$.  Viewing sets as given by characteristic functions, we note that $\Z_{2}$ only includes objects of type $0$ and $1$.    

\smallskip

Secondly, the language $\L_{\omega}$ includes variables $x^{\rho}, y^{\rho}, z^{\rho},\dots$ of any finite type $\rho\in \mathbf{T}$.  Types may be omitted when they can be inferred from context.  
The constants of $\L_{\omega}$ include the type $0$ objects $0, 1$ and $ <_{0}, +_{0}, \times_{0},=_{0}$  which are intended to have their usual meaning as operations on $\N$.
Equality at higher types is defined in terms of `$=_{0}$' as follows: for any objects $x^{\tau}, y^{\tau}$, we have
\be\label{aparth}
[x=_{\tau}y] \equiv (\forall z_{1}^{\tau_{1}}\dots z_{k}^{\tau_{k}})[xz_{1}\dots z_{k}=_{0}yz_{1}\dots z_{k}],
\ee
if the type $\tau$ is composed as $\tau\equiv(\tau_{1}\di \dots\di \tau_{k}\di 0)$.  
Furthermore, $\L_{\omega}$ also includes the \emph{recursor constant} $\mathbf{R}_{\sigma}$ for any $\sigma\in \mathbf{T}$, which allows for iteration on type $\sigma$-objects as in the special case \eqref{special}.  Formulas and terms are defined as usual.  
One obtains the sub-language $\L_{n+2}$ by restricting the above type formation rule to produce only type $n+1$ objects (and related types of similar complexity).        

\subsubsection{The base theory of higher-order Reverse Mathematics}\label{rmbt}
We introduce Kohlenbach's base theory $\RCAo$, first introduced in \cite{kohlenbach2}*{\S2}.
\bdefi\label{kase} 
The base theory $\RCAo$ consists of the following axioms.
\begin{enumerate}
 \renewcommand{\theenumi}{\alph{enumi}}
\item  Basic axioms expressing that $0, 1, <_{0}, +_{0}, \times_{0}$ form an ordered semi-ring with equality $=_{0}$.
\item Basic axioms defining the well-known $\Pi$ and $\Sigma$ combinators (aka $K$ and $S$ in \cite{avi2}), which allow for the definition of \emph{$\lambda$-abstraction}. 
\item The defining axiom of the recursor constant $\mathbf{R}_{0}$: for $m^{0}$ and $f^{1}$: 
\be\label{special}
\mathbf{R}_{0}(f, m, 0):= m \textup{ and } \mathbf{R}_{0}(f, m, n+1):= f(n, \mathbf{R}_{0}(f, m, n)).
\ee
\item The \emph{axiom of extensionality}: for all $\rho, \tau\in \mathbf{T}$, we have:
\be\label{EXT}\tag{$\textsf{\textup{E}}_{\rho, \tau}$}  
(\forall  x^{\rho},y^{\rho}, \varphi^{\rho\di \tau}) \big[x=_{\rho} y \di \varphi(x)=_{\tau}\varphi(y)   \big].
\ee 
\item The induction axiom for quantifier-free formulas of $\L_{\omega}$.
\item $\QFAC^{1,0}$: the quantifier-free Axiom of Choice as in Definition \ref{QFAC}.
\end{enumerate}
\edefi
\noindent
Note that variables (of any finite type) are allowed in quantifier-free formulas of the language $\L_{\omega}$: only quantifiers are banned.
Recursion as in \eqref{special} is called \emph{primitive recursion}; the class of functionals obtained from $\mathbf{R}_{\rho}$ for all $\rho \in \mathbf{T}$ is called \emph{G\"odel's system $T$} of all (higher-order) primitive recursive functionals. 
\bdefi\label{QFAC} The axiom $\QFAC$ consists of the following for all $\sigma, \tau \in \textbf{T}$:
\be\tag{$\QFAC^{\sigma,\tau}$}
(\forall x^{\sigma})(\exists y^{\tau})A(x, y)\di (\exists Y^{\sigma\di \tau})(\forall x^{\sigma})A(x, Y(x)),
\ee
for any quantifier-free formula $A$ in the language of $\L_{\omega}$.
\edefi
As discussed in \cite{kohlenbach2}*{\S2}, $\RCAo$ and $\RCA_{0}$ prove the same sentences `up to language' as the latter is set-based and the former function-based.   
This conservation results is obtained via the so-called $\ECF$-interpretation, which we now discuss. 
\begin{rem}[The $\ECF$-interpretation]\label{ECF}\rm
The (rather) technical definition of $\ECF$ may be found in \cite{troelstra1}*{p.\ 138, \S2.6}.
Intuitively, the $\ECF$-interpretation $[A]_{\ECF}$ of a formula $A\in \L_{\omega}$ is just $A$ with all variables 
of type two and higher replaced by type one variables ranging over so-called `associates' or `RM-codes' (see \cite{kohlenbach4}*{\S4}); the latter are (countable) representations of continuous functionals.  
The $\ECF$-interpretation connects $\RCAo$ and $\RCA_{0}$ (see \cite{kohlenbach2}*{Prop.\ 3.1}) in that if $\RCAo$ proves $A$, then $\RCA_{0}$ proves $[A]_{\ECF}$, again `up to language', as $\RCA_{0}$ is 
formulated using sets, and $[A]_{\ECF}$ is formulated using types, i.e.\ using type zero and one objects.  
\end{rem}
In light of the widespread use of codes in RM and the common practise of identifying codes with the objects being coded, it is no exaggeration to refer to $\ECF$ as the \emph{canonical} embedding of higher-order into second-order arithmetic. 

\smallskip

Finally as noted above, Theorem \ref{cant2} is provable in the base theory. 
\begin{thm}[\cite{simpson2}*{II.4.9}] The following is provable in $\RCA_{0}$.
For any sequence of real numbers $(x_{n})_{n\in \N}$, there is a real $y$ different from $x_{n}$ for all $n\in \N$.
\end{thm}

\subsubsection{Notations and the like}\label{kkk}
We introduce the usual notations for common mathematical notions, like real numbers, as also introduced in \cite{kohlenbach2}.  
\begin{defi}[Real numbers and related notions in $\RCAo$]\label{keepintireal}\rm~
\begin{enumerate}
 \renewcommand{\theenumi}{\alph{enumi}}
\item Natural numbers correspond to type zero objects, and we use `$n^{0}$' and `$n\in \N$' interchangeably.  Rational numbers are defined as signed quotients of natural numbers, and `$q\in \Q$' and `$<_{\Q}$' have their usual meaning.    
\item Real numbers are coded by fast-converging Cauchy sequences $q_{(\cdot)}:\N\di \Q$, i.e.\  such that $(\forall n^{0}, i^{0})(|q_{n}-q_{n+i}|<_{\Q} \frac{1}{2^{n}})$.  
We use Kohlenbach's `hat function' from \cite{kohlenbach2}*{p.\ 289} to guarantee that every $q^{1}$ defines a real number.  
\item We write `$x\in \R$' to express that $x^{1}:=(q^{1}_{(\cdot)})$ represents a real as in the previous item and write $[x](k):=q_{k}$ for the $k$-th approximation of $x$.    
\item Two reals $x, y$ represented by $q_{(\cdot)}$ and $r_{(\cdot)}$ are \emph{equal}, denoted $x=_{\R}y$, if $(\forall n^{0})(|q_{n}-r_{n}|\leq {2^{-n+1}})$. Inequality `$<_{\R}$' is defined similarly.  
We sometimes omit the subscript `$\R$' if it is clear from context.           
\item Functions $F:\R\di \R$ are represented by $\Phi^{1\di 1}$ mapping equal reals to equal reals, i.e.\ extensionality as in $(\forall x , y\in \R)(x=_{\R}y\di \Phi(x)=_{\R}\Phi(y))$.\label{EXTEN}
\item The relation `$x\leq_{\tau}y$' is defined as in \eqref{aparth} but with `$\leq_{0}$' instead of `$=_{0}$'.  Binary sequences are denoted `$f^{1}, g^{1}\leq_{1}1$', but also `$f,g\in C$' or `$f, g\in 2^{\N}$'.  Elements of Baire space are given by $f^{1}, g^{1}$, but also denoted `$f, g\in \N^{\N}$'.
\item For a binary sequence $f^{1}$, the associated real in $[0,1]$ is $\r(f):=\sum_{n=0}^{\infty}\frac{f(n)}{2^{n+1}}$.\label{detrippe}
\item Sets of type $\rho$ objects $X^{\rho\di 0}, Y^{\rho\di 0}, \dots$ are given by their characteristic functions $F^{\rho\di 0}_{X}\leq_{\rho\di 0}1$, i.e.\ we write `$x\in X$' for $ F_{X}(x)=_{0}1$. \label{koer} 
\end{enumerate}
\end{defi}
For completeness, we list the following notational convention for finite sequences.  
\begin{nota}[Finite sequences]\label{skim}\rm
The type for `finite sequences of objects of type $\rho$' is denoted $\rho^{*}$, which we shall only use for $\rho=0,1$.  
Since the usual coding of pairs of numbers goes through in $\RCAo$, we shall not always distinguish between $0$ and $0^{*}$. 
Similarly, we assume a fixed coding for finite sequences of type $1$ and shall make use of the type `$1^{*}$'.  
In general, we do not always distinguish between `$s^{\rho}$' and `$\langle s^{\rho}\rangle$', where the former is `the object $s$ of type $\rho$', and the latter is `the sequence of type $\rho^{*}$ with only element $s^{\rho}$'.  The empty sequence for the type $\rho^{*}$ is denoted by `$\langle \rangle_{\rho}$', usually with the typing omitted.  

\smallskip

Furthermore, we denote by `$|s|=n$' the length of the finite sequence $s^{\rho^{*}}=\langle s_{0}^{\rho},s_{1}^{\rho},\dots,s_{n-1}^{\rho}\rangle$, where $|\langle\rangle|=0$, i.e.\ the empty sequence has length zero.
For sequences $s^{\rho^{*}}, t^{\rho^{*}}$, we denote by `$s*t$' the concatenation of $s$ and $t$, i.e.\ $(s*t)(i)=s(i)$ for $i<|s|$ and $(s*t)(j)=t(|s|-j)$ for $|s|\leq j< |s|+|t|$. For a sequence $s^{\rho^{*}}$, we define $\overline{s}N:=\langle s(0), s(1), \dots,  s(N-1)\rangle $ for $N^{0}<|s|$.  
For a sequence $\alpha^{0\di \rho}$, we also write $\overline{\alpha}N=\langle \alpha(0), \alpha(1),\dots, \alpha(N-1)\rangle$ for \emph{any} $N^{0}$.  By way of shorthand, 
$(\forall q^{\rho}\in Q^{\rho^{*}})A(q)$ abbreviates $(\forall i^{0}<|Q|)A(Q(i))$, which is (equivalent to) quantifier-free if $A$ is.   
\end{nota}
\subsubsection{Some axioms and functionals}\label{lll}
As noted in Section \ref{intro}, the logical hardness of a theorem is measured via what fragment of the comprehension axiom is needed for a proof.  
For this reason, we introduce some axioms and functionals related to \emph{higher-order comprehension} in this section.
We are mostly dealing with \emph{conventional} comprehension here, i.e.\ only parameters over $\N$ and $\N^{\N}$ are allowed in formula classes like $\Pi_{k}^{1}$ and $\Sigma_{k}^{1}$.  

\smallskip

First of all, the following functional is clearly discontinuous at $f=11\dots$; in fact, $(\exists^{2})$ is equivalent to the existence of $F:\R\di\R$ such that $F(x)=1$ if $x>_{\R}0$, and $0$ otherwise (\cite{kohlenbach2}*{\S3}).  This fact shall be repeated often.  
\be\label{muk}\tag{$\exists^{2}$}
(\exists \varphi^{2}\leq_{2}1)(\forall f^{1})\big[(\exists n)(f(n)=0) \asa \varphi(f)=0    \big]. 
\ee
Related to $(\exists^{2})$, the functional $\mu^{2}$ in $(\mu^{2})$ is also called \emph{Feferman's $\mu$} (\cite{avi2}).
\begin{align}\label{mu}\tag{$\mu^{2}$}
(\exists \mu^{2})(\forall f^{1})\big[ (\exists n)(f(n)=0) \di [f(\mu(f))=0&\wedge (\forall i<\mu(f))(f(i)\ne 0) ]\\
& \wedge [ (\forall n)(f(n)\ne0)\di   \mu(f)=0]    \big], \notag
\end{align}
We have $(\exists^{2})\asa (\mu^{2})$ over $\RCAo$ and $\ACAo\equiv\RCAo+(\exists^{2})$ proves the same sentences as $\ACA_{0}$ by \cite{hunterphd}*{Theorem~2.5}. 

\smallskip

Secondly, the functional $\SS^{2}$ in $(\SS^{2})$ is called \emph{the Suslin functional} (\cite{kohlenbach2}).
\be\tag{$\SS^{2}$}
(\exists\SS^{2}\leq_{2}1)(\forall f^{1})\big[  (\exists g^{1})(\forall n^{0})(f(\overline{g}n)=0)\asa \SS(f)=0  \big], 
\ee
The system $\FIVE^{\omega}\equiv \RCAo+(\SS^{2})$ proves the same $\Pi_{3}^{1}$-sentences as $\FIVE$ by \cite{yamayamaharehare}*{Theorem 2.2}.   
By definition, the Suslin functional $\SS^{2}$ can decide whether a $\Sigma_{1}^{1}$-formula as in the left-hand side of $(\SS^{2})$ is true or false.   We similarly define the functional $\SS_{k}^{2}$ which decides the truth or falsity of $\Sigma_{k}^{1}$-formulas from $\L_{2}$; we also define 
the system $\SIXK$ as $\RCAo+(\SS_{k}^{2})$, where  $(\SS_{k}^{2})$ expresses that $\SS_{k}^{2}$ exists.  
We note that the operators $\nu_{n}$ from \cite{boekskeopendoen}*{p.\ 129} are essentially $\SS_{n}^{2}$ strengthened to return a witness (if existant) to the $\Sigma_{n}^{1}$-formula at hand.  

\smallskip

\noindent
Thirdly, full second-order arithmetic $\Z_{2}$ is readily derived from $\cup_{k}\SIXK$, or from:
\be\tag{$\exists^{3}$}
(\exists E^{3}\leq_{3}1)(\forall Y^{2})\big[  (\exists f^{1})(Y(f)=0)\asa E(Y)=0  \big], 
\ee
and we therefore define $\Z_{2}^{\Omega}\equiv \RCAo+(\exists^{3})$ and $\Z_{2}^\omega\equiv \cup_{k}\SIXK$, which are conservative over $\Z_{2}$ by \cite{hunterphd}*{Cor.\ 2.6}. 
Despite this close connection, $\Z_{2}^{\omega}$ and $\Z_{2}^{\Omega}$ can behave quite differently, as discussed in e.g.\ \cite{dagsamIII}*{\S2.2}.   
The functional from $(\exists^{3})$ is also called `$\exists^{3}$', and we use the same convention for other functionals.  

\bye